\DeclareRobustCommand{\shortto}{%
  \mathrel{\mathpalette\short@to\relax}%
}
\newcommand{\short@to}[2]{%
  \mkern2mu
  \clipbox{{.5\width} 0 0 0}{$\m@th#1\vphantom{+}{\shortrightarrow}$}%
  }
\newcommand{\calO}{\mathcal{O}}
\newcommand{\calC}{\mathcal{C}}
\newcommand{\bbZ}{\mathbb{Z}}
\newcommand{\bbR}{\mathbb{R}}
\newcommand{\bbQ}{\mathbb{Q}}
\newcommand{\bbP}{\mathbb{P}}
\newcommand{\piet}{\pi_1^{\et}}
\newcommand{\pipet}{\pi_1^{\mathrm{pro\et}}}
\newcommand{\pisga}{\pi_1^{\mathrm{SGA3}}}
\newcommand{\Spec}{\mathrm{Spec}}
\newcommand{\Gal}{\mathrm{Gal}}
\newcommand{\Hom}{\mathrm{Hom}}
\newcommand{\Aut}{\mathrm{Aut}}
\newcommand{\GL}{\mathrm{GL}}
\newcommand{\id}{\mathrm{id}}
\newcommand{\et}{\mathrm{\acute{e}t}}
\newcommand{\Et}{\mathrm{\acute{E}t}}
\newcommand{\Image}{\mathrm{Im}}
\newcommand{\Ker}{\mathrm{Ker}}
\newcommand{\bk}{\bar{k}}
\newcommand{\bs}{\bar{s}}
\newcommand{\bt}{\bar{t}}
\newcommand{\bx}{\bar{x}}
\newcommand{\bw}{\bar{w}}
\newcommand{\by}{\bar{y}}
\newcommand{\bp}{\bar{p}}
\newcommand{\bh}{\bar{h}}
\newcommand{\bG}{\bar{G}}
\newcommand{\bpartial}{\bar{\partial}}
\newcommand{\scrG}{\mathscr{G}}
\newcommand{\rmSets}{\mathrm{Sets}}
\newcommand{\rmNoohi}{\mathrm{Noohi}}
\newcommand{\rmStab}{\mathrm{Stab}}
\newcommand{\rmLoc}{\mathrm{Loc}}
\newcommand{\bbN}{\mathbb{N}}
\newcommand{\bbG}{\mathbb{G}}
\newcommand{\rmtop}{\mathrm{top}}
\newcommand{\rmim}{\mathrm{im}}
\newcommand{\rmsep}{\mathrm{sep}}
\newcommand{\rmlcs}{\mathrm{lcs}}
\newcommand{\rmCov}{\mathrm{Cov}}
\newcommand{\rmAut}{\mathrm{Aut}}
\newcommand{\rmShv}{\mathrm{Shv}}
\newcommand{\rmVert}{\mathrm{Vert}}
\newcommand{\tX}{\widetilde{X}}
\newcommand{\tC}{\widetilde{C}}
\newcommand{\bX}{\overline{X}}
\newcommand{\bbH}{\overline{\overline{H}}}
\newcommand{\hbbZ}{\widehat{\mathbb{Z}}}
\newcommand{\rmdiscr}{\mathrm{discr}}
\newcommand{\proet}{\mathrm{pro\et}}
\newcommand{\scrF}{\mathscr{F}}
\newcommand{\scrC}{\mathscr{C}}
\newcommand{\rmDD}{\mathrm{DD}}
\newcommand{\calB}{\mathcal{B}}
\newcommand{\rmFr}{\mathrm{Fr}}
\newcommand{\frakp}{\mathfrak{p}}
\newcommand{\rarr}{\rightarrow}
\newcommand{\epirarr}{\twoheadrightarrow}
\newcommand{\monorarr}{\hookrightarrow}
\newcommand{\cupt}{\cup_{\bullet}}
\newcommand{\topfp}{\mathrm{*^{\rmtop}}} 
\newcommand*{\triple}[2][.1ex]{%
  \mathrel{\vcenter{\offinterlineskip%
  \hbox{$#2$}\vskip#1\hbox{$#2$}\vskip#1\hbox{$#2$}}}}
\newcommand*{\tripleleftarrow}{\triple{\leftarrow}}
\newcommand\reallywidehat[1]{%
\savestack{\tmpbox}{\stretchto{%
  \scaleto{%
    \scalerel*[\widthof{\ensuremath{#1}}]{\kern-.6pt\bigwedge\kern-.6pt}%
    {\rule[-\textheight/2]{1ex}{\textheight}}
  }{\textheight}%
}{0.5ex}}%
\stackon[1pt]{#1}{\tmpbox}%
}
\newtheorem{theorem}{Theorem}[section]
\newtheorem{proposition}[theorem]{Proposition}
\newtheorem{lemma}[theorem]{Lemma}
\newtheorem{corollary}[theorem]{Corollary}
\newtheorem{definition}[theorem]{Definition}
\newtheorem{defn}[theorem]{Definition}
\newtheorem{prop}[theorem]{Proposition}
\newtheorem*{theorem*}{Theorem}
\newtheorem*{lemma*}{Lemma}
\newtheorem*{proposition*}{Proposition}
\theoremstyle{definition}
\newtheorem{question}[theorem]{Question}
\newtheorem{example}[theorem]{Example}
\newtheorem{fact}[theorem]{Fact}
\newtheorem{obs}[theorem]{Observation}
\newtheorem{rmk}[theorem]{Remark}
\newtheorem*{rmk*}{Remark}
\newtheorem*{obs*}{Observation}
\numberwithin{theorem}{section} 
\newcommand{\adjunction}[4]{\xymatrix@1{#1{\ } \ar@<-0.3ex>[r]_{ {\scriptstyle #2}} & {\ } #3 \ar@<-0.3ex>[l]_{ {\scriptstyle #4}}}}
\title{Fundamental Exact Sequence for the Pro-\'Etale Fundamental Group}
\date{\today}
\author{Marcin Lara}
\address{Institute of Mathematics, Faculty of Mathematics and Computer Science, Jagiellonian University, {\L}ojasiewicza 6, 30-348 Krak\'ow, Poland
\newline  \indent Institute of Mathematics, Goethe University Frankfurt, Robert-Mayer-Str.\ 6--8, 60325 Frankfurt am Main, Germany}
  \email{marcin.lara@uj.edu.pl}
  \email{lara@math.uni-frankfurt.de}
  \keywords{pro-\'etale topology, pro-\'etale fundamental group, \'etale fundamental group, homotopy exact sequence, fundamental exact sequence}
\begin{document}

\begin{abstract} The pro-\'etale fundamental group of a scheme, introduced by Bhatt and Scholze, generalizes formerly known fundamental groups -- the usual \'etale fundamental group $\piet$ defined in SGA1 and the more general $\pisga$. It controls local systems in the pro-\'etale topology and leads to an interesting class of ``geometric coverings'' of schemes, generalizing finite \'etale coverings.

We prove exactness of the fundamental sequence for the pro-\'etale fundamental group of a geometrically connected scheme $X$ of finite type over a field $k$, i.e.\ that the sequence
\begin{displaymath}
1 \rarr \pipet(X_{\bk}) \rarr \pipet(X) \rarr \Gal_k \rarr 1
\end{displaymath}
is exact as abstract groups and the map $\pipet(X_{\bk}) \rarr \pipet(X)$ is a topological embedding.

 On the way, we prove a general van Kampen theorem and the K\"unneth formula for the pro-\'etale fundamental group.
\end{abstract}

\maketitle

 \tableofcontents


\section{Introduction}

In \cite{BhattScholze}, the authors introduced the pro-\'etale topology for schemes. The main motivation was that the definitions of $\ell$-adic sheaves and cohomologies in the usual \'etale topology are rather indirect. In contrast, the naive definition of e.g.\ a constant $\bbQ_\ell$-sheaf in the pro-\'etale topology as $X_{\proet} \ni U \mapsto \mathrm{Maps}_{\mathrm{cts}}(U,\bbQ_\ell)$ is a sheaf and if $X$ is a variety over an algebraically closed field, then $H^i(X_{\et}, \bbQ_\ell) = H^i(X_{\proet},\bbQ_\ell)$, where the right hand side is defined ``naively'' by applying the derived functor $\mathrm{R}\Gamma(X_{\proet}, -)$ to the described constant sheaf.

Along with the new topology, the authors of \cite{BhattScholze} introduced a new fundamental group -- {\bf the pro-\'etale fundamental group}. It is defined for a connected locally topologically noetherian scheme $X$ with a geometric point $\bx$ and denoted $\pipet(X,\bx)$. The name ``pro-\'etale'' is justified by the fact that there is an equivalence $\pipet(X,\bx)-\rmSets \simeq  \rmLoc_{X_{\proet}}$ between the categories of (possibly infinite) discrete sets with continuous action by $\pipet(X,\bx)$ and locally constant sheaves of (discrete) sets in $X_{\proet}$. This is analogous to the classical fact that $\piet(X,\bx) -\mathrm{FSets}$ is equivalent to the category of lcc sheaves on $X_{\et}$, where $G - \mathrm{FSets}$ denotes \underline{finite} sets with a continuous $G$ action. This is the first striking difference between these fundamental groups: $\pipet$ allows working with sheaves of infinite sets. In fact, the authors of \cite{BhattScholze} study abstract ``infinite Galois categories'', which are pairs $(\calC,F)$ satisfying certain axioms that (together with an additional tameness condition) turn out to be equivalent to a pair $(G - \rmSets, F_G : G-\rmSets \rarr \rmSets)$ for a Hausdorff topological group $G$ and the forgetful functor $F_G$. In fact, one takes $G=\rmAut(F)$ with a suitable topology. This generalizes the usual Galois categories, introduced by Grothendieck to define $\piet(X,\bx)$. In Grothendieck's approach, one takes the category $\mathrm{F\Et}_X$ of finite \'etale coverings together with the fibre functor $F_{\bx}$ and obtains that $\piet(X,\bx) - \mathrm{FSets} \simeq \mathrm{F\Et}_X$. Discrete sets with a continuous $\pipet(X,\bx)$-action correspond to a larger class of coverings, namely ``{\bf geometric coverings}'', which are defined to be schemes $Y$ over $X$ such that $Y \rarr X$:
\begin{enumerate}
    \item is \'etale (not necessarily quasi-compact!)
    \item satisfies the valuative criterion of properness.
\end{enumerate}

  We denote the category of geometric coverings by $\rmCov_X$ (seen as a full subcategory of $\mathrm{Sch}_X$). It is clear that $\mathrm{F\Et} \subset \rmCov_X$. As $Y$ is not assumed to be of finite type over $X$, the valuative criterion does not imply that $Y \rarr X$ is proper (otherwise we would get finite \'etale morphisms again) and so in general we get more. A basic example of a non-finite covering in $\rmCov_X$ can be obtained by viewing an infinite chain of (suitably glued) $\bbP^1_k$'s as a covering of the nodal curve $X=\bbP^1/\{0,1\}$ obtained by gluing $0$ and $1$ on $\bbP^1_k$ (to formalize the gluing one can use \cite{Schwede}). Then, if $k=\bk$, $\pipet(X,\bx) = \bbZ$ and $\piet(X,\bx) = \hbbZ$.
In this example, the prodiscrete group $\pisga$ defined in Chapter X.6 of \cite{SGA3vol2} would give the same answer. This is essentially because our infinite covering is a torsor under a discrete group in $X_{\et}$. However, for more general schemes (e.g.\ an elliptic curve with two points glued), the category $\rmCov_X$ contains more. So far, all the new examples were coming from non-normal schemes. This is not a coincidence, as for a normal scheme $X$, any $Y \in \rmCov_X$ is a (possibly infinite) disjoint union of finite \'etale coverings. In this case, $\pipet(X,\bx) = \pisga(X,\bx) = \piet(X,\bx)$. In general $\piet$ can be recovered as the profinite completion of $\pipet$ and $\pisga$ is the prodiscrete completion of $\pipet$.

The groups $\pipet$ belong in general to a class of {\bf Noohi groups}. These can be characterized as Hausdorff topological groups $G$ that are Ra{\u \i}kov complete and such that the open subgroups form a basis of neighbourhoods at $1_G$. However, \uline{open \emph{normal} subgroups do not necessarily form a basis} of open neighborhoods of $1_G$ in a Noohi group. In the case of $\pipet$, this means that there might exist a connected $Y \in \rmCov_X$ that do not have a Galois closure. Examples of Noohi groups include: profinite groups, (pro)discrete groups, but also $\bbQ_\ell$ and $\GL_n(\bbQ_\ell)$. A slightly different example would be $\Aut(S)$, where $S$ is a discrete set and $\Aut$ has the compact-open topology.

The fact that groups like $\GL_n(\bbQ_\ell)$ are Noohi (but not profinite or prodiscrete) makes $\pipet$ better suited to work with $\bbQ_\ell$ (or $\overline{\bbQ}_\ell$) local systems. Indeed, denoting by $\rmLoc_{X_\proet}(\bbQ_\ell)$ the category of $\bbQ_\ell$-local systems on $X_{\proet}$, i.e.\ locally constant sheaves of finite-dimensional $\bbQ_\ell$-vector spaces (again, the ``naive'' definition works in $X_{\proet}$), one has an equivalence $\mathrm{Rep}_{\mathrm{cts},\bbQ_\ell}(\pipet(X,\bx)) \simeq \rmLoc_{X_{\proet}}(\bbQ_{\ell})$. This fails for $\piet$, as any $\bbQ_\ell$-representation of a profinite group must stabilize a $\bbZ_\ell$-lattice, while $\bbQ_\ell$-local systems (in the above sense) stabilize lattices only \'etale locally. The group $\pisga$ is not enough either; as shown by \cite[Example 7.4.9]{BhattScholze} (due to Deligne), if $X$ is the scheme obtained by gluing two points on a smooth projective curve of genus $g\geq 1$,  there are $\bbQ_\ell$-local systems on $X$ that do not come from a representation of $\pisga(X)$.

We will often drop $\bx$ from the notation for brevity. This usually does not matter much, as a different choice of the base point leads to an isomorphic group.

\subsection*{Classical results}
In \cite{SGA1}, Grothendieck proved some foundational results regarding the \'etale fundamental group. Among them:
\begin{enumerate}
  \item \label{item:FES-item-intro} The fundamental exact sequence, i.e.\ the comparison between the ``arithmetic'' and ``geometric'' fundamental groups:

\begin{theorem}(\cite[Exp. IX, Théorème 6.1]{SGA1})
  Let $k$ be a field with algebraic closure $\bk$. Let $X$ be a quasi-compact and quasi-separated scheme over $k$. If the base change $X_{\bk}$ is connected, then there is a short exact sequence
  \begin{displaymath}
  1 \rarr \piet(X_{\bk}) \rarr \piet(X) \rarr \Gal_k \rarr 1
  \end{displaymath}
of profinite topological groups. 
\end{theorem}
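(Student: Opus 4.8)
The plan is to argue entirely inside Grothendieck's formalism of Galois categories. Write $H = \piet(X_{\bk})$, $G = \piet(X)$ and $Q = \Gal_k = \piet(\Spec k)$, realised as the automorphism groups of the fibre functors attached to a geometric point $\bx$ of $X_{\bk}$ and its images in $X$ and $\Spec k$. The two maps are induced by the pullback functors $\mathrm{F\Et}_{\Spec k} \rarr \mathrm{F\Et}_X$, $Z \mapsto Z \times_{\Spec k} X$, and $\mathrm{F\Et}_X \rarr \mathrm{F\Et}_{X_{\bk}}$, $Z \mapsto Z \times_X X_{\bk}$, for compatibly chosen base points. Each of the three exactness assertions translates into an elementary property of these functors, and I would establish them in turn.

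For surjectivity of $G \rarr Q$, the standard criterion reduces matters to checking that pullback carries connected covers of $\Spec k$ to connected covers of $X$. A connected object of $\mathrm{F\Et}_{\Spec k}$ is $\Spec k'$ for a finite separable $k'/k$, and its pullback is $X_{k'}$; since $X_{\bk} \rarr X_{k'}$ is surjective and $X_{\bk}$ is connected by hypothesis, $X_{k'}$ is connected, giving surjectivity. The composite $H \rarr G \rarr Q$ is trivial because it is induced by $X_{\bk} \rarr \Spec \bk \rarr \Spec k$ and $\piet(\Spec \bk) = 1$; hence $\mathrm{im}(H \rarr G) \subseteq \ker(G \rarr Q)$.

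For injectivity of $H \rarr G$ I would use that it is equivalent to the assertion that every connected $W \in \mathrm{F\Et}_{X_{\bk}}$ is dominated by the pullback of some object of $\mathrm{F\Et}_X$. Here the hypothesis that $X$ is quasi-compact and quasi-separated enters: writing $X_{\bk} = \varprojlim_{k'} X_{k'}$ over the finite subextensions $k'/k$ inside $\bk$, a standard finite-presentation and limit argument descends $W \rarr X_{\bk}$ to a finite \'etale $W' \rarr X_{k'}$ for some $k'$. As $X_{k'} \rarr X$ is itself finite \'etale, the composite $W' \rarr X_{k'} \rarr X$ lies in $\mathrm{F\Et}_X$, and its pullback to $X_{\bk}$ dominates $W$; this yields injectivity.

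The main work is exactness in the middle, $\ker(G \rarr Q) \subseteq \mathrm{im}(H \rarr G)$. Dually this says that a cover $Z \in \mathrm{F\Et}_X$ whose pullback $Z_{\bk}$ is completely split over $X_{\bk}$ must already be pulled back from $\Spec k$. I would argue by descent: if $Z_{\bk} \cong I \times X_{\bk}$ for a finite set $I$, then by the same limit argument this splitting occurs over a finite Galois extension $k'/k$, so $Z_{k'} \cong I \times X_{k'}$. The cover $X_{k'} \rarr X$ is finite \'etale Galois with group $\Gal(k'/k)$, and the descent datum on $Z_{k'}$ relative to it amounts to a continuous $\Gal(k'/k)$-action on $I$; descending the corresponding constant cover exhibits $Z$ as the pullback of a finite \'etale $k$-scheme. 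This is the step I expect to be the \emph{crux}, since it is where Galois descent along $X_{k'} \rarr X$ must be combined carefully with the reduction to finite level. Finally, since $H$ is profinite hence compact and $G$ is Hausdorff, the injection $H \rarr G$ is automatically a closed topological embedding with image the closed subgroup $\ker(G \rarr Q)$, which completes the proof of exactness as profinite groups.
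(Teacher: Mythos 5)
The theorem you are proving is quoted in the paper purely as classical background from SGA1; the paper gives no proof of it, so the relevant comparison is with the standard Galois-category argument, which is also what the paper's pro-\'etale analogue (Theorem \ref{arithtogeom}) mirrors. Your surjectivity step, the triviality of the composite, and the injectivity step via the qcqs limit argument are the standard ones and are essentially correct, up to one fixable point: finite subextensions $k'/k$ inside $\bk$ need not be separable when $k$ is imperfect, so your claim that $X_{k'} \rarr X$ is finite \'etale can fail. One first replaces $\bk$ by $k^{\rmsep}$ (the \'etale site is invariant under the universal homeomorphism $X_{\bk} \rarr X_{k^{\rmsep}}$) and runs the limit argument over separable subextensions only.

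The genuine gap is in your translation of exactness in the middle. The statement ``every $Z \in \mathrm{F\Et}_X$ whose pullback $Z_{\bk}$ is completely split is pulled back from $\Spec k$'' is \emph{not} dual to $\ker(G \rarr Q) \subseteq \rmim(H \rarr G)$; it is dual to the weaker statement that $\ker(G \rarr Q)$ equals the smallest closed \emph{normal} subgroup of $G$ containing $\rmim(H \rarr G)$. Indeed, for a finite $G$-set $S$ the kernel of $G \rarr \mathrm{Sym}(S)$ is open and normal, so it contains $\rmim(H)$ if and only if it contains its normal closure: the hypothesis ``$Z_{\bk}$ completely split'' is literally incapable of distinguishing $\rmim(H)$ from its normal closure, so your descent argument identifies $\ker(G\rarr Q)$ with the normal closure and nothing more. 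These conditions, together with injectivity on the left and surjectivity on the right, do not imply exactness: take $H=\bbZ/2 \rarr G=S_4$ sending the generator to $(12)(34)$, and $Q = S_3 = S_4/V_4$; the image is not normal, its normal closure is $V_4 = \ker(G \rarr Q)$, yet the sequence is not exact in the middle. (Compare items (3) and (5) of the paper's Prop.~\ref{dictionary}: for profinite groups the thick closure of the image is the image itself, and exactness requires \emph{both} items, whereas you have only verified the analogue of (5).) What is missing is precisely normality of $\rmim\bigl(\piet(X_{\bk}) \rarr \piet(X)\bigr)$, which in cover language reads: if a connected $Z \in \mathrm{F\Et}_X$ admits a section over $X_{\bk}$, then $Z_{\bk}$ is completely split. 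This is proved by the Galois-translate argument: the section is a clopen immersion, the union $\bigcup_{\sigma \in \Gal_k} {}^{\sigma}s(X_{\bk})$ is clopen and Galois-stable, hence descends to a clopen subset of the connected scheme $Z$, which must be all of $Z$; therefore $Z_{\bk}$ is a disjoint union of copies of $X_{\bk}$. This is exactly part (3) of the paper's proof of Theorem \ref{arithtogeom} in the pro-\'etale setting. With this step added, your argument becomes a complete and standard proof.
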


  \item \label{item:HES-item-intro} The homotopy exact sequence:
\begin{theorem}(\cite[Exp. X, Corollaire 1.4]{SGA1})
  Let $f:X \rarr S$ be a flat proper morphism of finite presentation whose geometric fibres are connected and reduced. Assume $S$ is connected and let $\bs$ be a geometric point of $S$. Then there is an exact sequence 
  \begin{displaymath}
  \piet(X_{\bs}) \rarr \piet(X) \rarr \piet(S) \rarr 1
  \end{displaymath}
  of fundamental groups.
\end{theorem}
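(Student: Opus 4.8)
The plan is to work entirely through Grothendieck's dictionary between short exact sequences of fundamental groups and functors between the Galois categories of finite \'etale covers. Fix geometric points $\bx$ of $X$ lying over $\bs$, and write $F_{\bx}, F_{\bs}$ for the associated fibre functors. Base change along $f$ gives a functor $f^* : \mathrm{F\acute{E}t}_S \to \mathrm{F\acute{E}t}_X$, and restriction to the fibre gives $i^* : \mathrm{F\acute{E}t}_X \to \mathrm{F\acute{E}t}_{X_{\bs}}$; these induce the two maps $b : \piet(X) \to \piet(S)$ and $a : \piet(X_{\bs}) \to \piet(X)$. Since the theorem only claims right-exactness (there is no injectivity assertion on $a$), the statement decomposes into exactly two claims: (i) $b$ is surjective, which under the dictionary is equivalent to $f^*$ carrying connected covers to connected covers (equivalently, $f^*$ being fully faithful); and (ii) exactness in the middle, $\Image(a) = \Ker(b)$.

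The geometric input feeding both steps is the identity $f_*\calO_X = \calO_S$, valid \emph{universally}, i.e. preserved under arbitrary base change $S' \to S$. This is where the hypotheses enter: $f$ being proper and flat of finite presentation with geometrically connected and geometrically reduced fibres forces $H^0(X_{\bt}, \calO) = \kappa(\bt)$ on every geometric fibre, and cohomology-and-base-change upgrades this to $f_*\calO_X = \calO_S$ compatibly with all base change. Granting this, surjectivity of $b$ is quick: for a connected $S' \to S$ in $\mathrm{F\acute{E}t}_S$, the base change $X' := X \times_S S' \to S'$ is again proper, flat with geometrically connected reduced fibres, so $\Gamma(X', \calO_{X'}) = \Gamma(S', \calO_{S'})$ has no nontrivial idempotents; hence $X' = f^* S'$ is connected, which is precisely the condition for $f^*$ to preserve connectedness and therefore for $b$ to be surjective.

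For exactness in the middle, the inclusion $\Image(a) \subseteq \Ker(b)$ is formal: the composite $X_{\bs} \to X \to S$ factors through the geometric point $\bs$, whose fundamental group is trivial, so $b \circ a$ is trivial. The content is the reverse inclusion $\Ker(b) \subseteq \Image(a)$, which unwinds through the dictionary to the descent statement: \emph{if $Y \to X$ is finite \'etale and its restriction $Y_{\bs} \to X_{\bs}$ is completely split (a trivial cover), then $Y \cong X \times_S S'$ for some $S' \in \mathrm{F\acute{E}t}_S$}. To produce $S'$, set $\calB := \pi_*\calO_Y$ for $\pi : Y \to X$ (a finite \'etale $\calO_X$-algebra) and put $\calA := f_*\calB$, an $\calO_S$-algebra; the candidate cover is $S' := \Spec_S \calA$, and the unit $f^*\calA \to \calB$ furnishes a comparison map $X \times_S S' \to Y$ over $X$. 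One then checks, using cohomology-and-base-change for the finite flat $\calB$, that $\calA$ is finite locally free of rank $n = \deg(Y/X)$ --- here the triviality of $Y_{\bs}$ over $X_{\bs}$ pins down the rank via $\calA \otimes \kappa(\bs) = H^0(X_{\bs}, \calB|_{X_{\bs}}) = \kappa(\bs)^n$ --- that $\calA$ is \'etale over $S$, and finally that $f^*\calA \to \calB$ is an isomorphism, so that the comparison map is an isomorphism of covers.

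The main obstacle is precisely this last descent step, and it is where properness is indispensable. The delicate points are: controlling the rank of $\calA = f_*\calB$ away from the base point (one needs the relevant $H^0$ to be locally constant, which uses flatness together with the semicontinuity supplied by properness), and verifying that $f^*\calA \to \calB$ is an isomorphism rather than merely a map --- an infinitesimal statement that in Grothendieck's original treatment is handled by deforming the cover $Y_{\bs}$ along the nilpotent thickenings of the fibre (finite \'etale covers lift uniquely along nilpotent closed immersions), algebraizing the resulting formal cover over the complete local ring $\widehat{\calO}_{S,s}$ by the Grothendieck existence theorem (formal GAGA for the proper $f$), and then spreading the descended cover out from $\Spec \widehat{\calO}_{S,s}$ to an \'etale neighbourhood of $s$ by a standard limit argument exploiting finite presentation. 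Assembling these local descents and invoking faithfully flat descent for the already-constructed $\calA$ globalizes the isomorphism over the connected base $S$, completing the identification $\Ker(b) = \Image(a)$.
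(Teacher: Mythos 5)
This theorem appears in the paper only as quoted classical background from SGA1; the paper contains no proof of it, so your argument can only be measured against the standard proof and against the paper's own dictionary (Prop.~\ref{dictionary}). Your surjectivity step, the formal inclusion $\Image(a)\subseteq\Ker(b)$, and the descent machine ($\mathcal{A}=f_*\pi_*\calO_Y$, $S'=\Spec_S\mathcal{A}$, cohomology-and-base-change, formal GAGA or spreading out) are all the right ingredients. The genuine gap is the claim that $\Ker(b)\subseteq\Image(a)$ ``unwinds through the dictionary to'' the statement that fibrewise completely split covers descend to $S$. It does not. Connected finite \'etale covers of $X$ correspond to coset spaces $\piet(X)/U$ with $U$ open; the restriction to $X_{\bs}$ is completely split exactly when $\Image(a)\subseteq\bigcap_{g}gUg^{-1}$, while $Y$ descends exactly when $\Ker(b)\subseteq U$ (up to conjugation, and $\Ker(b)$ is normal). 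Quantifying over all open $U$, your descent statement is therefore equivalent to: $\Ker(b)$ is the smallest closed \emph{normal} subgroup of $\piet(X)$ containing $\Image(a)$ --- precisely what this paper calls near-exactness in the middle (Def.~\ref{definition-weakly-exact}; items (3) and (5) of Prop.~\ref{dictionary}). Since $\Image(a)$ is not known to be normal --- that is part of what exactness asserts --- this does not yield $\Ker(b)\subseteq\Image(a)$. The paper is organized around exactly this subtlety: Thm.~\ref{arithtogeom} proves near-exactness, and Thm.~\ref{exactness-in-geometric-to-arithmetic-as-abstract} needs a separate normality argument to upgrade it.

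To close the gap you need the second categorical condition of SGA1, Exp.~V, 6.6: for every \emph{connected} finite \'etale $Y\rarr X$ such that $Y_{\bs}\rarr X_{\bs}$ merely admits a section, there exist a connected finite \'etale $S'\rarr S$ and an $X$-morphism $X\times_S S'\rarr Y$. (This condition gives $\Ker(b)\subseteq gUg^{-1}$ for every open $U\supseteq\Image(a)$ and some $g$; normality of $\Ker(b)$ turns this into $\Ker(b)\subseteq U$, and intersecting over all such $U$ gives $\Ker(b)\subseteq\overline{\Image(a)}=\Image(a)$, the image being closed by compactness.) Your construction proves this with one extra twist: for such a $Y$, the algebra $\mathcal{A}=(f\circ\pi)_*\calO_Y$ (the Stein factorization of $Y\rarr S$) is still finite \'etale over $S$, because $Y\rarr S$ is proper, flat, of finite presentation with geometrically \emph{reduced} fibres, so the number of geometric connected components of its fibres is locally constant; $S'=\Spec_S\mathcal{A}$ is connected since $Y$ is, hence $X\times_S S'$ is connected by your own surjectivity argument; the canonical map $Y\rarr X\times_S S'$ is then finite \'etale of constant degree $d$, and evaluating over $\bs$ on the component where $Y_{\bs}\rarr X_{\bs}$ has degree one forces $d=1$. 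Thus $Y\cong X\times_S S'$, which supplies the missing condition and shows a posteriori that a section over the fibre already implies complete splitting. Without this step (or some other proof that $\Image(a)$ is normal in $\piet(X)$), your argument establishes only the near-exact sequence, not the exact sequence claimed.
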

\item ``K\"unneth formula":
\begin{proposition}(\cite[Exp. X, Cor.\ 1.7]{SGA1})
Let $X, Y$ be two connected schemes locally of finite type over an algebraically closed field $k$ and assume that $Y$ is proper. Let $\bx, \by$ be geometric points of $X$ and $Y$ respectively with values in the same algebraically closed field extension $K$ of $k$. Then the map induced by the projections is an isomorphism
\begin{displaymath}
\piet(X \times_k Y,(\bx,\by)) \stackrel{\sim}{\rarr} \piet(X,\bx) \times \piet(Y,\by)
\end{displaymath}
\end{proposition}

\item Invariance of $\piet$ under extensions of algebraically closed fields for proper schemes (\cite[Exp. X, Corollaire 1.8]{SGA1});
\item General van Kampen theorem (proved in a special case in \cite[IX \S 5]{SGA1} and generalized in \cite{Stix});
  
\end{enumerate}

The aim of this and the subsequent article \cite{HES} is to generalize statements (\ref{item:FES-item-intro}) and (\ref{item:HES-item-intro}), correspondingly, to the case of $\pipet$. In the present article, we also establish the generalizations of all the other points besides (\ref{item:HES-item-intro}). The main difficulties in trying to directly generalize the proofs of Grothendieck are as follows:
\begin{itemize}
  \item geometric coverings of schemes (i.e.\ elements of $\rmCov_X$ defined above) are often not quasi-compact, unlike elements of $\mathrm{F\Et}_X$. For example, for $X$ a variety over a field $k$ and connected $Y \in \rmCov_{X_{\bk}}$, there may be no finite extension $l/k$ such that $Y$ would be defined over $l$. Similarly, some useful constructions (like Stein factorization) no longer work (at least without significant modifications).
  \item for a connected geometric covering $Y \in \rmCov_X$, there is in general no Galois geometric covering dominating it. Equivalently, there might exist an open subgroup $U < \pipet(X)$ that does not contain an open normal subgroup. This prevents some proofs that would work for $\pisga$ to carry over to $\pipet$.
  \item The topology of $\pipet$ is more complicated than the one of $\piet$, e.g.\ it is not necessarily compact, which complicates the discussion of exactness of sequences.
\end{itemize}

\subsection*{Our results}
Our main theorem is the generalization of the fundamental exact sequence. More precisely, we prove the following.
\begin{theorem*}[Thm.\ \ref{exactness-in-geometric-to-arithmetic-as-abstract}]
  Let $X$ be a geometrically connected scheme of finite type over a field $k$. Then the sequence
  \begin{displaymath}
  1 \rarr \pipet(X_{\bk}) \rarr \pipet(X) \rarr \Gal_k \rarr 1
  \end{displaymath}
  is exact as abstract groups. 
  
  Moreover, the map $\pipet(X_{\bk}) \rarr \pipet(X)$ is a topological embedding and the map $\pipet(X) \rarr \Gal_k$ is a quotient map of topological groups.
  \end{theorem*}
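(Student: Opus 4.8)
The plan is to work entirely through the equivalence $\pipet(W,\bar w)\text{-}\rmSets \simeq \rmCov_W$ recalled in the introduction, together with the standard dictionary between continuous homomorphisms of Noohi groups and the pullback functors they induce on categories of (discrete) sets. Fix a geometric point $\bx$ of $X_{\bk}$, viewed simultaneously as a geometric point of $X$ and of $\Spec k$, so that all three fibre functors agree on $\bx$. The two maps in the sequence are then induced by the pullback functors $\alpha\colon \mathrm{F}\Et_{k}\rarr\rmCov_X$ along $X\rarr\Spec k$ and $\beta\colon\rmCov_X\rarr\rmCov_{X_{\bk}}$ along $X_{\bk}\rarr X$. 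Writing $A=\pipet(X_{\bk})$, $B=\pipet(X)$, $\Gamma=\Gal_k$, $\iota\colon A\rarr B$ and $\pi\colon B\rarr\Gamma$, I would reduce the theorem to four geometric assertions: (S) every connected object of $\mathrm{F}\Et_k$ pulls back under $\alpha$ to a connected cover of $X$; (T) $\beta\circ\alpha$ sends every object to a completely split cover of $X_{\bk}$; (G) every connected $Y\in\rmCov_{X_{\bk}}$ is isomorphic to a connected component of $\beta(Z)=Z\times_X X_{\bk}$ for some $Z\in\rmCov_X$; and (E) every $Z\in\rmCov_X$ with $\beta(Z)$ completely split lies in the essential image of $\alpha$. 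By the Noohi-group dictionary, (S) gives surjectivity of $\pi$, (T) gives $\Image(\iota)\subseteq\Ker(\pi)$, and (E) together with (G) gives $\Ker(\pi)=\Image(\iota)$. Assertion (G) gives both the injectivity and the topological embedding of $\iota$: if $Y$ is the component of $\beta(Z)$ through a point $z\in F_{\bx}(Y)\subseteq F_{\bx}(Z)$, then $\iota^{-1}(\rmStab_B(z))=\rmStab_A(z)=\rmStab_A(Y,z)$, so every open subgroup of $A$ is induced from $B$.

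Assertions (S) and (T) are formal. A connected object of $\mathrm{F}\Et_k$ is $\Spec l$ for a finite separable $l/k$, and $X_l=X\times_k\Spec l$ is connected because $X_{\bk}$ is connected and surjects onto it; this is exactly where geometric connectedness enters. Assertion (T) holds because $X_{\bk}\rarr\Spec k$ factors through $\Spec\bk$, over which every finite \'etale cover splits. For (E) I would argue by descent along the pro-(finite \'etale) $\Gamma$-torsor $X_{\bk}\rarr X$: if $\beta(Z)$ is completely split then $Z\times_X X_{\bk}\cong\coprod_{S}X_{\bk}$ for the set $S=F_{\bx}(Z)$, and the canonical continuous $\Gamma$-action on the base change equips $S$ with a continuous $\Gamma$-action; pro-\'etale descent along the torsor then identifies $Z$ with the pullback to $X$ of the cover of $\Spec k$ corresponding to the $\Gamma$-set $S$, so $Z$ lies in the image of $\alpha$.

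The heart of the matter is (G), and here the two pathologies flagged in the introduction intervene: a connected geometric cover $Y\rarr X_{\bk}$ is in general neither quasi-compact nor defined over any finite subextension of $\bk/k$, so one cannot simply spread $Y$ out over some $X_l$ and push it forward. When $Y\rarr X_{\bk}$ \emph{is} finite \'etale the classical argument applies: since $X$ is of finite type, $Y$ descends to a finite \'etale $Y_l\rarr X_l$ for some finite separable $l/k$, and then the composite $Z=(Y_l\rarr X_l\rarr X)$ is finite \'etale over $X$ with $Z\times_X X_{\bk}\cong\coprod_{l\monorarr\bk}(Y_l)_{\bk}$, exhibiting $Y$ as one component. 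For a general, possibly infinite and non-descending $Y$ I would exhaust $Y$ by its quasi-compact opens $Y_i$, each of which descends to some finite level $X_{l_i}$ (with the $l_i$ increasing, and in general unbounded), and then attempt to reassemble these finite-level models into a single \emph{geometric} cover $Z$ of $X$ by gluing together the $\Gamma$-translates of the descended pieces, using the gluing/van Kampen techniques developed earlier in the paper. Verifying that this construction yields an honest object of $\rmCov_X$ -- in particular that $Z\rarr X$ is \'etale and, crucially, satisfies the valuative criterion of properness in the absence of quasi-compactness -- is the step I expect to be the main obstacle, precisely because the lack of a common field of definition forces one to work with an infinite cover rather than a finite-level descent.

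It then remains only to record that $\pi\colon B\rarr\Gamma$ is a quotient map of topological groups. Since $\Gamma$ is profinite, its open subgroups correspond to finite objects of $\mathrm{F}\Et_k$; by (S) their pullbacks along $\alpha$ are finite \'etale covers of $X$ and hence give open subgroups of $B$, which realize the quotient topology on $\Gamma$ as the topology induced from $B$. Combined with the abstract exactness and the topological embedding obtained above, this completes the proof.
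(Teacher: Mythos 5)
Your high-level framework---translating the three group-theoretic statements into assertions about geometric covers via Prop.~\ref{dictionary}---is the same as the paper's, and your (S), (T), (E) correspond to steps actually carried out in Thm.~\ref{arithtogeom}. The fatal problem is your assertion (G): it is \emph{false}, and it is precisely the subtlety this paper is built around. Prop.~\ref{dictionary}(1) characterizes topological embeddings by a strictly weaker property: every connected $Y\in\rmCov_{X_{\bk}}$ must be \emph{dominated} by some connected $Y'$ which embeds into $Z\times_X X_{\bk}$ for some $Z\in\rmCov_X$; one cannot take $Y'=Y$. To see that (G) cannot hold, combine it with the theorem itself (which is true) and with the paper's counterexamples. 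If the $\pipet(X_{\bk})$-set $A/W$ of a connected cover $Y$ were a component of some $\beta(Z)$, then $W=\iota^{-1}(V)$ for an open subgroup $V\le B=\pipet(X)$. Since $\pi$ is open and $\Ker(\pi)=\iota(A)$, writing $\Gal_L=\pi(V)$ (with $L/k$ finite separable) one gets $\pi^{-1}(\Gal_L)=\iota(A)\cdot V$, so $A$ acts transitively on $\pi^{-1}(\Gal_L)/V$ with point stabilizer $W$; as $\pi^{-1}(\Gal_L)=\pipet(X_L)$, this exhibits $Y$ as the base change of a connected cover of $X_L$. Thus (G) forces every connected geometric cover of $X_{\bk}$ to be defined over a finite extension of $k$---exactly the statement refuted by Ex.~\ref{counterexample-with-picture} and Ex.~\ref{counterexample-with-matrices}, which you yourself cite. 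Consequently the classical finite-\'etale descent argument and the gluing construction you sketch (exhausting $Y$ by quasi-compact opens and reassembling translates) are aimed at an unprovable statement; the step you flag as ``the main obstacle'' is not merely hard but impossible. What Thm.~\ref{injectivity-on-the-left} actually proves is only domination, and by an entirely different mechanism: the van Kampen presentation of $\pipet(X)$ and $\pipet(X_{\bk})$ coming from the normalization, and the construction of an open ``looplike'' subgroup controlled by the distance functions $N(\omega)$ and the Galois covers $c_v^N$ supplied by Prop.~\ref{dominbyrat}.

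There is a second, independent gap in the bookkeeping. Even granting (S), (T), (E), (G), the dictionary does not give ``$\Ker(\pi)=\Image(\iota)$'': Prop.~\ref{dictionary}(5) only yields \emph{near}-exactness, namely that the smallest normal \emph{thickly closed} subgroup containing $\Image(\iota)$ equals $\Ker(\pi)$. Because Noohi groups are not compact, there is a genuine gap between a subgroup and its thick closure, and closing it is the whole content of the paper's final argument in Thm.~\ref{exactness-in-geometric-to-arithmetic-as-abstract}: one uses that $\iota(A)$ is closed in $B$ (a topological embedding of a Ra{\u \i}kov complete group has closed image), proves that $\iota(A)$ is \emph{normal} in $B$ by an explicit conjugation computation in the van Kampen presentation, and only then concludes $\overline{\overline{\iota(A)}}=\overline{\iota(A)}=\iota(A)$ via Lm.~\ref{lepage-lemma}. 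Your proposal never addresses normality of the image nor the distinction between the image and its thick closure, so even the reduction of the abstract exactness to your four geometric assertions is incomplete.
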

  The {\bf most difficult part is showing that $\pipet(X_{\bk}) \rarr \pipet(X)$ is} injective or, more precisely, {\bf a topological embedding}. This is Theorem \ref{injectivity-on-the-left}.

  As in the case of usual Galois categories, statements about exactness of sequences of Noohi groups translate to statements on the corresponding categories of $G-\rmSets$. If the groups involved are the pro-\'etale fundamental groups, this translates to statements about geometric coverings. We give a detailed dictionary in Prop.\ \ref{dictionary}. As Noohi groups are not necessarily compact, the statements on coverings are equivalent to some weaker notions of exactness (e.g.\ preserving connectedness of coverings is equivalent to the map of groups having dense image). In fact, we first prove a ``near-exact'' version of Theorem \ref{exactness-in-geometric-to-arithmetic-as-abstract} and obtain the above one as a corollary using an extra argument.

For $\pipet(X_{\bk}) \rarr \pipet(X)$ \uline{to be a topological embedding boils down to the following statement}: every geometric covering $Y$ of $X_{\bk}$ can be dominated by a covering $Y'$ that embeds into a base-change to $\bk$ of a geometric covering $Y''$ of $X$ (i.e.\ defined over $k$).

\[
  \xymatrix{
  Y' \subset Y''_{\bk} \ar[r] \ar@{->>}@<-15pt>[d] & Y''  \\
   Y \phantom{ ' \subset Y''_{\bk} } &
  }
\]

For finite coverings, the analogous statement is easy to prove; by finiteness, the given covering is defined over a finite field extension $l/k$ and one concludes quickly. This is also the case for infinite coverings detected by $\pisga$, see Prop.\ \ref{injectivity-for-prodiscrete}.  But for general geometric coverings, the situation is much less obvious; as we show by counterexamples (Ex.\ \ref{counterexample-with-picture} and Ex.\ \ref{counterexample-with-matrices}), {\bf it is \uline{not} true in general that a connected geometric covering of $X_{\bk}$ is isomorphic to a base-change of a covering of $X_l$ for some finite extension $l/k$}. This property is crucially used in the proof of \cite[Exp. IX, Theorem 6.1]{SGA1}, and thus {\bf trying to carry the classical proof of SGA over to $\pipet$ fails}. This last statement is, however, stronger than what we need to prove, and so does not contradict our theorem. 

A useful technical tool across the article is the \emph{van Kampen theorem} for $\pipet$. Its abstract form is proven by adapting the proof in \cite{Stix} to the case of Noohi groups and infinite Galois categories. For a morphism of schemes $X' \epirarr X$  of effective descent for $\rmCov$ (satisfying some extra conditions), it allows one to write the pro-\'etale fundamental group of $X$ in terms of the pro-\'etale fundamental groups of the connected components of $X'$ and certain relations. 
By the results of \cite{Rydh}, one can take $X' = X^\nu \to X$ to be the normalization morphism of a Nagata scheme $X$. As $\pipet$ and $\piet$ coincide for normal schemes, this allows us to present $\pipet(X)$ in terms of $\piet(X^\nu_w)$, where $X^\nu = \sqcup_w X^\nu_w$, and the (discrete) topological fundamental group of a suitable graph.
In this case, the van Kampen theorem takes on concrete form and generalizes \cite[Thm.\ 1.17]{ElenaPaper}.
\begin{theorem*}[van Kampen theorem, Cor.\ \ref{geomvK} + Rmk.\ \ref{remark-relations-in-vK-for-normalization} + Prop.\ \ref{properdescent}, cf.\ \cite{Stix}]
  Let $X$ be a Nagata scheme and $X^\nu = \sqcup_w X^\nu_w$ its normalization written as a union of connected components. Then, after a  choice of geometric points, \'etale paths between them and a maximal tree $T$ within a suitable ``intersection'' graph $\Gamma$, there is an isomorphism
  \[
    \pipet(X,\bx) \simeq  \Big(\big(*_w^{\rmtop} \piet(X^\nu_w,\bx_w)*^{\rmtop}\pi_1^{\rmtop}(\Gamma,T)\big)/\langle R_1, R_2 \rangle\Big)^{\rmNoohi}
  \]   
  where $R_1, R_2$ are two sets of relations described in Cor.\ \ref{geomvK} and $( - )^{\rmNoohi}$ is the \emph{Noohi completion} defined in Section \ref{section:infinite-Galois-Noohi}.
\end{theorem*}

In the proof of the main theorem, the van Kampen theorem allows us to construct $\pipet(X_{\bk})$- and $\pipet(X)$-sets in more concrete terms of graphs of groups involving $\piet$'s. We ``explicitly'' construct a Galois invariant open subgroup of a given open subgroup $U < \pipet(X_{\bk},\bx)$ in terms of {\bf ``regular loops"} (with respect to $U$), see Defn.\ \ref{defn:regular-loops}.

In fact, the existence of elements that are too far from being a product of regular loops is tacitly behind the counterexamples Ex.\ \ref{counterexample-with-picture} and \ref{counterexample-with-matrices}, while the fact that, despite this, there is still an abundance of (products of) regular loops (i.e.\ their closure is open) is behind our main proof. We also sketch a quicker but less constructive approach in Rmk.\ \ref{rmk:quick-approach}.

Another interesting result proven with the help of the van Kampen theorem is the K\"unneth formula.

\begin{proposition*}[K\"unneth formula for $\pipet$, Prop.\ \ref{Kunneth-proetale}]
  Let $X, Y$ be two connected schemes locally of finite type over an algebraically closed field $k$ and assume that $Y$ is proper. Let $\bx, \by$ be geometric points of $X$ and $Y$ respectively with values in the same algebraically closed field extension $K$ of $k$. Then the map induced by the projections is an isomorphism
  \begin{displaymath}
  \pipet(X \times_k Y,(\bx,\by)) \stackrel{\sim}{\rarr} \pipet(X,\bx) \times \pipet(Y,\by)
  \end{displaymath}
  \end{proposition*}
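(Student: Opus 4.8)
The plan is to deduce the statement from the already-understood normal case by descent along normalization, using the geometric van Kampen theorem (Thm.~\ref{geomvK}). The two projections $p\colon X\times_k Y\to X$ and $q\colon X\times_k Y\to Y$ induce a continuous homomorphism $\phi=(p_*,q_*)\colon \pipet(X\times_k Y,(\bx,\by))\to \pipet(X,\bx)\times\pipet(Y,\by)$, and this is the map we must show is an isomorphism of topological groups. Since $\rmCov_Z$ — and hence $\pipet(Z)$ — is insensitive to nilpotents (a morphism is \'etale and satisfies the valuative criterion of properness if and only if its base change to $Z_{\rmred}$ does), I would first reduce to the case that $X$ and $Y$ are reduced.

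When $X$ and $Y$ are moreover normal (hence integral), the product $X\times_k Y$ is again normal; this is exactly where one uses that $k=\bk$ is perfect, so that normality of $X$ and $Y$ is geometric. For a normal, locally Noetherian scheme one has $\pipet=\piet$, so in this situation the claim is literally the classical K\"unneth formula for $\piet$ (the cited Proposition of SGA1), whose hypotheses hold because $Y$ is proper and connected. Thus the normal case is immediate, and the whole content of the proof is the reduction of the general reduced case to it.

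For that reduction I would apply van Kampen simultaneously to the three normalization morphisms $X^\nu\to X$, $Y^\nu\to Y$ and $(X\times_k Y)^\nu\to X\times_k Y$. As $X,Y$ are of finite type over a field they are Nagata, so these morphisms are finite and surjective, hence of effective descent for $\rmCov$ by \cite{Rydh}. Over the perfect field $k$ normalization commutes with products: $X^\nu\times_k Y^\nu$ is normal and maps finitely and birationally onto the reduced scheme $X\times_k Y$, so $(X\times_k Y)^\nu\cong X^\nu\times_k Y^\nu$, with connected components $X^\nu_w\times_k Y^\nu_v$ as $w,v$ range over the components of $X^\nu$ and $Y^\nu$. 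Each such component is normal with proper second factor $Y^\nu_v$, so the normal case already gives $\pipet(X^\nu_w\times_k Y^\nu_v)\cong \pipet(X^\nu_w)\times\pipet(Y^\nu_v)$. The crucial compatibility is that the gluing data multiplies as well: the double fibre product governing the van Kampen relations satisfies
\begin{displaymath}
(X^\nu\times_k Y^\nu)\times_{X\times_k Y}(X^\nu\times_k Y^\nu)\;\cong\;(X^\nu\times_X X^\nu)\times_k(Y^\nu\times_Y Y^\nu),
\end{displaymath}
so that the graph of groups presenting $\pipet(X\times_k Y)$ is assembled from the product of the graphs presenting $\pipet(X)$ and $\pipet(Y)$.

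The main obstacle is the final group-theoretic step: showing that this van Kampen presentation of $\pipet(X\times_k Y)$ — whose vertex groups are the products $\pipet(X^\nu_w)\times\pipet(Y^\nu_v)$ and whose edge relations come from the product groupoid above — collapses to the direct product $\pipet(X)\times\pipet(Y)$. Concretely one must verify that the $X$-type and $Y$-type generators commute, and the commutativity is precisely what the ``square'' intersection loci $(X^\nu_w\cap X^\nu_{w'})\times_k(Y^\nu_v\cap Y^\nu_{v'})$ contribute, playing the role of the $2$-cells in a product of two graphs; once the two families are seen to commute and each is identified with a van Kampen presentation of $\pipet(X)$, resp.\ $\pipet(Y)$, the presentation becomes that of the direct product and $\phi$ is the resulting isomorphism. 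Because the whole argument is carried out at the level of the categories of covers, the dictionary of Prop.~\ref{dictionary} upgrades the abstract isomorphism to an isomorphism of topological (Noohi) groups, as required.
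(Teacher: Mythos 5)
Your reductions are all sound: passing to reduced schemes, the identification $(X\times_k Y)^\nu\cong X^\nu\times_k Y^\nu$ over the algebraically closed field $k$, the interchange isomorphism for the double fibre products, and the normal case via $\pipet=\piet$ plus the classical K\"unneth formula. The genuine gap is the step you yourself call ``the main obstacle'': the collapse of the van Kampen presentation attached to $X^\nu\times_k Y^\nu\to X\times_k Y$ onto the direct product. That collapse is the entire mathematical content of the proposition, and your proposal asserts it rather than proves it. Because you normalize both factors simultaneously, your $2$-complex is the degree-wise product of the two $2$-complexes: vertices are pairs $(w,v)$, edges are pairs $(e,e')$ of edges (including the diagonal/loop components), and $2$-cells are pairs $(f,f')$. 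In this presentation there are no separate ``$X$-type'' and ``$Y$-type'' generators to commute: a single generator $\overrightarrow{(e,e')}$ of the graph group mixes both directions, and one must first show that modulo the relations it factors through partially diagonal edges, then that the two resulting families commute, and then that each family reproduces the van Kampen presentation of $\pipet(X)$, resp.\ $\pipet(Y)$. None of this follows from the phrase ``2-cells in a product of two graphs''. Moreover, even writing the edge relations in a usable form --- splitting $\pipet(\partial_1)(g)\,\overrightarrow{(e,e')}=\overrightarrow{(e,e')}\,\pipet(\partial_0)(g)$ for $g\in\pipet(e\times_k e')$ into $X$- and $Y$-parts --- already requires knowing that $\pipet(e\times_k e')\to\pipet(e)\times\pipet(e')$ is surjective; the paper isolates exactly this as a separate lemma (Lm.~\ref{Kunneth-sujectivity}, proved via the two section maps $X\to X\times_k Y$ and $Y\to X\times_k Y$), and your proposal neither states nor invokes any surjectivity of this kind.

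For comparison, the paper avoids the two-dimensional combinatorics altogether by normalizing one factor at a time: it applies Cor.~\ref{geomvK} to $h\colon\tX\to X$ and to $h\times\id_Y\colon\tX\times Y\to X\times Y$ and observes that, $Y$ being connected and $k$ algebraically closed, the two $2$-complexes are canonically identified --- same graph, same tree, same face relations --- while only the vertex groups change, from $\pipet(\tX_v)$ to $\pipet(\tX_v\times Y)\cong\pipet(\tX_v)\times\pipet(Y)$. The explicit computation with the relations (made possible by Lm.~\ref{Kunneth-sujectivity}) then shows that all copies of $\pipet(Y)$ get identified with one another and commute with everything, so the presentation becomes that of $\pipet(X)\times\pipet(Y)$; the input hypothesis $\pipet(\tX_v\times Y)\cong\pipet(\tX_v)\times\pipet(Y)$ is supplied by running the same claim with the factors exchanged, where the pieces $\tX_v\times_k\tilde{Y}_u$ are normal and the classical K\"unneth formula (Fact~\ref{usual-van-Kampen}, using properness of $\tilde{Y}_u$) applies. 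To salvage your symmetric approach you would have to carry out the product-complex computation in full, including the bookkeeping of partially diagonal edges and $2$-cells; the paper's asymmetric two-step bootstrap is exactly the device that makes that unnecessary.
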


Along the way, we prove the invariance of $\pipet$ under extensions of algebraically closed fields for proper schemes (see Prop.\ \ref{algclosed-to-algclosed}) and give a short direct proof of the fact that $\pisga(X_{\bk},\bx) \hookrightarrow \pisga(X,\bx)$, see Cor.\ \ref{cor:injectivity-for-prodiscrete-cor}.

In a separate article \cite{HES}, we discuss the homotopy exact sequence for $\pipet$. It is proven by constructing an infinite (i.e.\ non-quasi-compact) analogue of the Stein factorization. Although the construction does not use the main results of this article, the auxiliary results on Noohi groups and $\pipet$ have proven to be very handy.

We hope that our techniques, with some extra tweaks and work, will allow to draw similar conclusions about other Noohi fundamental groups arising from the infinite Galois formalism. One such example could be the de Jong fundamental group $\pi_1^{\mathrm{dJ}}$, defined in the rigid-analytic setting in \cite{deJong}. In a later joint work \cite{ALY-specialization}, we have proven the existence of a specialization morphism between $\pipet$ and $\pi_1^{\mathrm{dJ}}$, relating $\pipet$ to this more established fundamental group.

\subsection*{Acknowledgements}
The main ideas and results contained in this article are a part of my PhD thesis. I express my gratitude to my advisor H\'el\`ene Esnault for introducing me to the topic and her constant encouragement. I would like to thank my co-advisor Vasudevan Srinivas for his support and suggestions. I am thankful to Peter Scholze for explaining some parts of his work to me via e-mail. I thank Jo\~ao Pedro dos Santos for for his comments and feedback. I owe special thanks to  Fabio Tonini, Lei Zhang and Marco D'Addezio from our group in Berlin for many inspiring mathematical  discussions. I thank Piotr Achinger and Jakob Stix for their support. I would also like to thank the referee for careful reading, valuable remarks and urging me to write a more streamlined version of the main proof.

My PhD was funded by the Einstein Foundation. This  work  is a  part  of  the  project ``Kapibara'' supported by the funding from the European Research Council (ERC) under the European Union’s Horizon 2020 research and innovation programme (grant agreement No 802787).

The major revision was prepared at JU Kraków and GU Frankfurt.
I was supported by the Priority Research Area SciMat under the
program Excellence Initiative - Research University at the Jagiellonian University in Kraków. This research was also funded by the Deutsche Forschungsgemeinschaft (DFG, German Research Foundation) TRR 326 Geometry and Arithmetic of Uniformized Structures, project number 444845124.

\subsection{Conventions and notations}
\begin{itemize}
  \item For us, compact = quasi-compact + Hausdorff.
    \item $H<^\circ G$ will mean that $H$ is an open subgroup of $G$.
    \item For subgroups $H<G$, $H^{nc}$ will denote the normal closure of $H$ in $G$, i.e.\ the smallest normal subgroup of $G$ containing $H$. We will use $\langle\langle - \rangle\rangle$ to denote the normal closure of the subgroup generated by some subset of $G$, i.e.\ $\langle\langle - \rangle\rangle = \langle - \rangle^{nc}$.
    \item For a field $k$, we will use $\bk$ to denote its (fixed) algebraic closure and  $k^{\rmsep}$ or $k^s$ to denote its separable closure (in $\bk$).
    \item The topological groups are assumed to be Hausdorff unless specified otherwise or appearing in a context where it is not automatically satisfied (e.g.\ as a quotient by a subgroup that is not necessarily closed). We will usually comment whenever a non-Hausdorff group appears.
    \item We assume (almost) every base scheme to be locally topologically noetherian. This does not cause problems when considering geometric coverings, as a geometric covering of a locally topologically noetherian scheme is locally topologically noetherian again - this is \cite[Lm.\ 6.6.10]{BhattScholze}.
    \item A ``$G$-set'' for a topological group $G$ will mean a discrete set with a continuous action of $G$ unless specified otherwise. We will denote the category of $G$-sets by $G-\rmSets$. We will denote the category of sets by $\rmSets$.
    \item We will often omit the base points from the statements and the discussion; by Cor.\ \ref{base-points}, this usually  does not change much. In some proofs (e.g.\ involving the van Kampen theorem), we keep track of the base points.
\end{itemize}

\section{Infinite Galois categories, Noohi groups and $\pipet$}\label{section:infinite-Galois-Noohi}
\subsection{Overview of the results in \cite{BhattScholze}}
Throughout the entire article we use the language and results of \cite{BhattScholze}, especially of Chapter 7, as this is where the pro-\'etale fundamental group was defined. Some familiarity with the results of \cite[\S 7]{BhattScholze} is a prerequisite to read this article. We are going to give a quick overview of some of these results below, but we recommend keeping a copy of \cite{BhattScholze} at hand.

\begin{defn}(\cite[Defn. 7.1.1]{BhattScholze})
Fix a topological group $G$. Let $G-\rmSets$ be the category of discrete sets with a continuous $G$-action, and let $F_G : G-\rmSets \rarr \rmSets$ be the forgetful functor. We say that $G$ is a \emph{Noohi group} if the natural map induces an isomorphism $G \rarr \rmAut(F_G)$ of topological groups. Here, $S \in \rmSets$ are considered with the discrete topology, $\rmAut(S)$ with the compact-open topology and $\rmAut(F_G)$ is topologized using $\rmAut(F_G(S))$ for $S \in G-\rmSets$.  More precisely, the stabilizers $\mathrm{Stab}_{F(S),s}^{\rmAut(F_G)}$ for connected $S \in G-\rmSets$, $s \in F(S)$, form a basis of neighbourhoods of $1 \in \rmAut(F_G)$.
\end{defn}

In particular, it follows from the definition that open subgroups form a basis of neighbourhoods of $1$ in a Noohi group. Now, by \cite[Prop.\ 7.1.5]{BhattScholze}, it follows that a topological group is Noohi if and only if it satisfies the following conditions:
\begin{itemize}
    \item its open subgroups form a basis of open neighbourhoods of $1 \in G$,
    \item it is Ra{\u \i}kov complete.
\end{itemize}
A topological group $G$ is Ra{\u \i}kov complete if it is complete for its two-sided uniformity (see \cite{Dikranjan} or \cite[Chapter 3.6]{AT} for an introduction to the Ra{\u \i}kov completion).
Using the above proposition it is easy to give examples of Noohi groups.
\begin{example}
The following classes of topological groups are Noohi: discrete groups, profinite groups, $\rmAut(S)$ with the compact-open topology for $S$ a discrete set (see \cite[Lm.\ 7.1.4]{BhattScholze}), groups containing an open subgroup which is Noohi (see \cite[Lm.\ 7.1.8]{BhattScholze}).

The following groups are Noohi: $\bbQ_\ell$, $\overline{\bbQ_\ell}$ for the colimit topology induced by expressing $\overline{\bbQ_\ell}$ as a union of finite extensions (in
contrast with the situation for the $\ell$-adic topology),  $\GL_n(\bbQ_\ell)$ for the colimit topology (see \cite[Example 7.1.7]{BhattScholze}).
\end{example}

The notion of a Noohi group is tightly connected to a notion of an infinite Galois category, which we are about to introduce. Here, an object $X \in \calC$ is called connected if it is not empty (i.e., initial), and for every subobject $Y \rarr X$ (i.e., $Y \stackrel{\sim}{\rarr}  Y \times_X Y$), either $Y$ is empty or $Y = X$.

\begin{defn}(\cite[Defn. 7.2.1]{BhattScholze}) An \emph{infinite Galois category} $\calC$ is a pair $(\calC,F : \calC \rarr \rmSets)$ satisfying:
\begin{enumerate}[label={(\arabic*)}]
    \item $\calC$ is a category admitting colimits and finite limits.
    \item Each $X \in \calC$ is a disjoint union of connected (in the sense explained above) objects.
    \item $\calC$ is generated under colimits by a set of connected objects.
    \item $F$ is faithful, conservative, and commutes with colimits and finite limits.
\end{enumerate}    
The \emph{fundamental group of} $(\calC,F)$ is the topological group $\pi_1(\calC,F) := \rmAut(F)$, topologized by the compact-open topology on $\rmAut(S)$ for any $S \in \rmSets$.

An infinite Galois category $(\calC,F)$ is \emph{tame} if for any connected $X \in \calC$, $\pi_1(\calC,F)$ acts transitively on $F(X)$.
\end{defn}

\begin{example} If $G$ is a topological group, then $(G-\rmSets,F_G)$ is a tame infinite Galois category.
\end{example}

\begin{theorem}(\cite[Thm.\ 7.2.5]{BhattScholze}) Fix an infinite Galois category $(\calC,F)$ and a Noohi group $G$. Then
  \begin{enumerate}
      \item $\pi_1(\calC,F)$ is a Noohi group.
      \item  There is a natural identification of $\Hom_{\mathrm{cont}}(G,\pi_1(\calC,F))$ with the groupoid of functors $\calC \rarr G-\rmSets$ that commute with the fibre functors.
      \item If $(\calC,F)$ is tame, then $F$ induces an equivalence $\calC \simeq \pi_1(\calC,F)-\rmSets$.
\end{enumerate}
\end{theorem}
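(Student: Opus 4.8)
The plan is to treat the three assertions in order, using throughout that $\pi_1 := \pi_1(\calC,F) = \rmAut(F)$ sits inside the product $\prod_{X \in \calC} \mathrm{Sym}(F(X))$, where each $\mathrm{Sym}(F(X))$ carries the compact-open (equivalently, pointwise-convergence) topology of the discrete set $F(X)$.

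For (1), I would first observe that $\pi_1$ is a \emph{closed} subgroup of $\prod_X \mathrm{Sym}(F(X))$: membership is cut out by the naturality equations $F(f)\circ \phi_X = \phi_Y \circ F(f)$ ranging over all morphisms $f\colon X \rarr Y$ of $\calC$, and each such equation is a closed condition because $F(X)$ is discrete (naturality passes to pointwise limits, and naturality of a tuple of bijections automatically gives naturality of the inverse tuple). Since $\mathrm{Sym}(S)$ is Ra{\u \i}kov complete for every discrete $S$, and both products and closed subgroups of Ra{\u \i}kov complete groups are again Ra{\u \i}kov complete, $\pi_1$ is Ra{\u \i}kov complete. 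For the neighbourhood basis, the stabilizer $\rmStab(s)$ of a point $s \in F(X)$ is an open subgroup, and since $F$ commutes with finite products one has $\rmStab(s_1)\cap\dots\cap\rmStab(s_n) = \rmStab\big((s_1,\dots,s_n)\big)$ for $(s_1,\dots,s_n) \in F(X_1\times\dots\times X_n)$; hence single-point stabilizers already form a basis of open subgroups at $1$. Faithfulness of $F$ makes this topology Hausdorff. By the criterion recalled above (open subgroups form a basis together with Ra{\u \i}kov completeness), $\pi_1$ is Noohi.

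For (2), given a continuous homomorphism $u\colon G \rarr \pi_1$, I would let $G$ act on each $F(X)$ through $u$ and the tautological action of $\rmAut(F)$; this action is continuous because its stabilizers are the preimages under $u$ of the open subgroups $\rmStab(s)$, and naturality in $X$ is automatic, so we obtain a functor $\calC \rarr G-\rmSets$ lifting $F$. Conversely, a functor $H\colon \calC \rarr G-\rmSets$ with $F_G\circ H = F$ equips each $F(X)$ with a continuous, natural $G$-action, hence a homomorphism $G \rarr \rmAut(F)$ whose continuity follows because the preimage of a basic open subgroup $\rmStab(s)$ is the (open) stabilizer of $s$ for the $G$-action. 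These two assignments are mutually inverse on objects, and since a natural isomorphism between two lifts of $F$ is precisely an element $\gamma \in \pi_1$ intertwining the two $G$-actions, natural isomorphisms on the functor side correspond to conjugation by elements of $\pi_1$ on the homomorphism side; this upgrades the bijection to the asserted equivalence of groupoids.

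For (3), apply (2) with $G = \pi_1$ and $u = \id$ to obtain the canonical lift $\tilde F\colon \calC \rarr \pi_1-\rmSets$ with $F_{\pi_1}\circ \tilde F = F$; the goal is that $\tilde F$ is an equivalence. Since $F$ commutes with coproducts, every object is a disjoint union of connected objects, and every $G$-set is the disjoint union of its orbits, it suffices to compare connected objects with transitive $\pi_1$-sets. \emph{Faithfulness} of $\tilde F$ is inherited from that of $F$. For \emph{essential surjectivity} onto transitive sets, take an open subgroup $U \leq \pi_1$; it contains some $\rmStab(s)$ with $s$ in a connected $X$, and by tameness $F(X)$ is the transitive set $\pi_1/\rmStab(s)$, so the equivalence relation $E$ on $F(X) = F(X\times X)$ whose quotient is $\pi_1/U$ is $\pi_1$-stable. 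The key point is that, by tameness, the $\pi_1$-orbits in $F(X\times X)$ are exactly the images $F(Z)$ of the connected components $Z$ of $X\times X$; hence $E = F(R)$ for the subobject $R$ that is the union of the corresponding components, $R$ is an equivalence-relation object on $X$, and its coequalizer $X' := X/R$ satisfies $F(X') = \pi_1/U$ by compatibility of $F$ with colimits and finite limits, with $X'$ connected because $\pi_1/U$ is transitive. For \emph{fullness} between connected $X,Y$, a $\pi_1$-map $g\colon F(X) \rarr F(Y)$ has $\pi_1$-stable graph $\Gamma_g \subseteq F(X\times Y)$, so $\Gamma_g = F(Z)$ for a subobject $Z \monorarr X\times Y$; the first projection induces a bijection $F(Z) \rarr F(X)$, which by conservativity of $F$ is an isomorphism $Z \stackrel{\sim}{\rarr} X$, and composing its inverse with the second projection yields the desired $f\colon X \rarr Y$ with $F(f)=g$. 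I expect the main obstacle to be exactly this essential-surjectivity step: identifying $\pi_1$-invariant equivalence relations on $F(X)$ with subobjects of $X\times X$ is where the tameness hypothesis is indispensable, and assembling the quotient $X'$ correctly — rather than merely locating $\pi_1/U$ as an orbit inside some $F(X)$ — is the crux of realizing every open subgroup.
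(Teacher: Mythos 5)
This theorem is not proved in the paper at all: it is quoted directly from Bhatt--Scholze \cite{BhattScholze} (their Thm.\ 7.2.5), so there is no internal proof to compare against; the relevant comparison is with the proof in the cited source, and your argument is correct and essentially reproduces it. All three steps follow the standard route: (1) realize $\rmAut(F)$ as a closed subgroup of $\prod_{X}\rmAut(F(X))$ (closedness from the naturality equations), use stability of Ra{\u \i}kov completeness under products and closed subgroups, and note that point stabilizers already form a basis of open subgroups because $F$ preserves finite products, then invoke the criterion of \cite[Prop.\ 7.1.5]{BhattScholze}; (2) the tautological correspondence between continuous homomorphisms $G \rarr \pi_1(\calC,F)$ and lifts of $F$ through $F_G$, with conjugation matching natural isomorphisms; (3) tameness identifying the $\pi_1$-orbits in $F(W)$ with the sets $F(W_i)$ for $W_i$ the connected components of $W$, the graph trick for fullness, and quotients by invariant equivalence relations for essential surjectivity --- and your closing remark correctly locates tameness as the indispensable ingredient. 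Only cosmetic points: Hausdorffness of $\rmAut(F)$ is automatic from the embedding into a product of Hausdorff groups, so faithfulness of $F$ plays no role there; in (3) you mean $E \subseteq F(X)\times F(X) = F(X\times X)$; and you do not actually need to verify that $R$ is an equivalence-relation object in $\calC$ --- since $\calC$ has all colimits and $F$ preserves them, the coequalizer of $R \rightrightarrows X$ is computed by the coequalizer of $F(R) \rightrightarrows F(X)$ in $\rmSets$, and it is enough that $F(R)=E$ is an equivalence relation on the set $F(X)$.
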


The ``tameness'' assumption cannot be dropped as there exist infinite Galois categories that are not of the form $(G-\rmSets,F_G)$, see \cite[Ex.\ 7.2.3]{BhattScholze}. This was overlooked in \cite{Noohi}, where a similar formalism was considered.

\begin{rmk}
The above formalism was also studied in \cite[Chapter 4]{Lepage} under the names of ``quasiprodiscrete'' groups and ``pointed classifying categories''.
\end{rmk}

In Section \ref{section-Noohization} below we will study ``Noohi completion'' and the dictionary between Noohi groups and $G-\rmSets$ (see Section \ref{section-Dictionary}). For now, let us return to gathering the results from \cite{BhattScholze}.

\subsubsection*{Pro-\'etale topology and the definition of $\pipet(X)$} 
\begin{defn} Let $X$ be a locally topologically noetherian scheme.
Let $Y \rarr X$ be a morphism of schemes such that:
\begin{enumerate}
    \item it is \'etale (not necessarily quasi-compact!)
    \item it satisfies the valuative criterion of properness.
\end{enumerate}
We will call $Y$ a \emph{geometric covering} of $X$. We will denote the category of geometric coverings by $\rmCov_X$.
\end{defn}
As $Y$ is not assumed to be of finite type over $X$, the valuative criterion does not imply that $Y \rarr X$ is proper (otherwise we would simply get a finite \'etale morphism).

\begin{example}\label{covers-of-a-field}
For an algebraically closed field $\bk$, the category $\rmCov_{\Spec(\bk)}$ consists of (possibly infinite) disjoint unions of $\Spec(\bk)$ and we have $\rmCov_{\Spec(\bk)} \simeq \rmSets$.
\end{example}

More generally, one has:
\begin{lemma}(\cite[Lm.\ 7.3.8]{BhattScholze}) If $X$ is a henselian local scheme, then any $Y \in \rmCov_X$ is a disjoint union of finite \'etale $X$-schemes.
\end{lemma}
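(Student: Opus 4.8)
The plan is to exploit the structure theory of quasi-finite morphisms over a henselian local ring together with the valuative criterion, which is exactly the extra hypothesis that separates geometric covers from arbitrary étale morphisms. Write $X = \Spec(A)$ with $A$ henselian local, closed point $x$ and residue field $k$, and let $f : Y \rarr X$ be a geometric cover. The special fibre $Y_x := Y \times_X \Spec(k)$ is étale over $k$, hence a (possibly infinite) disjoint union of spectra of finite separable extensions of $k$; I will index its points by $y \in Y_x$.

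First I would attach to each $y \in Y_x$ a finite étale clopen neighbourhood. Choosing an affine open $\Spec(B) \subseteq Y$ containing $y$, the map $\Spec(B) \rarr X$ is étale and separated, in particular quasi-finite and separated; the henselian form of Zariski's Main Theorem then gives a decomposition $B \cong B_{\mathrm{fin}} \times B'$ in which $\Spec(B_{\mathrm{fin}}) \rarr X$ is finite and $B'$ has empty special fibre. Since $A$ is henselian local, $B_{\mathrm{fin}}$ is a finite product of local finite étale $A$-algebras, one of whose factors $C_y$ (the local ring at $y$) is connected, finite étale over $A$, with special fibre the single point $y$. Set $W_y := \Spec(C_y)$; it is open in $\Spec(B)$, hence open in $Y$, connected, and finite étale over $X$.

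Next I would upgrade each $W_y$ to a clopen subscheme, which forces distinct $W_y$ to be disjoint. Here the uniqueness half of the valuative criterion shows that $f$ is separated; combined with finiteness of $W_y \rarr X$ and the cancellation property, the open immersion $W_y \monorarr Y$ becomes proper, hence a clopen immersion. Thus $W_y$ is precisely the connected component of $Y$ through $y$, and since its special fibre is the single point $y$, the assignment $y \mapsto W_y$ is injective and the $W_y$ are pairwise disjoint.

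Finally, the key point is $Y = \bigsqcup_{y \in Y_x} W_y$, and for this the existence half of the valuative criterion is essential. Given any $\eta \in Y$, its image $f(\eta)$ specializes to the closed point $x$ of the local scheme $X$; realising this specialization by a valuation ring $R$ with fraction field $K$, I obtain a commutative square $\Spec(K) \rarr Y$, $\Spec(R) \rarr X$ whose unique valuative lift $\Spec(R) \rarr Y$ sends the closed point to some $y_0 \in Y_x$ and the generic point to $\eta$. Since $W_{y_0}$ is open and contains the image of the closed point of $\Spec(R)$, its preimage in $\Spec(R)$ is an open set containing the closed point, hence all of $\Spec(R)$; therefore $\eta \in W_{y_0}$, and the $W_y$ cover $Y$. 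I expect the main obstacle to be the correct deployment of the valuative criterion: its existence part is exactly what forbids the ``empty special fibre'' contribution $B'$ that a general étale morphism would produce (for instance an open subscheme missing the closed fibre), while its uniqueness part supplies the separatedness needed to turn the finite pieces into a clopen decomposition.
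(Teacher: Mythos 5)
Your proof is correct, and it is essentially the standard argument: the paper itself gives no proof of this lemma (it is quoted directly from [BS, Lm.\ 7.3.8]), and the proof there runs exactly along your lines --- the structure theorem for quasi-finite separated schemes over a henselian local ring produces finite \'etale clopen pieces $W_y$ through the points of the special fibre, separatedness of $f$ plus cancellation makes them clopen in $Y$, and the existence part of VCoP, applied to a valuation ring realizing the specialization $f(\eta) \rightsquigarrow x$, shows every point of $Y$ generizes a point of $Y_x$ and hence lies in some $W_y$. One small point of precision: the uniqueness half of the valuative criterion yields separatedness only in conjunction with quasi-separatedness, which holds here because $Y$ is locally topologically noetherian; this is exactly Lemma \ref{separated} of the paper, so your appeal to separatedness is justified but should route through that lemma rather than through uniqueness alone.
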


Let us choose a geometric point $\bx: \Spec(\bk) \rarr X$ on $X$. By Example \ref{covers-of-a-field}, this gives a fibre functor $F_{\bx}: \rmCov_X \rarr \rmSets$.
By \cite[Lemma 7.4.1]{BhattScholze}), the pair $(\rmCov_X,F_{\bx})$ is a tame infinite Galois category. Then one defines
\begin{defn}
The \emph{pro-\'etale fundamental group} is defined as
\begin{displaymath}
\pipet(X,\bx) = \pi_1(\rmCov_X,F_{\bx}).
\end{displaymath}
\end{defn}
In other words, $\pipet(X,\bx)=\rmAut(F_{\bx})$ and this group is topologized using the compact-open topology on $\rmAut(S)$ for any $S \in \rmSets$.

One can compare the groups $\pipet(X,\bx)$, $\piet(X,\bx)$ and  $\pi_1^{\mathrm{SGA3}}(X,\bx)$, where the last group is the group introduced in Chapter X.6 of \cite{SGA3vol2}.
\begin{lemma}For a scheme $X$, the following relations between the fundamental groups hold
\begin{enumerate}
    \item The group $\piet(X,\bx)$ is the profinite completion of $\pipet(X)$.
    \item The group $\pi_1^{\mathrm{SGA3}}(X,\bx)$ is the prodiscrete completion of $\pipet(X,\bx)$.
\end{enumerate}
\end{lemma}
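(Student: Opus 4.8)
The plan is to reduce both statements to the standard inverse-limit descriptions of $\piet$ and $\pisga$, using the equivalence $\rmCov_X \simeq \pipet(X,\bx)-\rmSets$. This equivalence is available because $(\rmCov_X, F_{\bx})$ is a tame infinite Galois category (\cite[Lm. 7.4.1]{BhattScholze}), so \cite[Thm. 7.2.5(3)]{BhattScholze} applies. Under it, connected objects correspond to transitive $\pipet(X,\bx)$-sets, i.e. to cosets $\pipet(X,\bx)/U$ for $U$ an open subgroup; such a connected cover $Y_U$ is Galois (its automorphism group acts simply transitively on a geometric fibre) exactly when $U$ is normal, and then $\rmAut(Y_U/X) = \pipet(X,\bx)/U$ is a discrete group. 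Conversely, open normal subgroups $N$ are precisely the kernels of continuous surjections from $\pipet(X,\bx)$ onto discrete groups, and the finite-index ones are precisely the kernels of continuous surjections onto finite groups.

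For (1), first observe that the finite objects of $\rmCov_X$ are exactly the finite \'etale covers of $X$: a geometric cover with finite geometric fibres is proper and \'etale, hence finite \'etale, and conversely finite \'etale covers are geometric covers. Thus connected finite Galois \'etale covers of $X$ correspond, compatibly with taking automorphism groups, to finite-index open normal subgroups $N$ of $\pipet(X,\bx)$. Since Grothendieck defines $\piet(X,\bx) = \invlim_Y \rmAut(Y/X)$, the limit running over connected finite Galois \'etale covers $Y$ (\cite{SGA1}), the dictionary rewrites this as $\invlim_N \pipet(X,\bx)/N$ over the finite-index open normal subgroups, which is by definition the profinite completion of $\pipet(X,\bx)$.

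For (2), recall that $\pisga(X,\bx)$ is defined in \cite[Exp. X.6]{SGA3vol2} as $\invlim_Y \rmAut(Y/X)$, the limit now running over the connected covers $Y$ that are Galois with \emph{discrete} automorphism group, i.e. those admitting a Galois closure. Under the same dictionary these correspond to all open normal subgroups $N$ of $\pipet(X,\bx)$, with no finiteness restriction, and the limit becomes $\invlim_N \pipet(X,\bx)/N$ over all open normal subgroups, which is exactly the prodiscrete completion.

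The main obstacle is the matching in (2). One must check that the inverse system used in \cite{SGA3vol2} coincides with (or is cofinal in) the system of open normal subgroups of $\pipet(X,\bx)$; the key point, flagged already in the introduction, is that one must use open \emph{normal} subgroups rather than arbitrary open subgroups, since a connected geometric cover need not admit a Galois closure. This is precisely what makes $\pisga$ a \emph{proper} completion and not the whole of $\pipet$, and it is the subtlety absent in the profinite case. A secondary but necessary point, present in both parts, is that every identification — the equivalence of categories, the passage to automorphism groups, and the limits — must be tracked compatibly with the fibre functor $F_{\bx}$, so that the reconstructed deck groups assemble into the claimed topological isomorphisms and not merely abstract ones.
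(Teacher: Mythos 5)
Your overall strategy --- run the correspondence $\rmCov_X \simeq \pipet(X,\bx)-\rmSets$ and match the inverse systems defining $\piet$ and $\pisga$ with the finite-index open normal, resp.\ all open normal, subgroups of $\pipet(X,\bx)$ --- is the right one; it is essentially the proof of the two Bhatt--Scholze results (\cite[Lm.~7.4.3]{BhattScholze} and \cite[Lm.~7.4.6]{BhattScholze}) that the paper simply cites for this lemma. But as a self-contained argument your proposal is gapped at exactly the two places where the real work sits. In (1), you justify ``finite objects of $\rmCov_X$ are the finite \'etale covers'' by saying that a geometric cover with finite fibres is ``proper and \'etale, hence finite \'etale''. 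This deduction is circular: properness includes being of finite type, i.e.\ quasi-compact, and quasi-compactness is precisely the issue --- it does not follow formally from \'etale $+$ separated $+$ VCoP $+$ finite fibres. One genuinely needs an argument (a limit argument in the style of EGA~IV.8, or Bhatt--Scholze's comparison of finite locally constant sheaves on $X_{\proet}$ and on $X_{\et}$) showing that such a cover is \emph{\'etale-locally} constant, hence finite; only then is it proper.

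The second gap, in (2), is more serious. The ``definition'' of $\pisga$ you recall is not the one in \cite{SGA3vol2}: SGA3 has no notion of geometric cover and defines the enlarged fundamental group via pointed connected torsors under \emph{constant (discrete)} groups in the \'etale topology. Identifying that inverse system with the system of Galois objects of $\rmCov_X$ having discrete deck groups --- equivalently with the open normal subgroups of $\pipet(X,\bx)$ --- is the entire content of the statement: one must check (a) that a connected \'etale torsor under a discrete group is (representable by) a geometric cover, and (b) that a Galois geometric cover with discrete deck group is a torsor already in the \'etale topology and not merely in the pro-\'etale one (the point being that such a cover trivializes itself). You do flag ``the matching'' as the main obstacle, but you never carry it out, and the difficulty you then name --- open normal versus arbitrary open subgroups --- is a different issue: it explains why $\pisga$ differs from $\pipet$, not how to compare SGA3's definition with the geometric-cover picture. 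As written, part (2) assumes what it has to prove. The quickest correct route is the paper's own: invoke \cite[Lm.~7.4.3]{BhattScholze} and \cite[Lm.~7.4.6]{BhattScholze}, whose proofs supply exactly the two steps above; otherwise those steps must be written out.
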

\begin{proof}
This follows from \cite[Lemma 7.4.3]{BhattScholze} and \cite[Lemma 7.4.6]{BhattScholze}.
\end{proof}
As shown in \cite[Example 7.4.9]{BhattScholze}, $\pipet(X,\bx)$ is indeed more general than $\pi_1^{\mathrm{SGA3}}(X,\bx)$. This can be also seen by combining Example \ref{counterexample-with-picture} with Prop.\ \ref{injectivity-for-prodiscrete} below.

The following lemma is extremely important to keep in mind and will be used many times throughout the paper. Recall that, for example, a normal scheme is geometrically unibranch.
\begin{lemma}(\cite[Lm.\ 7.4.10]{BhattScholze})\label{proetale-of-normal}
If $X$ is geometrically unibranch, then $\pipet(X,\bx) \simeq \piet(X,\bx)$.
\end{lemma}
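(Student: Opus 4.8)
The plan is to deduce the statement from the already-recorded fact that $\piet(X,\bx)$ is the profinite completion of $\pipet(X,\bx)$. It therefore suffices to prove that $\pipet(X,\bx)$ is already profinite, and for this it is enough to show that every connected object of $\rmCov_X$ already lies in $\mathrm{F}\Et_X$, i.e.\ is finite \'etale over $X$. Indeed, granting this, every geometric cover is a (possibly infinite) disjoint union of finite \'etale covers, so an automorphism of the fibre functor $F_{\bx}$ on $\rmCov_X$ is determined by its restriction to $\mathrm{F}\Et_X$, and conversely any compatible system of automorphisms of finite \'etale covers extends uniquely to all disjoint unions. The resulting identification $\rmAut(F_{\bx}|_{\rmCov_X}) = \rmAut(F_{\bx}|_{\mathrm{F}\Et_X}) = \piet(X,\bx)$ is then a homeomorphism, since the open stabilizers defining the two compact-open topologies are, in both cases, stabilizers of points lying in finite \'etale covers.

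So fix a connected $Y \in \rmCov_X$; I want to show $Y \to X$ is finite \'etale. First I reduce to the case where $X$ is irreducible: being geometrically unibranch is local and implies local irreducibility, so a connected geometrically unibranch $X$ is irreducible, with a unique generic point $\eta$, and I write $K = \kappa(\eta)$. Since $Y \to X$ is \'etale, $Y$ is again geometrically unibranch, hence (being connected) irreducible; as its generic fibre $Y_\eta$ is \'etale over the field $K$ and discrete while $Y$ has a single generic point, one gets $Y_\eta = \Spec(L)$ for a finite separable extension $L/K$, say of degree $n$. The core step is then to bound the geometric fibres: for a geometric point $\bx$ of $X$, the strict henselization $X^{sh}_{\bx} = \Spec(\calO^{sh}_{X,\bx})$ is irreducible because $X$ is geometrically unibranch, so by the henselian-local structure lemma (the disjoint-union-of-finite-\'etale statement cited above) together with the triviality of finite \'etale covers of a strictly henselian local scheme, $Y \times_X X^{sh}_{\bx}$ is a disjoint union of copies of $X^{sh}_{\bx}$. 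Each such copy is irreducible and dominates $X^{sh}_{\bx}$, so the number of copies equals the number of points of $Y \times_X X^{sh}_{\bx}$ over the generic point of $X^{sh}_{\bx}$, i.e.\ the number of factor fields of $L \otimes_K F$, where $F$ is the fraction field of $\calO^{sh}_{X,\bx}$, which is at most $n$. Counting closed points then shows $\# Y_{\bx} \le n$, so $Y \to X$ is quasi-finite.

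Finally, $Y \to X$ is separated (the uniqueness clause in the valuative criterion of properness is exactly the valuative criterion of separatedness) and quasi-finite, so by Zariski's Main Theorem it factors as an open immersion $Y \hookrightarrow \bar Y$ into a finite $X$-scheme $\bar Y$. The valuative criterion of properness for $Y \to X$ forces $Y$ to be stable under specialization inside $\bar Y$, whence $Y = \bar Y$ and $Y \to X$ is finite; being also \'etale, it is finite \'etale, as wanted. I expect the main obstacle to be the fibre-bounding step: this is precisely where geometric unibranchness is indispensable, since it is what makes $X^{sh}_{\bx}$ connected and thereby forces the \'etale-local splitting of $Y$ into irreducible copies, and the delicate point is to control the number of these copies (equivalently the closed-fibre cardinality) in terms of the generic degree $n$. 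The failure of this control for non-unibranch schemes is exactly the source of the infinite $\bbP^1$-chain type covers, so the argument must genuinely use the hypothesis here.
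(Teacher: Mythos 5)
The paper does not actually prove this lemma: it is quoted verbatim from \cite[Lm. 7.4.10]{BhattScholze}, so the honest comparison is with the proof given there. Your strategy --- reduce to showing that every connected $Y \in \rmCov_X$ is finite \'etale, use geometric unibranchness to make $X$ (and then $Y$, being \'etale over $X$) irreducible, and bound every geometric fibre by the generic degree $n=[L:K]$ via the splitting of $Y \times_X \Spec(\calO^{sh}_{X,\bx})$ into copies of the irreducible strictly henselian base --- is exactly the argument behind the citation, and you carry out that core step correctly. This is indeed the one place where unibranchness is indispensable, and your initial reduction (profinite $\Rightarrow$ completion map is an isomorphism, with the matching of topologies) and the separatedness of $Y \to X$ (which is Lemma \ref{separated} of the paper) are also fine.

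There is, however, a genuine gap in the last step. From ``all geometric fibres have at most $n$ points'' you conclude that $Y \to X$ is \emph{quasi-finite} and invoke Zariski's Main Theorem. Quasi-finiteness includes finite type, hence quasi-compactness, and quasi-compactness of $Y \to X$ is precisely what has not been established: \'etale morphisms are only \emph{locally} of finite type, and the recurring theme of this whole subject is that geometric covers need not be quasi-compact (the infinite chain of $\bbP^1$'s over the nodal curve). The version of ZMT you need (open immersion into a finite $X$-scheme) genuinely fails for morphisms that are merely locally quasi-finite and separated, so ``fibres bounded by $n$ $\Rightarrow$ finite'' requires an argument. One way to close the gap, bypassing ZMT, is to induct on the fibre bound $n$ over an arbitrary locally topologically noetherian base (so that the induction may be applied with $Y$ as the base, using \cite[Lm. 6.6.10]{BhattScholze}). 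For $n \le 1$ the map is \'etale, separated and universally injective, hence an open immersion \cite[Tag 025G]{StacksProject}; its image is open and stable under specialization (existence part of the valuative criterion), hence clopen by local topological noetherianity, so the map is a clopen immersion and in particular finite. For the inductive step, write $Y \times_X Y = \Delta \sqcup W$ with $\Delta$ clopen ($Y \to X$ being \'etale and separated); then $W \to Y$ is \'etale, separated, satisfies the valuative criterion (being closed in $Y \times_X Y$, which is a base change of $Y \to X$), and has geometric fibres of size at most $n-1$, hence is finite by induction, so $Y \times_X Y \to Y$ is finite. Finally, the image of $Y \to X$ is clopen by the same argument as in the base case, so $Y$ maps onto it by an \'etale surjection, i.e.\ an fppf cover, and finiteness descends; hence $Y \to X$ is finite \'etale. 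With this (or any other proof of quasi-compactness, after which your ZMT argument and the specialization argument for $Y = \bar{Y}$ go through as written) inserted, your proof is complete and agrees with the cited one.
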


There is another way of looking at the pro-\'etale fundamental group, which justifies the name ``pro-\'etale''. 
\begin{defn}
\begin{enumerate}
  \item A map $f : Y \rarr X$ of schemes is called \emph{weakly \'etale} if $f$ is flat and the diagonal $\Delta_f : Y \rarr Y\times_XY$ is flat.
  \item The pro-\'etale site $X_\proet$ is the site of weakly \'etale $X$-schemes, with covers given by fpqc covers.  
\end{enumerate}  
\end{defn}
This definition of the pro-\'etale site is justified by a foundational theorem -- part \ref{olivier-item} of the following fact.
\begin{fact}
  Let $f : A \rarr B$ be a map of rings.
  \begin{enumerate}[label=\alph*)]
      \item $f$ is \'etale if and only if $f$ is weakly \'etale and finitely presented.
      \item If $f$ is ind-\'etale, i.e.\ $B$ is a filtered colimit of \'etale A-algebras, then $f$ is weakly \'etale.
      \item \label{olivier-item} (\cite[Theorem 2.3.4]{BhattScholze}) If $f$ is weakly \'etale, then there exists a faithfully flat ind-\'etale $g : B \rarr C$ such that $g\circ f$ is ind-\'etale.
  \end{enumerate}
\end{fact}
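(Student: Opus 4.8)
The plan is to treat the three parts separately: parts (a) and (b) are elementary and I would prove them by hand, whereas part (c) is Olivier's theorem, which I would not attempt to reprove but simply invoke as \cite[Theorem 2.3.4]{BhattScholze}. Throughout I use that for a ring map $f:A\rarr B$ the diagonal of $\Spec(B)\rarr\Spec(A)$ corresponds to the multiplication map $\mu:B\otimes_A B\rarr B$, so that \emph{weakly \'etale} means precisely that $A\rarr B$ is flat and $B$ is flat as a $B\otimes_A B$-module via $\mu$.

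For (a) I would use the standard characterization that $f$ is \'etale if and only if it is flat, finitely presented and unramified, together with the fact that for finitely presented $f$ unramifiedness is equivalent to the diagonal $\Delta_f$ being an open immersion. If $f$ is \'etale, then it is flat and finitely presented by definition, and $\Delta_f$ is an open immersion, hence flat; thus $f$ is weakly \'etale and finitely presented. Conversely, if $f$ is weakly \'etale and finitely presented, then $f$ is flat and finitely presented and $\Delta_f$ is flat; but a finitely presented morphism has a diagonal that is an immersion of finite presentation, and a flat immersion of finite presentation, being a flat monomorphism of finite presentation, is an open immersion. Hence $\Delta_f$ is an open immersion, so $f$ is unramified, and therefore \'etale.

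For (b) write $B=\dirlim_i B_i$ as a filtered colimit of \'etale $A$-algebras. Flatness of $A\rarr B$ is immediate, since each $B_i$ is flat over $A$ and a filtered colimit of flat modules is flat. For the diagonal, the diagonal functor $I\rarr I\times I$ is final for filtered $I$, whence $B\otimes_A B=\dirlim_i (B_i\otimes_A B_i)$, compatibly with the multiplication maps $\mu_i:B_i\otimes_A B_i\rarr B_i$; each $\mu_i$ is flat because the diagonal of the \'etale map $A\rarr B_i$ is an open immersion. Flatness of $\mu=\dirlim_i\mu_i$ then follows from the stability of weak \'etaleness under filtered colimits (flatness being detected by $\mathrm{Tor}$, which commutes with filtered colimits); I would cite the relevant stability statement rather than reproduce the bookkeeping. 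Hence $B$ is flat over $B\otimes_A B$ and $f$ is weakly \'etale.

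The content of the Fact is really part (c), which is the theorem of Olivier recalled as \cite[Theorem 2.3.4]{BhattScholze}, and this is by far the main obstacle; I would cite it rather than reprove it. Its proof reduces, by passing along faithfully flat ind-\'etale covers (the ``w-strictly local'' covers, which realize the strict henselizations at all closed points), to the local statement that a weakly \'etale algebra over a strictly henselian local ring is ind-\'etale; the global conclusion is then obtained by descent. The hard part is precisely this local analysis over strictly henselian rings, which rests on Olivier's delicate commutative algebra; since it is an external input to the present paper, invoking the citation is the appropriate move.
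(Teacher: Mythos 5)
Your proposal is correct and matches the paper's treatment: the paper gives no proof of this Fact at all, recalling all three parts as standard background with part (c) cited as \cite[Theorem 2.3.4]{BhattScholze}, which is exactly the move you make for the only substantive part. Your elementary verifications of (a) and (b) — that the diagonal of a finitely presented map is a finitely presented flat monomorphism, hence an open immersion, and the filtered-colimit argument with $B\otimes_A B = \varinjlim_i (B_i\otimes_A B_i)$ — are sound and simply fill in what the paper leaves to the reference.
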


\begin{defn}(\cite[Defn. 7.3.1.]{BhattScholze}) 
We say that $F \in \rmShv(X_{\proet})$ is \emph{locally constant} if there exists a cover $\{Y_i \rarr X\}$ in $X_{\proet}$ with $F|_{Y_i}$ constant. We write $\rmLoc_X$ for the corresponding full subcategory of $\rmShv(X_{\proet})$.
\end{defn}

We are ready to state the following important result.
\begin{theorem}(\cite[Lemma 7.3.9.]{BhattScholze}) Let $X$ to be locally topologically noetherian scheme. One has $\rmLoc_X = \rmCov_X$ as subcategories of $\rmShv(X_{\proet})$.
\end{theorem}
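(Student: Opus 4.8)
The plan is to realize both $\rmCov_X$ and $\rmLoc_X$ as full subcategories of $\rmShv(X_{\proet})$ via Yoneda and to verify the two inclusions separately. A geometric cover $Y \rarr X$ is in particular \'etale, hence weakly \'etale by part a) of the Fact above, so it is an object of the site $X_{\proet}$ and represents a presheaf $h_Y$. Since the fpqc topology is subcanonical, $h_Y$ is a sheaf, and $Y \mapsto h_Y$ gives a fully faithful embedding $\rmCov_X \monorarr \rmShv(X_{\proet})$. It therefore suffices to prove that the essential image of this embedding coincides with the full subcategory $\rmLoc_X$.

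For the inclusion $\rmCov_X \subseteq \rmLoc_X$, let $Y \in \rmCov_X$. The idea is to trivialize $h_Y$ after a suitable pro-\'etale base change. Choose a pro-\'etale cover $U \rarr X$ by w-contractible affines, whose local rings at closed points are strictly henselian. Since both \'etaleness and the valuative criterion of properness are stable under base change, $Y \times_X U \in \rmCov_U$; by the henselian-local structure lemma \cite[Lm. 7.3.8]{BhattScholze} it is a disjoint union of finite \'etale $U$-schemes, and over a strictly henselian local base such covers split into copies of the base. Hence $Y \times_X U \cong \coprod_{F_{\bx}(Y)} U$, so that $h_Y|_U$ is the constant sheaf on the set $F_{\bx}(Y)$ and $h_Y \in \rmLoc_X$.

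For the reverse inclusion $\rmLoc_X \subseteq \rmCov_X$, let $F \in \rmLoc_X$ and pick a pro-\'etale cover $U \rarr X$, which we may take w-contractible affine, with $F|_U \cong \underline{S}$ constant for some set $S$. This constant sheaf is represented by the geometric cover $W := \coprod_S U \rarr U$, so the sheaf-theoretic descent datum carried by $F$ relative to $U \rarr X$ transports to a scheme-theoretic descent datum on $W$ along $U \rarr X$. The goal is to descend $W$ to a scheme $Y \rarr X$ with $h_Y \cong F$; such a $Y$ is automatically a geometric cover, since both \'etaleness and the valuative criterion of properness are pro-\'etale local on the base, and then full faithfulness of the embedding yields $\rmCov_X = \rmLoc_X$.

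I expect the effectivity of this last descent to be the main obstacle. The scheme $W = \coprod_S U$ is not quasi-compact when $S$ is infinite, so one cannot invoke fpqc descent for affine (or quasi-affine) morphisms; moreover the descent isomorphism over $U \times_X U$ permutes the copies of $U$ and therefore respects no finite sub-decomposition of $S$, so the problem cannot be reduced to the finite \'etale case by writing $S$ as a filtered union of finite subsets. I would resolve this by working directly with the sheaf $F$: it is an fpqc sheaf that is pro-\'etale locally represented by a scheme, and the valuative criterion controls specializations consistently across the cover, which is exactly what is needed to glue the local schematic models into a single scheme $Y$ representing $F$.
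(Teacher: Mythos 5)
First, a framing remark: the paper does not prove this statement at all --- it is recalled verbatim from \cite[Lemma 7.3.9]{BhattScholze} in the overview of that reference --- so your attempt can only be measured against the proof in \cite{BhattScholze}. Measured that way, both of your inclusions have problems, and the first is not merely incomplete but rests on a step that fails. For $\rmCov_X \subseteq \rmLoc_X$ you apply \cite[Lm. 7.3.8]{BhattScholze} with the w-contractible affine $U$ as base and then split ``over a strictly henselian local base''. But $U$ is not local: it is w-local, i.e.\ its connected components are strictly henselian local schemes while $\pi_0(U)$ is a typically infinite profinite set. Lemma 7.3.8 therefore splits $Y \times_X U$ only over each connected component separately, and the whole content of this direction is to glue these componentwise splittings, continuously in $\pi_0(U)$ and with a single index set, into an isomorphism $Y \times_X U \cong \coprod_{F_{\bx}(Y)} U$. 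This cannot follow formally from ``$Y \times_X U \in \rmCov_U$'' alone: $U$ is not locally topologically noetherian, and over such bases geometric covers need not be locally constant. Concretely, if $W \subset \pi_0(U)$ is open but not closed, its preimage $V \subset U$ is an open subscheme which is a union of connected components; then $V \rarr U$ is \'etale and satisfies the valuative criterion of properness (any specialization, and any map from the spectrum of a valuation ring, stays inside a single connected component), yet $h_V$ is not locally constant. This is exactly the phenomenon behind \cite[Ex. 7.3.12]{BhattScholze} and the reason the theorem carries the locally topologically noetherian hypothesis. As written, your reasoning would prove that every geometric cover of a w-contractible affine is constant, which is false; a correct proof must use that $Y$ descends to the locally topologically noetherian scheme $X$, which your argument never does.

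For $\rmLoc_X \subseteq \rmCov_X$ you correctly isolate the crux --- effectivity of the descent of $\coprod_S U$ along $U \rarr X$, where quasi-compactness fails and the descent isomorphism respects no finite sub-decomposition of $S$ --- but your closing paragraph does not resolve it: ``the valuative criterion controls specializations consistently across the cover, which is exactly what is needed to glue'' restates the goal rather than producing a scheme representing $F$, and this is precisely where the content of \cite[Lemma 7.3.9]{BhattScholze} lies. One concrete way to fill it, parallel to how this paper treats proper surjective descent in Prop.~\ref{properdescent}, is to use the results of \cite{Rydh} to descend $\coprod_S U$ (\'etale and separated over $U$) to an algebraic space \'etale and separated over $X$, note that such an algebraic space is a scheme by \cite[Tag 0417]{StacksProject}, and only then check \'etaleness and the valuative criterion fpqc-locally via Lemma~\ref{VCoP-is-local} --- which is the one step of this direction your proposal does justify. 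But verifying that this descent machinery applies to the non-quasi-compact morphism $\coprod_S U \rarr U$ (or reproducing the argument of \cite{BhattScholze} directly) is exactly the missing work, so the second inclusion remains unproven.
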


\subsubsection*{Topological invariance of the pro-\'etale fundamental group}
We note that universal homeomorphisms of schemes induce equivalences on the corresponding categories of geometric coverings.

\begin{prop}(\cite[Lm. 5.4.2]{BhattScholze})\label{topoinv}
  Let $h: X' \rarr X$ be a universal homeomorphism of topologically noetherian schemes (i.e.\ induces a homeomorphism of topological spaces after any base-change). Then the pullback
  \begin{displaymath}
    h^* : \rmCov_{X} \rarr \rmCov_{X'} \text{ , } Y \mapsto Y' = Y \times_X X'
  \end{displaymath}
  is an equivalence of categories.
\end{prop}
\begin{proof}
As $\rmCov_X \simeq \rmLoc_X$, the theorem follows by the same proof as in \cite[Lm. 5.4.2]{BhattScholze}.

Alternatively, one can argue more directly (i.e.\ avoiding the equivalence with $\rmLoc_X$) as follows. By \cite[Theorem 04DZ]{StacksProject}, $V \mapsto V' = V \times_X X'$ induces an equivalence of categories of schemes \'etale over $X$ and schemes \'etale over $X'$. By \cite[Proposition 5.4.]{Rydh}, this induces an equivalence between schemes \'etale and separated over respectively $X$ and $X'$. The only thing left to be shown is that if for an \'etale separated scheme $Y \rarr X$, the map $Y \times_X X' \rarr X'$ satisfies the existence part of the valuative criterion of properness, then so does $Y \rarr X$. But this property can be characterized in purely topological terms (see \cite[Lemma 01KE]{StacksProject}) and so the result follows from the fact that $h$ is a universal homeomorphism.
\end{proof}

\subsection{Noohi completion}\label{section-Noohization}
Let $\mathrm{HausdGps}$ denote the category of Hausdorff topological groups (recall that we assume all topological groups to be Hausdorff, unless stated otherwise) and $\mathrm{NoohiGps}$ to be the full subcategory of Noohi groups. Let $G$ be a topological group. Denote $\calC_G= G - \rmSets$ and let $F_G : \calC_G \rightarrow \rmSets$ be the forgetful functor. Observe that $(\calC_G, F_G)$ is a tame infinite Galois category. Thus, the group $\rmAut(F_G)$ is a Noohi group.
It is easy to see that a morphism $G \rightarrow H$ defines an induced morphism of groups $\text{Aut}(F_G) \rightarrow \text{Aut}(F_H)$ and check that it is continuous.
Let $\psi_N:\text{HausdGps} \rightarrow \text{NoohiGps}$ be the functor defined by $G \mapsto \text{Aut}(F_G)$. Denote also the inclusion $i_N : \text{NoohiGps} \rightarrow \text{HausdGps}$. 
\begin{defn}\label{Noohization-defn}
We call $\psi_N(G)$ the Noohi completion of $G$ and will denote it $G^{\rmNoohi}$.
\end{defn} 
\begin{example}
In \cite[Example 7.2.6]{BhattScholze}, it was explained that the category of Noohi groups admits coproducts. Let $G_1, G_2$ be two Noohi groups and let $G_1*^NG_2$ denote their coproduct as Noohi groups. Let $G_1 *^{\rmtop} G_2$ be their topological coproduct. It exists and it is a Hausdorff group (\cite{Graev}). Then $G_1*^NG_2 = (G_1 *^{\rmtop} G_2)^{\rmNoohi}$.
\end{example}

Let $\alpha_G : G \to \rmAut(F_G) = G^{\rmNoohi}$ denote the obvious morphism.

\begin{proposition}\label{alpha_G-equivalence}
For a topological group $G$, the functor $F_G$ induces an equivalence of categories
\begin{displaymath}
\widetilde{F}_G : G - \rmSets \stackrel{\sim}{\rarr} G^{\rmNoohi} - \rmSets
\end{displaymath}
Moreover, $\alpha_G^* \circ \widetilde{F}_G \simeq \id$, and thus $\alpha^*$ is an equivalence of categories, too.
\end{proposition}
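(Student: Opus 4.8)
The plan is to read off the equivalence $\tilde{F}_G$ directly from the general theory of infinite Galois categories, and then to identify its quasi-inverse with $\alpha_G^*$ by unwinding definitions. First I would recall, as already observed above, that $(\calC_G, F_G) = (G-\rmSets, F_G)$ is a \emph{tame} infinite Galois category, and that by definition $\pi_1(\calC_G, F_G) = \rmAut(F_G) = G^{\rmNoohi}$. Since the category is tame, part (3) of \cite[Thm. 7.2.5]{BhattScholze} applies and tells us that $F_G$ induces an equivalence $\calC_G \simeq \pi_1(\calC_G, F_G) - \rmSets$. This is precisely the asserted functor
\[
\tilde{F}_G : G - \rmSets \stackrel{\sim}{\rarr} G^{\rmNoohi} - \rmSets.
\]
Concretely, $\tilde{F}_G$ sends a $G$-set $S$ to the same underlying set $F_G(S)$ equipped with the tautological action of $\rmAut(F_G) = G^{\rmNoohi}$ (an element $\phi \in \rmAut(F_G)$ acts through its component $\phi_S$ at $S$); this action is continuous and functorial in $S$, and the cited theorem guarantees it is an equivalence. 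This settles the first assertion.

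For the \emph{moreover} part, I would recall that $\alpha_G : G \rarr G^{\rmNoohi} = \rmAut(F_G)$ is the canonical comparison homomorphism, sending $g \in G$ to the automorphism of $F_G$ whose component at $S$ is the map $s \mapsto g\cdot s$, and that $\alpha_G^*$ is restriction of the action along $\alpha_G$. Then the composite $\alpha_G^* \circ \tilde{F}_G$ sends a $G$-set $S$ to the underlying set $F_G(S)$ with the $G$-action $g\cdot s = \alpha_G(g)_S(s)$. By the very definition of $\alpha_G$ this is exactly the original action of $G$ on $S$, and the same check on morphisms is immediate. Hence $\alpha_G^* \circ \tilde{F}_G \simeq \id$ (in fact it is the identity on underlying sets, actions, and maps).

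Finally, since $\tilde{F}_G$ is an equivalence, choosing a quasi-inverse $H$ gives $\alpha_G^* \simeq \alpha_G^* \circ \tilde{F}_G \circ H \simeq \id \circ H = H$, so $\alpha_G^*$ is itself a quasi-inverse of $\tilde{F}_G$ and, in particular, an equivalence of categories. This proves the last assertion.

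I do not expect a genuine obstacle here: essentially all of the content is carried by \cite[Thm. 7.2.5]{BhattScholze}. The only point requiring care is the bookkeeping identification of the abstractly-produced equivalence $\tilde{F}_G$ with a functor whose underlying set is literally $F_G(S)$ and whose $G^{\rmNoohi}$-action is tautological, so that restricting along $\alpha_G$ visibly returns the original $G$-action. Once this description is pinned down, the composite computation and the deduction that $\alpha_G^*$ is an equivalence are purely formal.
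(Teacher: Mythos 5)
Your proposal is correct and follows essentially the same route as the paper: the equivalence $\tilde{F}_G$ is exactly the one supplied by part (3) of \cite[Thm. 7.2.5]{BhattScholze} applied to the tame infinite Galois category $(G-\rmSets, F_G)$, the isomorphism $\alpha_G^* \circ \tilde{F}_G \simeq \id$ is a definition-unwinding check, and the last assertion follows formally. The only difference is that you spell out the tautological description of $\tilde{F}_G$ and the quasi-inverse argument explicitly, where the paper compresses these into ``clear from the definitions.''
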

\begin{proof}
The first part follows directly from \cite[Theorem 7.2.5]{BhattScholze}. The natural isomorphism $\alpha_G^* \circ \widetilde{F}_G \simeq \id$ is clear from the definitions. It follows that $\alpha_G^*$ is an equivalence.
\end{proof}

The following lemma is in contrast with \cite[Remark 2.13]{Noohi}, but agrees with \cite[Proposition 4.1.1.]{Lepage}.
\begin{lemma}\label{dense-in-Noohization}
For any topological group $G$, the image of $\alpha_G: G \rarr G^{\rmNoohi}$ is dense.
\end{lemma}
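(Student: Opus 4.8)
The plan is to unwind density in terms of the compact-open topology on $G^{\rmNoohi} = \rmAut(F_G)$ and then to exploit the naturality enjoyed by every element of $\rmAut(F_G)$. Recall that $\alpha_G(g) \in \rmAut(F_G)$ is the automorphism acting on each $F_G(X)$ by $x \mapsto g \cdot x$. Since the topology on $\rmAut(F_G)$ is the coarsest one for which every evaluation $\rmAut(F_G) \rarr \rmAut(F_G(X))$ is continuous, and since the compact-open topology on the automorphism group of a discrete set has the pointwise stabilizers of finite subsets as a neighborhood basis of the identity, a basis of open neighborhoods of a fixed $\phi \in \rmAut(F_G)$ is given by the sets $\{\psi : \psi_X(x_i) = \phi_X(x_i), \ i=1,\dots,n\}$, where $X$ ranges over $G$-sets and $\{x_1,\dots,x_n\} \subseteq F_G(X)$ over finite subsets (finitely many $G$-sets can be merged into one via their disjoint union, using that $F_G$ commutes with coproducts). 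Thus it suffices to prove: for every such $\phi$, $X$ and $\{x_1,\dots,x_n\}$, there exists $g \in G$ with $g \cdot x_i = \phi_X(x_i)$ for all $i$.

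First I would reduce to the case of a single point. Form the product $G$-set $Z := X^n$ — its diagonal action is continuous because the stabilizer of a point is a finite intersection of open subgroups — and set $z := (x_1,\dots,x_n) \in F_G(Z) = F_G(X)^n$, where I use that $F_G$ preserves finite products. Applying the naturality of $\phi$ to the projections $p_i : Z \rarr X$ yields $(\phi_Z(z))_i = \phi_X(x_i)$, so that any $g$ satisfying $g \cdot z = \phi_Z(z)$ automatically satisfies $g \cdot x_i = \phi_X(x_i)$ for all $i$. Hence it is enough to find, for a single $G$-set $Z$ and a single point $z$, an element $g \in G$ with $g\cdot z = \phi_Z(z)$.

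The crux is the observation that $\phi$ must preserve $G$-orbits. Let $O := G \cdot z \subseteq Z$ be the orbit of $z$; its stabilizer $\rmStab_G(z)$ is open, so $O$ is a genuine sub-$G$-set of $Z$ (isomorphic to $G/\rmStab_G(z)$). Writing $\iota : O \monorarr Z$ for the inclusion and combining the naturality of $\phi$ with respect to $\iota$ with the fact that $\phi_O$ is a self-bijection of the set $O$, I obtain $\phi_Z(z) = \phi_O(z) \in O = G \cdot z$. Therefore $\phi_Z(z) = g \cdot z$ for some $g \in G$, which is exactly the element sought. I do not expect a serious obstacle here: the entire content is the orbit-preservation step, and the only points requiring care are the explicit description of the compact-open topology and the verification that the auxiliary objects (the product $Z = X^n$ and the orbit $O$) genuinely lie in $G-\rmSets$, which holds precisely because the relevant stabilizers are open.
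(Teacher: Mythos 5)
Your proof is correct, but it follows a genuinely different route from the paper's. The paper argues top-down, leaning on the completed machinery: given an open set $U \subseteq G^{\rmNoohi}$, it uses that $G^{\rmNoohi}$ is a Noohi group to find a coset $qV \subseteq U$ of an open subgroup $V$, and then invokes Prop.~\ref{alpha_G-equivalence} (the equivalence $G-\rmSets \simeq G^{\rmNoohi}-\rmSets$, which rests on \cite[Thm.~7.2.5]{BhattScholze}) to conclude that the connected $G^{\rmNoohi}$-set $G^{\rmNoohi}/V$ pulls back to a connected, hence transitive, $G$-set; transitivity then yields $g$ with $\alpha_G(g) \in qV \subseteq U$. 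You instead argue bottom-up from the definition of $G^{\rmNoohi} = \rmAut(F_G)$: you unwind the compact-open topology into agreement with $\phi$ on finitely many points of a single $G$-set (merging via disjoint unions), reduce to a single point via the finite product $X^n$ and naturality along the projections, and then prove the key orbit-preservation statement $\phi_Z(z) \in G \cdot z$ by naturality along the inclusion of the orbit $O = G \cdot z$, which is a legitimate sub-$G$-set because its point stabilizers are open. What your approach buys is self-containedness: it needs neither the Noohi-ness of $\rmAut(F_G)$ nor the equivalence of categories, both of which are consequences of the nontrivial Theorem 7.2.5 of Bhatt--Scholze; in effect, your orbit-preservation step re-proves by hand exactly the fragment of tameness of $(G-\rmSets, F_G)$ that the lemma requires. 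What the paper's approach buys is brevity: once the cited machinery is granted, the whole argument is three lines, and it makes transparent that density is really a statement about connected objects staying connected under $\alpha_G^*$.
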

\begin{proof} Let $U \subset G^{\rmNoohi}$ be open. As $G^{\rmNoohi}$ is Noohi, there exists $q \in G^{\rmNoohi}$ and an open subgroup $V <^\circ G^{\rmNoohi}$ such that $qV \subset U$. The quotient $G^{\rmNoohi}/V$ gives a $G^{\rmNoohi}$-set. It is connected in the category  $G^{\rmNoohi}-\rmSets$ and, by Prop.\ \ref{alpha_G-equivalence}, $\alpha_G^*(G^{\rmNoohi}/V)$ is connected. Thus, the action of $G$ on $G^{\rmNoohi}/V$ is transitive and so there exists $g\in G$ such that $\alpha_G(g)\cdot [V]=[qV]$, i.e.\ $\alpha_G(g) \in qV$. Thus, the image of $\alpha_G$ is dense.
\end{proof}
\begin{obs*}
Let $f:H \rarr G$ be a map of topological groups. Directly from the definitions, one sees that the following diagram commutes:
\begin{center}
\begin{tikzpicture}
\matrix(a)[matrix of math nodes,
row sep=2em, column sep=2em,
text height=1ex, text depth=0.25ex]
{H & G\\ H^{\rmNoohi} & G^{\rmNoohi}\\};

\path[->,font=\scriptsize] (a-1-1) edge node[left] {$\alpha_H$} (a-2-1);
\path[->,font=\scriptsize] (a-1-2) edge node[left] {$\alpha_G$} (a-2-2);
\path[->,font=\scriptsize] (a-1-1) edge node[above] {$f$} (a-1-2);
\path[->,font=\scriptsize] (a-2-1) edge node[above] {$f^{\rmNoohi}$} (a-2-2);

\end{tikzpicture}\\
\end{center}
\end{obs*}

\begin{lemma}[universal property of Noohi completion]
Let $f:H \rightarrow G$ be a continuous morphism from a topological group to a Noohi group. Then there exists a unique map $f': H^{\rmNoohi} \rightarrow G$ such that $f'\circ \alpha_H=f$.
\end{lemma}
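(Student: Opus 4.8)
The plan is to construct the extension $f'$ by hand, exploiting the concrete description $H^{\rmNoohi} = \Aut(F_H)$ from Definition \ref{Noohization-defn}, and then to read off uniqueness from Lemma \ref{dense-in-Noohization}. Existence and uniqueness will rest on two different hypotheses: existence on the fact that $G$ is Noohi (so that $\Aut(F_G) = G$), and uniqueness on the density of the image of $\alpha_H$. To build the candidate, note that since $f : H \rarr G$ is continuous, restriction of the action along $f$ gives a pullback functor $f^\ast : G-\rmSets \rarr H-\rmSets$: the underlying set of $f^\ast T$ is that of $T$, with $H$ acting via $h \cdot t = f(h)\cdot t$, and continuity of $f$ ensures this action is again continuous, so $f^\ast T$ is a genuine object of $H-\rmSets$. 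An element $\sigma \in H^{\rmNoohi} = \Aut(F_H)$ is a natural automorphism $\sigma = (\sigma_S)_{S}$ of the forgetful functor $F_H$, and I would set $f'(\sigma)_T := \sigma_{f^\ast T}$ for each $T \in G-\rmSets$, viewing $\sigma_{f^\ast T}$ as a bijection of the underlying set of $T$.

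I would then run the routine verifications. First, the collection $(f'(\sigma)_T)_T$ is natural in $T$: any morphism of $G$-sets is in particular a morphism of the pulled-back $H$-sets, so naturality of $\sigma$ applies directly; hence $f'(\sigma) \in \Aut(F_G)$, and because $G$ is Noohi this is exactly an element of $G$. Second, $f'$ is a homomorphism, which is clear since the group law on both $\Aut(F_H)$ and $\Aut(F_G)$ is componentwise composition and $(\sigma\tau)_{f^\ast T} = \sigma_{f^\ast T}\circ \tau_{f^\ast T}$. Third, $f'$ is continuous: a subbasic open condition ``$f'(\sigma)_T(x)=y$'' for $T \in G-\rmSets$ pulls back to the subbasic open condition ``$\sigma_{f^\ast T}(x)=y$'' on the coordinate $f^\ast T \in H-\rmSets$ defining the compact-open topology on $\Aut(F_H)$.

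For the factorization, recall that $\alpha_H(h) \in \Aut(F_H)$ is the natural automorphism ``act by $h$'', i.e. $\alpha_H(h)_S : s \mapsto h\cdot s$. Then $f'(\alpha_H(h))_T = \alpha_H(h)_{f^\ast T}$ is the map $t \mapsto f(h)\cdot t$, which is precisely the element $f(h)$ of $G = \Aut(F_G)$; thus $f'\circ\alpha_H = f$ on the nose. For uniqueness, if $f''$ is another continuous homomorphism with $f''\circ\alpha_H = f$, then $f'$ and $f''$ agree on the image of $\alpha_H$, which is dense in $H^{\rmNoohi}$ by Lemma \ref{dense-in-Noohization}; since $G$ is Hausdorff, two continuous maps agreeing on a dense subset coincide, so $f'' = f'$.

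There is no deep obstacle along this route; the real content is in locating where each hypothesis enters. The only points demanding care are that $f^\ast$ genuinely lands in $H-\rmSets$ (this is exactly where continuity of $f$ is used) and that $f'(\sigma)$ is a bona fide element of $G$ rather than merely of $\Aut(F_G)$ (this is where $G$ being Noohi is essential). A more abstract alternative would deduce the statement from the adjunction packaged in \cite[Thm. 7.2.5]{BhattScholze} together with the identity $\alpha_H^\ast \circ \tilde F_H \simeq \id$ from Proposition \ref{alpha_G-equivalence}; the subtlety there is that such a correspondence only controls functors up to natural isomorphism, so one would still fall back on the density argument above to pin the homomorphism down uniquely. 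For that reason I prefer the explicit construction, which makes the equality $f'\circ\alpha_H = f$ transparent.
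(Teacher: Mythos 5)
Your proof is correct and is essentially the paper's own argument: your explicit formula $f'(\sigma)_T := \sigma_{f^\ast T}$ is exactly the definition of the functorially induced map $f^{\rmNoohi} : \Aut(F_H) \rarr \Aut(F_G)$ composed with the identification $\alpha_G^{-1} : \Aut(F_G) \stackrel{\sim}{\rarr} G$ (valid because $G$ is Noohi), which is precisely how the paper defines $f' = \alpha_G^{-1}\circ f^{\rmNoohi}$. The uniqueness argument via density of $\rmim(\alpha_H)$ is identical, and you even note the same alternative route through Prop. \ref{alpha_G-equivalence} and \cite[Theorem 7.2.5(2)]{BhattScholze} that the paper mentions.
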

\begin{proof}
By the definition of a Noohi group, $\alpha_G$ is an isomorphism. Defining $f':=\alpha_G^{-1}\circ f^{\rmNoohi}$ gives the existence. The uniqueness follows from $\alpha_H$ having dense image. Alternatively, one can combine Prop.\ \ref{alpha_G-equivalence} with \cite[Theorem 7.2.5(2)]{BhattScholze}.
\end{proof}

\begin{corollary}
The functor $\psi_N$ is a left adjoint of $i_N$.
\end{corollary}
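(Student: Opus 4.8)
The plan is to prove the adjunction by its unit, using the standard criterion: a functor is a left adjoint to $i_N$ as soon as, for each object $H$ of $\text{TopGrps}$, one has a universal arrow from $H$ to $i_N$. Here the candidate unit at a topological group $H$ is the canonical map $\alpha_H : H \rarr i_N(\psi_N(H)) = H^{\rmNoohi}$. Two things have already been put in place: $\psi_N$ is a functor (a continuous $H \rarr G$ induces $\rmAut(F_H) \rarr \rmAut(F_G)$, as noted before Definition \ref{Noohization-defn}), and $\alpha$ is a natural transformation $\id_{\text{TopGrps}} \Rightarrow i_N \circ \psi_N$, since the square in the Observation above records exactly the identity $\alpha_G \circ f = f^{\rmNoohi} \circ \alpha_H$. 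So it remains only to verify the universal property of the unit.

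First I would record that this universal property is precisely the content of the preceding lemma. Indeed, for a Noohi group $G$ and a continuous morphism $f : H \rarr i_N(G) = G$, that lemma produces a unique continuous $f' : H^{\rmNoohi} \rarr G$ of Noohi groups with $f' \circ \alpha_H = f$. This says that $\alpha_H$ is a universal arrow from $H$ to $i_N$, and hence, by the usual Mac Lane criterion, that $\psi_N$ (with unit $\alpha$) is left adjoint to $i_N$. Concretely, the adjunction bijection at $(H,G)$ is
\[
\Hom_{\text{NoohiGrps}}(\psi_N(H),G) \;\longrightarrow\; \Hom_{\text{TopGrps}}(H, i_N(G)), \qquad \phi \longmapsto \phi \circ \alpha_H,
\]
whose inverse sends $f$ to the unique $f'$ of the lemma. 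These are mutually inverse exactly by the uniqueness clause: starting from $f$, one has $f' \circ \alpha_H = f$ by construction; starting from $\phi$, the map $\phi \circ \alpha_H$ admits $\phi$ itself as a fill-in, so uniqueness forces $(\phi\circ\alpha_H)' = \phi$.

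Finally I would check that this bijection is natural in both variables. Naturality in $G$ is immediate from associativity of composition, and naturality in $H$ follows from the naturality of $\alpha$ recorded in the Observation, together again with the uniqueness in the universal property lemma. I do not expect a genuine obstacle here: all the real work was already done in establishing the universal property of the Noohi completion (which in turn rests on Proposition \ref{alpha_G-equivalence}, Lemma \ref{dense-in-Noohization}, and \cite[Theorem 7.2.5]{BhattScholze}), so this corollary is a purely formal repackaging. The only point demanding a little care is confirming that the fill-in $f'$ is a morphism of Noohi groups (i.e. continuous), which is guaranteed since $f' = \alpha_G^{-1} \circ f^{\rmNoohi}$ is a composite of continuous maps.
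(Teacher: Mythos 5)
Your proof is correct and is exactly the argument the paper intends: the corollary is stated as an immediate consequence of the preceding universal property lemma, with $\alpha_H$ serving as the unit, and your write-up simply spells out the standard universal-arrow criterion together with the naturality already recorded in the Observation. Nothing is missing and no different route is taken.
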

\begin{rmk}
 There are few places, where we write $G^{\rmNoohi}$ for a non-Hausdorff group $G$. This is mostly to avoid a large overline sign over a subgroup described by generators. In these cases, we mean
 \[
  G^{\rmNoohi} := (G^{\mathrm{Hausd}})^{\mathrm{Noohi}}
 \]
 where $G^{\mathrm{Hausd}}$ is the maximal Hausdorff quotient. As $(-)^{\mathrm{Hausd}}$ is a left adjoint as well, this usually does not cause problems. This also provides a left adjoint to the forgetful functor $\mathrm{NoohiGps} \to \mathrm{TopGps}$ to all topological groups.
\end{rmk}

We now move towards a more explicit description of the Noohi completion.
\begin{lemma}\label{lem:weakened-top}
Let $(G,\tau)$ be a topological group. Denote by $\mathcal{B}$ the collection of sets of the form
\begin{displaymath}
x_1\Gamma_1 y_1 \cap x_2\Gamma_2 y_2 \cap \ldots \cap x_m\Gamma_m y_m
\end{displaymath}
where $m \in \bbN$, $x_i,y_i \in G$ and $\Gamma_i < G$ are open subgroups of $G$. Then $\mathcal{B}$ is a basis of a group topology $\tau'$ on $G$ that is weaker than $\tau$ and open subgroups of $(G,\tau)$ form a basis of open neighbourhoods of $1_G$ in $(G,\tau')$.

Moreover, the natural map $i':(G,\tau) \rightarrow (G,\tau')$ induces an equivalence of categories $(G,\tau')-\rmSets \rightarrow (G,\tau)-\rmSets$. If $\{1_G\} \subset (G,\tau)$ is thickly closed, i.e.\ $\cap_{U<^{\circ} G} U = \{1_G\}$ (see Defn. \ref{definition-thick-closure}), then $(G,\tau')$ is Hausdorff and $(G,\tau)^{\rmNoohi} \stackrel{\sim}{\rightarrow} (G,\tau')^{\rmNoohi}$ is an isomorphism.
\end{lemma}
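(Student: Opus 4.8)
The plan is to reduce the entire statement to one elementary observation about the generating sets and then feed it through the standard dictionary between open subgroups and $G$-sets. The observation I would establish first is that a two-sided translate $x\Gamma y$ of an open subgroup $\Gamma <^\circ (G,\tau)$ is merely a coset of a conjugate: writing $g = x\gamma_0 y$ for a point $g \in x\Gamma y$ and substituting $y = \gamma_0^{-1}x^{-1}g$ gives $x\Gamma y = (x\Gamma x^{-1})(xy)$, a left coset of the open subgroup $x\Gamma x^{-1}$. Two consequences are immediate and will be used throughout: every member of $\mathcal B$ is $\tau$-open, so $\mathcal B \subseteq \tau$; and if $1_G \in x\Gamma y$ then the coset collapses to the subgroup itself, $x\Gamma y = x\Gamma x^{-1}$.

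Granting this, the first paragraph of the statement is quick. The family $\mathcal B$ contains $G = 1\cdot G\cdot 1$ and is closed under finite intersections, so it is the basis of a topology $\tau'$, and $\mathcal B \subseteq \tau$ gives $\tau' \subseteq \tau$. Inversion carries $x\Gamma y$ to $y^{-1}\Gamma x^{-1}$, hence permutes $\mathcal B$ and is a homeomorphism. For continuity of multiplication at $(a,b)$ I would, after intersecting finitely many factors, treat a single basic set $W = x\Gamma y \ni ab$: the translate $\Delta := (a^{-1}x)\Gamma(yb^{-1})$ contains $1_G$ exactly because $ab \in x\Gamma y$, so by the observation it is an open subgroup, and then $U := a\Delta$ and $V := \Delta b$ are basic neighbourhoods of $a$ and $b$ with $UV = a\Delta b = x\Gamma y = W$. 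Thus $\tau'$ is a group topology. Finally, a basic set containing $1_G$ is, by the observation, a finite intersection of conjugate open subgroups, hence a $\tau$-open subgroup, while each $\tau$-open subgroup $\Gamma = 1\cdot\Gamma\cdot 1$ already lies in $\mathcal B$; this proves both that the $\tau$-open subgroups form a basis of $\tau'$-neighbourhoods of $1_G$ and the equivalence ``a subgroup is $\tau$-open iff it is $\tau'$-open''.

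That last equivalence makes the equivalence of $G$-set categories almost formal. A discrete set with $G$-action is a $(G,\tau)$-set (resp. a $(G,\tau')$-set) exactly when all its point-stabilisers are $\tau$-open (resp. $\tau'$-open); since these conditions coincide, the two categories share the same objects and the same $G$-equivariant morphisms, so the restriction functor along $i'$ is an isomorphism of categories, in particular an equivalence. For the Hausdorffness clause I would use that the neighbourhood basis of $1_G$ in $\tau'$ consists of subgroups, whence $\overline{\{1_G\}}^{\tau'} = \bigcap_{\Gamma <^\circ (G,\tau)}\Gamma$; the thickness hypothesis forces this intersection to be $\{1_G\}$, so $\tau'$ is $T_1$ and therefore Hausdorff.

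The isomorphism of Noohi completions then follows from the category equivalence together with compatibility of the fibre functors. The restriction functor $(i')^*\colon (G,\tau')-\rmSets \to (G,\tau)-\rmSets$ leaves underlying sets and actions unchanged, so $F_{(G,\tau)}\circ (i')^* = F_{(G,\tau')}$; whiskering by an equivalence is a bijection on automorphisms of the fibre functor and respects the compact-open topologies built from the common underlying sets, so $\psi_N(i')\colon (G,\tau)^{\rmNoohi} \to (G,\tau')^{\rmNoohi}$ is an isomorphism of topological groups, the thickness hypothesis entering only to guarantee (via the previous step) that $(G,\tau')$ is Hausdorff and hence that its Noohi completion is formed inside $\mathrm{TopGrps}$ as in Definition \ref{Noohization-defn}. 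The one genuinely delicate point, and the step I would check most carefully, is the continuity of multiplication for $\tau'$: it is precisely there that rewriting $x\Gamma y$ as a coset of $x\Gamma x^{-1}$ is indispensable, since otherwise one cannot manufacture a product neighbourhood $U\cdot V$ sitting inside $W$.
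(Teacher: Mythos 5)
Your proof is correct, and it is genuinely more self-contained than the paper's. The paper disposes of the first statement by citing Bourbaki's criterion for group topologies defined by a neighbourhood filter at the identity (\cite[Prop. III.1.1]{BourbakiGT}, applied to the filter of subsets containing a $\tau$-open subgroup), and of the second by the one-line remark that any continuous map $(G,\tau)\rarr \Aut(S)$, $S$ discrete, factors through $i'$; the Hausdorff and Noohi clauses are left essentially implicit. You instead verify everything directly, and the engine of your argument is the elementary identity $x\Gamma y=(x\Gamma x^{-1})(xy)$: it makes every element of $\calB$ a coset of an open subgroup (hence $\tau$-open), collapses basic sets containing $1_G$ to finite intersections of conjugate open subgroups, and yields the product decomposition $U\cdot V=a\Delta\cdot\Delta b=a\Delta b=W$ that gives continuity of multiplication — which is indeed the only delicate point, and your choice $\Delta=(a^{-1}x)\Gamma(yb^{-1})$ handles it cleanly. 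Your treatment of the category equivalence via point-stabilisers (continuity of an action on a discrete set $\Leftrightarrow$ all stabilisers open, and ``$\tau$-open subgroup $\Leftrightarrow$ $\tau'$-open subgroup'') is just a spelled-out version of the paper's factorisation fact, with the bonus that it shows the two categories of $G$-sets are literally equal, so the restriction functor is an isomorphism, not merely an equivalence; this in turn makes the Noohi-completion claim purely formal, since $\Aut(F)$ with its compact-open topology depends only on the common category and forgetful functor. What the paper's route buys is brevity and the reassurance of a standard reference; what yours buys is a proof readable without Bourbaki in hand, plus explicit verification of the Hausdorff clause ($\overline{\{1_G\}}^{\tau'}=\bigcap_{\Gamma<^\circ(G,\tau)}\Gamma=\{1_G\}$, and $T_1$ implies Hausdorff for topological groups) and of the role the thick-closedness hypothesis plays in keeping $(G,\tau')$ inside the category of Hausdorff groups where the Noohi completion of Definition \ref{Noohization-defn} lives.
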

\begin{proof} The first statement follows from \cite[Prop.\ III.1.1]{BourbakiGT} by taking the filter of subsets of $G$ containing an open subgroup. It is also proven in \cite[Lm.\ 1.13]{ElenaPaper} (the proposition is stated there in a particular case, but the proof works for any topological group). The second statement follows from the fact that for a discrete set $S$, any continuous morphism $(G,\tau)\rightarrow \Aut (S)$ factorizes through $i':(G,\tau)\rightarrow (G,\tau')$. 
\end{proof}

\begin{fact}(\cite[Prop.\ 7.1.5]{BhattScholze})\label{raikovandnoohi}
Let $G$ be a topological group such that its open subgroups form a basis of open neighbourhoods of $1_G$. Then $G^{\rmNoohi} \simeq \widehat{G}$, where $\widehat{G}$ denotes the Ra{\u \i}kov completion of $G$.
\end{fact}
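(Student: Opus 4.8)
This statement is quoted from Bhatt--Scholze, so I only indicate a proof. The plan is to exhibit the canonical map $\alpha_G : G \rarr G^{\rmNoohi}$ as a dense \emph{uniform} embedding into a Raĭkov-complete group, which by uniqueness of completions forces $G^{\rmNoohi}\cong\widehat G$. Three facts are already in hand: $G^{\rmNoohi}=\rmAut(F_G)$ is a Noohi group (\cite[Thm. 7.2.5(1)]{BhattScholze}) and is therefore Hausdorff and Raĭkov complete; the image of $\alpha_G$ is dense (Lemma \ref{dense-in-Noohization}); and, since a continuous homomorphism of topological groups that is a homeomorphism onto its image automatically identifies the two-sided uniformities of a topological subgroup, it suffices to prove that $\alpha_G$ is a \emph{topological embedding}. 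Once this is known, $G^{\rmNoohi}$ is a complete Hausdorff group containing $G$ as a dense uniform subgroup, hence is the Raĭkov completion $\widehat G$.

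To see that $\alpha_G$ is a topological embedding, I would argue on the level of open subgroups, using the equivalence $G-\rmSets\simeq G^{\rmNoohi}-\rmSets$ of Prop. \ref{alpha_G-equivalence}. Continuity of $\alpha_G$ shows that $\alpha_G^{-1}(V)<^\circ G$ for every open $V<^\circ G^{\rmNoohi}$. Conversely, given an open $U<^\circ G$, the $G$-set $G/U$ corresponds under the equivalence to a $G^{\rmNoohi}$-set with a base point whose $G^{\rmNoohi}$-stabilizer $V:=\rmStab_{G^{\rmNoohi}}([U])$ is open and satisfies $\alpha_G^{-1}(V)=U$. Thus the open subgroups of $G$ are exactly the $\alpha_G$-preimages of open subgroups of $G^{\rmNoohi}$; since, by hypothesis, open subgroups form a neighbourhood basis of $1_G$ in $G$ (and do so in $G^{\rmNoohi}$ because it is Noohi), the subspace topology on $\alpha_G(G)$ agrees with the topology of $G$. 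Injectivity is the special case of this discussion with base points: $\Ker\alpha_G\subseteq\bigcap_{U<^\circ G}U=\{1_G\}$, the last equality because $G$ is Hausdorff with open subgroups forming a basis.

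This completes the proof modulo the reduction in the first paragraph. The place where the hypothesis enters essentially is precisely this embedding: by the preceding lemma the assignment $G\mapsto G-\rmSets$ only sees the weaker topology $\tau'$ generated by finite intersections of two-sided translates of open subgroups, and the topology that $\alpha_G$ induces on $G$ (the initial topology from all $G$-sets) is always $\tau'$; so $\alpha_G$ is an embedding for the original topology $\tau$ exactly when $\tau=\tau'$, which holds precisely when open subgroups form a basis of $1_G$. The genuine subtlety, and the part I would be most careful about, is that open subgroups need not be normal, so the left and right uniformities of $G$ differ; one must check that the \emph{same} family of open subgroups controls both, which is what legitimises identifying the two-sided uniformity on $\alpha_G(G)$ with that of $G$ rather than merely matching topologies.
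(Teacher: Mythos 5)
Your proposal is correct, and there is nothing in the paper to compare it against line by line: the paper states this result as a quoted Fact from \cite[Prop. 7.1.5]{BhattScholze} and gives no proof, so what you have done is reconstruct the argument from ingredients the paper does supply, and the reconstruction is sound. The three pillars all check out: $G^{\rmNoohi}$ is Noohi, hence Hausdorff and Ra{\u \i}kov complete by the quoted characterization of Noohi groups; density of $\alpha_G(G)$ is exactly Lemma \ref{dense-in-Noohization}; and the embedding step, which is the real content, works as you say via stabilizers: for $U<^\circ G$, the $G^{\rmNoohi}$-set $\tilde{F}_G(G/U)$ from Prop. \ref{alpha_G-equivalence} has open point stabilizer $V=\rmStab_{G^{\rmNoohi}}([U])$, and $\alpha_G^*\circ\tilde{F}_G\simeq\id$ gives $\alpha_G^{-1}(V)=U$, so together with continuity of $\alpha_G$ and the fact that open subgroups form neighbourhood bases of the identity on both sides (by hypothesis on $G$, by Noohi-ness on $G^{\rmNoohi}$) the two neighbourhood filters match; injectivity follows from $\Ker(\alpha_G)\subseteq\bigcap_{U<^\circ G}U=\{1_G\}$, using the Hausdorff convention. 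The uniformity subtlety you flag at the end is legitimate but resolves itself by a standard observation: for any topological subgroup $H\leq K$, the two-sided (Ra{\u \i}kov) uniformity of $H$ is the restriction of that of $K$, because for $x,y\in H$ the elements $xy^{-1}$ and $x^{-1}y$ lie in $H$ automatically, so the entourage attached to $V\cap H$ is the trace of the entourage attached to $V$; no normality of open subgroups is needed. With that, density plus completeness of $G^{\rmNoohi}$ plus uniqueness of the Ra{\u \i}kov completion (\cite{AT}) finishes the proof exactly as you outline; this is also essentially the argument in \cite{BhattScholze} itself.
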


\begin{proposition}\label{Noohization-description}
Let $(G,\tau)$ be a topological group. Assume that $\{1_G\} \subset (G,\tau)$ is thickly closed (see Defn. \ref{definition-thick-closure}). Then there is a natural isomorphism of groups 
\begin{displaymath}
G^{\rmNoohi} \simeq \widehat{(G,\tau')},
\end{displaymath}
where $\tau'$ denotes the topology described in the previous lemma and $\widehat{\ldots}$ denotes the Ra{\u \i}kov completion.
\end{proposition}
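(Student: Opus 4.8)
The plan is to chain together the two results that immediately precede this proposition: the lemma constructing the weaker topology $\tau'$, and Fact \ref{raikovandnoohi}, which identifies the Noohi completion with the Ra\u{\i}kov completion for a group whose open subgroups form a basis of open neighbourhoods of the identity. Once the hypotheses of each are verified, the statement follows by composing the two isomorphisms, so the argument is essentially formal; the genuine work has already been done in the construction of $\tau'$.

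First I would apply the previous lemma. Since $\{1_G\} \subset (G,\tau)$ is assumed thickly closed, that lemma yields two facts: the topology $(G,\tau')$ is Hausdorff, and the identity map $i' : (G,\tau) \rightarrow (G,\tau')$ induces an isomorphism of Noohi completions $(G,\tau)^{\rmNoohi} \stackrel{\sim}{\rightarrow} (G,\tau')^{\rmNoohi}$, i.e. $G^{\rmNoohi} \simeq (G,\tau')^{\rmNoohi}$. The Hausdorffness is exactly the property needed for the Ra\u{\i}kov completion $\widehat{(G,\tau')}$ to be defined (and Hausdorff), so this is not a vacuous point.

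Next I would check that $(G,\tau')$ satisfies the hypothesis of Fact \ref{raikovandnoohi}, namely that its open subgroups form a basis of open neighbourhoods of $1_G$. By the very description of the basis $\mathcal{B}$ of $\tau'$ (take $m=1$ and $x_1=y_1=1$), every open subgroup $\Gamma$ of $(G,\tau)$ lies in $\mathcal{B}$ and is therefore an open subgroup of $(G,\tau')$; and the previous lemma asserts precisely that these subgroups form a basis of open neighbourhoods of $1_G$ in $(G,\tau')$. Hence the open subgroups of $(G,\tau')$ form such a basis, and Fact \ref{raikovandnoohi} applies to give $(G,\tau')^{\rmNoohi} \simeq \widehat{(G,\tau')}$. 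Composing with the isomorphism from the previous paragraph yields $G^{\rmNoohi} \simeq (G,\tau')^{\rmNoohi} \simeq \widehat{(G,\tau')}$.

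The naturality of the composite follows from the naturality of each factor: the first isomorphism is induced by $i'$ and is natural by the functoriality of the Noohi completion recorded in the observation before the universal property, and the second is the natural isomorphism of Fact \ref{raikovandnoohi}. I do not anticipate a serious obstacle here, since the substantive content (the construction of $\tau'$, the comparison of the categories of $G$-sets, and the resulting comparison of Noohi completions) is already packaged in the preceding lemma; the only point that requires genuine care is confirming that the open subgroups inherited from $\tau$ really do constitute a neighbourhood basis at $1_G$ for the \emph{weaker} topology $\tau'$, and this is guaranteed verbatim by that lemma.
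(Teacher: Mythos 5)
Your proposal is correct and follows exactly the paper's own argument: the paper proves the proposition by combining the preceding lemma (which gives $(G,\tau)^{\rmNoohi} \simeq (G,\tau')^{\rmNoohi}$ and Hausdorffness of $\tau'$) with Fact \ref{raikovandnoohi} applied to $(G,\tau')$, yielding $(G,\tau)^{\rmNoohi} \simeq (G,\tau')^{\rmNoohi} \simeq \widehat{(G,\tau')}$. Your write-up merely spells out the hypothesis-checking that the paper leaves implicit, which is a fine level of detail but not a different proof.
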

\begin{proof}
We combine Fact \ref{raikovandnoohi} with the last lemma and get $(G,\tau)^{\rmNoohi} \simeq (G,\tau')^{\rmNoohi} \simeq \widehat{(G,\tau')}$.
\end{proof}

\begin{obs}\label{setsofquotient}
Let $G$ be a topological group and $H$ a normal subgroup. Then the full subcategory of $G-\rmSets$ of objects on which $H$ acts trivially is equal to the full subcategory of $G-\rmSets$ on which its closure $\bar{H}$ acts trivially and it is equivalent to the category of $G/\bar{H} - \rmSets$. So, it is an infinite Galois category with the fundamental group equal to $(G/\bar{H})^{\rmNoohi}$.
\end{obs}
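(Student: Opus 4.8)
The plan is to verify the three assertions in turn, the heart of the matter being that discreteness of the underlying sets forces stabilizers to be open — hence closed — so that a $G$-set can only detect the closure $\bar{H}$ and never $H$ itself beyond its closure.

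First I would show that the two full subcategories coincide. Let $X$ be an object of $G-\rmSets$ on which $H$ acts trivially. For each $x \in X$ the orbit map $g \mapsto gx$ is continuous and $X$ is discrete, so the stabilizer $\rmStab_G(x)$ is an open subgroup of $G$; being open, it is also closed. Since $H$ acts trivially we have $H \subseteq \rmStab_G(x)$, and passing to the closure gives $\bar{H} \subseteq \rmStab_G(x)$. As this holds for every $x$, the subgroup $\bar{H}$ acts trivially on $X$. The reverse inclusion of subcategories is immediate from $H \subseteq \bar{H}$, so the two full subcategories are literally equal.

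Next I would produce the equivalence with $(G/\bar{H})-\rmSets$. As $H$ is normal, its closure $\bar{H}$ is a closed normal subgroup, so $G/\bar{H}$ is a Hausdorff topological group and the quotient map $\pi : G \rarr G/\bar{H}$ is continuous and open. Given a $G$-set $X$ on which $\bar{H}$ acts trivially, the action factors through a $G/\bar{H}$-action on the discrete set $X$; this action is continuous because for each $x$ the stabilizer $\rmStab_G(x)$ is open and contains $\bar{H} = \ker \pi$, so its image $\pi(\rmStab_G(x)) = \rmStab_{G/\bar{H}}(x)$ is open in $G/\bar{H}$. Conversely, any $(G/\bar{H})$-set pulls back along $\pi$ to a $G$-set, continuity being preserved under composition with $\pi$, on which $\bar{H} = \ker \pi$ acts trivially. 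These two assignments are mutually inverse and identify equivariant maps on both sides, giving the desired equivalence of categories.

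Finally, the conclusion follows from the formalism already developed. By the previous step the subcategory in question is equivalent to $(G/\bar{H})-\rmSets$, which, together with its forgetful functor, is a tame infinite Galois category (as observed for any topological group at the start of Section \ref{section-Noohization}). Its fundamental group is $\rmAut(F_{G/\bar{H}})$, which by Definition \ref{Noohization-defn} is exactly the Noohi completion $(G/\bar{H})^{\rmNoohi}$; equivalently, one may invoke Proposition \ref{alpha_G-equivalence} to identify $(G/\bar{H})-\rmSets$ with $(G/\bar{H})^{\rmNoohi}-\rmSets$. I do not anticipate a genuine obstacle: every step is a routine continuity check. The only points requiring care are, first, the passage from $H$ to $\bar{H}$, where one uses that open subgroups of a topological group are automatically closed so that discrete $G$-sets cannot distinguish $H$ from $\bar{H}$, and second, the openness of $\pi$, which is what guarantees that the induced $G/\bar{H}$-action stays continuous.
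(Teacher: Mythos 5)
Your proof is correct and is exactly the routine verification the paper intends: the statement appears there as an Observation with no written proof, and your two key points (stabilizers of points in a discrete $G$-set are open, hence closed, so a $G$-set cannot distinguish $H$ from $\bar{H}$; openness of $G \rarr G/\bar{H}$ keeps the factored action continuous) together with Definition \ref{Noohization-defn} are precisely what is being taken for granted. Nothing is missing.
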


\begin{lemma}\label{noohiandtopology}
Let $X$ be a connected, locally path-connected, semilocally simply-connected topological space and $x \in X$ a point. Let $F_x$ be the functor taking a covering space $Y \rarr X$ to the fibre $Y_x$ over the point $x \in X$. Then $(\mathrm{TopCov}(X),F_x)$ is a tame infinite Galois category and $\pi_1(\mathrm{TopCov}(X),F_x)=\pi_1^{\rmtop}(X,x)$, where we consider $\pi_1^{\rmtop}(X,x)$ with the discrete topology. Here, $\mathrm{TopCov}(X)$ denotes denotes the category of covering spaces of $X$.
\end{lemma}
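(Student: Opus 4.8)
The plan is to reduce the statement to the classical classification of covering spaces, combined with the Example above recording that $(G-\rmSets,F_G)$ is a tame infinite Galois category for any topological group $G$. Throughout, write $G = \pi_1^{\rmtop}(X,x)$ endowed with the discrete topology, so that a continuous $G$-action is nothing but an action of the abstract group $G$.

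First I would invoke the fundamental theorem of covering space theory. Under the standing hypotheses (connected, locally path-connected, semilocally simply-connected) the space $X$ admits a universal cover, and the monodromy construction yields an equivalence of categories
\begin{displaymath}
\Phi : \mathrm{TopCov}(X) \stackrel{\sim}{\rarr} G-\rmSets,
\end{displaymath}
sending a covering space $p:Y\rarr X$ to the fibre $Y_x = p^{-1}(x)$ with its monodromy $\pi_1^{\rmtop}(X,x)$-action. By construction the underlying set of $\Phi(Y)$ is exactly $F_x(Y)$, so $F_G\circ\Phi = F_x$ (up to the evident natural isomorphism). Thus $\Phi$ identifies the pair $(\mathrm{TopCov}(X),F_x)$ with $(G-\rmSets,F_G)$.

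Next I would transport the structure along $\Phi$. All the defining axioms of an infinite Galois category, as well as tameness, are invariant under equivalences of categories compatible with the fibre functors: the existence of colimits and finite limits, the decomposition of every object into its connected components, generation under colimits by a set of connected objects, and the faithfulness, conservativity and exactness properties of the fibre functor are all preserved, while connectedness is itself a categorical notion. Since $(G-\rmSets,F_G)$ is a tame infinite Galois category by the Example, the same holds for $(\mathrm{TopCov}(X),F_x)$. Because $\Phi$ respects the fibre functors, it moreover induces a topological isomorphism $\rmAut(F_x)\cong\rmAut(F_G)$ for the compact-open topologies, i.e. $\pi_1(\mathrm{TopCov}(X),F_x)\cong\pi_1(G-\rmSets,F_G)$. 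By the definition of the Noohi completion the latter group is $G^{\rmNoohi}$; since $G$ is discrete it is already a Noohi group, so $\alpha_G:G\rarr G^{\rmNoohi}$ is an isomorphism and we conclude $\pi_1(\mathrm{TopCov}(X),F_x) = \pi_1^{\rmtop}(X,x)$ with the discrete topology.

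The main obstacle is not conceptual but a matter of care: one must apply the classical equivalence in the full generality permitting disconnected covering spaces with arbitrary, possibly infinite, fibres, and verify that the monodromy functor genuinely commutes with the fibre functors and with the colimits and finite limits appearing in the definition. The hypotheses that $X$ be locally path-connected and semilocally simply-connected are precisely what guarantee the existence of the universal cover, hence the essential surjectivity of $\Phi$ and the validity of the whole reduction.
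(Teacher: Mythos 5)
Your proposal is correct and follows essentially the same route as the paper: invoke the classical monodromy equivalence $(\mathrm{TopCov}(X),F_x)\simeq(\pi_1^{\rmtop}(X,x)-\rmSets,F_{\pi_1^{\rmtop}(X,x)})$ compatible with the fibre functors, then conclude using the fact that discrete groups are Noohi. The paper cites Fulton/Hatcher/\c{C}akar for the equivalence and finishes in one line; your additional remarks on transporting the axioms and the Noohi completion merely make explicit what the paper leaves implicit.
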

\begin{proof}
We first claim that there is an isomorphism: $(\text{TopCov}(X),F_x) \simeq (\pi_1^{\rmtop}(X,x)-\rmSets,F_{\pi_1^{\rmtop}(X,x)})$. This is in fact a classical result in algebraic topology, which can be recovered from \cite[Ch. 13]{Fulton} or \cite[Ch. 1]{Hatcher} and is stated explicitly in \cite[Cor.\ 4.1]{Cakar}. This finishes the proof, as discrete groups are Noohi.
\end{proof}

\subsection{Dictionary between Noohi groups and $G-\rmSets$}\label{section-Dictionary}
\begin{definition}\label{definition-thick-closure}
Let $H \subset G$ be a subgroup of a topological group $G$. Then we define a ``thick closure'' $\bbH$ of $H$ in $G$ to be the intersection of all open subgroups of $G$ containing $H$, i.e.\ $\bbH:=\bigcap_{H \subset U <^\circ G}$U.
If a subgroup satisfies $H = \bbH$ we will call it thickly closed in $G$.
\end{definition}

In a topological group open subgroups are also closed, so a thickly closed subgroup is also an intersection of closed subgroups, so it is closed in $G$. Observe also that an arbitrary intersection of thickly closed subgroups is thickly closed. This justifies, for example, the existence of the smallest normal thickly closed subgroup containing a given group. In fact, we can formulate a more precise observation.
\begin{obs}\label{normalqpNc-closed}\label{thickly-closed}
Let $H<G$ be a subgroup of a topological group $G$. Then the smallest normal thickly closed subgroup of $G$ containing $H$ is equal to $\overline{\overline{(H^{nc})}}$, where $H^{nc}$ is the normal closure of $H$ in $G$.
\end{obs}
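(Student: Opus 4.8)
The plan is to prove the two inclusions separately. Write $N$ for the smallest normal thickly closed subgroup of $G$ containing $H$; this exists by the remark immediately preceding the statement, since an arbitrary intersection of normal thickly closed subgroups is again normal (intersection of normal subgroups) and thickly closed. I will show $N = \overline{\overline{(H^{nc})}}$, the thick closure of the normal closure $H^{nc}$.

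For the inclusion $N \subseteq \overline{\overline{(H^{nc})}}$, I would check that $\overline{\overline{(H^{nc})}}$ is itself a normal thickly closed subgroup containing $H$, so that minimality of $N$ forces the inclusion. It contains $H$ since $H \subseteq H^{nc} \subseteq \overline{\overline{(H^{nc})}}$. It is thickly closed because, for any subgroup $M \subseteq G$, an open subgroup $U$ contains $M$ if and only if it contains $\overline{\overline{M}}$; hence the family of open subgroups cut out by $M$ and by $\overline{\overline{M}}$ coincide, and the thick-closure operation is idempotent, i.e.\ $\overline{\overline{(\overline{\overline{M}})}} = \overline{\overline{M}}$.

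The only point needing a genuine argument is normality of $\overline{\overline{(H^{nc})}}$, and this is the main (mild) obstacle. Here I would use that for each $g \in G$ the conjugation map $c_g : x \mapsto g x g^{-1}$ is a topological automorphism of $G$ and hence permutes the open subgroups of $G$. Since $H^{nc}$ is normal, $c_g$ carries the collection $\{\,U <^\circ G : H^{nc} \subseteq U\,\}$ bijectively onto itself; applying $c_g$ to the intersection defining the thick closure then gives $g\,\overline{\overline{(H^{nc})}}\,g^{-1} = \overline{\overline{g\,H^{nc}\,g^{-1}}} = \overline{\overline{(H^{nc})}}$, so $\overline{\overline{(H^{nc})}}$ is normal.

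For the reverse inclusion $\overline{\overline{(H^{nc})}} \subseteq N$, I would first record monotonicity of the thick closure: if $A \subseteq B$ then $\{\,U <^\circ G : B \subseteq U\,\} \subseteq \{\,U <^\circ G : A \subseteq U\,\}$, so intersecting over the smaller family gives the larger set, whence $\overline{\overline{A}} \subseteq \overline{\overline{B}}$. Now $N$ is normal and contains $H$, so by minimality of the normal closure $H^{nc} \subseteq N$; applying thick closure and using that $N$ is thickly closed (so $N = \overline{\overline{N}}$) yields $\overline{\overline{(H^{nc})}} \subseteq \overline{\overline{N}} = N$. Combining the two inclusions gives $N = \overline{\overline{(H^{nc})}}$, as desired.
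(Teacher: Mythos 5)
Your proof is correct: both inclusions are established cleanly, and the three facts you isolate (idempotence and monotonicity of the thick closure, plus normality of $\overline{\overline{(H^{nc})}}$ via conjugation permuting the open subgroups containing the normal subgroup $H^{nc}$) are exactly what is needed. The paper states this as an Observation with no written proof, and your argument is precisely the routine verification it leaves implicit, following the remark that intersections of thickly closed subgroups are thickly closed.
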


\begin{obs}
  Let $G$ be a topological group such that the open subgroups form a local base at $1_G$. Let $W \subset G$ be a subset. Then the topological closure of $W$ can be written as $\overline{W} = \cap_{V <^\circ G} WV$.
  \end{obs}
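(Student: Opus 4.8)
The plan is to reduce the computation of $\overline{W}$ to the standard neighbourhood criterion for closure, and then convert that criterion into the stated set-theoretic identity, using that each open subgroup is closed under inversion.

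First I would record that, since $\{V : V <^\circ G\}$ is by hypothesis a local base at $1_G$ and left translation $x \mapsto gx$ is a homeomorphism of $G$, the family $\{gV : V <^\circ G\}$ is a local base at an arbitrary point $g \in G$; in particular each $gV$ is open. Consequently $g \in \overline{W}$ if and only if every such basic open neighbourhood of $g$ meets $W$, i.e. $gV \cap W \neq \emptyset$ for every open subgroup $V$.

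Next I would unwind the condition $gV \cap W \neq \emptyset$. It says there exist $v \in V$ and $w \in W$ with $gv = w$, equivalently $g = wv^{-1}$. Because $V$ is a subgroup we have $v^{-1} \in V$, so this is the same as $g \in WV$; conversely, any $g = wv \in WV$ gives $gv^{-1} = w \in gV \cap W$. Combining this with the previous step, $g \in \overline{W}$ if and only if $g \in WV$ for every $V <^\circ G$, which is precisely $g \in \bigcap_{V <^\circ G} WV$.

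There is no genuine obstacle here; this is a routine verification. The only two points deserving attention are that the hypothesis on local bases is exactly what lets us test membership in $\overline{W}$ on the cosets $gV$ alone, and that the equivalence $gV \cap W \neq \emptyset \iff g \in WV$ relies on the subgroup property $V = V^{-1}$ (for a general neighbourhood one would instead obtain only $g \in WV^{-1}$).
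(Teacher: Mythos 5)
Your proof is correct. The paper states this as an unproved observation, and your argument is exactly the standard verification it leaves implicit: translate the local base at $1_G$ to a local base $\{gV : V <^\circ G\}$ at $g$, then use $V = V^{-1}$ to convert the neighbourhood criterion $gV \cap W \neq \emptyset$ into the coset condition $g \in WV$ — both of the points you flag (the local-base hypothesis and the subgroup symmetry) are precisely what make the identity hold.
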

  
  The following lemma can be found on p.79 of \cite{Lepage}.
  \begin{lemma}\label{lepage-lemma}
  Let $G$ be a topological group such that the open subgroups form a basis of neighbourhoods of $1_G$. Let $H \lhd G$ be a normal subgroup. Then 
  \begin{displaymath}
  \overline{H} = \bbH
  \end{displaymath}
  i.e.\ the usual topological closure and the thick closure coincide.
  \end{lemma}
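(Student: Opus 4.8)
The plan is to prove the two inclusions $\overline{H} \subseteq \bbH$ and $\bbH \subseteq \overline{H}$ separately; only the second will use normality of $H$.

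For $\overline{H} \subseteq \bbH$, I would invoke the remark made right after Definition \ref{definition-thick-closure}, that every open subgroup of $G$ is also closed. Hence each open subgroup $U$ with $H \subseteq U$ is a closed set containing $H$, so it contains the topological closure $\overline{H}$. Taking the intersection over all such $U$ gives $\overline{H} \subseteq \bbH$ by the definition of the thick closure. This step is purely formal and works for an arbitrary subgroup $H$, normal or not.

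For the reverse inclusion I would start from the description of the topological closure furnished by the preceding observation: since the open subgroups form a local base at $1_G$, one has $\overline{H} = \bigcap_{V <^\circ G} HV$. It therefore suffices to show $\bbH \subseteq HV$ for every open subgroup $V <^\circ G$. Fix such a $V$. The set $HV = \bigcup_{h \in H} hV$ is open, being a union of translates of the open set $V$, and it contains $H = H\cdot 1$. The crucial point is that $HV$ is in fact a subgroup: because $H$ is normal we have $VH = HV$, whence $HV \cdot HV = H(VH)V = H(HV)V = HV$ gives closure under multiplication and $(HV)^{-1} = V^{-1}H^{-1} = VH = HV$ gives closure under inversion. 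Thus $HV$ is an open subgroup of $G$ containing $H$, so by the definition of the thick closure $\bbH \subseteq HV$. Intersecting over all open $V$ yields $\bbH \subseteq \bigcap_{V} HV = \overline{H}$.

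I do not expect a genuine obstacle beyond isolating where normality is used. The only place it enters is in showing that $HV$ is a subgroup, and this is precisely the step that fails for a non-normal $H$: without normality $HV$ need only be an open neighbourhood of $H$ rather than an open subgroup, so the thick closure — being an intersection of \emph{subgroups} — could a priori be strictly larger than $\overline{H}$. The remainder is a direct comparison of the two intersection formulas $\overline{H} = \bigcap_{V} HV$ and $\bbH = \bigcap_{H \subseteq U <^\circ G} U$.
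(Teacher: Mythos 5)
Your proof is correct and is essentially the paper's own argument: both rely on the formula $\overline{H} = \bigcap_{V <^\circ G} HV$ and on the key fact that normality of $H$ makes each $HV$ an open subgroup containing $H$ (hence containing $\bbH$), with the reverse inclusion following because open subgroups are closed. You have merely written out in detail the verification that $HV$ is a subgroup, which the paper states in one line.
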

  \begin{proof}
  We compute that $\overline{H} = \cap_{V <^\circ G} HV \stackrel{(*)}{\supset} \cap_{H < U <^\circ G} U = \bbH \supset \overline{H}$. The inclusion $(*)$ follows from the fact that $HV$ is an (open) subgroup of $G$ as $H$ is normal.
\end{proof}

Let us make an easy observation, that will be useful to keep in mind while reading the proof of the technical proposition below.
\begin{obs}
Let $U<G$ be an open subgroup of a topological group and let $g_0 \in G$. Then the mapping $G/g_0Ug_0^{-1} \rarr G/U$ given by $[gg_0Ug_0^{-1}] \mapsto [gg_0U]$ is an isomorphism of $G$-sets.

Given open subgroups $U,V <G$ and some surjective map of $G$-sets $\phi:G/V \epirarr G/U$ we can assume that it is the standard quotient map (i.e.\ $V \subset U$) up to replacing $U$ by a conjugate open subgroup (more precisely by $g_0Ug_0^{-1}$, where $g_0$ is such that $\phi([V])=[g_0U]$).
\end{obs}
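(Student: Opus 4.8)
The plan is to treat the two assertions in turn; both are formal consequences of the orbit--stabilizer description of $G$-sets, and the only thing requiring care is the bookkeeping of left versus right cosets.

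For the first assertion, I would define $\Psi : G/g_0Ug_0^{-1} \rarr G/U$ by right translation by $g_0$, namely $\Psi([g\,g_0Ug_0^{-1}]) = [gg_0U]$, and verify directly that it is a well-defined, $G$-equivariant bijection. Well-definedness and injectivity reduce to the single equivalence $gg_0U = g'g_0U \iff (g')^{-1}g \in g_0Ug_0^{-1}$, which is exactly the condition for $[g\,g_0Ug_0^{-1}] = [g'\,g_0Ug_0^{-1}]$; surjectivity is immediate, since any $[yU]$ is the image of $[yg_0^{-1}\cdot g_0Ug_0^{-1}]$. Equivariance is the crucial point and holds because right multiplication by $g_0$ commutes with the left $G$-action: for $a\in G$ one has $a\cdot[g\,g_0Ug_0^{-1}] = [ag\,g_0Ug_0^{-1}]$, whose image $[agg_0U]$ equals $a\cdot[gg_0U]$. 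Since $U$ is open and conjugation is a homeomorphism, the conjugate $g_0Ug_0^{-1}$ is open too, so both quotients are genuine objects of $G-\rmSets$; and as these are discrete, the $G$-equivariant bijection $\Psi$ is automatically an isomorphism in $G-\rmSets$.

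For the second assertion, I would use that a morphism out of the transitive object $G/V$ is determined by the image of the base point $[V]$. Writing $\phi([V]) = [g_0U]$, equivariance forces $[g_0U]$ to be fixed by $V$, i.e. $vg_0U = g_0U$ for all $v\in V$, which is precisely the inclusion $V \subset g_0Ug_0^{-1}$; note that surjectivity of $\phi$ is automatic here, since the image of a $G$-map into the transitive $G$-set $G/U$ is a nonempty $G$-stable subset, hence all of $G/U$. Now I would compose $\phi$ with the inverse of the isomorphism $\Psi$ from the first part. As $\Psi([g_0Ug_0^{-1}]) = [g_0U]$, the composite $\Psi^{-1}\circ\phi : G/V \rarr G/g_0Ug_0^{-1}$ sends the base point $[V]$ to the base point $[g_0Ug_0^{-1}]$, and since $V\subset g_0Ug_0^{-1}$ it is exactly the standard quotient map. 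Renaming the conjugate $g_0Ug_0^{-1}$ as $U$ then gives the claimed reduction.

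I do not expect a genuine obstacle: the whole observation is formal. The only place to stay alert is the direction of translation in the first part --- one must translate on the right, so as to commute with the left $G$-action, and keep track of the fact that it is $g_0Ug_0^{-1}$ (the stabilizer of $\phi([V])$) that arises, not some other conjugate; translating on the wrong side or using the wrong conjugate yields a map that fails to be $G$-equivariant. Everything else (openness of the conjugate subgroup, automatic continuity on discrete sets, automatic surjectivity onto a transitive target) is routine.
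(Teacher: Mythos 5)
Your proposal is correct, and it is exactly the routine coset bookkeeping the paper has in mind: the paper states this as an Observation without proof, treating it as formal, and your verification (right translation by $g_0$ to get equivariance, then reading off $V \subset g_0Ug_0^{-1}$ from the stabilizer of $\phi([V])$) is the intended argument. Nothing is missing; the points you flag as needing care (translating on the correct side, openness of the conjugate) are precisely the right ones.
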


\begin{rmk}
A map $Y' \rarr Y$ in an infinite Galois category $(\calC,F)$ is an epimorphism/monomorphism if and only if the map $F(Y') \rarr F(Y)$ is surjective/injective. Similarly $Y$ is an initial object if and only if $F(Y)=\emptyset$ and so on. The proofs of those facts are the same as the proofs in \cite[Tag 0BN0]{StacksProject}. This justifies using words ``injective'' or ``surjective'' when speaking about maps in $(\calC,F)$.
\end{rmk}

Recall the following fact.
\begin{obs*}
Let $f: G' \rarr G$ be a surjective map of topological groups. Then the induced morphism $G'/\ker(f) \rarr G$ is an isomorphism if and only if $f$ is open. In such case, we say that $f$ is a quotient map. In the  language of \cite[III.2.8]{BourbakiGT} we would call $f$ \emph{strict} and surjective.
\end{obs*}

\begin{defn}
We will say that an object of a tame infinite Galois category is \emph{completely decomposed} if it is a (possibly infinite) disjoint union of final objects.
\end{defn}

\begin{proposition}\label{dictionary}
  Let $G'' \stackrel{h'}\rightarrow G' \stackrel{h}\rightarrow G$ be maps between Noohi groups and $\calC_{G''} \stackrel{H'}\leftarrow \calC_{G'} \stackrel{H}\leftarrow \calC_G$ the corresponding maps of the infinite Galois categories. Then the following hold:
  
  \begin{enumerate}[label={(\arabic*)}]
  \item The map $h': G'' \rightarrow G'$ is a topological embedding if and only if for every connected object $X$ in $\calC_{G''}$, there exist connected objects $X' \in \calC_{G''}$ and $Y \in \calC_{G'}$ and maps $X' \twoheadrightarrow X$ and $X' \hookrightarrow H'(Y)$.
  
  \item \label{denseimageequivalentconditions} The following are equivalent
  \begin{enumerate}
  \item The morphism $h:G' \rarr G$ has dense image.
  \item The functor $H$ maps connected objects to connected objects.
  \item The functor $H$ is fully faithful.
  \end{enumerate}

  \item \label{dictionary-normal} The thick closure of $\Image (h') \subset G'$ is normal if and only if for every connected object $Y$ of $\calC_{G'}$ such that $H'(Y)$ contains a final object of $\calC_{G''}$, $H'(Y)$ is completely decomposed.
  
  \item $h'(G'') \subset \Ker(h)$ if and only if the composition $H' \circ H$ maps any object to a completely decomposed object.
  
  \item \label{dictionary-kernel} Assume that $h'(G'') \subset \ker(h)$ and that $h:G' \rarr G$ has dense image. Then the following conditions are equivalent:
  
  \begin{enumerate}
  \item the induced map $(G'/\ker(h))^{\rmNoohi} \rarr G$ is an isomorphism and the smallest normal thickly closed subgroup containing $\Image(h')$ is equal to $\ker(h)$,
  \item for any connected $Y \in \calC_{G'}$ such that $H'(Y)$ is completely decomposed, $Y$ is in the essential image of $H$,
  \item the induced map $(G'/\ker(h))^{\rmNoohi} \rarr G$ is an isomorphism and for any connected $Y \in \calC_{G'}$ such that $H'(Y)$ is completely decomposed, there exists $Z \in \calC_G$ and an epimorphism $H(Z) \twoheadrightarrow Y$. 
  \end{enumerate}
  \end{enumerate}
  \end{proposition}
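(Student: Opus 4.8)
My plan is to reduce all five assertions to a single translation dictionary and then read each statement off it. A continuous map $h\colon G'\rarr G$ of Noohi groups induces $H\colon\calC=G-\rmSets\rarr\calC'=G'-\rmSets$ by restriction of scalars, and likewise $h'$ induces $H'$; the point is that $H,H'$ are the identity on underlying sets (they commute with the fibre functors). Hence a map in these categories is an epimorphism/monomorphism iff it is surjective/injective on sets, the connected objects are exactly the $G/U$ with $U<^\circ G$, the final object is $G/G$, and an object is completely decomposed iff the group acts trivially on it. I would record four standing facts used throughout: open subgroups form a neighbourhood basis at $1$; $h(G')$ is dense iff $h(G')U=G$ for every $U<^\circ G$ (the computation that the $G'$-orbit of $[U]$ in $H(G/U)$ is all of $G/U$ precisely when $h(G')U=G$); the $G$-action on $\bigsqcup_{U<^\circ G}G/U$ is faithful (as $\bigcap_{U<^\circ G}U=\{1\}$); and, for a normal subgroup, thick closure equals topological closure (Lemma \ref{lepage-lemma}).

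For (1), a monomorphism from a connected $G''/W''$ into $H'(G'/V')$ realises $G''/W''$ as a single $G''$-orbit of some $g'V'$, whose stabiliser is $(h')^{-1}(g'V'g'^{-1})$; so such a mono exists iff $W''$ is a conjugate of $(h')^{-1}(V')$ for some $V'<^\circ G'$, while an epimorphism $G''/W''\epirarr G''/U''$ exists iff $W''$ lies in a conjugate of $U''$. Thus the stated condition says exactly that every $U''<^\circ G''$ contains some $(h')^{-1}(V')$ up to conjugacy, i.e. the subgroups $(h')^{-1}(V')$ form a neighbourhood basis at $1_{G''}$; since their intersection is $\ker h'$, this simultaneously forces injectivity and matches the subspace topology, which is the assertion that $h'$ is a topological embedding. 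For (2), the orbit computation gives (a)$\Leftrightarrow$(b) directly (as $H$ preserves coproducts it suffices to test on connected objects). For (a)$\Rightarrow$(c) I would prove fullness by noting that for fixed $s$ the set $\{g\in G: f(gs)=gf(s)\}$ is clopen (the equaliser of two continuous maps into a discrete set) and contains the dense $h(G')$, hence is all of $G$; faithfulness is automatic. For (c)$\Rightarrow$(b) I would use the two-point trivial object $P=\ast\sqcup\ast$: if $H(G/U)$ had $r\ge2$ components then $\lvert\Hom(H(G/U),H(P))\rvert=2^{r}>2=\lvert\Hom(G/U,P)\rvert$, contradicting fullness.

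For (4), $H'\circ H$ is restriction along $h\circ h'$, so "every object maps to a completely decomposed one" means $h(h'(G''))$ acts trivially on every $G$-set, which by the faithfulness fact is equivalent to $h(h'(G''))=\{1\}$, i.e. $h'(G'')\subseteq\Ker(h)$. For (3), for connected $Y=G'/U'$ the object $H'(Y)$ has a final summand iff $\Image(h')$ lies in some conjugate of $U'$, and is completely decomposed iff $\Image(h')\subseteq\mathrm{core}(U')=\bigcap_{x}xU'x^{-1}$. So the stated condition reads: whenever $\Image(h')$ lies in a conjugate of $U'$ it lies in $\mathrm{core}(U')$. If this holds, every open $U'\supseteq\Image(h')$ already contains $\mathrm{core}(U')$, and since then each conjugate $xU'x^{-1}$ also contains $\Image(h')$ one gets $\overline{\overline{\Image(h')}}\subseteq\mathrm{core}(U')$; hence $\overline{\overline{\Image(h')}}=\bigcap_{U'\supseteq\Image(h')}\mathrm{core}(U')$ is an intersection of normal subgroups, so it is normal. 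Conversely, normality of the thick closure shows that $\Image(h')\subseteq g'U'g'^{-1}$ makes $\overline{\overline{\Image(h')}}$ fixed under conjugation, hence contained in every conjugate of $U'$.

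Part (5) is where I expect the real work. Write $K=\Ker(h)$ and let $N$ be the smallest normal thickly closed subgroup containing $\Image(h')$, which equals $\overline{\overline{(\Image(h'))^{nc}}}$ by Obs.~\ref{thickly-closed}; since $K$ is normal and closed it is thickly closed (Lemma \ref{lepage-lemma}), and the hypothesis $h'(G'')\subseteq K$ gives $N\subseteq K$. By the computation in (3), the connected $Y$ with $H'(Y)$ completely decomposed are exactly the $N$-trivial ones, so that subcategory is $\{N\text{-trivial }G'\text{-sets}\}\simeq(G'/N)-\rmSets$ by Obs.~\ref{setsofquotient}. Factoring $h$ through $G'\rarr(G'/K)^{\rmNoohi}\xrightarrow{\bar h}G$ (universal property of the Noohi completion) identifies $H$ with restriction along $\bar h$ followed by $(G'/K)^{\rmNoohi}-\rmSets\simeq\{K\text{-trivial }G'\text{-sets}\}\hookrightarrow G'-\rmSets$; since $\bar h$ has dense image, part (2) shows $\bar h$ is an isomorphism iff the essential image of $H$ is all of $\{K\text{-trivial }G'\text{-sets}\}$. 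Now the essential image of $H$ lies in $\{K\text{-trivial}\}\subseteq\{N\text{-trivial}\}$, and (b) says it contains every connected $N$-trivial object, hence (taking coproducts) equals $\{N\text{-trivial}\}$, squeezing both inclusions to equalities and forcing both $N=K$ (two normal thickly closed subgroups with the same trivial-action subcategory coincide, as each is recovered as $\bigcap\{U'<^\circ G': M\subseteq U'\}$) and $\bar h$ an isomorphism, i.e. (a); the converse reverses this. Finally (b)$\Rightarrow$(c) is immediate (an isomorphism is an epimorphism), and for (c)$\Rightarrow$(b) one notes that a quotient of a $K$-trivial $G'$-set is $K$-trivial, so the epimorphism $H(Z)\epirarr Y$ makes $Y$ a $K$-trivial object, which together with $\bar h$ an isomorphism places $Y$ in the essential image. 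The main obstacle is precisely this bookkeeping: correctly identifying the essential image of $H$ with the $K$-trivial subcategory through the Noohi completion, and organising the squeeze $\{K\text{-trivial}\}\subseteq\{N\text{-trivial}\}=\operatorname{ess.\,im}H$ so that it yields $N=K$ and the isomorphism $(G'/K)^{\rmNoohi}\xrightarrow{\sim}G$ at once.
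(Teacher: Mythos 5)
Your proposal is correct. For parts (1), (4) and (5) it follows essentially the paper's own route: in (1) you compress the paper's argument through the stabiliser dictionary (the paper writes out in detail the passage from ``the subgroups $(h')^{-1}(V')$ are cofinal among the open subgroups of $G''$'' to ``topological embedding'', which you leave as a one-line remark; it is worth spelling out, since it uses the injectivity just obtained to transport the basis at $1_{G''}$ to all of $G''$), and your squeeze in (5), $\{N\text{-trivial}\}\subseteq \operatorname{ess.im}(H)\subseteq\{K\text{-trivial}\}\subseteq\{N\text{-trivial}\}$, is a categorical repackaging of the paper's first proof of (b)$\Rightarrow$(a): your recovery of a normal thickly closed subgroup as $\bigcap\{U'<^\circ G' : M\subseteq U'\}$ is exactly the paper's closing step, and, like the paper (which reduces the isomorphism claim to ``$H$ induces an equivalence $G'/\ker(h)-\rmSets\simeq G-\rmSets$''), you rely tacitly on the fact that an equivalence compatible with the fibre functors forces the inducing map of Noohi groups to be an isomorphism. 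The genuine divergences are two. In (2) you prove fullness directly from density: $\{g\in G : f(gs)=gf(s)\}$ is clopen (the equaliser of two continuous maps into a discrete set) and contains the dense subgroup $h(G')$, hence is all of $G$; the paper instead proves (b)$\Rightarrow$(c) by realising the graph of the morphism inside $H(S\times T)$ as the pullback of a connected component of $S\times T$ and inverting a projection by conservativity. Your argument is shorter and avoids the component bookkeeping, at the mild cost of closing the cycle as (b)$\Rightarrow$(a)$\Rightarrow$(c)$\Rightarrow$(b) rather than proving (b)$\Rightarrow$(c) directly. In (3) your normal-core reformulation, $\overline{\overline{\Image(h')}}=\bigcap_{U'\supseteq\Image(h')}\mathrm{core}(U')$, is tidier than the paper's conjugate-chasing and makes normality transparent; just fix the slip ``every open $U'\supseteq\Image(h')$ already contains $\mathrm{core}(U')$'', which should say that $\mathrm{core}(U')$ contains $\Image(h')$ (your subsequent use is the correct statement).
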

  \begin{proof}
  \begin{enumerate}[label={(\arabic*)}]
  \item The proof is virtually the same as for usual Galois categories, but there every injective map is automatically a topological embedding (as profinite groups are compact). Assume that $G'' \rightarrow G'$ is a topological embedding. Let $X \in \calC_{G''}$ be connected and write $X \simeq G''/U$ for an open subgroup $U<G''$. Then there exists an open subset $\widetilde{V} \subset G'$ such that $\widetilde{V} \cap G''=U$ (as $G'' \rightarrow G'$ is a topological embedding) and an open subgroup $V < G'$ such that $V \subset \widetilde{V}$ (as $G'$ is Noohi). Denote $W=V \cap G''$. Then $X':=G''/W \twoheadrightarrow X$ and $X' \hookrightarrow H'(G'/V)$, so we conclude by setting $Y:=G'/V$. For the other implication: we want to prove that $G'' \rightarrow G'$ is a topological embedding under the assumption from the statement. It is enough to check that the set of preimages ${h'}^{-1}(\calB)$  of some basis $\calB$ of opens of $e_{G'}$ forms a basis of opens of $e_{G''}$. Indeed, assume that 
  this is the case. Firstly, observe that it implies that $h'$ is injective, as both $G''$ and $G'$ are Hausdorff (and in particular $T_0$). If $U$ is an open subset of $G''$, then we can write $U=\bigcup g_\alpha'' U_\alpha$ for some $g_\alpha'' \in G''$ and $U_\alpha \in {h'}^{-1}(\calB)$. We can write $U_\alpha=h'^{-1}(V_\alpha)$ for some $V_\alpha \in \calB$. Then $V=\bigcup h'(g_\alpha'') V_\alpha$ satisfies ${h'}^{-1}(V)=U$ because ${h'}^{-1}(h'(g_\alpha'') V_\alpha)=g_\alpha'' U_\alpha$ (by injectivity of $h'$). So this will prove that the topology on $G''$ is induced from $G'$ via $h'$. Let $\calB=\{U<G|U \textrm{ is open}\}$. This is a basis of opens of $e_{G'}$ (as $G'$ is Noohi). We want to check that $h'^{-1}(\calB)$ is a basis of opens of $e_{G''}$. As open subgroups of $G''$ form a basis of opens of $e_{G''}$ it is enough to show that for any open subgroup $U<G''$ there exists an open subgroup $V<G'$ such that $h'^{-1}(V) \subset U$. From the assumption we know that there exist open subgroups $\widetilde{U}<G''$ and $V<G'$ such that $G''/\widetilde{U} \epirarr G''/U$ and $G''/\widetilde{U} \monorarr  G'/V$. The surjectivity of the first map means that we can assume (up to replacing $\widetilde{U}$ by a conjugate) $\widetilde{U}\subset U$. The injectivity of the second means that we can assume (up to replacing $V$ by a conjugate) that $h'^{-1}(V) \subset \widetilde{U}$. Indeed, the injectivity implies that if $h'(g'')V=V$, then $g''\widetilde{U}=\widetilde{U}$ which translates immediately to $h'^{-1}(V) \subset \widetilde{U}$. So we have also $h'^{-1}(V) \subset U$, which is what we wanted to prove.
  
  \item The equivalence between (a) and (b) follows from the observation that a map between Noohi groups $G' \rightarrow G$ has a dense image if and only if for any open subgroup $U$ of $G$, the induced map on sets $G' \rightarrow G/U$ is surjective. Here, we only use that open subgroups form a basis of open neighbourhoods of $1_G \in G$.

  Now, the functor $H$ is automatically faithful and conservative (because $F_{G'}\circ H=F_G$ is faithful and conservative). Assume that (b) holds. Let $S,T \in G-\rmSets$ and let $g \in \Hom_{G'-\rmSets}(H(S),H(T))$. We have to show that $g$ comes from $g_0\in \Hom_{G-\rmSets}(S,T)$.  We can and do assume $S$, $T$ connected for that. Let $\Gamma_g \subset H(S)\times H(T)$ be the graph of $g$. It is a connected subobject. As $H(S)\times H(T)=H(S\times T)$, the assumption (b) implies that each connected component of $H(S)\times H(T)$ is the pullback of a connected component $\Gamma_0$ of $S\times T$. Thus, $\Gamma_g$ is the pullback of some $\Gamma_0 \subset S \times T$. By conservativity of $H$, the projection $p_{\Gamma_0}:\Gamma_0 \rarr S$ is an isomorphism, as this is true for $p_{\Gamma_g} : \Gamma_g \rarr H(S)$. The composition $q_{\Gamma_0}\circ p_{\Gamma_0}^{-1}: S \rarr T$ maps via $H$ to $g$.
  
  Conversely, assume (c) holds. Let $S \in G-\rmSets$ be connected. We want to show that $H(S)$ is connected. Suppose on the contrary that $H(S) = A \sqcup B$ with $A,B \in G'-\rmSets$. Let $T = \bullet \sqcup \bullet \in G-\rmSets$ be a two-element set with a trivial $G$-action. Then $\Hom_{G-\rmSets}(S,T)$ has precisely two elements, while $\Hom_{G'-\rmSets}(H(S), H(T))=\Hom_{G'-\rmSets}(A\sqcup B, \bullet \sqcup \bullet)$ has at least four.

  \item Assume first that the thick closure of $\rmim(h')$ is normal. Let $Y=G'/U$ be an element of $\calC_{G'}$ whose pull-back to $G'' - \rmSets$ contains the final object. This means that $G''$ fixes one of the classes, let's say $[g'U]$. This is equivalent to $g'^{-1}h'(G'')g'$ fixing $[U]$, i.e.\ $g'^{-1}h'(G'')g' \subset U$. But this implies immediately that $\overline{\overline{(g'^{-1}h'(G'')g')}} \subset U$. Let $\widetilde{g} \in G'$ be any element. We have $\overline{\overline{(g'^{-1}h'(G'')g')}} = g'^{-1}\overline{\overline{h'(G'')}}g'=\overline{\overline{h'(G'')}}=\widetilde{g}^{-1}\overline{\overline{h'(G'')}}\widetilde{g}$ from the assumption that $\overline{\overline{h'(G'')}}$ is normal. So $\widetilde{g}^{-1}h'(G'') \widetilde{g} \subset \widetilde{g}^{-1}\overline{\overline{h'(G'')}}\widetilde{g}\subset U$ and we conclude that $h'(G'')$ fixes an arbitrary class $[\widetilde{g}U]$. This shows that $G'/U$ pulls back to a completely decomposed object.
  
  The other way round: assume that for every connected object $Y$ of $\calC_{G'}$ such that $H'(Y)$ contains a final object, $H'(Y)$ is completely decomposed. Let $U$ be an open subgroup of $G'$ containing $h'(G'')$. Then $G''$ fixes $[U] \in G/U$ and so, by assumption, fixes every $[g'U] \in G/U$. This implies that for any $g' \in G'$ $g'^{-1}h'(G'')g' \subset U$ which easily implies that also $h'(G'')^{nc} \subset U$. As this is true for any $U$ containing $h'(G'')$ we get that $\overline{\overline{h'(G'')}}=\overline{\overline{(h'(G'')^{nc})}}$ and the last group is the smallest normal thickly closed subgroup of $G'$ containing $h'(G'')$ (Observation \ref{normalqpNc-closed}).
  \item The same as for usual Galois categories, we use that $\cap_{U <^\circ G} U = 1_G$.
  
  \item \begin{itemize}
  \item (b) $\Rightarrow$ (c): Assume (b). We only need to show, that $(G'/\ker(h))^{\rmNoohi} \rarr G$ is an isomorphism. This is equivalent to showing that $H$ induces an equivalence $G'/\ker(h)-\rmSets \simeq G-\rmSets$. As $G'/\ker(h) - \rmSets \simeq \{S \in G'-\rmSets|\ker(h) \text{ acts trivially on } S\} \subset \{S \in G-\rmSets | G'' \text{ acts trivially on } S \}$, the assumption of (b) implies that the functor $G-\rmSets \rarr G'/\ker(h) - \rmSets$ is essentially surjective. By the global assumption that $G' \rarr G$ has dense image, it is fully faithful (see \ref{denseimageequivalentconditions}).
  \item (c) $\Rightarrow$ (b): Assume (c). Let $Y \in \calC_{G'}$ be connected and such that $H'(Y)$ is completely decomposed. We have $Z \in \calC_G$ and an epimorphism $H(Z) \twoheadrightarrow Y$. As $\ker(h)$ acts trivially on $H(Z)$, we conclude that it also acts trivially on $Y$. Thus, by abuse of notation, $Y \in G'/\ker(h)-\rmSets$. But $G'/\ker(h)-\rmSets \simeq (G'/ \ker(h))^{\rmNoohi} - \rmSets \simeq G -\rmSets$ from the assumption. Thus, we see that $Y$ is in the essential image of $H$.

  \item (b) $\Rightarrow$ (a): Assume (b). We give two proofs of this fact.

  First proof: We have proven above that (b) $\Rightarrow$ $(G'/\ker(h))^{\rmNoohi} \simeq G$. Let $N$ be the smallest normal thickly closed subgroup of $G'$ containing $h'(G'')$. Observe that $N \subset \ker h$ (as $\ker(h)$ is thickly closed). Let $U$ be an open subgroup containing $N$. We want to show that $U$ contains $\ker h$. This will finish the proof as both $N$ and $\ker h$ are thickly closed. Write $Y=G'/U$. Observe that $G'/U$ pulls back to a completely decomposed $G''$-set if and only if for any $g' \in G'$ there is $g' h'(G'') {g'}^{-1} \subset U$. Indeed, $h'(G'')$ fixes $[g'U] \in G'/U$ if and only if $g'h'(G''){g'}^{-1}$ fixes $[U]$. So $N \subset U$ implies that $Y$ pulls back to a completely decomposed $G''$-set and, by assumption, $Y$ is isomorphic to a pull-back of some $G$-set and so $\ker(h)$ acts trivially on $Y$. This implies that $\ker h \subset U$, which finishes the proof.

  Alternative proof: We already know that (b) $\Rightarrow$ $(G'/\ker(h))^{\rmNoohi} \simeq G$. Let $N \subset \ker(h)$ be as in the first proof above. Consider the map $G/N \epirarr G/\ker(h)$. The assumption (b) and full faithfulness of $H$ (by the global assumption and using \ref{denseimageequivalentconditions}) imply that $(G'/N)^{\rmNoohi} \rarr G$ is an isomorphism. Thus, $(G'/N)^{\rmNoohi} \simeq (G'/\ker(h))^{\rmNoohi}$. Using Prop.\ \ref{Noohization-description}, we check that the canonical maps $G'/N \rarr (G'/N)^{\rmNoohi}$ and $G'/\ker(h) \rarr (G'/\ker(h))^{\rmNoohi}$ are injective. Thus, $G'/N \epirarr G'/\ker(h)$ is injective and so $N = \ker(h)$.
  \item (a) $\Rightarrow$ (b): Assume (a). Let $Y=G'/U$ be a connected $G'$-set that pulls back via $h'$ to a completely decomposed object. As we have seen while proving ``(b) $\Rightarrow$ (a)'', this implies that for any $g' \in G'$ $g' h'(G''){g'}^{-1} \subset U$, so $H^{nc} \subset U$ and so also $\overline{\overline{(H^{nc})}} \subset U$. But, by Observation \ref{normalqpNc-closed}, there is $N=\overline{\overline{(H^{nc})}}$. By assumption, we have $N=\ker h$ and so we conclude that $\ker h \subset U$. But then, by assumption $(G'/\ker(h))^{\rmNoohi} \simeq G$, $Y$ is in the essential image of $H$.
  \end{itemize}
  \end{enumerate}
  \end{proof}
  
  To distinguish between exactness in the usual sense (i.e.\ on the level of abstract groups) and notions of exactness appearing in Prop.\ \ref{dictionary}, we introduce a new notion. It will be mainly used in the context of Noohi groups.
\begin{defn}\label{definition-weakly-exact}
Let $G'' \stackrel{h'}{\rarr} G' \stackrel{h}{\rarr} G \rarr 1$ be a sequence of topological groups such that $\rmim(h') \subset \ker(h)$. Then we will say that the sequence is
\begin{enumerate}[label={(\arabic*)}]
    \item \emph{nearly exact on the right} if $h$ has dense image,
    \item \emph{nearly exact in the middle} if $\overline{\overline{\rmim(h')}}= \ker(h)$, i.e.\ the thick closure of the image of $h'$ in $G'$ is equal to the kernel of $h$,
    \item \emph{nearly exact} if it is both nearly exact on the right and nearly exact in the middle.
\end{enumerate}
\end{defn}

We end this subsection with a lemma on topological groups and their Noohi completions that will be used later in the proof of the main theorem.
\begin{lemma}\label{lem:V^Noohi-description}
  Let $G$ be a topological group and $\tilde{G}$ be a subgroup of $G^{\rmNoohi}$ such that the canonical map $G \to G^{\rmNoohi}$ factorizes through $\tilde{G}$ 
  \[
    G \to \tilde{G} \subset G^{\rmNoohi}
  \]
Let $V_0 < \tilde{G}$ be a subgroup. Let $S = (\tilde{G}/V_0,\rmdiscr)$ be the discrete set that comes naturally with an {\bf abstract} action by $\tilde{G}$.

If the induced abstract $G$-action on $S$ is continuous, then $V_0$ is open in $\tilde{G}$.

Moreover, in such case, denoting  $V = \mathrm{Stab}_{G^{\rmNoohi}}([V_0] \in \tilde{G}/V_0)$, there is
\[
  V = V_0^{\rmNoohi} = \overline{V_0}^{G^{\rmNoohi}} \quad \textrm{ and } \quad V_0 = V \cap \tilde{G}
\]
\end{lemma}
\begin{proof}
  By the universal property, the $G$-action on $S$ extends to $G^{\rmNoohi}$ and this action is transitive. Then $V_0$ is the preimage of the stabilizer $V = \mathrm{Stab}_{G^{\rmNoohi}}([V_0] \in \tilde{G}/V_0)$, which is open.

The group $V$ is open in a Noohi group, thus Noohi (see \cite[Lemma 7.1.8.]{BhattScholze}). By the universal property, there is a factorization $V_0^{\rmNoohi} \to V$. But as $V_0$ is a subgroup of a Noohi group, its open subgroups form a basis of $1_{V_0}$. Thus, the Noohi completion of $V_0$ is just the Ra{\u \i}kov completion. But as the canonical map from a group to its Ra{\u \i}kov completion is a topological embedding, \cite[Cor.\ 3.6.18.]{AT} implies that $V_0^{\rmNoohi} \to V$ is a topological embedding. By a characterization of Ra{\u \i}kov completeness (see \cite[Prop.\ 6.2.7.]{Dikranjan}), it follows that $V_0^{\rmNoohi}$ is closed in $V$. But as $\tilde{G}$ contains the image of $G$, it is dense in $G^{\rmNoohi}$, and from the definition of $V$ it follows that $V_0$ has to be dense in $V$. Putting this together, we get that $V_0^{\rmNoohi} = V = \overline{V_0}^{G^{\rmNoohi}}$.
\end{proof}

\subsection{A remark on valuative criteria}
We will sometimes shorten ``the valuative criterion of properness'' to ``VCoP''. It is useful to keep in mind the precise statements of different parts of the valuative criterion, see \cite[Lemma 01KE]{StacksProject}, \cite[Section 01KY]{StacksProject} and \cite[Lemma 01KC]{StacksProject}. Let us prove a lemma (which is implicit in \cite{BhattScholze}), that VCoP chan be checked fpqc-locally.

\begin{lemma}\label{VCoP-is-local}
Let $g:X \rarr S$ be a map of schemes. The properties: 
\begin{enumerate}[label=(\alph*)]
\item $g$ is \'etale
\item $g$ is separated
\item $g$ satisfies the existence part of VCoP \label{VCoPproperty}
\end{enumerate}
can be checked fpqc-locally on $S$.

Moreover, the property \ref{VCoPproperty} can be also checked after a surjective proper base-change.
\end{lemma}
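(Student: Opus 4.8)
The forward implication in every case --- stability under base change --- is immediate, so in all three cases the content is the descent statement: assuming the property holds for each $g_i : X_i := X\times_S S_i \rarr S_i$ attached to an fpqc cover $\{S_i \rarr S\}$, deduce it for $g$. For \textbf{(a)} and \textbf{(b)} I would simply invoke the general fact that ``\'etale'' and ``separated'' are fpqc-local on the base (\'etale $=$ flat $+$ locally of finite presentation $+$ unramified, each fpqc-local on the base; separatedness descends because ``being a closed immersion'' does and the diagonal commutes with base change), citing the descent tables for properties of morphisms, e.g. \cite[Tag 02YJ]{StacksProject}. No new argument is needed there.

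For \textbf{(c)} the plan is a three-step pattern: \emph{lift} a valuative datum to some $S_i$ after an extension of valuation rings, \emph{apply} the existence hypothesis for $g_i$, then \emph{descend} the resulting arrow back to $R$. Concretely, fix a valuation ring $R$ with $K := \rmFrac(R)$ and a square given by $a:\Spec K \rarr X$ and $b:\Spec R \rarr S$ with $g\circ a = b|_{\Spec K}$; let $s\in S$ be the image of the closed point and pick $i$ and $s_i\in S_i$ over $s$. The ring $B := R\otimes_{\calO_{S,s}}\calO_{S_i,s_i}$ is faithfully flat over $R$, so I can choose a point $w\in\Spec B$ over the closed point of $\Spec R$ and, by going-down for the flat map $\Spec B\rarr\Spec R$, a generization $w'$ of $w$ lying over the generic point of $\Spec R$; then $\kappa(w')$ is an extension of $K$. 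Taking a valuation ring $R'$ of $\kappa(w')$ dominating the image of $\calO_{B,w}$ (via \cite[Tag 00PH]{StacksProject}) gives $K':=\rmFrac(R')\supseteq K$, a map $\Spec R'\rarr S_i$ sending the closed point over $s_i$ and the generic point over the generic point of $\Spec R$, and --- crucially --- $R'\cap K = R$, since $R$ is a valuation ring and hence maximal for domination among subrings of $K$. Now $\Spec K'\rarr\Spec K\xrightarrow{a}X$ together with $\Spec R'\rarr S_i$ assemble into $\Spec K'\rarr X_i$, and the existence hypothesis for $g_i$ produces $\Spec R'\rarr X_i\rarr X$.

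The only genuine obstacle is the final \emph{descent} of this lift, which lives over $R'$ rather than $R$; it is resolved precisely by the identity $R'\cap K = R$ arranged above. Choosing an affine open $U=\Spec A\subseteq X$ around the image of the closed point of $\Spec R'$, the generic point of $\Spec R'$ also lands in $U$ (it lies in every nonempty open of $\Spec R'$), so both $A\rarr R'$ (from the $R'$-lift) and $A\rarr K$ (from $a$) are defined and agree inside $K'$; hence $A$ maps into $R'\cap K = R$, yielding the desired $\Spec R\rarr U\subseteq X$ extending $a$.

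For the \textbf{moreover} part, with $S'\rarr S$ surjective and proper and $g':X\times_S S'\rarr S'$ satisfying existence, the same three steps work, with the flat going-down argument replaced by the valuative criterion of $S'\rarr S$ itself. I would first pass to a field extension $K_1/K$ over which $\Spec K\rarr S$ lifts to $S'$, choose a valuation ring $R_1$ of $K_1$ with $R_1\cap K = R$, use properness of $S'\rarr S$ to lift $\Spec R_1\rarr S$ to $\Spec R_1\rarr S'$, apply existence for $g'$ to get $\Spec R_1\rarr X'\rarr X$, and then run the identical affine-open descent (using $R_1\cap K = R$) to obtain $\Spec R\rarr X$. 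Everything outside the descent step reduces to standard facts on extending valuation rings along field extensions and flat or proper morphisms.
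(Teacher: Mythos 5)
Your treatment of (a) and (b) is the same as the paper's (both simply cite the Stacks Project, \cite[Section 02YJ]{StacksProject}), but for (c) your route is genuinely different. The paper works purely topologically: it invokes the equivalence of the existence part of VCoP with ``specializations lift along any base change'' (\cite[Lemma 01KE]{StacksProject}), reduces to $S'\rarr S$ faithfully flat between affines, and finishes with a diagram chase using that generizations lift along the flat map $S'\times_S T\rarr T$ while specializations lift along $S'\times_S X\times_S T\rarr S'\times_S T$ by hypothesis; the proper surjective case is declared analogous. You instead transport the valuative datum itself along the cover: the ring $B=R\otimes_{\calO_{S,s}}\calO_{S_i,s_i}$, the going-down argument, the dominating valuation ring $R'$ with $R'\cap K=R$, the application of the hypothesis upstairs, and the affine-chart descent are all correct steps, so your proof is a viable, more classical alternative; it trades the paper's quick globalization of a topological property for explicit but elementary valuation-ring bookkeeping.

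However, there is one genuine gap, at the descent step. You produce $\ell:\Spec R\rarr X$ with $\ell|_{\Spec K}=a$ and stop, but the existence part of VCoP requires $\ell$ to fill the whole square, i.e.\ also $g\circ\ell=b$, and this is \emph{not} automatic: $S$ is an arbitrary (possibly non-separated) scheme, and two morphisms $\Spec R\rightrightarrows S$ agreeing on $\Spec K$ can differ --- a DVR mapping to the affine line with doubled origin is the standard example --- so ``extending $a$'' is strictly weaker than what is needed. The claim is true for your $\ell$, but proving it requires the extra information that $\ell\circ(\Spec R'\rarr\Spec R)$ equals the $R'$-lift, so that $g\circ\ell$ and $b$ agree after composition with $\Spec R'\rarr\Spec R$: for instance, both maps then send the closed point to $s$, hence both factor uniquely through the monomorphism $\Spec \calO_{S,s}\rarr S$, and the two induced ring maps $\calO_{S,s}\rightrightarrows R$ agree because they agree after the injection $R\monorarr K$. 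A cleaner repair is to descend a \emph{section}: your $R'$-lift together with $\Spec R'\rarr\Spec R$ gives a morphism $\Spec R'\rarr X\times_S\Spec R$ over $\Spec R$; running your chart argument inside $X\times_S\Spec R$ (where the chart $U=\Spec A$ is an $R$-algebra) yields $A\rarr R'\cap K=R$ with $R\rarr A\rarr R$ the identity (compose with the injection $R\monorarr K'$), i.e.\ a section of $X\times_S\Spec R\rarr\Spec R$, and composing with the projection $X\times_S\Spec R\rarr X$ fills both triangles for free. The same point must be added at the end of your ``moreover'' argument, where the identical issue occurs.
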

\begin{proof}
The cases of \'etale and separated morphisms are proven in \cite[Section 02YJ]{StacksProject}. For the last part: satisfying the existence part of VCoP is equivalent to specializations lifting along any base-change of $g$ (\cite[Lemma 01KE]{StacksProject}). It is easy to see that this property can be checked Zariski locally. Thus, if $S' \rarr S$ is an fpqc cover such that the base-change $g': X' \rarr S'$ satisfies specialization lifting for any base-change, we can assume that $S, S'$ are affine with $S' \rarr S$ faithfully flat. Let $T \rarr S$ be any morphism. Consider the diagram:
\begin{center}
\begin{tikzpicture}
\matrix(a)[matrix of math nodes,
row sep=2em, column sep=2.5em,
text height=2ex, text depth=0.25ex]
{X' & S' \times_S X \times_S T & T \times_S X\\ S'  & S'\times_S T & T \\};

\path[->] (a-1-2) edge (a-1-1);
\path[->] (a-1-2) edge (a-1-3);

\path[->] (a-2-2) edge (a-2-1);
\path[->] (a-2-2) edge (a-2-3);

\path[->] (a-1-1) edge (a-2-1);
\path[->] (a-1-2) edge (a-2-2);
\path[->] (a-1-3) edge (a-2-3);

\end{tikzpicture}
\end{center}

Let $\xi'\in T \times_S X$, let $\xi$ be its image in $T$ and let $t \in T$ be such that $\xi \rightsquigarrow t$. We need to find $t' \in T \times_S X$ over $t$ such that $\xi' \rightsquigarrow t'$. Let $Z= \overline{\{\xi'\}} \subset T \times_S X$ be the closure of $\{\xi'\}$. We need to show that the set-theoretic image $W \subset T$ of $Z$ in $T$ contains $t$. It is enough to show, that $W$ is stable under specialization or, equivalently, that $T \setminus W$ is stable under generalization. But, from flatness (\cite[Lemma 03HV]{StacksProject}), generalizations lift along $S' \times_S T \rarr T$. Thus, it is enough to show that the preimage of $T \setminus W$ in $S' \times_S T$ is stable under generalizations or, equivalently (using the surjectivity of $S' \times_S T \rarr T$), that the preimage of $W$ in $S' \times_S T$ is closed under specializations. But an easy diagram chasing (using the fact that the right square of the diagram above is cartesian) shows that the preimage of $W$ in $S' \times_S T$ is the image of a closed subset of $S' \times_S X \times T$. We conclude, because specializations lift along $S' \times_S X \times_S T \rarr S' \times_S T$ by assumption.

The last part of the statement is proven in an analogous way.
\end{proof}

\begin{lemma}\label{separated}
Let $f:Y \rarr X$ be a geometric covering of a locally topologically noetherian scheme. Then $f$ is separated.
\end{lemma}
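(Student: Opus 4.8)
The plan is to exploit that the definition of a geometric cover builds in the \emph{full} valuative criterion of properness — in particular its \emph{uniqueness} part — and to use the locally-topologically-noetherian hypothesis to get around the fact that $f$ is a priori neither quasi-compact nor quasi-separated. The guiding idea is that an \'etale morphism has open diagonal, so separatedness is equivalent to that diagonal also being closed, and closedness will be deduced from stability under specialization, which in turn follows from uniqueness of valuative lifts.

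First I would record that an \'etale morphism is unramified, so the diagonal $\Delta_f : Y \rarr Y\times_X Y$ is an open immersion; hence $f$ is separated if and only if the open subset $\Delta(Y) \subset Y\times_X Y$ is also closed (an immersion with closed image is a closed immersion). Second, I would observe that $Y\times_X Y$ is again locally topologically noetherian: the second projection $p_2 : Y\times_X Y \rarr Y$ is the base change of $f$ along $f$, hence is \'etale and satisfies VCoP (both are stable under base change), i.e. it is a geometric cover of $Y$; since $Y$ is locally topologically noetherian (being a geometric cover of such an $X$, by \cite[Lm. 6.6.10]{BhattScholze}), so is $Y\times_X Y$ by the same lemma. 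In a locally topologically noetherian scheme every open subset is constructible, and a constructible subset is closed precisely when it is stable under specialization; it therefore suffices to prove that $\Delta(Y)$ is stable under specialization.

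For the specialization step, take $z \in \Delta(Y)$ and a specialization $z \rightsquigarrow w$ in $Y\times_X Y$, and choose a valuation ring $R$ with fraction field $K$ together with a morphism $h : \Spec R \rarr Y\times_X Y$ sending the generic point to $z$ and the closed point to $w$ (such a valuation ring realizing a given specialization exists, see \cite[Tag 01J8]{StacksProject}). Set $a = p_1\circ h$ and $b = p_2\circ h$. Since $z$ lies in the open subscheme $\Delta(Y)\cong Y$, the restriction $h|_{\Spec K}$ factors through $\Delta(Y)$, on which the two projections coincide, so $a|_{\Spec K} = b|_{\Spec K}$; moreover $f\circ a = f\circ b$ because $f\circ p_1 = f\circ p_2$. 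Thus $a$ and $b$ are two lifts of one and the same valuative square for $f$, and the uniqueness part of VCoP forces $a = b$. Consequently $h = (a,b) = \Delta\circ a$ factors through the diagonal, so in particular $w = h(\text{closed point}) \in \Delta(Y)$, which is exactly the required stability under specialization.

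The main obstacle — and the reason this is not a one-line citation of the valuative criterion of separatedness — is precisely that $f$ need not be quasi-separated a priori, so the standard criterion does not apply directly. The role of the locally-topologically-noetherian hypothesis is exactly to upgrade ``stable under specialization'' to ``closed'' (equivalently, to make the open immersion $\Delta_f$ quasi-compact, hence $f$ quasi-separated, after which one could alternatively invoke the usual valuative criterion of separatedness together with the uniqueness part of VCoP). The only genuinely careful points I anticipate are keeping track of which part (existence versus uniqueness) of VCoP is being used, and verifying that $h|_{\Spec K}$ factors through the diagonal.
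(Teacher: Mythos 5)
Your proof is correct, and it takes a more self-contained route than the paper's, whose proof consists of two citations: by \cite[Remark 7.3.3]{BhattScholze} a geometric cover of a locally topologically noetherian scheme is automatically quasi-separated, and a quasi-separated morphism satisfying the (uniqueness part of the) valuative criterion of properness is separated by \cite[Tag 01KY]{StacksProject}. What you do is unwind both citations into a direct argument: showing that $Y \times_X Y$ is locally topologically noetherian and invoking constructibility is exactly what makes the open immersion $\Delta_f$ quasi-compact, i.e.\ it reproves the quasi-separatedness remark of \cite{BhattScholze}; and your specialization argument via uniqueness of valuative lifts is precisely the proof of the valuative criterion of separatedness in this situation. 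Both proofs therefore rest on the same two pillars --- local topological noetherianity as a substitute for quasi-compactness, and the uniqueness half of VCoP, which is genuinely indispensable (with only the existence half, the affine line with doubled origin would qualify as a ``geometric cover'' and the lemma would be false) --- but yours makes visible exactly where each hypothesis enters, at the cost of length, while the paper's is essentially a two-line reference.

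One wording correction: an arbitrary open subset of a locally topologically noetherian scheme need not be constructible in the global sense (globally, constructibility involves quasi-compact opens; e.g.\ an infinite, coinfinite open subset of an infinite disjoint union of points is open but not constructible); it is only \emph{locally} constructible. This does not damage your argument, because both closedness and stability under specialization localize: cover $Y \times_X Y$ by noetherian open subsets $U_i$, observe that $\Delta_f(Y) \cap U_i$ is open, hence constructible in the noetherian sober space $U_i$, and stable under specialization there, hence closed in $U_i$; since closedness is local, $\Delta_f(Y)$ is closed.
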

\begin{proof}
By \cite[Remark 7.3.3]{BhattScholze}, $f$ is quasi-separated. A quasi-separated morphism satisfying VCoP is separated (see \cite[Tag 01KY]{StacksProject}).
\end{proof}

\section{Seifert-van Kampen theorem for $\pipet$ and its applications}

\subsection{Abstract Seifert–van Kampen theorem for infinite Galois categories}
We aim at recovering a general version of van Kampen theorem, proven in \cite{Stix}, in the case of the pro-\'etale fundamental group. Most of the definitions and proofs are virtually the same as in \cite{Stix}, after replacing ``Galois category'' with ``(tame) infinite Galois category'' and ``profinite'' with ``Noohi'', but still some additional technical difficulties appear here and there. We make the necessary changes in the definitions and deal with those difficulties below.

Denote by $\Delta_{\leq 2}$ a category whose objects are $[0]=\{0\}, [1]=\{0,1\}, [2]=\{0,1,2\}$ and has strictly increasing maps as morphisms. There are face maps $\partial_i:[n-1] \rarr [n]$ for $n=1,2$ and $0 \leq i \leq n$ which omit the value $i$ and vertices $v_i:[0]\rarr [2]$ with image $i$.

The category of \emph{$2$-complexes in a category $\scrC$} is the category of contravariant functors $T_\bullet: \Delta_{\leq 2} \rarr \scrC$. We denote $T_n=T_\bullet([n])$ and call it \emph{the $n$-simplices of $T_\bullet$}. $T(\partial_i)$ is called the \emph{$i$-th boundary map}.

By a \emph{$2$-complex $E$} we mean a $2$-complex in the category of sets. We often think of $E$ as a category: its objects are the elements of $E_n$ for $n=0,1,2$ and its morphisms are obtained by defining $\partial: s \rarr t$ where $s \in E_n$ and $t=E(\partial)(s)$. Let $\Delta_n=\{\sum_{i=0}^n \lambda_i e_i \in \bbR_{\geq 0}^{n+1}|\sum_i \lambda_i = 1\}$ denote the topological $n$-simplex. Then we define $|E|=\bigsqcup E_n\times \Delta_n/\sim$, where $\sim$ identifies $(s,d(x))$ with $(E(\partial)(s),x)$ for all $\partial : [m] \rarr [n]$ and its corresponding linear map $d : \Delta_m \rarr \Delta_n$ sending $e_i$ to $e_{\partial(i)}$, and $s \in E_n$ and $x \in \Delta_m$. We call $E$ \emph{connected} if $|E|$ is a connected topological space.

\begin{defn}
\emph{Noohi group data} $(\scrG,\alpha)$ on a $2$-complex $E$ consists of the following:
\begin{enumerate}
\item A mapping (not necessarily a functor!) $\scrG$ from the category $E$ to the category of Noohi groups: to a complex $s \in E_n$ is attributed a Noohi group $\scrG(s)$ and to a map $\partial : s \rarr t$ is attached a continuous morphism $\scrG(\partial):\scrG(s) \rarr \scrG(t)$.
\item For every triple $v \in E_0$, $e \in E_1$, $f \in E_2$ and boundary maps $\partial',\partial$ such that $\partial'(f)=e$, $\partial(e)=v$, an element $\alpha_{vef} \in \scrG(v)$ (its existence is a part of the definition) such that the following diagram commutes:
\begin{center}
\begin{tikzpicture}
\matrix(a)[matrix of math nodes,
row sep=3em, column sep=2em,
text height=1.5ex, text depth=0.25ex]
{\scrG(f) & & \scrG(e) \\ \scrG(v) & &  \scrG(v) \\};
\path[->,font=\scriptsize] (a-1-1) edge node[above] {$\scrG(\partial')$} (a-1-3);
\path[->,font=\scriptsize] (a-1-1) edge node[left] {$\scrG(\partial\partial')$} (a-2-1);
\path[->,font=\scriptsize] (a-1-3) edge node[right] {$\scrG(\partial)$} (a-2-3);
\path[->,font=\scriptsize] (a-2-1) edge node[above] {$\alpha_{vef}(\cdot)\alpha_{vef}^{-1}$} (a-2-3);
\end{tikzpicture}
\end{center}
\end{enumerate}
\end{defn}

\begin{defn}
Let $(\scrG,\alpha)$ be Noohi group data on the $2$-complex $E$. A \emph{$(\scrG,\alpha)$-system} $M$ on $E$ consists of the following:
\begin{enumerate}[label={(\arabic*)}]
\item For every simplex $s \in E$ a $\scrG(s)$-set $M_s$.
\item For every boundary map $\partial : s \rarr t$ a map of $\scrG(s)$-sets $m_\partial : M_s \rarr \scrG(\partial)^*(M_t)$, such that:
\item for every triple $v \in E_0$, $e \in E_1$, $f \in E_2$ and boundary maps $\partial',\partial$ such that $\partial'(f)=e$, $\partial(e)=v$ the following diagram commutes
\begin{center}
\begin{tikzpicture}
\matrix(a)[matrix of math nodes,
row sep=3em, column sep=2em,
text height=1.5ex, text depth=0.25ex]
{M_f & M_e \\ M_v & M_v \\};
\path[->,font=\scriptsize] (a-1-1) edge node[above] {$m_{\partial'}$} (a-1-2);
\path[->,font=\scriptsize] (a-1-1) edge node[left] {$m_{\partial\partial'}$} (a-2-1);
\path[->,font=\scriptsize] (a-1-2) edge node[right] {$m_{\partial}$} (a-2-2);
\path[->,font=\scriptsize] (a-2-1) edge node[above] {$\alpha_{vef} \cdot$} (a-2-2);
\end{tikzpicture}
\end{center}
\end{enumerate}
\end{defn}
\begin{defn}
A $(\scrG,\alpha)$-system is called \emph{locally constant} if all the maps $m_\partial$ are bijections.
\end{defn}

Observe that $\alpha \cdot : m \mapsto \alpha m$ is a $\scrG(v)$-equivariant map $M_v \rarr (\alpha()\alpha^{-1})^*M_v$. Observe that there is an obvious notion of a morphism of $(\scrG,\alpha)$-systems: a collection of $\scrG(s)$-equivariant maps that commute with the $m$'s. Let us denote by $\rmlcs(E,(\scrG, \alpha))$ the category of locally constant $(\scrG,\alpha)$-systems.

Let $M \in \rmlcs(\scrG,\alpha)$ for Noohi group data $(\scrG,\alpha)$ on some $2$-complex $E$. We define oriented graphs $E_{\leq 1}$ and $M_{\leq 1}$ (which will be an oriented graph \emph{over} $E_{\leq 1}$) as in \cite{Stix}, but our graphs $M_{\leq 1}$ are possibly infinite. For $E_{\leq 1}$ the vertices are $E_0$ and edges $E_1$ such that $\partial_0$ (resp. $\partial_1$) map an edge to its target (resp. origin). For $M_{\leq 1}$ the vertices are $\bigsqcup_{v\in E_0} M_v$ and edges are $\bigsqcup_{e\in E_1} M_e$ serves as the set of edges. The target/origin maps are induced by the $m_\partial$ and the map $M_{\leq 1} \rarr E_{\leq 1}$ is the obvious one.

There is an obvious topological realization functor for graphs $|\cdot |$. By applying this functor to the above construction we get a \underline{topological covering} (because $M$ is locally constant) $|M_{\leq 1}| \rightarrow |E_{\leq 1}|$. This gives a functor
\begin{displaymath}
|\cdot_{\leq 1}|: \rmlcs(E,(\scrG,\alpha)) \rightarrow  \mathrm{TopCov}(|E_{\leq 1}).
\end{displaymath}
Choosing a maximal subtree $T$ of $|E_{\leq 1}|$ gives a fibre functor $F_T:\mathrm{TopCov}(|E_{\leq 1}|) \rightarrow \rmSets$ by $(p:Y \rarr |E_{\leq 1}|) \mapsto \pi_0(p^{-1}(|T|))$. The pair $(\mathrm{TopCov}(|E_{\leq 1}|),F_T)$ is an infinite Galois category and the resulting fundamental group $\pi_1($Cov$(|E_{\leq 1}|),F_T))$ is isomorphic to $\pi_1^\rmtop(|E_{\leq 1}|)$ (see Lemma \ref{noohiandtopology}) which is in turn isomorphic to $\rmFr(E_1)/\langle\langle \{\vec{e}| e \in T\}^{\rmFr(E_1)} \rangle\rangle = \rmFr(\vec{e}| e \in E_1 \setminus T)$, where $\rmFr(\ldots)$ denotes a free group on the given set of generators and $\langle\langle \{\vec{e}| e \in T\}^{Fr(E_1)} \rangle\rangle$ denotes the normal closure in $\rmFr(E_1)$ of the subgroup generated by $\{\vec{e} \in T\}$. Here, $\vec{e}$ acts on $F_T(M)$ via
\begin{displaymath}
\pi_0(p^{-1}(|T|)) \cong \pi_0(p^{-1}(\partial_0(e))) \cong \pi_0(p^{-1}(|e|)) \cong \pi_0(p^{-1}(\partial_1(e)) \cong \pi_0(p^{-1}(|T|))
\end{displaymath}

As in \cite{Stix}, for every $s \in E_0$ and $M \in $ lcs$(E,(\scrG,\alpha))$ we have that $F_T(M)$ can be seen canonically as a $\scrG(s)$-module by $M_s = \pi_0(p^{-1}(s))\cong\pi_0(p^{-1}(T))$. Denote $\pi_1(E_{\leq 1},T)=\rmFr (E_1)/\langle\langle \{\vec{e}|e \in T \}^{\rmFr(E_1)}\rangle\rangle$. Putting the above together we get a functor
\begin{displaymath}
Q: \text{lcs}(E,(\scrG,\alpha)) \rightarrow (*^N_{v \in E_0}\scrG(v)*^N \pi_1(E_{\leq 1}, T))-\text{sets}
\end{displaymath}
\begin{rmk}
In the setting of usual ("finite") Galois categories, it is usually enough to say that a particular morphism between two Galois categories is exact, because of the following fact (\cite[Tag 0BMV]{StacksProject}): Let $G$ be a topological group. Let $F : \mathrm{Finite-}G\mathrm{-Sets} \rarr \rmSets$ be an exact functor with $F(X)$ finite for all $X$. Then $F$ is isomorphic to the forgetful functor. 

As we do not know if an analogous fact is true for infinite Galois categories, given two infinite Galois categories $(\calC,F)$, $(\calC',F')$ and a morphism $\phi:\calC \rarr \calC'$, we are usually more interested in checking whether $F \simeq F' \circ \phi$. If $\phi$ satisfies this condition, it also commutes with finite limits and arbitrary colimits. Indeed, we have a map $\mathrm{colim} \phi(X_i) \rarr \phi(\mathrm{colim}X_i)$ that becomes an isomorphism after applying $F'$ (as $F'$ and $F=F' \circ \phi$ commute with colimits) and we conclude by conservativity of $F'$. Similarly for finite limits.
\end{rmk}

\begin{proposition}
Let $(E,(\scrG, \alpha))$ be a connected 2-complex with Noohi group data. Define a functor $F: \rmlcs(E,(\scrG,\alpha)) \rightarrow $ Sets in the following way: pick any simplex $s$ and define $F$ by $M \mapsto M_s$. Then ($\rmlcs(E,(\scrG,\alpha)),F)$ is a tame infinite Galois category.

Moreover, the obtained functor
\begin{displaymath}
Q: \text{lcs}(E,(\scrG,\alpha)) \rightarrow (*^N_{v \in E_0}\scrG(v)*^N \pi_1(E_{\leq 1}, T))-\text{sets}
\end{displaymath}
satisfies $F \simeq F_{\mathrm{forget}} \circ Q$ and maps connected objects to connected objects.
\end{proposition}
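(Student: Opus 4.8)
The plan is to produce the group action defining $Q$ first, and then to read off the infinite Galois category axioms and the connectedness statement from it. Write $G = *^N_{v \in E_0}\scrG(v) *^N \pi_1(E_{\leq 1},T)$; this is a coproduct of Noohi groups with a free (hence discrete, hence Noohi) group, so $G$ is Noohi and $(G-\rmSets, F_{\mathrm{forget}})$ is a tame infinite Galois category. The key point is to turn each $F_T(M)=\pi_0(p^{-1}(|T|))$ into a $G$-set, naturally in $M$. Since $T$ is a maximal subtree, $p^{-1}(|T|)\rarr |T|$ is a covering of a contractible space and hence trivial, giving canonical bijections $F_T(M)\cong M_v$ for every vertex $v$; through these, each $\scrG(v)$ acts on $F_T(M)$. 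Each edge $e\notin T$ acts by the monodromy of the covering $p\colon |M_{\leq 1}|\rarr |E_{\leq 1}|$ around the corresponding loop, which is exactly the displayed composite of the $m_\partial$. Because $G$ is a Noohi free product and $F_T(M)$ is discrete, these data extend uniquely to a continuous $G$-action (any map to $\rmAut$ of a discrete set factors through the Noohi completion of the topological free product); this defines $Q$, and $F\simeq F_{\mathrm{forget}}\circ Q$ holds by construction, since $F(M)=M_s\cong F_T(M)$ through the bijective transition maps. I emphasize that only the $1$-skeleton $E_{\leq 1}$ and the vertex groups enter here: the faces and the elements $\alpha_{vef}$ impose no relation on $G$, which is why $G$ is a free product, the $2$-cells cutting $G$ down only later in the van Kampen theorem proper.

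\textbf{The infinite Galois category axioms.} Next I would verify the axioms for $(\rmlcs(E,(\scrG,\alpha)),F)$. Finite limits and arbitrary colimits are computed simplex-wise in each $\scrG(s)-\rmSets$, the transition maps of a (co)limit being the corresponding (co)limit of the $m_\partial$; since a finite limit or an arbitrary colimit of bijections is again a bijection and the restriction $\scrG(\partial)^*$ commutes with both, local constancy is preserved, so $\rmlcs$ has finite limits and all colimits and $F$ (evaluation at $s$) commutes with them. Faithfulness and conservativity of $F$ follow from connectivity of $|E|$: because every $m_\partial$ is bijective, a morphism $\phi$ is determined simplex-by-simplex once it is known at a single simplex, and one propagates this along the edges of the connected graph $E_{\leq 1}$ and then across the faces. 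For the remaining two axioms I would use the $G$-action: decomposing $F_T(M)$ into $G$-orbits yields, via the correspondence established below, a decomposition of $M$ into connected sub-systems; and since a connected $M$ is reconstructed up to isomorphism from the transitive $G$-set $F_T(M)$ together with the fixed tree $T$ (the edge- and face-data being forced as pullbacks of the vertex-data along the $\scrG(\partial)$, and the transitions being the monodromy), the connected objects form a set, which gives generation under colimits.

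\textbf{Connectedness, tameness, and the main obstacle.} The heart of the matter is the bijection between subobjects of $M$ and sub-$G$-sets of $F_T(M)$. Given a subobject $N\monorarr M$, its fibre $F(N)\subset F_T(M)$ is stable under every $\scrG(v)$ and under the monodromy, hence is a $G$-subset; conversely, given a $G$-stable subset $S\subset F_T(M)$, I set $N_v\subset M_v$ to be the image of $S$, define $N$ on edges and faces by pullback through the bijective $m_\partial$, and check that the cocycle condition with the $\alpha_{vef}$ is inherited from that of $M$ and that $S$ being monodromy-stable forces the two edge-transitions to land in $N$ consistently; this produces a sub-system $N\monorarr M$ with $F(N)=S$. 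Consequently $M$ is connected if and only if $G$ acts transitively on $F_T(M)$, which in particular shows that $Q$ sends connected objects to connected objects. Finally, the $G$-action is natural in $M$, so it defines a continuous homomorphism $G\rarr \rmAut(F)=\pi_1(\rmlcs(E,(\scrG,\alpha)),F)$; since $G$ already acts transitively on $F(X)$ for every connected $X$, so does $\rmAut(F)$, which proves tameness. I expect the main obstacle to be precisely this subobject/$G$-subset correspondence: assembling a genuine $(\scrG,\alpha)$-system from a $G$-stable subset while verifying the $\alpha$-cocycle compatibility and the monodromy bookkeeping, for which the covering-space description of $E_{\leq 1}$ and its fundamental group (Lemma \ref{noohiandtopology}) is the essential tool.
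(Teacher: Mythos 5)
Your proposal is correct and follows essentially the same route as the paper: both construct $Q$ via the tree trivialization and edge monodromy, verify the category axioms simplexwise, and rest everything on the same key lemma — that a subset of $F_T(M)$ stable under the vertex groups and $\pi_1(E_{\leq 1},T)$ extends, using the $\alpha$-compatibility across the $2$-cells, to a subsystem of $M$ — from which preservation of connectedness, generation by a set of connected objects, and tameness all follow. The only organizational difference is that the paper proves the disjoint-union axiom directly via smallest subsystems and obtains injectivity of $Q$ on isomorphism classes of connected objects by a graph-of-isomorphism argument (lifting the graph of an isomorphism $QX \simeq QY$ to a subsystem of $X \times Y$), where you instead route both through the $G$-orbit correspondence and a direct reconstruction of $M$ from $Q(M)$; the two are interchangeable.
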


\begin{proof}
We first check conditions (1), (2) and (4) of \cite[Def. 7.2.1]{BhattScholze}. Then we show that $Q$ maps connected objects to connected objects and we use the proof of this last fact to show the condition (3).

Colimits and finite limits: they exist simplexwise and taking limits and colimits is functorial so we get a system as candidate for a colimit/finite limit. This will be a locally constant system, as the colimit/finite limit of bijections between some $G$-sets is a bijection.

Each $M$ is a disjoint union of connected objects: let us call $N \in \rmlcs(\scrG,\alpha)$ a \emph{subsystem} of $M$ if there exists a morphism $N \rarr M$ such that for any simplex $s$ the map $N_s \rarr M_s$ is injective (we then identify, for any simplex $s$, $N_s$ with a subset of $M_s$). We can intersect such subsystems in an obvious way and observe that it gives another subsystem. So for any element $a \in M_v$ there exists the smallest subsystem $N$ of $M$ such that $a \in N_v$. We see readily that for any vertices $v,v'$ and $a \in M_v, a' \in M_{v'}$ the smallest subsystems $N$ and $N'$ containing one of them are either equal or disjoint (in the sense that, for each simplex $s$, $N_s$ and $N_s'$ are disjoint as subsets of $M_s$). It is easy to see that in this way we have obtained a decomposition of $M$ into a disjoint union of connected objects.

$F$ is faithful, conservative and commutes with colimits and finite limits: observe that $\phi_s:$ lcs$(E,(\scrG,\alpha)) \ni M \mapsto M_s \in \scrG(s)-\rmSets$ is faithful, conservative and commutes with colimits and finite limits and $F=F_s \circ \phi_s$, where $F_s$ is the usual forgetful functor on $\scrG(s)-\rmSets$.

It is obvious that $F \simeq F_{\mathrm{forget}} \circ Q$. We are now going to show that $Q$ preserves connected objects. Take a connected object $M \in \text{lcs}(E,(\scrG,\alpha))$ and suppose that $N$ is a non-empty subset of $F_T(M)$ stable under the action of $\pi_1(E_{\leq 1},T)$ and $\scrG(v)$ for $v \in E_0$. Stability under the action of $\pi_1(E_{\leq 1},T)$ shows that $N$ can be extended to a subgraph $N_{\leq 1} \subset M_{\leq 1}$: for an edge $e$ of $M_{\leq 1}$ we declare it to be an edge of $N_{\leq 1}$ if one of its ends touches a connected component of $p^{-1}(|T|)$ corresponding to an element of $N$. This is well defined, as in this case both ends touch such a component - this is because the action of $m_{\partial_1}m_{\partial_0}^{-1}$ equals the action of $\stackrel{\rightarrow}{e} \in \pi_1(E_{\leq 1},T)$.

Now we want to show that it extends to $2$-simplexes. 
This is a local question and we can restrict to simplices in the boundary of a given face $f \in E_2$. Define $N_f$ as a preimage of $N_s$ via any $\partial$ such that $\partial(f)=s$. We see that if the choice is independent of $s$, then we have extended $N$ to a locally constant system. To see the independence it is enough to prove that if $(vef)$ is a barycentric subdivision (i.e.\ we have $\partial$ and $\partial'$ such that $\partial'(f)=e$ and $\partial(e)=v$), then $m_{\partial\partial'}^{-1}(N_v)=m_{\partial'}^{-1}(N_e)$. But from the $\scrG(v)$-invariance we have $N_v=\alpha_{vef}^{-1}(N_v)$ and so
\begin{displaymath}
m_{\partial\partial'}^{-1}(N_v)=m_{\partial\partial'}^{-1}(\alpha_{vef}^{-1}(N_v))=m_{\partial'}^{-1}m_\partial^{-1}(N_v)=m_{\partial'}(N_e)
\end{displaymath}
and thus $N$ can be seen as an element of lcs$(E,(\scrG,\alpha))$ which is a subobject of $M$, which contradicts connectedness of $M$.

To see that $\rmlcs(E,(\scrG,\alpha))$ is generated under colimits by a set of connected objects, observe that in the above proof of the fact that $Q$ preserves connected objects, we have in fact shown the following statement. 
\begin{fact}
Let $M \in \rmlcs(\scrG,\alpha)$ and let $Z$ be a connected component of $Q(M)$. Then there exists a subsystem $W \subset M$ such that $Q(W)=Z$. 
\end{fact}
We want to show that there exists a \underline{set} of connected objects in $\rmlcs(\scrG,\alpha)$ such that any connected object of $\rmlcs(\scrG,\alpha)$ is isomorphic to an element in that set. As an analogous fact is true in $(*^N_{v \in E_0}\scrG(v)*^N \pi_1(E_{\leq 1}, T))-\mathrm{sets}$, it is easy to see that it is enough to check that, for any $X,Y$, if $QX \simeq QY$, then $X \simeq Y$. Let $X, Y \in \rmlcs(\scrG,\alpha)$ be connected. Assume that $QX \simeq QY$. Looking at the graph of this isomorphism, we find a connected subobject $Z \subset QX \times QY$ that maps isomorphically on $QX$ and $QY$ via the respective projections. By the above fact, we know that there exists $W \subset X \times Y$ such that $QW=Z$. Because $F \simeq F_{\mathrm{forget}} \circ Q$ and $F$ is conservative, we see that the projections $W \rarr X$ and $W \rarr Y$ must be isomorphisms. This shows $X \simeq Y$ as desired.

The only claim left is that $\rmlcs(E(\scrG,\alpha))$ is tame, but this follows from tameness of $(*^N_{v \in E_0}\scrG(v)*^N \pi_1(E_{\leq 1}, T))-\rmSets$, the equality $F\simeq F_{\mathrm{forget}}\circ Q$ and the fact that $Q$ maps connected objects to connected objects. 
\end{proof}
 
Let us denote by $\pi_1(E,\scrG,s)$ the fundamental group of the infinite Galois category $(\rmlcs(E,\scrG),F_s)$. The proposition above tells us that there is a continuous map of Noohi groups with dense image $*^N_{v \in E_0}\scrG(v)*^N \pi_1(E_{\leq 1}, T)) \rarr \pi_1(E,\scrG,s)$. We now proceed to describe the kernel.

Recall that $\pi_1(E_{\leq 1},T)=\rmFr (E_1)/\langle\langle \{\vec{e}|e \in T \}^{\rmFr(E_1)}\rangle\rangle$.
\begin{theorem}(abstract Seifert-Van Kampen theorem for infinite Galois categories)
Let $E$ be a connected $2$-complex with group data $(\scrG,\alpha)$. With notations as above, the functor $Q$ induces an isomorphism of Noohi groups
\begin{displaymath}
(*^N_{v \in E_0} \scrG(v) *^N \pi_1(E_{\leq 1},T)/\bar{H})^{\rmNoohi} \rarr \pi_1(E,\scrG,s)
\end{displaymath}
where $\bar{H}$ is the closure of the group
\begin{displaymath}
H=\left\langle\left\langle\begin{array}{c|c}
\scrG(\partial_1)(g)\vec{e} =\vec{e} \scrG(\partial_0)(g) & e \in E_1 \textrm{ , } g \in \scrG(e) \\ 
\overrightarrow{(\partial_2f)}\alpha^{(f)}_{102}(\alpha^{(f)}_{120})^{-1}\overrightarrow{(\partial_0f)}\alpha^{(f)}_{210}(\alpha^{(f)}_{201})^{-1}\Big(\overrightarrow{(\partial_1f)}\Big)^{-1} \alpha^{(f)}_{021} (\alpha^{(f)}_{012})^{-1} & f \in E_2
\end{array}\right\rangle\right\rangle
\end{displaymath}
where $\langle\langle - \rangle\rangle$ denotes the normal closure of the subgroup generated by the indicated elements and $\alpha$'s come from the definition of a $(\scrG,\alpha)$-system for each given $f$.
\end{theorem}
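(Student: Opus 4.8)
The plan is to show directly that $Q$ induces an equivalence of categories $\rmlcs(E,(\scrG,\alpha)) \simeq (\Pi/\bar H)^{\rmNoohi}-\rmSets$, where I abbreviate $\Pi := *^N_{v \in E_0}\scrG(v) *^N \pi_1(E_{\leq 1},T)$. Both sides are the $G-\rmSets$ attached to a topological group (the left by tameness and \cite[Thm.~7.2.5]{BhattScholze}, the right by Prop.~\ref{alpha_G-equivalence} together with $\rmAut(F_{G-\rmSets}) = G^{\rmNoohi}$), so an equivalence compatible with the fibre functors is literally the datum of the asserted isomorphism of Noohi groups on $\rmAut(F)$. By the discussion preceding the theorem, $Q$ already corresponds to a continuous homomorphism $q:\Pi \rarr \pi_1(E,\scrG,s)$ with dense image.

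First I would check that $H \subset \ker(q)$, i.e. that every listed relator acts trivially on $F_T(M)$ for all $M$. For an edge relator $\scrG(\partial_1)(g)\,\vec e = \vec e\,\scrG(\partial_0)(g)$ this is exactly the statement that the transport $m_{\partial_1} m_{\partial_0}^{-1}$, through which $\vec e$ acts on $F_T(M)$, intertwines the two boundary actions; and that is precisely the requirement that $m_{\partial_0},m_{\partial_1}$ be maps of $\scrG(e)$-sets, namely part (2) of the definition of a $(\scrG,\alpha)$-system. For the $2$-simplex relator attached to $f \in E_2$, the same computation as in \cite{Stix} — transporting around $\partial_0 f,\partial_1 f,\partial_2 f$ and inserting the elements $\alpha^{(f)}_{ijk}$ — identifies triviality of the relator on $F_T(M)$ with the commutativity (3) of the defining diagram. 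Since $\pi_1(E,\scrG,s)$ is Hausdorff, $\ker(q)$ is closed, and as it contains the normal subgroup $H$ it contains $\bar H$. Because the action on any $\Pi$-set has open (hence closed) stabilizers, $Q$ therefore factors through the full subcategory of $\Pi-\rmSets$ on which $\bar H$ acts trivially, which by Observation \ref{setsofquotient} is $(\Pi/\bar H)^{\rmNoohi}-\rmSets$; write $\bar Q$ for the factored functor.

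It remains to prove that $\bar Q$ is an equivalence. Full faithfulness is free: the induced map $(\Pi/\bar H)^{\rmNoohi} \rarr \pi_1(E,\scrG,s)$ still has dense image (as $q$ does), so $\bar Q$ is fully faithful by the equivalence of the conditions in Prop.~\ref{dictionary}\ref{denseimageequivalentconditions}. The heart of the matter is essential surjectivity, for which I would construct an explicit section. Given a $\Pi$-set $S$ on which $\bar H$ acts trivially, define a system $M = R(S)$ by setting $M_s := S$ for every simplex $s$, letting $\scrG(s)$ act on $M_s$ through the canonical map $\scrG(s) \rarr \Pi$ (obtained by composing boundary maps down to a chosen vertex of $s$), and taking the transports $m_\partial$ to be the identity along the edges of $T$ and the action of $\vec e \in \Pi$ along the remaining edges. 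Every $m_\partial$ is then a bijection, so $M$ is locally constant; triviality of the edge relators on $S$ forces each $m_\partial$ to be genuinely $\scrG(e)$-equivariant, and triviality of the $2$-simplex relators gives exactly the face compatibility (3). By construction $\scrG(v)$ and each $\vec e$ act on $F_T(M)=M_s=S$ just as they do on $S$, whence $\bar Q(M)\cong S$. Combined with full faithfulness this makes $\bar Q$ an equivalence, and therefore $(\Pi/\bar H)^{\rmNoohi} \rarr \pi_1(E,\scrG,s)$ is an isomorphism.

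I expect the main obstacle to be the essential-surjectivity construction, and within it the bookkeeping for faces: one must orient each $f\in E_2$ and fix basepoint vertices and tree paths consistently so that the transport maps out of $M_f$ are well defined independently of these choices, and then match the resulting compatibility condition against the precise word $\overrightarrow{(\partial_2f)}\alpha^{(f)}_{102}(\alpha^{(f)}_{120})^{-1}\overrightarrow{(\partial_0f)}\alpha^{(f)}_{210}(\alpha^{(f)}_{201})^{-1}\big(\overrightarrow{(\partial_1f)}\big)^{-1}\alpha^{(f)}_{021}(\alpha^{(f)}_{012})^{-1}$ appearing in $H$. Aligning the index conventions for the $\alpha^{(f)}_{ijk}$ with the diagram in the definition of a $(\scrG,\alpha)$-system, so that nothing ends up off by an inversion or a permutation, is the delicate point; by contrast the edge relators, the dense-image claim and full faithfulness are routine.
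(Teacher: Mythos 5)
Your proposal is correct and takes essentially the same approach as the paper: the paper's proof consists of invoking the argument of \cite[Thm. 3.2 (2)]{Stix} — precisely the steps you reconstruct (relators act trivially on every $F_T(M)$ via the equivariance of the $m_\partial$ and the $\alpha$-diagram, the functor factors through the $\bar{H}$-trivial subcategory, full faithfulness from dense image, essential surjectivity via the explicit tree-based section) — followed by Observation~\ref{setsofquotient} to identify that subcategory with $\big(*^N_{v \in E_0}\scrG(v)*^N\pi_1(E_{\leq 1},T)/\bar{H}\big)^{\rmNoohi}-\rmSets$, just as you do. The face bookkeeping you flag as the delicate point is exactly the part the paper delegates to \cite{Stix}, so your outline neither deviates from nor falls short of the paper's own level of justification.
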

\begin{proof}
The same proof as the proof of \cite[Thm.\ 3.2 (2)]{Stix} shows that $Q$ induces an equivalence of categories between the infinite Galois categories $(\rmlcs(E,\scrG), F_s)$ and the full subcategory of objects of $*^N_{v \in E_0} \scrG(v) *^N \pi_1(E_{\leq 1},T)-\rmSets$ on which $H$ acts trivially. We conclude by Observation \ref{setsofquotient}. 
\end{proof}
\begin{rmk}\label{topvsnoohi}
It is important to note that we can replace free Noohi products by free topological products in the statement above, as we take the Noohi completion of the quotient anyway. More precisely, the canonical map
\begin{displaymath}
(*^{\rmtop}_{v \in E_0} \scrG(v) *^{\rmtop} \pi_1(E_{\leq 1},T)/\bar{H_1})^{\rmNoohi} \rarr (*^N_{v \in E_0} \scrG(v) *^N \pi_1(E_{\leq 1},T)/\bar{H})^{\rmNoohi}
\end{displaymath}
is an isomorphism, where $H_1$ is the normal closure in $*^{\rmtop}_{v \in E_0} \scrG(v) *^{\rmtop} \pi_1(E_{\leq 1},T)$ of a group having the same generators as $H$. This is because the categories of $G-\rmSets$ are the same for those two Noohi groups.
\end{rmk}
\begin{fact}
The topological free product $*^{\rmtop}_i G_i$ of topological groups has as an underlying space the free product of abstract groups $*_i G_i$. This follows from the original construction of Graev \cite{Graev}. 
\end{fact}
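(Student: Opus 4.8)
The plan is to show that the canonical comparison map from the abstract free product to the underlying abstract group of the topological coproduct is an isomorphism, the substantive point being that no algebraic collapsing occurs. First I would fix notation: write $W:=*_i G_i$ for the coproduct of the $G_i$ in the category of abstract groups, with canonical inclusions $j_i:G_i\rarr W$, and let $P:=*^{\rmtop}_i G_i$ be the (Hausdorff) topological coproduct, with canonical continuous homomorphisms $\iota_i:G_i\rarr P$. Applying the universal property of $W$ to the family $(\iota_i)$ produces a canonical homomorphism of abstract groups $\phi:W\rarr P$ with $\phi\circ j_i=\iota_i$, and the Fact is precisely the assertion that $\phi$ is bijective.

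The conceptual picture, which isolates where the difficulty lies, is categorical. Dropping the Hausdorff requirement, the forgetful functor $U:\mathrm{TopGrp}\rarr\mathrm{Grp}$ is left adjoint to the indiscrete-topology functor $\mathrm{Grp}\rarr\mathrm{TopGrp}$ (every homomorphism into an indiscrete group is continuous), hence preserves all colimits. Granting that the coproduct $C$ of the $G_i$ in all topological groups exists (it does, by Graev/Markov), its underlying abstract group is therefore exactly $W$, so both surjectivity and injectivity are automatic in that setting. The Hausdorff coproduct $P$ is then the maximal Hausdorff quotient $C/\overline{\{1_C\}}$, whence the entire content of the Fact reduces to showing that this Hausdorffification does not alter the underlying group, i.e. that $C$ is already Hausdorff.

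Concretely, to prove the hard direction (injectivity of $\phi$, equivalently $\overline{\{1_C\}}=\{1_C\}$) I would separate reduced words by continuous homomorphisms into Hausdorff groups. Given a reduced word $w=g_{i_1}\cdots g_{i_n}\in W$ with $w\neq 1$, it suffices to produce a Hausdorff topological group $H$ and continuous homomorphisms $f_i:G_i\rarr H$ whose induced abstract homomorphism sends $w$ to an element $\neq 1_H$: by the universal property of $P$ the $f_i$ factor through a unique continuous $\tilde f:P\rarr H$, and $\tilde f(\phi(w))\neq 1_H$ then forces $\phi(w)\neq 1_P$. Constructing such a separating family is the heart of Graev's embedding theorem; one realizes all the $G_i$ inside a single Hausdorff group (for instance via a left-regular-type action on a suitable set, topologized so that each $G_i\rarr H$ remains continuous) in which reduced words provably do not collapse. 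Surjectivity is comparatively soft: one checks that $P$ is generated as an abstract group, not merely topologically, by $\bigcup_i \iota_i(G_i)$, which is automatic once an explicit normal form is available.

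The cleanest packaging, and the route I would actually take, is to recall Graev's direct construction: he equips $W$ with a group topology $\tau_G$ for which (i) each $j_i:G_i\rarr (W,\tau_G)$ is a topological embedding, (ii) $(W,\tau_G)$ is Hausdorff, and (iii) a homomorphism out of $(W,\tau_G)$ into a topological group is continuous if and only if its restriction to every $G_i$ is. Properties (i) and (iii) say precisely that $(W,\tau_G)$ has the universal property of the topological coproduct, while (ii) places it among Hausdorff topological groups; by uniqueness of coproducts $(W,\tau_G)\cong P$, so the underlying abstract group of $P$ is $W$ by construction. The hard part will be property (ii), namely that $\tau_G$ separates a nontrivial reduced word from $1$; this is the delicate core of Graev's work and is exactly equivalent to the injectivity of $\phi$ above, whereas once the topology is defined the embedding property (i) and the continuity criterion (iii) are essentially formal.
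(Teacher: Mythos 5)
Your proposal is correct and takes essentially the same route as the paper, whose entire proof consists of invoking Graev's original construction: a Hausdorff group topology on the abstract free product $*_i G_i$ satisfying the coproduct's universal property, which is exactly your final paragraph. Your categorical preamble (the forgetful functor $\mathrm{TopGrp}\rightarrow\mathrm{Grp}$ is a left adjoint, so the non-Hausdorff coproduct already has underlying group $*_i G_i$ and only Hausdorffness remains) is a clean elaboration of the same argument, and you rightly identify the separation of nontrivial reduced words as the delicate core that must be cited from Graev rather than re-proved — your parenthetical sketch of a ``left-regular-type action'' would not by itself suffice for non-discrete factors, but it is not load-bearing in your argument.
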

\subsection{Application to the pro-\'etale fundamental group}
\subsubsection*{Descent data}
Let $T_\bullet$ be a $2$-complex in a category $\scrC$ and let $\scrF \rightarrow \scrC$ be a category fibred over $\scrC$, with $\scrF(S)$ as a category of sections above the object $S$. 
\begin{defn}
The category $\rmDD (T_\bullet,\scrF)$ of \emph{descent data} for $\scrF / \scrC$ relative $T_\bullet$ has as objects pairs $(X',\phi)$ where $X' \in \scrF(T_0)$ and $\phi$ is an isomorphism $\partial_0^*X' \stackrel{\sim}{\rightarrow}\partial_1^*X'$ in $\scrF(T_1)$ such that the \emph{cocycle condition} holds, i.e., the following commutes in $\scrF(T_2)$:
\begin{center}
\begin{tikzpicture}
\matrix(a)[matrix of math nodes,
row sep=3em, column sep=1.5em,
text height=1.5ex, text depth=0.25ex]
{v_2^*X' &  &v_1^* X'\\  & v_0^* X' &  \\};
\path[->,font=\scriptsize] (a-1-1) edge node[above] {$\partial_0^* \phi$} (a-1-3);
\path[->,font=\scriptsize] (a-1-1) edge node[left] {$\partial_1^* \phi$} (a-2-2);
\path[->,font=\scriptsize] (a-1-3) edge node[right] {$\partial_2^* \phi$} (a-2-2);
\end{tikzpicture}
\end{center}
Morphisms $F: (X',\phi) \rightarrow (Y',\psi)$ in $\rmDD(T_\bullet,\scrF)$ are morphisms $F: X' \rightarrow Y'$ in $\scrF(T_0)$ such that its two pullbacks $\partial_0^*f$ and $\partial_1^* f$ are compatible with $\phi$, $\psi$, i.e., $\partial_1^* f \circ \phi = \psi \circ \partial_0^* f$.
\end{defn}
Let $h: S' \rightarrow S$ be a map of schemes. There is an associated $2$-complex of schemes
\begin{displaymath}
S_\bullet(h): S' \leftleftarrows S' \times_S S' \tripleleftarrow S' \times_S S'\times_S S'
\end{displaymath}
The value of $\partial_i$ is the projection under omission of the $i^{\mathrm{th}}$ component. We abbreviate $\rmDD(S_\bullet(h),\scrF)$ by $\rmDD(h,\scrF)$. Observe that $h^*$ gives a functor $h^*:\scrF(S) \rightarrow \rmDD(h,\scrF)$.
\begin{defn}
In the above context $h : S' \rightarrow S$ is called an \emph{effective descent} morphism for $\scrF$ if $h^*$ is an equivalence of categories.
\end{defn}
\begin{proposition}[{\cite[Prop.\ 1.16]{ElenaPaper}}]\label{properdescent}
Let $g: S' \rightarrow S$ be a proper, surjective morphism of finite presentation, then $g$ is a morphism of effective descent for geometric coverings.
\end{proposition}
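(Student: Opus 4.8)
The plan is to reduce the statement to the descent theory of étale morphisms developed in \cite{Rydh}, and then to check separately that the extra condition defining geometric covers --- the existence part of the valuative criterion of properness --- descends along $g$.

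First I would record that $\rmCov_S$ sits as a \emph{full} subcategory of the category of separated étale $S$-schemes: a geometric cover is étale by definition and separated by Lemma \ref{separated}. Consequently, once we control descent for separated étale morphisms, full faithfulness of $g^*$ on $\rmCov_S$ is automatic: the category $\rmDD(g,\rmCov)$ is full in the descent category for separated étale morphisms (morphisms of descent data are the same in both), so the Hom-sets match and full faithfulness restricts to the full subcategory.

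Next I would invoke the main results of \cite{Rydh}. A proper surjective morphism is universally closed and surjective, hence universally submersive; together with the finite presentation hypothesis this places $g$ in the class of morphisms for which \cite{Rydh} establishes effective descent for the fibered category of separated étale morphisms --- crucially, with \emph{no} quasi-compactness or finite type assumption on the morphisms being descended. This is exactly what is needed, since a geometric cover is in general not quasi-compact. Now let $(Y',\phi) \in \rmDD(g,\rmCov)$ be a descent datum of geometric covers. Viewing it as a descent datum of separated étale morphisms and applying effectivity, I obtain a separated étale $S$-scheme $Y$ together with an isomorphism $g^*Y \cong (Y',\phi)$ of descent data; in particular $Y \times_S S' \cong Y'$. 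It remains to verify that $Y \to S$ is a geometric cover, i.e. that it satisfies the existence part of VCoP. Here I would use the "moreover" part of Lemma \ref{VCoP-is-local}: the existence part of VCoP may be checked after a surjective proper base change. Since $g$ is proper and surjective and the base change $Y \times_S S' \cong Y'$ satisfies VCoP (being a geometric cover of $S'$), it follows that $Y \to S$ satisfies the existence part of VCoP. Thus $Y \in \rmCov_S$, which together with the full faithfulness above shows that $g^* : \rmCov_S \to \rmDD(g,\rmCov)$ is an equivalence.

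The main obstacle is concentrated entirely in the appeal to \cite{Rydh}: descending the bare étale structure along a merely proper surjective (and in general \emph{not flat}) morphism is the deep input, and the fact that the descent works for non-quasi-compact separated étale morphisms is precisely what makes it applicable to geometric covers. By contrast, once $Y$ is produced, promoting it to a geometric cover is a soft consequence of the proper-base-change form of Lemma \ref{VCoP-is-local}, and full faithfulness is formal from the full-subcategory observation.
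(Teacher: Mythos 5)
Your overall strategy --- reduce to Rydh's descent results for \'etale morphisms, then check that the existence part of VCoP descends using the ``moreover'' part of Lemma \ref{VCoP-is-local}, with full faithfulness being formal --- is exactly the route the paper takes (following \cite[Prop. 1.16]{ElenaPaper}). However, there is a genuine gap in how you invoke \cite{Rydh}. Descent along a proper surjective morphism is not flat descent: the descended object is produced as a quotient, and Rydh's effectivity results (\cite[Prop. 5.4]{Rydh}, \cite[Thm. 5.19]{Rydh}) yield a priori only an \emph{algebraic space} over $S$, not a scheme. Your sentence ``applying effectivity, I obtain a separated \'etale $S$-scheme $Y$'' therefore asserts more than the cited input provides, and since the entire content of the proposition is outsourced to this citation, the overstatement conceals a step that must actually be carried out.

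The missing step is to prove that the descended algebraic space is a scheme. This is where \'etaleness and separatedness get used a second time: an algebraic space that is locally quasi-finite and separated over a scheme is itself a scheme (\cite[Tag 0417]{StacksProject}; compare the use of \cite[Tag 03XX]{StacksProject} in the proof of Prop. \ref{topoinv}). The paper's proof makes this verification explicit before turning to the valuative criterion. Once that step is inserted, the rest of your argument agrees with the paper: checking VCoP for $Y \rarr S$ via the proper surjective base-change part of Lemma \ref{VCoP-is-local}, and full faithfulness restricting to the full subcategory $\rmCov_S$ of separated \'etale $S$-schemes (Lemma \ref{separated}).
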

\begin{proof}
This was proven by Lavanda and relies on the results of \cite{Rydh}. More precisely, this follows from Prop.\ 5.4 and Thm.\ 5.19 of \cite{Rydh}, then checking that the obtained algebraic space is a scheme (using \'etaleness and separatedness, see \cite[Tag 0417]{StacksProject}) and that it still satisfies the valuative criterion (see Lemma \ref{VCoP-is-local}).
\end{proof}

\subsubsection*{Discretisation of descent data}
We would like to apply the procedure described in \cite[\S4.3]{Stix} but to the pro-\'etale fundamental group. However, in the classical setting of Galois categories, given a category $\calC$ and functors $F,F': \calC \rightarrow \rmSets$ such that $(\calC,F)$ and $(\calC,F')$ are Galois categories (i.e.\ $F,F'$ are fibre functors), there exists an isomorphism (not unique) between $F$ and $F'$. Choosing such an isomorphism is  called ``choosing a path'' between $F$ and $F'$. However, it is not clear whether an analogous statement is true for tame infinite Galois categories as the proof does not carry over to this case (see the proof of \cite[Lemma 0BN5]{StacksProject} or in \cite{SGA1} - these proofs are essentially the same and rely on the pro-representability result of Grothendieck \cite[Prop.\ A.3.3.1]{Grothendieckdescenteii}).
\begin{question}
Let $\calC$ be a category and $F,F': \calC \rightarrow \rmSets$ be two functors such that $(\calC,F)$ and $(\calC,F')$ are tame infinite Galois categories. Is it true that $F$ and $F'$ are isomorphic? 
\end{question}
As we do not know the answer to this question, we have to make an additional assumption when trying to discretise the descent data. Fortunately, it will always be satisfied in the geometric setting, which is our main case of interest.
\begin{defn}\label{definition-compatible-functor}
Let $(\calC,F)$, $(\calC',F')$ be two infinite Galois categories and let $\phi: \calC \rarr \calC'$ be a functor. We say that $\phi$ is \emph{compatible} if there exists an isomorphism of functors $F \simeq F' \circ \phi$. 
\end{defn}

Let $\scrF \rarr \scrC$ be fibred in tame infinite Galois categories. More precisely, we have a notion of connected objects in $\scrC$ and any $T \in \scrC$ is a coproduct of connected components. Over connected objects $\scrF$ takes values in tame infinite Galois categories (i.e.\ over a connected $Y\in \scrC$ there exists a functor $F_Y : \scrF(Y) \rarr \rmSets$ such that $(\scrF(Y),F_Y)$ is a tame infinite Galois category but we do not fix the functor).

\begin{defn}
Let $T_\bullet$ be a $2$-complex in $\scrC$. Let $E=\pi_0(T_\bullet)$ be its $2$-complex of connected components: the $2$-complex in $\rmSets$ built by degree-wise application of the connected component functor. We will say that $T_\bullet$ is a \emph{compatible} $2$-complex if one can fix fibre functors $F_s$ of $\scrF(s)$ for each simplex $s \in E$ such that $(\scrF(s),F_s)$ is tame and for any boundary map $\partial : s \rarr s'$ there exists an isomorphism of fibre functors $F_s\circ T(\partial)^* \stackrel{\sim}{\rarr} F_{s'}$. 
\end{defn}
The $2$-complexes that will appear in the (geometric) applications below will always be compatible. From now on, we will assume all $2$-complexes to be compatible, even if not stated explicitly.
Let $T_\bullet$ be a compatible $2$-complex in $\scrC$. Fix fibre functors $F_s$ and isomorphisms between them as in the definition of a compatible $2$-complex. For any $\partial$, denote the fixed isomorphism by $\vec{\partial}$.
For a $2$-simplex $(vef)$ of the barycentric subdivision with  $\partial' : f \rarr e$ and
$\partial : e \rarr v$ we define
\begin{displaymath}
\alpha_{vef} = \vec{\partial'} \vec{\partial} \Big(\overrightarrow{\partial \partial'}\Big)^{-1}
\end{displaymath}
or, more precisely,
\begin{displaymath}
\alpha_{vef} = T(\partial)(\vec{\partial'}) \vec{\partial} (\overrightarrow{\partial \partial'})^{-1} \in \Aut(F_v)=\pi_1(\scrF(s),F_s).
\end{displaymath}
We define Noohi group data $(\scrG,\alpha)$ on $E$ in the following way: $\scrG(s) = \pi_1(\scrF(s),F_s)$ for any simplex $s \in E$ and to $\partial : s \rarr s'$ is associated
$\scrG(\partial) : \pi_1(\scrF(s),F_s) \stackrel{T(\partial)^*}{\rarr} \pi_1(\scrF(s'),F_{s}\circ T(\partial)^*) \stackrel{\vec{\partial}()\vec{\partial}^{-1}}{\longrightarrow} \pi_1(\scrF(s'),F_{s'})$. We define elements $\alpha$ as described above and we easily check that this gives Noohi group data.
\begin{proposition}
The choice of functors $F_s$ and the choice of $\vec{\partial}$ as above fix a functor
\begin{displaymath}
\rmdiscr : \rmDD(T_\bullet , \scrF) \rarr \rmlcs(E,(\scrG,\alpha))
\end{displaymath}
which is an equivalence of categories.
\end{proposition}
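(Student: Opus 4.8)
The plan is to reduce everything to a simplexwise computation, using that $\scrF$ is fibred in tame infinite Galois categories. First I would record the two structural inputs. Since $T_n=\coprod_{s\in E_n}s$ is the decomposition into connected components, sections over $T_n$ decompose as $\scrF(T_n)\simeq\prod_{s\in E_n}\scrF(s)$; combining this with the tameness equivalence of \cite[Thm. 7.2.5(3)]{BhattScholze} and the fixed fibre functors $F_s$ yields $\scrF(T_n)\simeq\prod_{s\in E_n}\bigl(\scrG(s)-\rmSets\bigr)$, under which an object $X'\in\scrF(T_0)$ becomes the family $M_v:=F_v(X'_v)\in\scrG(v)-\rmSets$. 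Second, for each boundary map $\partial:s\rarr s'$ in $E$ I would check that the pullback $T(\partial)^{*}$, transported through $F_s,F_{s'}$ and the chosen isomorphism $\vec{\partial}:F_s\circ T(\partial)^{*}\xrightarrow{\sim}F_{s'}$, becomes exactly the restriction functor $\scrG(\partial)^{*}$; this is precisely how $\scrG(\partial)$ was defined, so $\vec{\partial}$ is $\scrG(s)$-equivariant by construction.

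Given this dictionary I would define $\rmdiscr$ on objects as follows. For a vertex set $M_v=F_v(X'_v)$; for an edge $e$ put $M_e:=F_e\bigl((\partial_0^{*}X')_e\bigr)$, let $m_{\partial_0}:=\vec{\partial_0}$ be the canonical bijection $M_e\xrightarrow{\sim}\scrG(\partial_0)^{*}M_{\partial_0(e)}$, and let $m_{\partial_1}:=\vec{\partial_1}\circ F_e(\phi_e)$ incorporate the gluing isomorphism $\phi$. For a face $f$ one pulls $X'$ back to $f$ and defines the three maps $m_{\partial_i}$ from the $\vec{\partial_i}$ together with the appropriate restrictions of $\phi$. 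Since $\phi$ is an isomorphism, all $m_\partial$ are bijections, so the resulting system is locally constant. On morphisms, a map $F:X'\rarr Y'$ compatible with $\phi,\psi$ goes to the family $F_s(F)$, which commutes with the $m$'s by functoriality. The one genuinely substantive point is that the $(\scrG,\alpha)$-system axiom (3), with the specific elements $\alpha_{vef}=T(\partial)(\vec{\partial'})\,\vec{\partial}\,(\overrightarrow{\partial\partial'})^{-1}$, holds if and only if the cocycle condition for $(X',\phi)$ holds: both assert that the two ways of comparing the pullbacks of $X'$ around a $2$-simplex agree, and the $\alpha_{vef}$ are defined to be exactly the discrepancy between composing pullbacks directly and composing them through the intermediate edge.

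To finish, I would exhibit a quasi-inverse and read off that $\rmdiscr$ is an equivalence. Running the dictionary backwards, a locally constant system $M$ gives $X'_v\in\scrF(v)$ (via $\scrF(v)\simeq\scrG(v)-\rmSets$), hence $X'\in\scrF(T_0)$; the bijections $m_{\partial_0},m_{\partial_1}$ over each edge assemble, after undoing the $\vec{\partial}$'s, into an isomorphism $\phi:\partial_0^{*}X'\xrightarrow{\sim}\partial_1^{*}X'$, and the $\alpha$-compatibility on faces translates back into the cocycle condition, so $(X',\phi)\in\rmDD(T_\bullet,\scrF)$. Full faithfulness and essential surjectivity then follow formally from the componentwise equivalences $\scrF(s)\simeq\scrG(s)-\rmSets$, since both the objects and the morphisms on either side are built out of the same simplexwise data.

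The main obstacle will be the $2$-simplex bookkeeping in the previous paragraph: because $\scrF$ is only a pseudofunctor, the pullbacks $T(\partial)^{*}$ compose only up to canonical isomorphism, and one must verify that these coherence isomorphisms, the three fibre-functor comparisons $\vec{\partial}$ around a face, and the cocycle triangle for $\phi$ all fit together to yield precisely the relation encoded by $\alpha_{vef}$. This is the same computation that underlies \cite[\S4.3]{Stix}; the only new feature is that the groups $\scrG(s)$ are Noohi rather than profinite and the sets $M_s$ may be infinite, neither of which affects the argument, since it is entirely simplexwise and uses only the equivalence $\scrF(s)\simeq\scrG(s)-\rmSets$ valid in the tame infinite setting.
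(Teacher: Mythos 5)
Your proposal is correct and follows essentially the same route as the paper, which simply imports the formulas and computations of \cite[Prop. 4.4]{Stix}: define the system simplexwise via the tameness equivalences $\scrF(s)\simeq\scrG(s)-\rmSets$ and the chosen isomorphisms $\vec{\partial}$, use $\phi$ to produce the edge bijections (so local constancy is automatic), match the cocycle condition with the $\alpha_{vef}$-compatibility, and obtain the quasi-inverse by running the same dictionary backwards. The only difference is one of exposition: you spell out the formulas that the paper leaves to the citation of \cite{Stix}, including the pseudofunctor coherence bookkeeping, which is exactly the computation both the paper and Stix rely on.
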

\begin{proof}
Given a descent datum $(X',\phi)$ relative $T_\bullet$ we have to attach a locally constant $(\scrG,\alpha)$-system on $E$ in a functorial way. For $v \in E_0, e \in E_1$ and $f \in E_2$, the definition of suitable $\scrG(v)$ (or $\scrG(e)$ or $\scrG(f)$) sets and maps $m_\partial$ between them can be given by the same formulas as in \cite[Prop.\ 4.4]{Stix} and also the same computations as in \cite[Prop.\ 4.4]{Stix} show that we obtain an element of $ \rmlcs(E,(\scrG,\alpha))$. Again, the reasoning of \cite[Prop.\ 4.4]{Stix} gives a functor in the opposite direction: given $M \in \rmlcs(E,(\scrG,\alpha))$ we define $X' \in \scrF(T_0) = \prod_{v\in E_0} \scrF(v)$ as $X'_{|v}$ corresponding to $M_v$ for all $v \in E_0$. Maps from edges to vertices define a map $\phi:T(\partial_0)^*X' \rarr T(\partial_1)^*X'$ and to check the cocycle condition one reverses the argument of the proof that $\rmdiscr$ gives a locally constant system.
\end{proof}
To apply the last proposition we need to know that the compatibility condition holds in the setting we are interested in.
\begin{lemma}[{\cite[Lm.\ 7.4.1]{BhattScholze}}]\label{geomfunctorscompatibility}
Let $f:X' \rarr X$ be a morphism of two connected locally topologically noetherian schemes and let $\bx',\bx$ be geometric points on $X', X$, correspondingly. Then the functor $f^* : \rmCov_X \rarr \rmCov_{X'}$ is a compatible functor between infinite Galois categories $(\rmCov_X,F_{\bx})$ and $(\rmCov_{X'},F_{\bx'})$, i.e.\ the functors $F_{\bx}$ and $F_{\bx'}\circ f^*$ are isomorphic.
\end{lemma}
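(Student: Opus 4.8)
The plan is to reduce the statement to base-point independence of the fibre functor on $\rmCov_X$ and then to establish the latter by a specialization argument, since the abstract Question raised above (isomorphism of two fibre functors on a tame infinite Galois category) is not available to us and the isomorphism must be produced geometrically.

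First I would dispose of the pullback. Writing $\bar{x}' : \Spec(\Omega') \rarr X'$, associativity of fibre products gives, for any $Y \in \rmCov_X$, a canonical identification
\[
\mathrm{ev}_{\bar{x}'}(f^* Y) = (Y \times_X X') \times_{X'} \Spec(\Omega') = Y \times_X \Spec(\Omega') = \mathrm{ev}_{\bar{y}}(Y),
\]
where $\bar{y} := f \circ \bar{x}'$ is regarded as a geometric point of $X$. This identification is natural in $Y$, so $\mathrm{ev}_{\bar{x}'} \circ f^* \simeq \mathrm{ev}_{\bar{y}}$ as functors $\rmCov_X \rarr \rmSets$. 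It therefore remains to produce an isomorphism $\mathrm{ev}_{\bar{x}} \simeq \mathrm{ev}_{\bar{y}}$ between the two fibre functors on $\rmCov_X$ attached to the geometric points $\bar{x}, \bar{y}$ of the connected scheme $X$.

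Next I would treat a single specialization. Suppose $y \in X$ is a specialization of $x \in X$. I would choose a valuation ring $R$ with a map $\Spec(R) \rarr X$ sending the generic point to $x$ and the closed point to $y$, and replace $R$ by a strict henselization so as to arrange $\bar{x}$ and $\bar{y}$ as geometric points over the generic and closed points of $\Spec(R)$. Then $Y_R := Y \times_X \Spec(R)$ is a geometric cover of a henselian local scheme, so by \cite[Lm. 7.3.8]{BhattScholze} it is a disjoint union of finite étale $R$-schemes; since $R$ is strictly henselian each of these is split, and hence $Y_R$ is a disjoint union of copies of $\Spec(R)$ indexed by some set $I_Y$. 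Consequently both geometric fibres of $Y_R$ are canonically in bijection with $I_Y$, yielding a bijection $\mathrm{ev}_{\bar{x}}(Y) \cong \mathrm{ev}_{\bar{y}}(Y)$ which is manifestly natural in $Y$; separatedness of geometric covers (Lemma \ref{separated}) ensures the fibres genuinely match $I_Y$ without collapsing. Finally, since $X$ is connected and locally topologically noetherian, any two points are joined by a finite chain of points in which consecutive members are comparable under specialization (connect points within an irreducible component through its generic point, and points in meeting components through a common point); concatenating the isomorphisms built above along such a chain gives the desired $\mathrm{ev}_{\bar{x}} \simeq \mathrm{ev}_{\bar{y}}$. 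Only the existence of an isomorphism is needed for compatibility, so the dependence of the construction on the chosen chain is harmless.

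The main obstacle I expect is the middle step: one must check that, after base change to the strictly henselian valuation ring, the fibre functor really computes the common index set $I_Y$ at both the generic and the special point and that the resulting bijection is functorial in $Y$ — here \cite[Lm. 7.3.8]{BhattScholze} together with separatedness does the essential work. The pullback identification and the reduction to specializations are formal by comparison.
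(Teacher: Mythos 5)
Your proposal is correct and follows essentially the same route as the paper: the paper likewise first replaces $\mathrm{ev}_{\bar{x}'}\circ f^*$ by evaluation at the image geometric point $f\circ\bar{x}'$ of $X$, and then invokes the second part of the proof of \cite[Lm.\ 7.4.1]{BhattScholze}, which is precisely the chain-of-specializations argument you spell out (a valuation ring realizing each specialization, made strictly henselian, so that \cite[Lm.\ 7.3.8]{BhattScholze} splits the pulled-back cover and identifies both geometric fibres with its set of connected components, naturally in $Y$). The only difference is that the paper cites this argument rather than reproducing it, while you reconstruct it in detail.
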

\begin{proof}
Looking at the image of $\bar{x}'$ (as a geometric point) on $X$, we reduce to the case when both $\bar{x}'$ and $\bar{x}$ lie on the same scheme $X$. In that case we proceed as in the second part of the proof of \cite[Lm.\ 7.4.1]{BhattScholze}.
\end{proof}

\begin{corollary}[{\cite[Lm.\ 7.4.1]{BhattScholze}}]\label{base-points}
  Let $X$ be a connected topologically noetherian scheme. Let $\bx_1$, $\bx_2$ be two geometric points on $X$. Then there is an isomorphism $\pipet(X,\bx_1) \simeq \pipet(X,\bx_2)$. It is unique (only) up to an inner automorphism.
\end{corollary}

The above results combine to recover the analogue of \cite[Cor.\ 5.3]{Stix} in the pro-\'etale setting.
\begin{corollary}\label{geomvK}
Let $h : S' \rarr S$ be an effective descent morphism for geometric coverings. Assume that $S$ is connected and $S, S', S'\times_S S',S'\times_S S'\times_S S'$ are locally topologically noetherian. Let $S' = \bigsqcup_v S'_v$
be the decomposition into connected components. Let $\bar{s}$ be a geometric point of $S$, let $\bar{s}(t)$ be a geometric point of the simplex $t \in \pi_0(S_\bullet(h))$, and let $T$ be a maximal tree in the graph $\Gamma = \pi_0(S_\bullet(h))_{\leq 1}$ . For every boundary map $\partial : t \rarr t'$ let $\gamma_{t',t} :
\bar{s}(t') \rarr S_\bullet(h)(\partial)\bar{s}(t)$ be a fixed path (i.e.\ an isomorphism of fibre functors as in Lm.\ \ref{geomfunctorscompatibility}). Then canonically with respect to all these choices
\begin{displaymath}
\pipet(S,\bar{s}) \cong \Big(\big(*^N_{v \in E_0} \pipet(S_v',\bs(v)) *^N \pi_1(\Gamma,T)\big)/\overline{H}\Big)^{\rmNoohi}
\end{displaymath}
where $H$ is the normal subgroup generated by the cocycle and edge relations
\begin{eqnarray}
\pipet(\partial_1)(g)\vec{e} =\vec{e} \pipet(\partial_0)(g)\\ 
\overrightarrow{(\partial_2f)}\alpha^{(f)}_{102}(\alpha^{(f)}_{120})^{-1}\overrightarrow{(\partial_0f)}\alpha^{(f)}_{210}(\alpha^{(f)}_{201})^{-1}\Big(\overrightarrow{(\partial_1f)}\Big)^{-1} \alpha^{(f)}_{021} (\alpha^{(f)}_{012})^{-1}=1
\end{eqnarray}
for all parameter values $e \in S_1(h)$, $g \in \pipet(e,\bar{s}(e))$, and $f \in S_2 (h)$. The map $\pipet(\partial_i)$ uses the fixed path $\gamma_{\partial_i(e),e}$ and $\alpha^{(f)}_{ijk}$ is defined using $v \in S_0(h)$ and $e \in S_1(h)$ determined by $v_i(f) = v$, $\{\partial_0(e),\partial_1(e)\} = \{v_i(f),v_j(f)\}$ as
\begin{displaymath}
\alpha^{(f)}_{ijk} = \gamma_{v,e}\gamma_{e,f}\gamma_{v,f}^{-1} \in \pipet(v,\bar{s}(v)).
\end{displaymath}
\end{corollary}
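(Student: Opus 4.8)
The plan is to assemble the corollary from the three preceding results: the discretization equivalence $\rmDD(T_\bullet,\scrF) \simeq \rmlcs(E,(\scrG,\alpha))$, the abstract Seifert--van Kampen theorem computing $\pi_1(E,\scrG,s)$, and the effective descent hypothesis on $h$. Nothing new needs to be constructed; the work is in specializing the abstract machinery to $\scrF = \rmCov$ and checking that the fibre functors and structure maps match the stated geometric data.

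First I would specialize the fibred-category setup to $\scrC = $ (locally topologically noetherian schemes) and $\scrF = \rmCov$, the fibration $Y \mapsto \rmCov_Y$. Since $(\rmCov_Y, F_{\by})$ is a tame infinite Galois category for every connected $Y$ by the recalled results of \cite{BhattScholze}, $\scrF$ is fibred in tame infinite Galois categories. Applying this to the $2$-complex $S_\bullet(h)$ associated to $h : S' \rarr S$, and using Lemma \ref{geomfunctorscompatibility} to see that each boundary pullback $\partial^*$ is a compatible functor, I conclude that $S_\bullet(h)$ is a compatible $2$-complex. The discretization proposition then furnishes an equivalence $\rmdiscr : \rmDD(h,\rmCov) \stackrel{\sim}{\rarr} \rmlcs(E,(\scrG,\alpha))$, where $E = \pi_0(S_\bullet(h))$ and $\scrG(v) = \pipet(S'_v,\bs(v))$ for the connected components $S'_v$ of $S'$, with the group data $(\scrG,\alpha)$ built from the chosen paths $\gamma$.

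Next, since $h$ is a morphism of effective descent for geometric covers, the pullback $h^* : \rmCov_S \rarr \rmDD(h,\rmCov)$ is an equivalence. Composing with $\rmdiscr$ yields an equivalence $\rmCov_S \simeq \rmlcs(E,(\scrG,\alpha))$. I would then verify that, with the base point $\bs$ and the fixed paths, this equivalence carries the fibre functor $F_{\bs}$ to the fibre functor $F_s$ (evaluation at the base simplex $s$) on the discretized side; this is precisely where the compatibility of $f^*$ from Lemma \ref{geomfunctorscompatibility} is used, so that we obtain a genuine isomorphism of automorphism groups and not merely an equivalence of categories. Consequently $\pipet(S,\bs) = \pi_1(\rmCov_S,F_{\bs}) \cong \pi_1(\rmlcs(E,(\scrG,\alpha)),F_s) = \pi_1(E,\scrG,s)$.

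Finally I would invoke the abstract Seifert--van Kampen theorem, which expresses $\pi_1(E,\scrG,s)$ as $\big(\big(*^N_{v} \scrG(v) *^N \pi_1(E_{\leq 1},T)\big)/\overline{H}\big)^{\rmNoohi}$ with $H$ the normal closure of the edge and cocycle relations. Substituting $\scrG(v) = \pipet(S'_v,\bs(v))$, $\pi_1(E_{\leq 1},T) = \pi_1(\Gamma,T)$, and unwinding $\scrG(\partial_i)$ and $\alpha^{(f)}_{ijk}$ in terms of the geometric paths $\gamma_{v,e},\gamma_{e,f},\gamma_{v,f}$ produced by Lemma \ref{geomfunctorscompatibility}, one recovers exactly the claimed presentation. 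The main obstacle is the bookkeeping in this last identification: I must check that the abstract structure maps and cocycle elements coming from $\rmdiscr$ coincide with $\pipet(\partial_i)$ and $\alpha^{(f)}_{ijk}$ as defined via the $\gamma$'s, so that the abstract relations match the stated edge and cocycle relations, and that the resulting isomorphism is canonical with respect to the choices of $\bs$, $\bs(t)$, $T$, and the paths $\gamma$. (By Remark \ref{topvsnoohi} one may freely replace the free Noohi products by topological free products throughout, which simplifies the comparison.)
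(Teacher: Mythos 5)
Your proposal is correct and follows essentially the same route as the paper: the paper gives no separate proof of this corollary, stating only that ``the above results combine'' --- namely the discretization equivalence (made applicable via Lemma \ref{geomfunctorscompatibility}), the effective descent hypothesis giving $\rmCov_S \simeq \rmDD(h,\rmCov)$, and the abstract Seifert--van Kampen theorem --- which is exactly the chain of reductions you spell out. Your additional care about matching fibre functors and unwinding $\scrG(\partial_i)$ and $\alpha^{(f)}_{ijk}$ in terms of the chosen paths is precisely the bookkeeping the paper leaves implicit.
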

\begin{rmk}
Similarly as in Rmk.\ \ref{topvsnoohi}, we could replace $*^{N}$ by $*^{\rmtop}$ in the above, as we take the Noohi completion of the whole quotient anyway.
\end{rmk}
\begin{rmk}\label{remark-relations-in-vK-for-normalization}
We will often use Cor.\ \ref{geomvK} for $h$ - the normalization map (or similar situations), where the connected components $S_v'$ are normal. In this case $\pipet(S_v',\bs(v)) = \piet(S_v',\bs_v)$. This implies that $\pipet(\partial_1)$ factorizes through the profinite completion of $\pipet(e,\bs(e))$, which can be identified with $\piet(e,\bs(e))$. Moreover, the map $\pipet(e,\bs(e)) \rarr \piet(e,\bs(e))$ has dense image and, in the end, we take the closure $\overline{H}$ of $H$. The upshot of this discussion is that in the definition of generators of $H$ we might consider $g \in \piet(e,\bs(e))$ instead of $g \in \pipet(e,\bs(e))$ and $\piet(\partial_i)$ instead of $\pipet(\partial_i)$, $i \in \{0,1\}$, i.e.
\begin{displaymath}
\pipet(S,\bar{s}) \cong \Big(\big(*^{\rmtop}_{v \in E_0} \piet(S_v',\bs(v)) *^{\rmtop} \pi_1(\Gamma,T)\big)/\overline{H}\Big)^{\rmNoohi}
\end{displaymath}
where $H$ is the normal subgroup generated by
\begin{displaymath}
\piet(\partial_1)(g)\vec{e}\piet(\partial_0)(g)^{-1}\vec{e}^{-1} \textrm{ for all } e \in S_1(h), g \in \piet(e, \bs(e)) \tag{$R_1$}
\end{displaymath}
and
\begin{displaymath}
\overrightarrow{(\partial_2f)}\alpha^{(f)}_{102}(\alpha^{(f)}_{120})^{-1}\overrightarrow{(\partial_0f)}\alpha^{(f)}_{210}(\alpha^{(f)}_{201})^{-1}\Big(\overrightarrow{(\partial_1f)}\Big)^{-1} \alpha^{(f)}_{021} (\alpha^{(f)}_{012})^{-1} \textrm{ for all } f \in S_2(h). \tag{$R_2$}
\end{displaymath}

\end{rmk}

Let us move on to some applications.
\subsubsection*{Ordered descent data}
Let $\scrF$ be a category fibred over $\scrC$ with a fixed splitting cleavage (i.e.\ the associated pseudo-functor is a functor). Assume that $\scrC$ is some subcategory of the category of locally topologically noetherian schemes with the property that finite fibre products in $\scrC$ are the same as the finite fibre products as schemes. Let $h=\bigsqcup_{i \in I}h_i: S'=\bigsqcup_{i} S_{i \in I}' \rarr S$ be a morphism of schemes and let $<$ be a total order on the set of indices $I$. Let $S_\bullet^< (h) \subset S_\bullet(h)$ be the open and closed sub-2-complex of schemes in $\scrC$ of ordered partial products 
\begin{displaymath}
S_0^<(h) = S' \quad , \quad S_1^<(h) = \bigsqcup_{i < j} S_i' \times_S S_j' \quad , \quad S_2^<(h) = \bigsqcup_{i < j < k} S_i' \times_S S_j' \times_S S_k'
\end{displaymath}
\begin{proposition}\label{ordereddescentprop}
Let $h=\bigsqcup_{i \in I}h_i: S'=\bigsqcup_{i} S_{i \in I}' \rarr S$ be a morphism of schemes such that, for every $i, j \in I$, the maps induced by the diagonal morphisms $\Delta_i^*:\scrF(S_i' \times_S S_i') \rarr \scrF(S_i')$ and $(\Delta_i \times \id_{S'_j})^*:\scrF(S_i' \times_S S_j' \times_S S_i') \rarr \scrF(S_i'\times S_j')$ are fully faithful. Then the natural open and closed immersion $S_\bullet^<(h) \hookrightarrow S_\bullet(h)$ induces an equivalence of categories
\begin{displaymath}
\rmDD(h,\scrF) \stackrel{\cong}{\rarr} \rmDD(S_\bullet^<(h),\scrF)
\end{displaymath}
\end{proposition}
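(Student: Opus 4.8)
The plan is to follow the reduction to ordered products from \cite{Stix}, replacing the geometric fact used there — that the diagonals $\Delta_i$ and $\Delta_i \times \id$ are open and closed immersions, hence induce fully faithful pullbacks — by the abstract full faithfulness hypotheses in the statement. Write $R : \rmDD(h,\scrF) \rarr \rmDD(S_\bullet^<(h),\scrF)$ for the functor induced by the open and closed immersion $S_\bullet^<(h) \hookrightarrow S_\bullet(h)$. Since $S_0^<(h) = S_0(h) = S'$, an object of either side has the same underlying $X' = (X_i')_i \in \scrF(S') = \prod_i \scrF(S_i')$, and a descent isomorphism $\phi$ over $S' \times_S S' = \bigsqcup_{i,j} S_i' \times_S S_j'$ decomposes into components $\phi_{ij}$ over $S_i' \times_S S_j'$; the functor $R$ simply forgets all $\phi_{ij}$ with $i \geq j$. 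I will show $R$ is essentially surjective and fully faithful by reconstructing the forgotten components canonically from the ordered ones. Throughout I use the splitting cleavage and the hypothesis that fibre products in $\scrC$ agree with scheme fibre products, so that the relevant pullback functors compose strictly and the bookkeeping below is on the nose.

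First I reconstruct the missing components of $\phi$. Let $\sigma$ be the automorphism of $S' \times_S S'$ swapping the two factors, carrying $S_i' \times_S S_j'$ to $S_j' \times_S S_i'$; for $i<j$ I define the reversed component by $\phi_{ji} := \sigma^*(\phi_{ij})^{-1}$. For the diagonal component over $S_i' \times_S S_i'$, both projections restrict along $\Delta_i : S_i' \hookrightarrow S_i' \times_S S_i'$ to the identity, so I want $\Delta_i^* \phi_{ii} = \id_{X_i'}$; since $\Delta_i^*$ is fully faithful there is a unique morphism $\phi_{ii}$ with this property, and it is an isomorphism because fully faithful functors reflect isomorphisms. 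For a genuine full descent datum one checks conversely, by restricting the cocycle identity over $S_i' \times_S S_i' \times_S S_i'$ to the small diagonal, that $\Delta_i^* \phi_{ii} = \id$ is forced; this is what will make the reconstruction inverse to $R$.

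The heart of the argument is verifying the cocycle identity for the reconstructed $\phi$ over every component $S_a' \times_S S_b' \times_S S_c'$ of $S_2(h)$. When $a,b,c$ are pairwise distinct, the identity for an arbitrary ordering is obtained from the given ordered cocycle identity on $\{a,b,c\}$ by pulling back along factor-swapping isomorphisms and inverting, using the definition of the reversed components. The cases with a repeated index are where the second hypothesis enters; the representative one is $(i,j,i)$ with $i<j$, where the three faces of $S_i' \times_S S_j' \times_S S_i'$ project to $\phi_{ij}$, $\phi_{ji}$ and $\phi_{ii}$. I restrict the desired identity along $\Delta_i \times \id_{S_j'} : S_i' \times_S S_j' \hookrightarrow S_i' \times_S S_j' \times_S S_i'$, $(a,b) \mapsto (a,b,a)$: the three faces compose with this embedding to $\Delta_i$, to $\id$, and to $\sigma$, so the restricted identity reads $\id = \phi_{ij} \circ \sigma^*\phi_{ji} = \phi_{ij} \circ (\phi_{ij})^{-1}$, which holds by construction. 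Full faithfulness of $(\Delta_i \times \id_{S_j'})^*$ then promotes this restricted identity to the full cocycle identity over $S_i' \times_S S_j' \times_S S_i'$. The patterns $(i,i,j)$, $(i,j,j)$ and their permutations are handled identically, transporting the hypothesis along factor swaps, and $(i,i,i)$ uses $\Delta_i^*$ together with the hypothesis for $j=i$.

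This construction is functorial and provides a functor $G$ with $R \circ G \cong \id$ tautologically and $G \circ R \cong \id$ by the uniqueness built into the full faithfulness of $\Delta_i^*$ together with the forced identity $\Delta_i^* \phi_{ii} = \id$ noted above. Finally, applying the same restriction-and-promotion mechanism to a morphism $F : X' \rarr Y'$ shows that compatibility with the ordered components $\phi_{ij}$ ($i<j$) already implies compatibility with the diagonal and reversed components, so $R$ is full (and trivially faithful); hence $R$ is an equivalence. I expect the one genuinely delicate point to be the case analysis for the repeated-index cocycle identities — matching, for each index pattern, the correct diagonal embedding so that the restricted identity becomes detectable by precisely the hypothesized fully faithful functor, and tracking which of $\phi_{ij}$, $\phi_{ji}$, $\phi_{ii}$ appears on each face; this is exactly why the statement includes the second, less obvious full faithfulness hypothesis $(\Delta_i \times \id_{S'_j})^*$ alongside $\Delta_i^*$.
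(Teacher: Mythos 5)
Your proposal is correct and takes essentially the same approach as the paper's proof: both reconstruct the diagonal component $\phi_{ii}$ as the unique morphism restricting to $\id$ under the fully faithful $\Delta_i^*$ (an isomorphism since fully faithful functors reflect isomorphisms), recover $\phi_{ji}$ from $\phi_{ij}$ by swapping factors and inverting, and use the hypothesis on $(\Delta_i \times \id_{S_j'})^*$ (transported along factor permutations, with the small diagonal $\Delta_{2,i}^*$ handled as a composite) to force uniqueness and to promote the restricted cocycle identities to the full ones on the repeated-index components. The paper's argument is simply a terser version of your case analysis.
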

\begin{proof}
Let $Y \in \scrF(S'_i)$ and consider $\partial_0^*Y, \partial_1^* Y \in \scrF(S_i' \times_S S_i')$ obtained via maps induced by the projections $\scrF(S_i') \rarr \scrF(S_i' \times_S S_i')$. We first claim that there is \underline{exactly one} isomorphism $\partial_0|_{S_i\times_S S_i}^*Y \rarr \partial_1|_{S_i\times_S S_i}^* Y$ as in the definition of descent data. Observe that $\Delta_i^* \partial_0^*Y = Y$, $\Delta_i^* \partial_1^*Y = Y$ and from the assumption any isomorphism $\phi : \partial_0^*Y|_{S_i} \rarr \partial_1^* Y|_{S_i}$ corresponds to some isomorphism in $\psi \in \Hom_{S_i}(Y|_{S_i},Y|_{S_i})$. Pulling back the cocycle condition via the diagonal $\Delta_{2,i}^*:\scrF(S_i' \times_S S_i' \times_S S_i') \rarr \scrF(S_i')$ we get $\psi = \id_{Y|_{S_i}}$, so there is at most one map $\phi$ as above (we use here and below that we work with a splitting cleavage and so, by definition, the pullback functors preserve compositions of maps). Moreover, our assumptions imply that $\Delta_{2,i}^*$ is fully faithful as well, which shows that $\phi: \partial_0^*Y|_{S_i} \rarr \partial_1^* Y|_{S_i}$ corresponding to $\id_{Y|_{S_i}}$ will satisfy the condition. A similar reasoning shows that if we have $\phi_{ij}$ specified for $i<j$, then $\phi_{ji}$ is uniquely determined and the if $\phi_{ij}$'s satisfy the cocycle condition on $S_{ijk}$ for $i<j<k$, then $\phi_{ij}$'s together with $\phi_{ji}$'s obtained will satisfy the cocycle condition on any $S_{\alpha \beta \gamma}$, $\alpha, \beta, \gamma \in \{i,j,k\}$
\end{proof}

\begin{obs}\label{ordereddescentobs}
If the map of schemes $S'_i \rarr S$ is injective, i.e.\ if the diagonal map $S_i' \rarr S_i' \times_S S_i'$ is an isomorphism, then (still assuming splitting of the cleavage) the assumptions of the proposition are satisfied.
\end{obs}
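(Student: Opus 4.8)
The plan is to verify the two fully-faithfulness hypotheses of Proposition \ref{ordereddescentprop} directly, and the key mechanism is the standard fact that, for a cleaved fibred category, the pullback functor along an \emph{isomorphism} of base objects is itself an equivalence of categories, hence in particular fully faithful. (With the splitting cleavage that is in force throughout, pullback along $u$ and pullback along $u^{-1}$ are even mutually strictly inverse, but all I need is that each is an equivalence.) Thus I would reduce the whole statement to checking that the two diagonal morphisms appearing in the hypotheses are isomorphisms, after which full faithfulness is automatic.

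For the first morphism this is immediate from the assumption: the hypothesis "$S_i' \rarr S$ injective" is \emph{defined} to mean that $\Delta_i : S_i' \rarr S_i' \times_S S_i'$ is an isomorphism, so $\Delta_i^* : \scrF(S_i' \times_S S_i') \rarr \scrF(S_i')$ is an equivalence and in particular fully faithful. For the second morphism I would observe that $\Delta_i \times \id_{S'_j}$ is obtained from $\Delta_i$ by applying the functor $(-)\times_S S_j'$ (forming the fibre product with $S_j'$ over $S$); concretely,
\[
\Delta_i \times \id_{S'_j} : S_i' \times_S S_j' \rarr (S_i' \times_S S_i') \times_S S_j' \cong S_i' \times_S S_j' \times_S S_i',
\]
where the last identification is the canonical reordering of the triple fibre product. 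Since the functor $(-)\times_S S_j'$ carries isomorphisms to isomorphisms and $\Delta_i$ is an isomorphism, $\Delta_i \times \id_{S'_j}$ is again an isomorphism; therefore $(\Delta_i \times \id_{S'_j})^*$ is an equivalence, in particular fully faithful. This establishes both hypotheses of Proposition \ref{ordereddescentprop}.

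I do not expect a genuine obstacle here, as the argument is essentially a reduction to "pullback along an isomorphism is an equivalence." The only point demanding a little care is the bookkeeping of the triple product $S_i' \times_S S_j' \times_S S_i'$: one must check that the morphism written $\Delta_i \times \id_{S'_j}$ really is the base change of $\Delta_i$ (equivalently, the image of $\Delta_i$ under $(-)\times_S S_j'$) and not some unrelated diagonal, so that stability of isomorphisms under base change applies cleanly. Once that identification is made explicit, the conclusion is immediate.
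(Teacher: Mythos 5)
Your proof is correct, and since the paper states this as an Observation with no proof attached, your argument is exactly the implicit justification the paper relies on: under the monomorphism hypothesis both $\Delta_i$ and the partial diagonal $\Delta_i \times \id_{S_j'}$ (identified, via the canonical reordering of factors, with the fibre product of $\Delta_i$ with $\id_{S_j'}$ over $S$) are isomorphisms of schemes, and pullback along an isomorphism in a cleaved fibred category is an equivalence, hence fully faithful. No gap.
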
 

\subsubsection*{Two examples}
\begin{example}\label{exnodal}
  Let $k$ be a field and $C$ be $\bbP^1_k$ with two $k$-rational closed points $p_0$ and $p_1$ glued (see \cite{Schwede} for results on gluing schemes). Denote by $p$ the node (i.e.\ the image of $p_i$'s in $C$). We want to compute $\pipet(C)$. By the definition of $C$, we have a map $h:\tC = \bbP^1 \rarr C$ (which is also the normalization). It is finite, so it is an effective descent map for geometric coverings. Thus, we can use the van Kampen theorem. This goes as follows:
  \begin{itemize}
    \item Check that $\tC \times_C \tC \simeq \tC \sqcup p_{01} \sqcup p_{10}$ as schemes over $C$, where $p_{\alpha \beta}$ are equal to $\Spec (k)$ and map to the node of $C$ via the structural map. This can be done by checking that $\Hom_C(Y,\tC \sqcup p_{01} \sqcup p_{10})\simeq \Hom_C(Y,\tC)\times \Hom_C(Y,\tC)$;
    \item Similarly, check that $\tC \times_C \tC \times_C \tC \simeq \tC \sqcup p_{001} \sqcup p_{010} \sqcup p_{011} \sqcup p_{100} \sqcup p_{101} \sqcup p_{110}$, where the  projection $\tC \times_C \tC \times_C \tC \rarr \tC \times_C \tC$ omitting the first factor maps $p_{abc}$ to $p_{bc}$ and so on;
    \item We fix a geometric point $\bar{b}=\Spec(\bk)$ over the base scheme $\Spec(k)$ and fix geometric points $\bp_0$ and $\bp_1$ over $p_0$ and $p_1$ that map to $\bar{b}$. Then we fix geometric points on $\tC,p_{01},p_{10} \subset \tC \sqcup p_{01} \sqcup p_{10} \simeq \tC \times_C \tC$ in a compatible way and similarly for connected components of $\tC \times_C \tC \times_C \tC$ (i.e.\ let us say that $\bp_{\alpha \beta \gamma} \mapsto \bp_{\alpha}$ via $v_0$ and $\bp_{\alpha \beta} \mapsto \bp_{\alpha}$). We fix a path $\gamma$ from $\bp_0$ to $\bp_1$ that becomes trivial on $\Spec (k)$ via the structural map (this can be done by viewing $\bp_0$ and $\bp_1$ as geometric points on $\tC_{\bk}$, choosing the path on $\tC_{\bk}$ first and defining $\gamma$ to be its image). Let $\bp$ be the fixed geometric point on $C$ given by the image of $\bp_0$ (or, equivalently, $\bp_1$).
    \item We want to use Cor.\ \ref{geomvK} to compute $\pipet(C,\bp)$. We choose $\bp_0$ as the base point $\bs(\tC)$ for $\tC \in \pi_0(S_0(h))$, $\tC \in \pi_0(S_1(h))$ and $\tC \in \pi_0(S_2(h))$ . Then for any $t,t' \in \pi_0(S_\bullet(h))$ and the boundary map $\partial : t \rarr t'$, we use either the identity or $\gamma$ to define $\gamma_{t',t} : \bs(t') \rarr S_\bullet(h)(\partial)\bs(t)$ as all the points $\bp_{abc}$ map ultimately either to $\bp_0$ or $\bp_1$. 
  \end{itemize}

  With this setup, the $\alpha^{(f)}_{ijk}$'s (defined as in Cor.\ \ref{geomvK}) are trivial for any $f$ and so the relation (2) in this corollary reads $\overrightarrow{(\partial_2f)}\overrightarrow{(\partial_0f)}\Big(\overrightarrow{(\partial_1f)}\Big)^{-1}=1$. Applying this to different faces $f \in \pi_0(\tC \times_C \tC \times_C\tC)$ gives that the image of $\pi_1(\Gamma,T) \simeq \bbZ^{*3}$ in $\pipet(C,\bp)$ is generated by a single edge (in our case only one maximal tree can be chosen -- containing a single vertex). The choice of paths made guarantees $\pipet(\partial_0)(g)=\pipet(\partial_1)(g)$ in $\pipet(\tC,\bp_0)$ for any $g \in \pipet(p_{ab},\bp_{ab})=\Gal(k)$. So relation (1) in Cor.\ \ref{geomvK} implies that the image of $\pipet(\tC,\bp_0)\simeq \Gal(k)$ in $\pipet(C,\bp_0)$ commutes with the elements of the image of $\pi_1(\Gamma,T)$. Putting this together we get
  \begin{eqnarray*}
  \pipet(C,\bp) \simeq\\
  \simeq \Big((\pipet(\tC,\bp_0)*^{\rmtop}\pi_1(\Gamma,T))/\langle\langle \pipet(\partial_1)(g)\vec{e}=
  \vec{e}\pipet(\partial_2)(g), \overrightarrow{(\partial_2f)}\overrightarrow{(\partial_0f)}\Big(\overrightarrow{(\partial_1f)}\Big)^{-1}
  =1\rangle\rangle\Big)^{\rmNoohi}\simeq\\
  \simeq \Big(\Gal_k \times \bbZ\Big)^{\rmNoohi} = \Gal_k \times \bbZ
  \end{eqnarray*}
  
\end{example}

\begin{example}\label{exofgluingmany}
Let $X_1,\ldots,X_m$ be geometrically connected normal curves over a field $k$ and let $Y_{m+1}, \ldots,Y_n$ be nodal curves over $k$ as in Ex.\ \ref{exnodal}. Let $x_i : \Spec(k) \rarr X_i$ be rational points and let $y_j$ denote the node of $Y_j$. Let $X:=\cup_\bullet X_i\cup_\bullet Y_j$ be a scheme over $k$ obtained via gluing of $X_i$'s and $Y_j$'s along the rational points $x_i$ and $y_j$ (in the sense of \cite{Schwede}). The notation $\cupt$ denotes gluing along the obvious points. The point of gluing gives a rational point $x: \Spec(k) \rarr X$. We choose a geometric point $\bar{b}=\Spec(\bk)$ over the base $\Spec(k)$ and choose a geometric point $\bx$ over $x$ such that it maps to $\bar{b}$. The maps $X_i \rarr X$ and $Y_j \rarr X$ are closed immersions (this is basically \cite[Lm.\ 3.8]{Schwede}).  We also get geometric points $\bx_i$ and $\by_j$ over $x_i$ and $y_j$ that map to $\bar{b}$ as well. Denote $\bX_i = (X_{i})_{\bk}$. Let $\Gal_{k,i} = \piet(x_i,\bx_i)$. It is a copy of $\Gal_k$ in the sense that the induced map $\piet(x_i,\bx_i) \rarr \piet(\Spec(k),\bar{b})$ is an isomorphism. Let us denote by $\iota_i : \Gal_{k} \rarr \Gal_{k,i}$ the inverse of this isomorphism. The group $\piet(x_i,\bx_i)$ acts on $\piet(\bX_i,\bx_i)$ and allows to write $\piet(X_i,\bx_i) \simeq \piet(\bX_i,\bx_i) \rtimes \Gal_{k,i}$.
  
After some computations (as in the previous example), using Cor.\ \ref{geomvK} and Ex.\ \ref{exnodal}, one gets 
  \begingroup\makeatletter\def\f@size{10}\check@mathfonts
  \begin{displaymath}
    \pipet(X,\bx) \simeq \Big(*^N_{1 \leq i \leq m} (\piet(\bX_i,\bx_i) \rtimes  \Gal_{k,i}) *^N_{m+1 \leq j \leq n} (\bbZ \times \Gal_{k,j})/\langle\langle \iota_i(\sigma)=\iota_{i'}(\sigma)|\sigma \in \Gal_k, \text{ } i,i' = 1, \ldots, n \rangle\rangle \Big)^{\rmNoohi}
  \end{displaymath}
  \endgroup
  Let us describe the category of group-sets.
\begingroup\makeatletter\def\f@size{10}\check@mathfonts
\begin{equation*}
\begin{aligned}
\Big(&{}*^N_{1 \leq i \leq m} (\piet(\bX_i,\bx_i) \rtimes  \Gal_{k,i}) *^N_{m+1 \leq j \leq n} (\bbZ \times \Gal_{k,j})/\langle\langle \iota_i(\sigma)=\iota_{i'}(\sigma)|\sigma \in \Gal_k, \text{ } i,i' = 1, \ldots, n \rangle\rangle \Big)^{\rmNoohi}-\rmSets \simeq\\
&{}\simeq \Big\{ S \in \Big(*^N_{1 \leq i \leq m} (\piet(\bX_i,\bx_i) \rtimes  \Gal_{k,i}) *^N_{m+1 \leq j \leq n} (\bbZ \times \Gal_{k,j}) \Big)^{\rmNoohi} - \rmSets \Big| \forall_{i,i',\sigma}\forall_{s \in S} \iota_i(\sigma)\cdot s = \iota_{i'}(\sigma) \cdot s \Big\} \simeq\\
&{}\simeq \Big\{ S \in \Big(*^N_{1 \leq i \leq m} \piet(\bX_i,\bx_i) *^N\bbZ^{*n-m}*^N \Gal_k \Big)^{\rmNoohi} - \rmSets \Big| (\spadesuit) \Big\} \\
&\text{where the condition $(\spadesuit)$ reads } \forall_{\substack{\sigma \in \Gal_k\\ s \in S, 1 \leqslant i \leqslant m}} \forall_{\substack{\gamma \in \piet(\bX_i,\bx_i)\\ w \in \bbZ^{*n-m}}}\big(\sigma \cdot (\gamma \cdot s) = ^{\sigma}\gamma \cdot( \sigma \cdot s) \text{ and } \sigma \cdot (w \cdot s) = w \cdot (\sigma \cdot s)\big).
\end{aligned}     
\end{equation*}
\endgroup

We have used Obs.\ \ref{setsofquotient} and Lm.\ \ref{setsofsemi} below.
\end{example}

\begin{lemma}\label{setsofsemi}
Let $K$ and $Q$ be topological groups and assume we have a continuous action $K \times Q \rarr K$ respecting multiplication in $K$. Then $K \rtimes Q$ with the product topology (on $K \times Q$) is a topological group and there is an isomorphism
\begin{displaymath}
K*^{\rmtop}Q / \langle \langle qkq^{-1}= {}^{q}k \rangle \rangle \rarr K \rtimes Q
\end{displaymath}
\end{lemma}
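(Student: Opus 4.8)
The plan is to treat the two assertions in turn: first confirm that $K \rtimes Q$ is a topological group, and then establish the isomorphism by producing a pair of mutually inverse continuous homomorphisms, using the universal property of the topological free product $K *^{\rmtop} Q$. For the first assertion, recall that the multiplication on $K \rtimes Q$ is $(k_1,q_1)(k_2,q_2) = (k_1 \cdot {}^{q_1}k_2, q_1 q_2)$ and inversion is $(k,q)^{-1} = ({}^{q^{-1}}(k^{-1}), q^{-1})$. Both formulas are assembled from the multiplications of $K$ and $Q$ and from the action map $K \times Q \rarr K$, all continuous by hypothesis; hence multiplication and inversion are continuous for the product topology on $K \times Q$, so $K \rtimes Q$ is a topological group, and it is Hausdorff as a product of two Hausdorff groups.

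Next I would build the map out of the quotient. The inclusions $\iota_K : k \mapsto (k,1)$ and $\iota_Q : q \mapsto (1,q)$ are continuous homomorphisms into $K \rtimes Q$, so the universal property of $K *^{\rmtop} Q$ as the coproduct of Hausdorff topological groups yields a continuous homomorphism $\Phi : K *^{\rmtop} Q \rarr K \rtimes Q$ restricting to $\iota_K$ and $\iota_Q$. A direct computation in $K \rtimes Q$ gives $(1,q)(k,1)(1,q^{-1}) = ({}^q k, 1)$, that is, $\Phi(qkq^{-1}) = \Phi({}^q k)$; thus $\Phi$ annihilates the normal subgroup $\langle\langle qkq^{-1} = {}^q k \rangle\rangle$ and, because the target carries the quotient topology, factors through a continuous homomorphism $\bar{\Phi}$ defined on $K *^{\rmtop} Q / \langle\langle qkq^{-1} = {}^q k \rangle\rangle$.

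Then I would construct the inverse $\Psi : K \rtimes Q \rarr K *^{\rmtop} Q / \langle\langle qkq^{-1} = {}^q k \rangle\rangle$ by $(k,q) \mapsto [k][q]$, where brackets denote images in the quotient. That $\Psi$ is a homomorphism is precisely where the imposed relation $[q][k] = [{}^q k][q]$ is used: it rewrites $[k_1][q_1][k_2][q_2]$ as $[k_1][{}^{q_1}k_2][q_1 q_2] = \Psi\big((k_1,q_1)(k_2,q_2)\big)$. Continuity of $\Psi$ follows since $(k,q) \mapsto \iota_K(k)\,\iota_Q(q)$ is continuous on $K \times Q$ (a product of two continuous maps into the topological group $K *^{\rmtop} Q$, followed by its multiplication), composed with the continuous quotient map, and the topology on $K \rtimes Q$ is exactly the product topology on $K \times Q$. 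Evaluating $\bar{\Phi}\circ\Psi$ and $\Psi\circ\bar{\Phi}$ on the generators $(k,1)$, $(1,q)$, $[k]$, $[q]$ shows the two maps are mutually inverse, whence $\bar{\Phi}$ is an isomorphism of topological groups.

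The main obstacle I anticipate is not the algebra: the underlying abstract isomorphism $K * Q / \langle\langle qkq^{-1} = {}^q k\rangle\rangle \cong K \rtimes Q$ is classical, obtained by putting every word into the normal form $kq$. The delicate point is the topological bookkeeping, namely ensuring that \emph{both} $\bar{\Phi}$ and $\Psi$ are continuous rather than merely that $\bar{\Phi}$ is a continuous bijection; this is why I exhibit the explicit inverse $\Psi$ and check its continuity directly from the product topology instead of relying on an open-mapping argument. As a minor additional benefit, since the quotient need not be Hausdorff a priori, producing a topological isomorphism onto the Hausdorff group $K \rtimes Q$ simultaneously shows that the quotient is in fact Hausdorff.
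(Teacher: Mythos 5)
Your proof is correct and follows essentially the same route as the paper: the forward map comes from the universal property of $K *^{\rmtop} Q$, and the inverse $(k,q) \mapsto [k][q]$ is shown continuous by exactly the paper's argument (continuity of the two inclusions and of multiplication in $K *^{\rmtop} Q$, followed by the quotient map). The only minor difference is that you verify bijectivity by checking the two maps are mutually inverse on generators, whereas the paper identifies the kernel of the forward map with the normal closure of the relations by invoking the fact (due to Graev) that the underlying abstract group of $K *^{\rmtop} Q$ is the abstract free product; your variant is marginally more self-contained.
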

\begin{proof}
That $K \rtimes Q$ becomes a topological group is easy from the continuity assumption of the action. The isomorphism is obtained as follows: from the universal property we have a continuous homomorphism $K*^{\rmtop}Q \rarr K \rtimes Q$ and the kernel of this map is the smallest normal subgroup containing the elements $qkq^{-1}( ^{q}k)^{-1}$ (this follows from the fact that the underlying abstract group of $K*^{\rmtop}Q$ is the abstract free product of the underlying abstract groups, similarly for $K \rtimes Q$ and that we know the kernel in this case). So we have a continuous map that is an isomorphism of abstract groups. We have to check that the inverse map $K \rtimes Q \ni kq \mapsto kq \in K*^{\rmtop}Q / \langle \langle qkq^{-1}= ^{q}k \rangle \rangle$ is continuous. It is enough to check that the map $K \times Q \ni (k,q) \mapsto kq \in K*^{\rmtop}Q$ (of topological spaces) is continuous, but this follows from the fact that the maps $K \rarr K*^{\rmtop}Q$ and $Q \rarr K*^{\rmtop}Q$ are continuous and that the multiplication map $(K*^{\rmtop}Q) \times (K*^{\rmtop}Q) \rarr K*^{\rmtop}Q$ is continuous.
\end{proof}

Let us also state a technical lemma concerning the ``functoriality'' of the van Kampen theorem. It is important that the diagram formed by the schemes $X_1,X_2, \tX, \tX_1$ in the statement is cartesian.
\begin{lemma}\label{vKfunctoriality}
Let $f:X_1 \rarr X_2$ be a morphism of connected schemes and $h:\tX \rarr X_2$ be a morphism of schemes. Denote by $h_1:\tX_1 \rarr X_1$ the base-change of $h$ via $f$. Assume that $h$ and $h_1$ are effective descent morphisms for geometric coverings and that local topological noetherianity assumptions are satisfied for the schemes involved as in the statement of Cor.\ \ref{geomvK}. Assume that for any connected component $W \in \pi_0(S_\bullet(h))$, the base-change $W_1$ of $W$ via $f$ is connected. Choose the geometric points on $W_1 \in \pi_0(S_\bullet(h_1))$ and paths between the obtained fibre functors as in Cor.\ \ref{geomvK} and choose the geometric points and paths on $W \in \pi_0(S_\bullet(h))$ as the images of those chosen for $\tX_1$.
Identify the graphs $\Gamma=\pi_0(S_\bullet(h))_{\leqslant 1}$ and $\Gamma_1=\pi_0(S_\bullet(h_1))_{\leqslant 1}$ (it is possible thanks to the assumption made) and choose a maximal tree $T$ in $\Gamma$. Using the above choices, use Cor.\ \ref{geomvK}. to write the fundamental groups $\pipet(X_1) \simeq \big((*_{W \in \pi_0(\tX)}^{\rmtop} \pipet(W_1))*^{\rmtop} \pi_1(\Gamma_1,T)/\langle R' \rangle\big)^{\rmNoohi}$ and  $\pipet(X_2) \simeq \big((*_{W \in \pi_0(\tX)}^{\rmtop} \pipet(W))*^{\rmtop} \pi_1(\Gamma,T)/\langle R \rangle \big)^{\rmNoohi}$. 

Then the map of fundamental groups $\pipet(f) : \pipet(X_1) \rarr \pipet(X_2)$ is the Noohi completion of the map
\begin{displaymath}
\Big((*_{W \in \pi_0(\tX)}^{\rmtop} \pipet(W_1))*^{\rmtop} \pi_1(\Gamma_1,T)\Big)/\langle R' \rangle \rarr  \Big(*_{W \in \pi_0(\tX)}^{\rmtop} \pipet(W))*^{\rmtop} \pi_1(\Gamma,T)\Big)/\langle R \rangle,
\end{displaymath}  which is induced by the maps $\pipet(W_1) \rarr \pipet(W)$ and the identity on $\pi_1(\Gamma_1,T)$ (which makes sense after identification of $\Gamma_1$ with $\Gamma$).
\end{lemma}
\begin{proof}
It is clear that on (the image of) $\pipet(W_1)$ (in $\pipet(X_1)$) the map is the one induced from $f_W:W_1 \rarr W$. The part about $\pi_1(\Gamma_1,T)$ follows from the fact that $\pi_1(\Gamma_1,T) < \pipet(X_1)$ acts in the same way as $\pi_1(\Gamma,T) < \pipet(X_2)$ on any geometric covering of $X_2$. This follows from the choice of points and paths on $W \in \pi_0(S_\bullet(h))$ as the images of the points and paths on the corresponding connected components $W_1 \in \pi_0(S_\bullet(h_1))$. The maps as in the statement give a morphism $\phi:(*_{W \in \pi_0(\tX)}^{\rmtop} \pipet(W_1))*^{\rmtop} \pi_1(\Gamma_1,T) \rarr  (*_{W \in \pi_0(\tX)}^{\rmtop} \pipet(W))*^{\rmtop} \pi_1(\Gamma,T)$ and it is easy to check that $\phi(R') \subset R$, which finishes the proof.
\end{proof}

\subsection{K\"unneth formula}
In this subsection we use the van Kampen formula to prove the K\"unneth formula for $\pipet$.

Let $X, Y$ be two connected schemes locally of finite type over an algebraically closed field $k$ and assume that $Y$ is proper. Let $\bx, \by$ be geometric points of $X$ and $Y$ respectively with values in the same algebraically closed field extension $K$ of $k$. With these assumptions, the classical statement says that the ``K\"unneth formula'' for $\piet$ holds, i.e.
\begin{fact}(\cite[Exp. X, Cor.\ 1.7]{SGA1})\label{usual-van-Kampen}
With the above assumptions, the map induced by the projections is an isomorphism
\begin{displaymath}
\piet(X \times_k Y,(\bx,\by)) \stackrel{\sim}{\rarr} \piet(X,\bx) \times \piet(Y,\by)
\end{displaymath}
\end{fact}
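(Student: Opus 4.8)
The plan is to realize $X \times_k Y$ as the total space of the proper, flat, finitely presented morphism $p \colon X \times_k Y \rarr X$ (a base change of $Y \rarr \Spec k$) and to feed this into the homotopy exact sequence for a proper morphism over a general base, \cite[Exp. X, Corollaire 1.4]{SGA1}. The key point enabling the argument is that, since $k = \bk$ is algebraically closed, the closed points $\bx$ and $\by$ are $k$-rational; this produces two sections which both split the sequence and, taken together, force the resulting group extension to be a \emph{direct} product rather than merely a semidirect one.

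First I would verify the hypotheses of the homotopy sequence for $p$. Flatness and properness of $p$ follow by base change from $Y \rarr \Spec k$, and finite presentation holds because $Y$, being proper, is of finite type over a field. Every geometric fibre of $p$ is isomorphic to $Y_{\bk} = Y$, which is connected, hence geometrically connected over $\bk$. For reducedness of the fibres I would first reduce to the case that $X$ and $Y$ are reduced by topological invariance of $\piet$ (a nilimmersion is a universal homeomorphism; cf.\ Prop.~\ref{topoinv} for the pro-\'etale analogue); since $k$ is perfect, reduced schemes are geometrically reduced, so each fibre $Y$ is reduced and $X \times_k Y$ is reduced as well. With $X$ connected, \cite[Exp. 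X, Corollaire 1.4]{SGA1} then yields an exact sequence of profinite groups
\begin{displaymath}
\piet(Y,\by) \rarr \piet(X \times_k Y,(\bx,\by)) \stackrel{p_*}{\rarr} \piet(X,\bx) \rarr 1,
\end{displaymath}
in which the left-hand map is induced by the inclusion of the fibre $t \colon Y \monorarr X \times_k Y$, $y \mapsto (\bx, y)$, which carries $\by$ to $(\bx,\by)$.

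Next I would introduce the second projection $q \colon X \times_k Y \rarr Y$ together with the sections $s = \id_X \times \by \colon X \rarr X \times_k Y$ and $t = \bx \times \id_Y \colon Y \rarr X \times_k Y$, both of which pass through the base point. From $p \circ s = \id_X$, $q \circ t = \id_Y$, and the fact that $q \circ s$ and $p \circ t$ are constant maps factoring through $\Spec k$ (so that they induce trivial maps, as $\piet(\Spec \bk)=1$), one reads off
\begin{displaymath}
p_* s_* = \id,\quad q_* s_* = 1,\quad p_* t_* = 1,\quad q_* t_* = \id.
\end{displaymath}
The claim is then that $(p_*, q_*) \colon \piet(X \times_k Y,(\bx,\by)) \rarr \piet(X,\bx) \times \piet(Y,\by)$ is an isomorphism. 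For surjectivity, given $(a,b)$ the element $g = s_*(a)\, t_*(b)$ satisfies $p_*(g) = a$ and $q_*(g) = b$. For injectivity, if $g \in \Ker(p_*) \cap \Ker(q_*)$, then exactness of the homotopy sequence gives $g = t_*(b)$ for some $b \in \piet(Y,\by)$, whence $b = q_*(t_*(b)) = q_*(g) = 1$ and so $g = 1$. A continuous bijective homomorphism of profinite (compact Hausdorff) groups is automatically an isomorphism of topological groups, which finishes the proof; note this argument forces $\Image(s_*)$ and $\Image(t_*)$ to commute a posteriori, which is exactly what distinguishes a product from a semidirect product.

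The substantive external input is the homotopy exact sequence \cite[Exp. X, Corollaire 1.4]{SGA1}, and the single place where properness of $Y$ is truly indispensable is precisely its applicability: without it the middle exactness — and indeed the whole statement — collapses, as already for $X = Y = \mathbb{A}^1$ in positive characteristic (Artin–Schreier covers). On my side the only delicate verifications are the fibre hypotheses (connected and reduced), handled via geometric connectedness of $Y$ over $\bk$ and the reduction to the reduced case, and the bookkeeping identifying the left-hand map of the homotopy sequence with $t_*$, which is what lets the retraction $q_*$ be applied to conclude injectivity.
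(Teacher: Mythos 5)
The paper offers no proof of this statement at all---it is quoted as a classical fact with a citation to \cite[Exp. X, Cor. 1.7]{SGA1}---and your argument is precisely the standard derivation behind that citation: apply the homotopy exact sequence of \cite[Exp. X, Cor. 1.4]{SGA1} to the projection $p\colon X\times_k Y \rarr X$ and use the two sections through the base point (available since closed points are $k$-rational over $k=\bk$) to split the extension into a direct product via the retraction $q_*$. Your verifications of the fibre hypotheses (geometric connectedness since $k$ is algebraically closed, geometric reducedness after passing to reductions over a perfect field, using topological invariance of $\piet$) and the injectivity argument via $\Ker(p_*)=\Image(t_*)$ are correct, so this is essentially the same approach.
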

We want to establish analogous statement for $\pipet$.

\begin{proposition}\label{Kunneth-proetale}
Let $X, Y$ be two connected schemes locally of finite type over an algebraically closed field $k$ and assume that $Y$ is proper. Let $\bx, \by$ be geometric points of $X$ and $Y$ respectively with values in the same algebraically closed field extension $K$ of $k$. Then the map induced by the projections is an isomorphism
\begin{displaymath}
\pipet(X \times_k Y,(\bx,\by)) \stackrel{\sim}{\rarr} \pipet(X,\bx) \times \pipet(Y,\by)
\end{displaymath}
\end{proposition}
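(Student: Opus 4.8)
The plan is to compute both sides through the van Kampen theorem (Cor.~\ref{geomvK}) applied to normalizations, reducing everything to the classical Künneth formula for $\piet$ (Fact~\ref{usual-van-Kampen}) on the normal pieces, together with a purely group-theoretic splitting. Since $X$ and $Y$ are of finite type over a field they are Nagata, so the normalizations $\nu_X : X^\nu \rarr X$ and $\nu_Y : Y^\nu \rarr Y$ are finite; being proper, surjective and of finite presentation they are morphisms of effective descent for geometric covers by Prop.~\ref{properdescent}, and so are all of their base changes. Write $X^\nu = \bigsqcup_v X^\nu_v$ and $Y^\nu = \bigsqcup_w Y^\nu_w$ into connected (normal) components. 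Throughout I would use that over $\bk$ the product of connected finite type schemes is connected, so $\pi_0$ is multiplicative on products; that a product of normal $\bk$-varieties is normal; and that for geometrically unibranch schemes $\pipet = \piet$ by Lemma~\ref{proetale-of-normal}. I deliberately normalize one factor at a time, so that the graph appearing in van Kampen stays one-dimensional and never becomes the (intractable) product graph.

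First I would prove the special case in which the first factor is normal: for $X'$ connected normal of finite type over $\bk$ and $Y$ as in the statement, $\pipet(X' \times_k Y) \cong \piet(X') \times \pipet(Y)$. Apply Cor.~\ref{geomvK} to $\id_{X'} \times \nu_Y : X' \times Y^\nu \rarr X' \times Y$. Because $X'$ is a constant connected factor, the iterated fibre products of $X'\times Y^\nu$ over $X'\times Y$ are $X'\times(Y^\nu\times_Y\cdots\times_Y Y^\nu)$, so the $2$-complex of connected components, its graph $\Gamma^Y$ and a maximal tree are \emph{identical} to the ones produced by $\nu_Y$ for $Y$ alone. The vertex schemes $X'\times Y^\nu_w$ are normal, whence $\pipet(X' \times Y^\nu_w) = \piet(X' \times Y^\nu_w) \cong \piet(X') \times \piet(Y^\nu_w)$ by Lemma~\ref{proetale-of-normal} and Fact~\ref{usual-van-Kampen} ($Y^\nu_w$ is proper). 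As the vertex components are normal, Remark~\ref{remark-relations-in-vK-for-normalization} lets me take the edge groups to be $\piet\big(X' \times (Y^\nu \times_Y Y^\nu)_f\big) \cong \piet(X') \times \piet\big((Y^\nu\times_Y Y^\nu)_f\big)$, again by Fact~\ref{usual-van-Kampen} since the edge components are proper over $Y$. Crucially the maps to the vertices ignore the $X'$-factor, so in every edge relation the common factor $\piet(X')$ commutes with every edge and, inside each vertex group, with each $\piet(Y^\nu_w)$, while the face relations are trivial in the $X'$-direction. Thus $\piet(X')$ is identified across all vertices into a single central direct factor, and splitting it off leaves exactly the van Kampen presentation of $\pipet(Y)$. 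This splitting is an instance of a general lemma — crossing van Kampen data with a fixed group $Q$ placed diagonally and trivially on the edges yields the product with $Q$ — proved by the same manipulation of presentations as in Example~\ref{exofgluingmany} (with $Q = \piet(X')$ in the role of $\Gal_k$), using Obs.~\ref{setsofquotient} and Lemma~\ref{setsofsemi}.

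For the general case I apply Cor.~\ref{geomvK} to $\nu_X \times \id_Y : X^\nu \times Y \rarr X \times Y$. As before, since $Y$ is a constant connected factor, the graph $\Gamma^X$ and its maximal tree coincide with those produced by $\nu_X$ for $X$. The vertex groups are $\pipet(X^\nu_v \times Y)$, which by the special case equal $\piet(X^\nu_v) \times \pipet(Y)$. For the edge groups $\pipet\big((X^\nu\times_X X^\nu)_e \times Y\big)$ I avoid a full Künneth computation: the map $\pipet(\partial_i)$ into the profinite factor $\piet(X^\nu_{v_i})$ factors through $\piet\big((X^\nu\times_X X^\nu)_e \times Y\big) \cong \piet\big((X^\nu\times_X X^\nu)_e\big) \times \piet(Y)$ (Fact~\ref{usual-van-Kampen}, $Y$ proper), and in fact only through $\piet\big((X^\nu\times_X X^\nu)_e\big)$, since the map to $X^\nu_{v_i}$ is independent of $Y$; meanwhile the $\pipet(Y)$-component of $\pipet(\partial_i)$ is the $i$-independent second projection onto $\pipet(Y)$, which is surjective (split by any $\bk$-point of the edge component). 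Hence each edge relation splits into the pure $X$-edge relation for $b \in \piet\big((X^\nu\times_X X^\nu)_e\big)$ and the relation forcing $\pipet(Y)$ to commute with the edge; likewise for the face relations. So the whole presentation is the one for $\pipet(X)$ crossed with the central factor $\pipet(Y)$, and the splitting lemma yields $\pipet(X \times_k Y) \cong \pipet(X) \times \pipet(Y)$. Since every van Kampen isomorphism used is induced by the two projections, this agrees with the map in the statement.

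The main obstacle is the group-theoretic heart: establishing that in these van Kampen presentations the contributions of the ``constant factor'' ($\piet(X')$, then $\pipet(Y)$) sitting at all vertices are genuinely identified into a single central direct summand, so that a free-product-with-amalgamation presentation factors as a direct product. This demands tracking the edge and face relations precisely and invoking the classical $\piet$-Künneth formula to control the profinite (edge) contributions, even though the edge schemes are neither normal nor proper over the base. By comparison the geometric inputs — finiteness of normalization, effective descent, multiplicativity of $\pi_0$, and normality of products over $\bk$ — are routine, as is the final check (e.g. via the dictionary Prop.~\ref{dictionary}) that the constructed isomorphism is the projection-induced map.
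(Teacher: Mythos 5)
Your proposal is correct and takes essentially the same route as the paper: the paper also proves a van Kampen splitting statement (normalize one factor while keeping the other fixed, then use the edge and face relations to identify the fixed factor's copies across the maximal tree into a single central direct factor), applies it once with a normal component as the fixed factor (base case: classical K\"unneth, Fact \ref{usual-van-Kampen}, on products of normal varieties) and once with $Y$ fixed and $X$ normalized to conclude. Your packaging (a direct special case followed by the general case, with edges handled via factorization through profinite completions and sections from $\bk$-points) is only a cosmetic variant of the paper's conditional Claim together with Remark \ref{remark-relations-in-vK-for-normalization} and Lemma \ref{Kunneth-sujectivity}.
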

Choosing a path between $(\bx,\by)$ and some fixed $k$-point of $X \times_k Y$ (seen as a geometric point) and looking at the images of this path via projections onto $X$ and $Y$ reduces us (by Cor.\ \ref{base-points} and compatibility of the chosen paths), to the situation where we can assume that $\bx$ and $\by$ are $k$-points. We are going to assume this in the proof.   
Before we start, let us state and prove the surjectivity of the above map as a lemma. Properness is not needed for this.

\begin{lemma}\label{Kunneth-sujectivity}
Let $X, Y$ be two connected schemes over an algebraically closed field $k$ with $k$-points on them: $\bx$ on $X$ and $\by$ on $Y$. Then the map induced by the projections
\begin{displaymath}
\pipet(X \times_k Y,(\bx,\by)) \rarr \pipet(X,\bx) \times \pipet(Y,\by)
\end{displaymath}
is surjective.
\end{lemma}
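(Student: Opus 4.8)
The plan is to exploit the sections of the two projections furnished by the chosen $k$-points, so that surjectivity reduces to a one-line computation in the target product. Write $p_X : X \times_k Y \rarr X$ and $p_Y : X \times_k Y \rarr Y$ for the projections and let $\Phi = ((p_X)_*,(p_Y)_*)$ be the map in the statement. Since $\by$ is a $k$-point, the morphism $s_X := \id_X \times \by : X \rarr X \times_k Y$, $x \mapsto (x,\by)$, is a section of $p_X$ carrying $\bx$ to $(\bx,\by)$; symmetrically $s_Y := \bx \times \id_Y : Y \rarr X \times_k Y$ is a section of $p_Y$ carrying $\by$ to $(\bx,\by)$. By functoriality of $\pipet$ (which follows from functoriality of the pullback on $\rmCov$, Lm.~\ref{geomfunctorscompatibility}, and \cite[Thm.~7.2.5(2)]{BhattScholze}) these induce continuous homomorphisms $(s_X)_* : \pipet(X,\bx) \rarr \pipet(X \times_k Y,(\bx,\by))$ and $(s_Y)_*$, all compatible with the base point $(\bx,\by)$, and $(p_X)_* \circ (s_X)_* = \id$, $(p_Y)_* \circ (s_Y)_* = \id$.

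The key point I would then isolate is that the \emph{cross terms} vanish. The composite $p_Y \circ s_X : X \rarr Y$ is the constant morphism with image $\{\by\}$, factoring through the structure map $X \rarr \Spec k$ followed by $\by : \Spec k \rarr Y$. As $k$ is algebraically closed, $\pipet(\Spec k) = 1$, so $(p_Y)_* \circ (s_X)_*$ is trivial, and symmetrically $(p_X)_* \circ (s_Y)_*$ is trivial. Consequently $\Phi \circ (s_X)_* = (\id_{\pipet(X)},1)$ and $\Phi \circ (s_Y)_* = (1,\id_{\pipet(Y)})$.

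Surjectivity then follows immediately: given $(a,b) \in \pipet(X,\bx) \times \pipet(Y,\by)$, I would set $g = (s_X)_*(a)\cdot(s_Y)_*(b) \in \pipet(X \times_k Y,(\bx,\by))$ and compute $\Phi(g) = (a,1)\cdot(1,b) = (a,b)$.

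I do not expect a genuine obstacle here: this is the easy half of the Künneth statement, and properness of $Y$ plays no role (consistent with the remark preceding the lemma). The only steps requiring care are the book-keeping of base points under $s_X$ and $s_Y$ — so that every induced map genuinely lives over $(\bx,\by)$ — and the identification $\pipet(\Spec k) = 1$ used to kill the cross terms. The hard content, namely injectivity (where properness of $Y$ and invariance of $\pipet$ under algebraically closed field extensions enter), is deferred to the proof of Prop.~\ref{Kunneth-proetale} proper.
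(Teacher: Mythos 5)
Your proposal is correct and follows essentially the same route as the paper: both use the sections $(\id_X,\by)$ and $(\bx,\id_Y)$ induced by the $k$-points, check that composing with the projections yields $(\id,1)$ and $(1,\id)$ (your cross-term computation via $\pipet(\Spec k)=1$ just fills in the paper's ``easy to check''), and conclude since the image, being a subgroup, contains a generating set of the product.
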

\begin{proof}
Consider the map $(\id_X, \by): X = X\times_k \by  \rarr X \times_k Y$. It is easy to check that the map induced on fundamental groups $\pipet(X,\bx) \rarr \pipet(X \times_k Y,(\bx,\by)) \rarr \pipet(X,\bx) \times \pipet(Y,\by)$ is given by $(\id_{\pipet(X,\bx)}, 1_{\pipet(Y,\by)}) : \pipet(X,\bx) \rarr \pipet(X,\bx) \times \pipet(Y,\by)$. Analogous fact holds if we consider  $(\bx, \id_Y): Y  \rarr X \times_k Y$. As a result, the image $\rmim(\pipet(X \times_k Y,(\bx,\by)) \rarr \pipet(X,\bx) \times \pipet(Y,\by))$ contains the set $(\pipet(X,\bx) \times \{1_{\pipet(Y,\by)}\}) \cup  (\{1 _{\pipet(X,\bx)}\} \times \pipet(Y,\by))$. This finishes the proof, as this set generates $\pipet(X,\bx) \times \pipet(Y,\by)$.
\end{proof}

\begin{proof}(of Prop.\ \ref{Kunneth-proetale})
As $X,Y$ are locally of finite type over a field, the normalization maps are finite and we can apply Prop.\ \ref{properdescent}.
Let $\tX \rarr X$ be the normalization of $X$ and let $\tX = \sqcup_v \tX_v$ be its decomposition into connected components and let us fix a closed point $x_v \in \tX_v$ for each $v$. Similarly, let $\sqcup_u \widetilde{Y}_u = \widetilde{Y} \rarr Y$ be the decomposition into connected components of the normalization of $Y$ with closed points $y_u \in \widetilde{Y}_u$.

We first deal with a particular case.

\textbf{Claim}: the statement of Prop.\ \ref{Kunneth-proetale} holds under the additional assumption that
\begin{itemize}
  \item either, for any $v$, the projections induce isomorphisms
\begin{displaymath}
\pipet(\tX_v \times_k Y,(x_v,\by)) \stackrel{\sim}{\rarr} \pipet(\tX_v,x_v) \times \pipet(Y,\by).
\end{displaymath}
  \item or, for any $u$, the projections induce isomorphisms
\begin{displaymath}
\pipet(X \times_k \widetilde{Y}_u,(\bx,y_u)) \stackrel{\sim}{\rarr} \pipet(X,\bx) \times \pipet(\widetilde{Y}_u,y_u).
\end{displaymath}

\end{itemize}

\textbf{Proof of the claim.} Apply Cor.\ \ref{geomvK} to $h: \tX \rarr X$. We choose $\bx$ and $x_v$'s as geometric points $\bs(t)$ of the corresponding simplexes $t \in \pi_0(S_\bullet(h))_0$ and choose $\bs(t)$ to be arbitrary closed points (of suitable double and triple fibre products) for $t \in \pi_0(S_\bullet(h))_{2}$. We fix a maximal tree $T$ in $\Gamma=\pi_0(S_\bullet(h))_{\leq 1}$ and fix paths $\gamma_{t',t}:\bs(t') \rarr S_\bullet(h)(\partial)\bar{s}(t)$. Thus, we get  $\pipet(X,\bx) \cong \Big(\big(*^N_{v} \pipet(\tX_v,x_v) *^N \pi_1(\Gamma,T)\big)/\overline{H}\Big)^{\rmNoohi}$ where $H$ is defined as in Cor.\ \ref{geomvK}.

Observe now that $\tX_v \times_k Y$ are connected (as $k$ is algebraically closed) and that $h\times\id_Y:\tX \times Y \rarr X \times Y$ is an effective descent morphism for geometric coverings. So we might use Cor.\ \ref{geomvK} in this setting. As $(\tX_v \times Y)\times_{X \times Y} (\tX_w \times Y) = (\tX_v \times_X X_w) \times_k Y$, and similarly for triple products, we can identify in a natural way $i^{-1}:\pi_0(S_\bullet(h\times\id_Y)) \stackrel{\sim}{\rarr} \pi_0(S_\bullet(h))$. In particular we can identify the graph $\Gamma_Y =\pi_0(S_\bullet(h\times\id_Y))_{\leq 1}$ with $\Gamma$ and we choose the maximal tree $T_Y$ of $\Gamma_Y$ as the image of $T$ via this identification. For $t \in \pi_0(S_\bullet(h))$ choose $(\bs(t),\by)$ as the closed base points for $i(t) \in \pi_0(S_\bullet(h\times\id_Y))$.
Denote by $\alpha_{ijk}$ elements of various $\pipet(\tX_v)$ defined as in Cor.\ \ref{geomvK} and by $\vec{e}$ elements of $\pi_1(\Gamma,T)$. By the choices and identifications above we can identify $\pi_1(\Gamma_Y,T_Y)$ with $\pi_1(\Gamma,T)$. Using van Kampen and the assumption, we write
\begin{eqnarray*}
\pipet(X \times Y,(\bx,\by)) \cong \Big(\big(*^N_{v} \pipet(\tX_v \times Y,(x_v,\by)) *^N \pi_1(\Gamma_Y,T_Y)\big)/\overline{H_Y}\Big)^{\rmNoohi}\\ 
\cong \Big(\big(*^N_{v} (\pipet(\tX_v,x_v) \times \pipet(Y,\by)_v) *^N \pi_1(\Gamma,T)\big)/\overline{H_Y}\Big)^{\rmNoohi}.
\end{eqnarray*}
Here $\pipet(Y,\by)_v$ denotes a ``copy'' of $\pipet(Y,\by)$ for each $v$. By Lm.\ \ref{Kunneth-sujectivity}, for $T \in \pi_0(S_\bullet(h))$ the natural map $\pipet(T \times Y,(\bs(T),\by)) \rarr \pipet(T,\bs(T))\times \pipet(Y,\by)$ is surjective. It follows that the relations defining $H_Y$ (as in Cor.\ \ref{geomvK}) can be written as 
\begin{eqnarray*}
\pipet(\partial_1)(g)h_{y,1} \vec{e} = \vec{e} \pipet(\partial_0)(g)h_{y,0} \text{, } e\in e(\Gamma) \text{, } g \in \pipet(e,\bs(e)) \text{, } e \in S_1(h) \text{, } h_y \in \pipet(Y,\by)\\
\overrightarrow{(\partial_2f)}\alpha_{102}\alpha_{120}^{-1}\overrightarrow{(\partial_0f)}\alpha_{210}\alpha_{201}^{-1}\Big(\overrightarrow{(\partial_1f)}\Big)^{-1} \alpha_{021} \alpha_{012}^{-1}=1, f \in S_2(h)
\end{eqnarray*}
where $\alpha$'s in the second relation are elements of suitable $\pipet(\tX_v)$'s and are the same as in the corresponding generators of $H$. The $h_{y,i}$ denotes a copy of element $h_y \in \pipet(Y,\by)$ in a suitable $\pipet(Y,\by)_v$. Varying $e$ and $h_y$ while choosing $g=1 \in \pipet(e,\bs(e))$ for every $e$, gives that $h_{y,1} \vec{e} = \vec{e}h_{y,0}$. For $e \in T$ we have $\vec{e}=1$ and so the first relation reads $h_{y,1}=h_{y,0}$, i.e.\ it identifies $\pipet(Y,\by)_v$ with $\pipet(Y,\by)_w$ for $v,w$ - ends of the edge $e$. As $T$ is a maximal tree in $\Gamma$, it contains all the vertices, so the first relation identifies $\pipet(Y,\by)_v=\pipet(Y,\by)_w$ for any two vertices $v,w$ and we will denote this subgroup (of the quotient) by $\pipet(Y,\by)$. This way $h_{y,1} \vec{e} = \vec{e}h_{y,0}$ reads simply $h_{y} \vec{e} = \vec{e}h_{y}$, so elements of $\pipet(Y,\by)$ commute with elements of $\pi_1(\Gamma,T)$. Moreover, elements of $\pipet(Y,\by)$ commute with elements of each $\pipet(\tX_v,x_v)$, as this was true for $\pipet(Y,\by)_v$. On the other hand, choosing $h_y=1$ in the first relation and looking at the second relation, we see that $H_Y$ contains all the relations of $H$. Using notations from the above discussion, we can sum it up by writing
\begin{displaymath}
H_Y = \langle \langle \text{relations generating $H$}, h_{y,0}=h_{y,1}, h_y\vec{e}=\vec{e}h_y, h_yg=gh_y \text{ (} g \in \pipet(\tX_v,x_v) \text{)}  \rangle \rangle.
\end{displaymath}
Putting this together, we get equivalences of categories
\begin{eqnarray*}
\Big(\big(*^N_{v} (\pipet(\tX_v,x_v) \times \pipet(Y,\by)_v) *^N \pi_1(\Gamma,T)\big)/\overline{H_Y}\Big)-\rmSets\\
\cong \{S \in \big(*^N_{v} (\pipet(\tX_v,x_v) \times \pipet(Y,\by)_v) *^N \pi_1(\Gamma,T)\big)-\rmSets | H_Y \text{ acts trivially on } S\} \\
\stackrel{\spadesuit}{\cong} \{S \in \big((*^N_{v} \pipet(\tX_v,x_v) *^N \pi_1(\Gamma,T))\times \pipet(Y,\by) \big)-\rmSets | H \text{ acts trivially on } S\}\\
\cong \Big(\big(\big(*^N_{v} \pipet(\tX_v,x_v) *^N \pi_1(\Gamma,T)\big)/\overline{H}\big)\times \pipet(Y,\by)\Big)-\rmSets\\
\cong \Big(\pipet(X,\bx)\times \pipet(Y,\by)\Big)-\rmSets,
\end{eqnarray*}
where equality $\spadesuit$ follows from the fact that for topological groups $G_1,G_2$ there is equivalence $(G_1 \times G_2) - \rmSets \cong \{S \in G_1 *^N G_2 - \rmSets | \forall_{\substack{g_1\in G_1\\ g_2 \in G_2}}\forall_{s \in S} g_1g_2s=g_2g_1s\}$ (see Lm.\ \ref{setsofsemi}).\\
This finishes the proof of the Claim in the ``either'' case. After noting that each $\tilde{Y}_u$ is still proper, the ``or'' case follows in a completely symmetrical manner. We have proven a particular case of the proposition. Let us now go ahead and prove the full statement.

\textbf{General case.} The general case follows from the claim proven above in the following way: let $\sqcup_v \tX_v = \tX \rarr X$ and $\sqcup_u \widetilde{Y}_u = \widetilde{Y} \rarr Y$ be decompositions into connected components of the normalizations of $X$ and $Y$. Fix $v$ and note that $\pipet(\tX_v \times_k Y) = \pipet(\tX_v) \times \pipet(Y)$ by applying the claim to $Y$ and $\tX_v$. This is possible, as $\widetilde{Y}_u$'s, $\tX_v$ and the products $\widetilde{Y}_u\times_k\tX_v$ (for all $u$) are normal varieties and so their pro-\'etale fundamental groups are equal to the usual \'etale fundamental groups (by Lm.\ \ref{proetale-of-normal}) for which the equality $\piet(\widetilde{Y}_u\times_k\tX_v)=\piet(\widetilde{Y}_u)\times \piet(\tX_v)$ is known (see Fact \ref{usual-van-Kampen}). Thus, for any $v$, we have that $\pipet(\tX_v \times_k Y) = \pipet(\tX_v) \times \pipet(Y)$. We can now apply the claim to $X$ and $Y$ and finish the proof in the general case.
\end{proof}

\subsection{Invariance of $\pipet$ of a proper scheme under a base-change $K \supset k$ of algebraically closed fields}
\begin{proposition}\label{algclosed-to-algclosed}
Let $X$ be a proper scheme over an algebraically closed field $k$. Let $K \supset k$ be another algebraically closed field. Then the pullback induces an equivalence of categories 
\begin{displaymath}
F:\rmCov_X \rarr \rmCov_{X_K}
\end{displaymath}
In particular, if $X$ is connected,
$X_K \rarr X$ induces an isomorphism
\begin{displaymath}
\pipet(X_K) \stackrel{\sim}{\rarr} \pipet(X).
\end{displaymath}
\end{proposition}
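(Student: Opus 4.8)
The plan is to prove that the pullback functor is an equivalence by showing that the induced map on pro-\'etale fundamental groups is an isomorphism of topological groups, and then invoking the formalism of infinite Galois categories. First I would reduce to the case that $X$ is connected: the pullback functor decomposes over connected components, and since $X$ is proper over $k=\bk$, each connected component $Z$ has $H^0(Z,\calO_Z)$ a finite local $k$-algebra with residue field $k$, so $H^0(Z_K,\calO_{Z_K})=H^0(Z,\calO_Z)\otimes_k K$ is again local; hence $Z_K$ stays connected and $\pi_0$ is preserved under the base change. By topological invariance (Prop. \ref{topoinv}), together with the fact that $k$ is perfect (so $X_{\rmred}$ is geometrically reduced and $(X_{\rmred})_K=(X_K)_{\rmred}$), I may also assume $X$ reduced. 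For connected $X$ the morphism $f:X_K \rarr X$ is a compatible functor in the sense of Lemma \ref{geomfunctorscompatibility}, so it induces a continuous homomorphism $h:\pipet(X_K)\rarr \pipet(X)$, and $f^*$ is identified, via \cite[Thm. 7.2.5]{BhattScholze}, with restriction of action along $h$. It therefore suffices to prove that $h$ is an isomorphism of Noohi groups.

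To compute both groups I would apply the van Kampen theorem (Cor. \ref{geomvK}) to the normalization maps $h_X:X^\nu \rarr X$ and $h_{X_K}:(X_K)^\nu \rarr X_K$, which are finite (as $X$ is a proper, hence Nagata, $k$-scheme) and thus effective descent morphisms for geometric covers (Prop. \ref{properdescent}). The crucial compatibility is that normalization commutes with this base change: since $k$ is algebraically closed, $X^\nu$ is geometrically normal over $k$, so $(X^\nu)_K$ is normal and finite birational over $X_K$, whence $(X^\nu)_K=(X_K)^\nu$. Moreover all the schemes $X^\nu$, $X^\nu\times_X X^\nu$ and $X^\nu\times_X X^\nu \times_X X^\nu$ occurring in $S_\bullet(h_X)$ are finite over $X^\nu$, hence proper over $k$; by the $\pi_0$-invariance recalled above their sets of connected components are unchanged by $-\otimes_k K$. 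Consequently the graph $\Gamma=\pi_0(S_\bullet(h_X))_{\leq 1}$ and a chosen maximal tree $T$ are carried isomorphically to the corresponding data for $X_K$, and the base points and paths can be chosen compatibly on both sides.

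With these identifications, the van Kampen presentation of Remark \ref{remark-relations-in-vK-for-normalization} expresses $\pipet(X)$ in terms of the vertex groups $\pipet(X^\nu_v)=\piet(X^\nu_v)$ (Lemma \ref{proetale-of-normal}, as the $X^\nu_v$ are normal), the edge groups $\piet(e)$ for $e$ a connected component of $X^\nu\times_X X^\nu$, and $\pi_1(\Gamma,T)$, modulo the edge and cocycle relations; and similarly for $\pipet(X_K)$ with $X^\nu_v$ and $e$ replaced by their base changes to $K$. Now the vertex components $X^\nu_v$ are normal and proper over $k$, and the edge components $e$ are proper over $k$; for all of them the classical invariance of $\piet$ under extension of algebraically closed fields for proper schemes, \cite[Exp. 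X, Cor. 1.8]{SGA1}, gives isomorphisms $\piet((X^\nu_v)_K)\xrightarrow{\sim}\piet(X^\nu_v)$ and $\piet(e_K)\xrightarrow{\sim}\piet(e)$. These isomorphisms are functorial, hence compatible with the boundary maps $\piet(\partial_i)$ and with the cocycle elements $\alpha^{(f)}_{ijk}$, so they assemble into an isomorphism between the free products (before taking quotients) carrying the relation subgroup $\overline{H_{X_K}}$ onto $\overline{H_X}$. Taking Noohi completions of the quotients yields the desired isomorphism, and by functoriality of the whole construction it coincides with the map $h$ induced by $f$. This proves $h$ is an isomorphism, and the equivalence $\rmCov_X \simeq \rmCov_{X_K}$ follows.

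The main obstacle I expect is the bookkeeping needed to certify that the isomorphism of van Kampen presentations is exactly the base-change map $h$, rather than merely some abstract isomorphism of the two groups: one must choose the geometric base points $\bs(t)$, the maximal tree $T$, and the connecting paths $\gamma_{t',t}$ simultaneously on $X$ and on $X_K$, and check that the classical invariance isomorphisms on the vertex and edge groups intertwine $\piet(\partial_0),\piet(\partial_1)$ and send each $\alpha^{(f)}_{ijk}$ to its counterpart. The two genuinely geometric inputs that make this possible — and which both use properness of $X$ in an essential way — are the invariance of $\pi_0$ of the fibre products and the classical SGA1 invariance of $\piet$; it is exactly here that the hypothesis that $X$ is proper cannot be dropped.
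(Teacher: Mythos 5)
Your proposal is correct, but it takes a genuinely different route from the paper's proof, which never passes to fundamental groups. The paper uses the finite normalization $h:X^\nu \rarr X$ as an effective descent morphism for geometric covers (Prop.~\ref{properdescent}) and compares categories of descent data directly: geometric covers of the normal scheme $X^\nu$ and of the proper schemes $X^\nu\times_X X^\nu$, $X^\nu\times_X X^\nu\times_X X^\nu$ are (possibly infinite) disjoint unions of finite \'etale covers, the classical equivalences $\mathrm{F\Et}_{X^\nu}\simeq\mathrm{F\Et}_{(X^\nu)_K}$ of \cite[Exp. X, Cor. 1.8]{SGA1} (and their analogues for the double and triple products) extend to such disjoint unions and commute with the pullbacks along the face maps, hence identify $\rmDD(h,\rmCov)$ with $\rmDD(h_K,\rmCov)$, i.e.\ $\rmCov_X$ with $\rmCov_{X_K}$; full faithfulness is obtained the same way, or for connected $X$ from preservation of connectedness via Prop.~\ref{dictionary} \ref{denseimageequivalentconditions}. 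Your route instead matches the two van Kampen presentations, and the inputs you invoke are genuinely available: the fibre products are proper over $k$, so their $\pi_0$ is insensitive to the extension; $(X^\nu)_K$ is normal by geometric normality, so the vertex groups are \'etale fundamental groups (Lemma~\ref{proetale-of-normal}); and Lemma~\ref{vKfunctoriality} identifies the base-change map with the induced map of presentations, so the argument does close. But it costs you the reductions to connected and reduced $X$, plus the base-point/tree/path bookkeeping and the verification that the SGA1 isomorphisms intertwine the $\piet(\partial_i)$ and the $\alpha^{(f)}_{ijk}$ --- which you rightly flag as the main burden; it is the same kind of bookkeeping the paper only carries out where it is unavoidable, in Step 1 of the proof of Thm.~\ref{injectivity-on-the-left}. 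In short: both proofs rest on the same two geometric inputs (finiteness of the normalization as a descent morphism, and SGA1 invariance for proper schemes over algebraically closed fields); the paper's categorical formulation makes all compatibilities automatic and handles disconnected and non-reduced $X$ uniformly, while yours buys an explicit presentation-level description of the isomorphism that the statement does not actually require.
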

\begin{proof}
Let $X^\nu \rarr X$ be the normalization. It is finite, and thus a morphism of effective descent for geometric coverings. Let us show that the functor $F$ is essentially surjective. Let $Y' \in \rmCov_{X_K}$. As $k$ is algebraically closed and $X^\nu$ is normal, we conclude that $X^\nu$ is geometrically normal, and thus the base change $(X^\nu)_K$ is normal as well (see \cite[Tag 038O]{StacksProject}). Pulling $Y'$ back to $(X^\nu)_K$ we get a disjoint union of schemes finite \'etale over $(X^\nu)_K$ with a descent datum. It is a classical result (\cite[Exp. X, Cor.\ 1.8]{SGA1}) that the pullback induces an equivalence $\mathrm{F}\et_{X^\nu} \rarr \mathrm{F}\et_{X^\nu_K}$ of finite \'etale coverings and similarly for the double and triple products $X^\nu_2 = X^\nu\times_XX^\nu$, $X^\nu_3 = X^\nu\times_XX^\nu\times_X X^\nu$. These equivalences obviously extend to categories whose objects are (possibly infinite) disjoint unions of finite \'etale schemes (over $X^\nu$, $X^\nu_2$, $X^\nu_3$ respectively) with \'etale morphisms as arrows. These categories can be seen as subcategories of $\rmCov_{X^\nu}$ and so on. These subcategories are moreover stable under pullbacks between $\rmCov_{X^\nu_i}$. Putting this together we see, that $Y'' = Y' \times_{X_K}(X^\nu)_K$ with its descent datum is isomorphic to a pullback of a descent datum from $X^\nu$. Thus, we conclude that there exists $Y \in \rmCov_X$ such that $Y' \simeq Y_K$. Full faithfulness of $F$ is shown in the same way. If $X$ is connected, it can be also proven more directly, as $F$ being fully faithful is equivalent to preserving connectedness of geometric coverings, but any connected $Y \in \rmCov_X$ is geometrically connected, and thus $Y_K$ remains connected by Lm.\ \ref{dictionary} \ref{denseimageequivalentconditions}. Note that in the above argument we do not claim that the double and triple intersections $X_2^\nu, X_3^\nu$ are normal, as this is in general false. Instead, we are only using that all the considered geometric coverings of those schemes came as pullbacks from $X^\nu$, and thus were already split-to-finite.
\end{proof}

\section{Fundamental exact sequence}

\subsection{Statement of the results and examples}

The main result of this chapter is the following theorem.
\begin{theorem*}(see Theorem \ref{exactness-in-geometric-to-arithmetic-as-abstract} below)
  Let $k$ be a field and fix an algebraic closure $\bk$. Let $X$ be a geometrically connected scheme of finite type over $k$. Then the sequence of abstract groups
  \begin{displaymath}
  1 \rarr \pipet(X_{\bk}) \rarr \pipet(X) \rarr \Gal_k \rarr 1
  \end{displaymath}
  is exact. 
  
  Moreover, the map $\pipet(X_{\bk}) \rarr \pipet(X)$ is a topological embedding and the map $\pipet(X) \rarr \Gal_k$ is a quotient map of topological groups.
\end{theorem*}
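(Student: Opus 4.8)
The plan is to translate every assertion into a statement about geometric covers through the dictionary of Proposition \ref{dictionary}, applied with $G''=\pipet(X_{\bk})$, $G'=\pipet(X)$, $G=\Gal_k$, so that $\calC=\rmCov_{\Spec k}\simeq\Gal_k-\rmSets$, $\calC'=\rmCov_X$, $\calC''=\rmCov_{X_{\bk}}$ and $H,H'$ are pullback along $X\to\Spec k$ and $X_{\bk}\to X$. I would break the theorem into three parts: (i) $\pipet(X)\to\Gal_k$ is a surjective quotient map, (ii) near-exactness in the middle in the sense of Definition \ref{definition-weakly-exact}, and (iii) the topological embedding $\pipet(X_{\bk})\hookrightarrow\pipet(X)$; genuine abstract exactness is then assembled from (ii) and (iii). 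By far the decisive difficulty is (iii), i.e. Theorem \ref{injectivity-on-the-left}; the other two are close to the classical picture.

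For (i) and (ii): geometric connectedness means $X_l$ is connected for every finite separable $l/k$, so $H$ sends the connected object $\Spec l$ to the connected cover $X_l$, and by Proposition \ref{dictionary}\ref{denseimageequivalentconditions} the map to $\Gal_k$ has dense image. Genuine surjectivity I would get in the classical manner: extend $\sigma\in\Gal_k$ to an automorphism of $\bk$, use the induced isomorphism $\sigma_X\colon X_{\bk}\to X_{\bk}$ over $X$ to produce a natural bijection $F_{\bx}\cong F_{\bx\circ\sigma}$, and compose it with a path $F_{\bx\circ\sigma}\cong F_{\bx}$ (which exists by Lemma \ref{base-points}); the resulting automorphism of $F_{\bx}$ lifts $\sigma$, and openness follows since the $\Gal_l$ pull back to the open subgroups attached to $X_l$. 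For (ii) the inclusion $\rmim(h')\subset\ker(h)$ is clear because $H'\circ H$ factors through $\Spec\bk$ (Proposition \ref{dictionary}(4)), and I would check condition (b) of Proposition \ref{dictionary}\ref{dictionary-kernel}: if $Y\in\rmCov_X$ is connected with $Y_{\bk}$ completely decomposed, then $\Gal_k$ acts transitively on the discrete set $\pi_0(Y_{\bk})$, which is therefore \emph{finite} as $\Gal_k$ is profinite; hence $Y\to X$ is finite \'etale with split base change, so $Y\cong X\times_k\Spec l$ lies in the essential image of $H$.

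The heart is (iii). By Proposition \ref{dictionary}(1) it says that every connected geometric cover of $X_{\bk}$ is dominated by one embedding into the base change of a cover of $X$. Since $X$ is of finite type (hence Nagata), the normalization $X^\nu\to X$ is finite, so Corollary \ref{geomvK} presents $\pipet(X)$ as $\big(\big(*^{\rmtop}_w\piet(X^\nu_w)*^{\rmtop}\pi_1(\Gamma,T)\big)/\overline{H}\big)^{\rmNoohi}$ and compatibly presents $\pipet(X_{\bk})$ via the normalization of $X_{\bk}$; on each normal component the classical homotopy sequence yields injectivity $\piet((X^\nu_w)_{\bk})\hookrightarrow\piet(X^\nu_w)$, with Lemma \ref{proetale-of-normal} identifying these \'etale groups with the pro-\'etale ones. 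The problem thus reduces to a purely group-theoretic one: injectivity, and preservation of the subspace topology, for the map induced on the free product over the graph of groups by these componentwise injections, in the presence of the $\Gal_k$-action and of the Noohi completion. \emph{This is the main obstacle:} unlike for $\piet$, a connected geometric cover of $X_{\bk}$ need not descend to any finite extension of $k$ (Examples \ref{counterexample-with-picture} and \ref{counterexample-with-matrices}), so the classical descent argument breaks down completely. I would instead introduce ``distance functions'' on the free product, measuring the combinatorial position of a reduced word relative to the Bass--Serre structure of the graph of groups, and prove they are controlled by the relevant maps, so that a nontrivial (resp.\ topologically nontrivial) geometric element cannot become trivial after passing to $\pipet(X)$.

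Finally, to pass from near-exactness to genuine abstract exactness: surjectivity from (i) together with the fact that conjugation by any lift of $\sigma\in\Gal_k$ carries $\rmim(h')$ into itself (via $\sigma_X$) shows $\rmim(h')$ is normal in $\pipet(X)$, while the topological embedding (iii) together with Ra\u\i kov completeness of the Noohi group $\pipet(X_{\bk})$ shows $\rmim(h')$ is closed. Lemma \ref{lepage-lemma} then equates its topological and thick closures, so $\rmim(h')=\overline{\overline{\rmim(h')}}=\ker(h)$ by (ii), giving exactness as abstract groups with $\pipet(X)\to\Gal_k$ a quotient map.
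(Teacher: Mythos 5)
Your decomposition --- (i) quotient map, (ii) middle near-exactness via the dictionary, (iii) the embedding, then assembly via closed $+$ normal $+$ Lemma \ref{lepage-lemma} --- is exactly the paper's architecture (Theorems \ref{arithtogeom}, \ref{injectivity-on-the-left} and \ref{exactness-in-geometric-to-arithmetic-as-abstract}), and (i)--(ii) are essentially sound modulo facts you assert but do not prove: openness of $p$ needs the paper's argument that every open subgroup is $\pipet(Y,\by)$ with the image of $\by$ in $Y$ having finite residue field extension $l/k$, whence the image contains $\Gal_l$; and finiteness of $\pi_0(Y_{\bk})$ does not follow from ``transitive $+$ profinite'' (a profinite group can act transitively but discontinuously on an infinite discrete set) --- one needs continuity of the action, i.e.\ Lemma \ref{finitegeomconncomp}. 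The decisive gap is (iii). You correctly reduce it, via Proposition \ref{dictionary}(1) and Corollary \ref{geomvK}, to a statement about the graph-of-groups presentation, but then the entire content is the sentence ``introduce distance functions \dots so that a nontrivial geometric element cannot become trivial after passing to $\pipet(X)$.'' That is not the statement to be proved, and it is not a workable mechanism: by the dictionary, a topological embedding means that every connected cover of $X_{\bk}$ --- every transitive continuous action of the geometric group --- is \emph{dominated} by one extending to a cover defined over a finite extension of $k$; abstract non-vanishing of elements is weaker, and word-level Bass--Serre combinatorics cannot detect the relevant topology anyway, since both groups are Noohi completions in which triviality is governed by the closure $\overline{\langle R_1,R_2\rangle^{nc}}$ and by which open subgroups (equivalently, continuous actions) exist. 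What the paper actually does in Theorem \ref{injectivity-on-the-left} is a construction: given a connected $\pipet(X_{\bk})$-set $S$ with base point, it defines the plain length, the distance function $N(\omega)$, ``looplike'' elements and the open subgroup $V$ they generate, and then extends the left-translation action on the resulting discrete quotient $S'$ to a \emph{continuous} action of $*_v(G_v\rtimes\Gal_{k,v})*D$ (notation of that proof) killing the relations, by means of the elements $\delta_{\sigma,E},\theta_{\sigma,E},\eta$, an abstract Galois action $\phi_{v_0}$, and the key invariance $N(\omega')=N(\omega)$ under Galois twisting; none of this, nor the Step 1 reductions (making the components of the normalization geometrically connected, disposing of inseparability via Proposition \ref{topoinv}), appears in your sketch.

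Independently, the final assembly contains a non-sequitur. From surjectivity of $p$ and the fact that your lifts $\gamma_\sigma$ normalize $\rmim(h')$, normality does not follow: an arbitrary $g\in\pipet(X)$ is $u\gamma_{p(g)}$ with $u\in\ker(p)$, so $g\,\rmim(h')\,g^{-1}=u\,\rmim(h')\,u^{-1}$, and what is really needed is that $\ker(p)$ normalizes $\rmim(h')$ --- unknown before exactness is proved, since one only knows $\rmim(h')\subset\ker(p)$. The step is repairable: the subgroup generated by $\rmim(h')$ and the $\gamma_\sigma$ acts transitively on the fibre of every connected $Y\in\rmCov_X$ (indeed $\rmim(h')$ is transitive on the fibre of each component of $Y_{\bk}$, and the $\gamma_\sigma$ induce the $\Gal_k$-action on $\pi_0(Y_{\bk})$, which is transitive for $Y$ connected), hence is dense; since $\rmim(h')$ is closed and the normalizer of a closed subgroup is closed, density gives normality. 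This is parallel to what the paper does, except that the dense subgroup comes from the van Kampen presentation and is shown to normalize the image via the conjugation formula $\sigma_v\psi(\bar g)\sigma_v^{-1}=\psi(\overline{{}^{\sigma_v}g})$ --- again machinery from the proof of Theorem \ref{injectivity-on-the-left} that your proposal does not supply. Two further cautions: your lift construction yields $p(\gamma_\sigma)=\sigma$ (and the normalization property of $\gamma_\sigma$) only if the path $F_{\bx\circ\sigma}\cong F_{\bx}$ is chosen to come from a path on $X_{\bk}$, which is precisely where geometric connectedness enters --- an arbitrary path on $X$ only lifts $\tau\sigma$ for an uncontrolled $\tau$; and what Proposition \ref{dictionary}\ref{dictionary-kernel} gives is that the smallest \emph{normal} thickly closed subgroup containing $\rmim(h')$ equals $\ker(p)$, not literally Definition \ref{definition-weakly-exact}(2) (the paper gets the latter by additionally checking Proposition \ref{dictionary}\ref{dictionary-normal}); this suffices for your assembly once normality of $\rmim(h')$ is in hand, but the citation as stated is imprecise.
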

One shows the near exactness first and obtains the above version as a corollary with an extra  argument. The most difficult part of the sequence is exactness on the left. We will prove it as a separate theorem and its proof occupies an entire subsection.

\begin{theorem*}(see Theorem \ref{injectivity-on-the-left} below)
Let $k$ be a field and fix an algebraic closure $\bk$ of $k$. Let $X$ be a scheme of finite type over $k$
such that the base change $X_{\bk}$ is connected. Then the induced map
\begin{displaymath}
\pipet(X_{\bk}) \rarr \pipet(X)
\end{displaymath}
is a topological embedding.
\end{theorem*}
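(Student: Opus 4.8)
The plan is to translate the topological-embedding statement into a domination property of geometric covers via the dictionary, present both fundamental groups through the van Kampen theorem applied to normalizations, and then settle the resulting purely group-theoretic question by analysing the Galois action on a free product. Concretely, I would first apply Prop.~\ref{dictionary}(1) to the map $h' : \pipet(X_{\bk}) \to \pipet(X)$, whose associated functor $H'$ is the base-change functor $(-)_{\bk} : \rmCov_X \to \rmCov_{X_{\bk}}$. The criterion there reduces the theorem to the following statement about covers: for every connected $Z \in \rmCov_{X_{\bk}}$ there are a connected $Z' \in \rmCov_{X_{\bk}}$ and a $Y \in \rmCov_X$ together with maps $Z' \twoheadrightarrow Z$ and $Z' \hookrightarrow Y_{\bk}$. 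That is, every connected geometric cover of $X_{\bk}$ is \emph{dominated} by one embedding into the base change of a cover defined over $k$. It is essential that this is weaker than asking $Z$ itself to descend to a finite extension of $k$: the latter is false in general (Ex.~\ref{counterexample-with-picture}, Ex.~\ref{counterexample-with-matrices}), which is exactly why the naive descent proof for $\piet$ breaks down.

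Next I would set up the van Kampen presentations. Since $X$ is of finite type over $k$ it is Nagata, so the normalization $\nu : X^\nu \to X$ is finite, hence proper, surjective and of finite presentation, so it is a morphism of effective descent for geometric covers by Prop.~\ref{properdescent}. Writing $X^\nu = \bigsqcup_v X^\nu_v$ into its (normal) connected components and using $\pipet(X^\nu_v) = \piet(X^\nu_v)$ (Lm.~\ref{proetale-of-normal}), Cor.~\ref{geomvK} together with Rmk.~\ref{remark-relations-in-vK-for-normalization} gives
\[
\pipet(X) \cong \Big(\big(*^{\rmtop}_v \piet(X^\nu_v) *^{\rmtop} \pi_1(\Gamma,T)\big)/\overline{H}\Big)^{\rmNoohi},
\]
where $\Gamma = \pi_0(S_\bullet(\nu))_{\leq 1}$ and $\overline{H}$ encodes the edge and cocycle relations. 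Applying the same to the normalization of $X_{\bk}$ yields a parallel presentation of $\pipet(X_{\bk})$ whose vertex groups are the $\piet$ of the connected components of the $(X^\nu_v)_{\bk}$ and whose graph $\Gamma_{\bk}$ lies over $\Gamma$; crucially, $\Gal_k$ acts compatibly on this geometric data, permuting components and edges, and the map $\pipet(X_{\bk}) \to \pipet(X)$ is identified with the induced map of these two free-product-with-relations presentations.

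The combinatorial heart is then to show that the canonical map from the geometric free product into the arithmetic one is a topological embedding. I would attack this by introducing \emph{distance functions} on the free products: Galois-equivariant, length-type invariants built from the reduced-word normal form, engineered to survive both the passage to the quotient by $\overline{H}$ and the Noohi completion. Using such an invariant one produces, for a given connected $Z \in \rmCov_{X_{\bk}}$ written as $\pipet(X_{\bk})/U$, a dominating $Z'$ whose open stabilizer is cut out by arithmetic conditions that extend to an open subgroup of $\pipet(X)$, thereby furnishing the $Y \in \rmCov_X$ with $Z' \hookrightarrow Y_{\bk}$ demanded by Prop.~\ref{dictionary}(1).

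I expect this last step to be the main obstacle. The difficulty lies entirely in controlling the interplay of the Galois action, the relations $\overline{H}$, and the Noohi completion inside the free product. The Noohi completion can a priori identify distinct elements, and — because connected covers of $X_{\bk}$ need not admit Galois closures (open subgroups need not contain open normal ones) — there is no reduction to a profinite or prodiscrete setting in which compactness would make injectivity formal. The distance function is precisely the device meant to remain meaningful through these operations and to convert the embedding criterion of Prop.~\ref{dictionary}(1) into a verifiable statement about reduced words.
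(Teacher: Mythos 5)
Your strategy is the same as the paper's: use Prop.~\ref{dictionary}(1) to convert the embedding statement into a domination statement for covers, present $\pipet(X)$ and $\pipet(X_{\bk})$ by the van Kampen theorem applied to the finite normalization, and control the Galois action on the resulting free products by ``distance functions.'' You also correctly identify the weak form of the statement (domination, not descent) and the role of the counterexamples. However, there is a genuine gap: your proposal stops exactly where the proof has to begin. The distance function, the dominating cover $Z'$, and the ``arithmetic conditions'' cutting out its stabilizer are postulated rather than constructed, and the entire content of the theorem lives in that construction; asserting that a Galois-equivariant length invariant ``engineered to survive'' the quotient by $\overline{H}$ and the Noohi completion exists is a statement of intent, not an argument.

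Concretely, three things are missing. First, a preliminary reduction replacing $k$ by a finite extension $l$ (legitimate because $\pipet(X_l)$ is open in $\pipet(X)$ and the map factors through it) so that all connected components of $\tX$, $\tX\times_X\tX$ and $\tX\times_X\tX\times_X\tX$ are geometrically connected and carry rational points. This is what makes the geometric and arithmetic van Kampen graphs literally equal --- in your sketch $\Gal_k$ still permutes components and edges, a complication the paper deliberately eliminates --- and it supplies both the decompositions $\piet(X_v)\cong\piet(\bX_v)\rtimes\Gal_k$ and the rational points without which Prop.~\ref{dominbyrat} (the source of all auxiliary Galois covers) cannot be invoked. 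Second, the device itself: for the given connected $\pipet(X_{\bk})$-set $S$ with base point $s_0$, one chooses finite Galois covers $c_v^N$ (genuine $G_v\rtimes\Gal_k$-sets) dominating every $G_v$-orbit reachable from $s_0$ in at most $N$ steps, defines ``looplike'' palindromic words whose letter pairs act trivially on $c_v^{N(\omega)}$ for a tree-distance-based length $N(\omega)$, and proves that the subgroup $V$ they generate is open, giving the dominating set $S'\twoheadrightarrow S$. Third --- and this is where all the difficulty sits --- one must show $S'$ carries a well-defined continuous action of $\pipet(X_l)$: this requires the elements $\delta_{\sigma,E}$, $\theta_{\sigma,E}$, $\eta_{\sigma,E}$, an abstract conjugation action of $\Gal_k$ on the free product, independence of the coset representative both modulo $U$ and modulo $\langle\bR_1,\bR_2\rangle^{nc}$, continuity, and triviality of the relations $R_1,R_2$; the computational crux is the invariance $N(\omega')=N(\omega)$, i.e.\ that Galois twisting does not increase the distance function, so looplike elements remain looplike. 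None of this appears in your proposal. Finally, your stated worry that the Noohi completion might identify distinct elements is misplaced: since the argument only ever deals with quotients by \emph{open} subgroups, Lm.~\ref{quotientbyclosure} makes the passage to closures of relation subgroups and to the completion harmless; the actual danger is not injectivity being destroyed by completion, but the Galois action on $S'$ failing to be well defined.
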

By Prop.\ \ref{dictionary}, it translates to the following statement in terms of coverings: every geometric covering of $X_{\bk}$ can be dominated by a covering that embeds into a base-change to $\bk$ of a geometric covering of $X$ (i.e.\ defined over $k$). In practice, we prove that every connected geometric covering of $X_{\bk}$ can be dominated by a (base-change of a) covering of $X_l$ for $l/k$ finite.

For finite coverings, the analogous statement is very easy to prove simply by finiteness condition. But for general geometric coverings this is non-trivial and maybe even slightly surprising as we show by counterexamples (Ex.\ \ref{counterexample-with-picture} and Ex.\ \ref{counterexample-with-matrices}) that it is not always true that a connected geometric covering of $X_{\bk}$ is isomorphic to a base-change of a covering of $X_l$ for some finite extension $l/k$. This last statement is, however, stronger than what we need to prove, and thus does not contradict our theorem. Observe, that the stronger statement is true for finite coverings and, even more generally, whenever $\pipet(X_{\bk})$ is prodiscrete, as proven in Prop.\ \ref{injectivity-for-prodiscrete}.

Let us proceed to proving the easier part of the sequence first.

\begin{obs}
By Prop.\ \ref{topoinv}, the category of geometric coverings is invariant under universal homeomorphisms. In particular, for a connected $X$ over a field and $k'/k$ purely inseparable, there is $\pipet(X_{k'})=\pipet(X)$.
Similarly, we can replace $X$ by $X_{\mathrm{red}}$ and so assume $X$ to be reduced when convenient. In this case, base change to separable closure $X_{k^s}$ is reduced as well. We will often use this observation without an explicit reference.
\end{obs}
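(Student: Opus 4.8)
The plan is to derive the whole Observation from the topological invariance statement Prop.~\ref{topoinv}: its first sentence is a verbatim restatement of that proposition, and the remaining assertions follow once the relevant comparison maps are identified as universal homeomorphisms, plus one colimit argument for the final reducedness claim. Throughout I would use the standard characterization that a morphism of schemes is a universal homeomorphism precisely when it is integral, universally injective and surjective.

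For the purely inseparable case I would first note that $\Spec(k') \rarr \Spec(k)$ is a universal homeomorphism: a purely inseparable, hence algebraic, extension $k'/k$ is integral and radicial and induces a bijection on the one-point spectra, so it meets the above criterion. As universal homeomorphisms are stable under base change, the projection $X_{k'} = X \times_k \Spec(k') \rarr X$ is again a universal homeomorphism; in particular $X_{k'}$ is topologically noetherian because $X$ is and the two spaces are homeomorphic. Prop.~\ref{topoinv} then supplies an equivalence $\rmCov_X \simeq \rmCov_{X_{k'}}$, whence $\pipet(X_{k'}) = \pipet(X)$. The reduction claim is identical in spirit: $X_{\rmred} \rarr X$ is a surjective closed immersion, so it is integral, universally injective (closed immersions are monomorphisms, hence radicial) and surjective, i.e. a universal homeomorphism, and Prop.~\ref{topoinv} yields $\pipet(X_{\rmred}) = \pipet(X)$. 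Hence one may always pass to $X_{\rmred}$ and assume $X$ reduced.

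It remains to see that $X_{k^s}$ is reduced when $X$ is. Here I would write $k^s = \dirlim_i k_i$ as the filtered union of its finite separable subextensions $k_i/k$. Each $\Spec(k_i) \rarr \Spec(k)$ is finite étale, so $X_{k_i} \rarr X$ is finite étale and therefore $X_{k_i}$ is reduced, since étale morphisms preserve reducedness. Working on an affine open $\Spec(A) \subset X$ with $A$ reduced, one has $A \otimes_k k^s = \dirlim_i (A \otimes_k k_i)$, a filtered colimit of reduced rings, which is again reduced; hence $X_{k^s}$ is reduced. None of these steps presents a serious obstacle; the two points deserving a word of care are the base-change stability of the universal-homeomorphism property, so that Prop.~\ref{topoinv} genuinely applies, and the interchange of reducedness with the filtered colimit in the last step, both of which are standard.
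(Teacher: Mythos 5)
Your proposal is correct and follows exactly the route the paper intends: the paper offers no separate proof of this Observation beyond citing Prop.~\ref{topoinv}, and your argument fills in precisely the standard facts implicit in that citation (that $X_{k'}\rarr X$ and $X_{\rmred}\rarr X$ are universal homeomorphisms via the integral + universally injective + surjective criterion, and that reducedness is preserved under separable base change via the filtered-colimit argument). The only detail worth keeping, which you already noted, is that Prop.~\ref{topoinv} is stated for topologically noetherian schemes, so one must observe that this hypothesis transfers along the homeomorphism.
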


We start with the following lemmas.
\begin{lemma}\label{galois-invariant-connected}
  Let $k$ be a field. Let $k\subset k'$ be a (possibly infinite) Galois extension. Let $X$ be a connected scheme over $k$. Let $\overline{T}_0 \subset \pi_0(X_{k'})$ be a non-empty closed subset preserved by the $\Gal(k'/k)$-action. Then $\overline{T}_0 = \pi_0(X_{k'})$.
  \end{lemma}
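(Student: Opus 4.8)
The plan is to reduce the assertion to the case of finite Galois subextensions $k \subset k_\lambda \subset k'$ and then pass to the limit. Since the finite Galois subextensions $k_\lambda/k$ are cofinal in $k'/k$, one has $X_{k'} = \varprojlim_\lambda X_{k_\lambda}$ as a cofiltered limit of schemes with finite (in particular affine) transition maps, and, the schemes in question being quasi-compact and quasi-separated (in our situation $X$ is of finite type over $k$), $\pi_0$ commutes with this limit: $\pi_0(X_{k'}) = \varprojlim_\lambda \pi_0(X_{k_\lambda})$ as profinite sets, compatibly with the surjections $p_\lambda \colon \pi_0(X_{k'}) \twoheadrightarrow \pi_0(X_{k_\lambda})$ and with the $\Gal(k'/k)$-action (the action on the $\lambda$-th term factoring through $G_\lambda := \Gal(k_\lambda/k)$). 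I may assume $\overline{T}_0 \neq \emptyset$, as otherwise there is nothing to prove. The two ingredients will be: a finite-level transitivity statement, and the standard fact that a closed subset of a profinite set is the inverse limit of its images.

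The heart of the argument is the finite case: for $k_\lambda/k$ finite Galois, I claim $G_\lambda$ acts transitively on the (finite) set $\pi_0(X_{k_\lambda})$. First, $X_{k_\lambda} \to X$ is the base change of the finite \'etale $G_\lambda$-torsor $\Spec k_\lambda \to \Spec k$, hence is itself finite locally free of degree $[k_\lambda:k]$ and a $G_\lambda$-torsor over $X$. To see that $\pi_0(X_{k_\lambda})$ is finite, note that each connected component $U$, being clopen, has open and closed image in $X$; as $X$ is connected this image is all of $X$, so $U$ meets every fibre of $X_{k_\lambda}\to X$. Counting pairwise orthogonal idempotents in the fibre over a fixed point $x \in X$ --- an algebra of dimension $[k_\lambda:k]$ over $\kappa(x)$ --- then bounds the number of components by $[k_\lambda:k]$. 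Now let $O \subseteq \pi_0(X_{k_\lambda})$ be a $G_\lambda$-orbit and let $U_O$ be the union of the corresponding components: it is a $G_\lambda$-invariant clopen subset of $X_{k_\lambda}$. Since $X_{k_\lambda} \to X$ is a $G_\lambda$-torsor, its set-theoretic fibres are exactly the $G_\lambda$-orbits, so $U_O$ is saturated; its image in $X$ is therefore clopen and pulls back to $U_O$, and connectedness of $X$ forces $U_O = \emptyset$ or $U_O = X_{k_\lambda}$, i.e. $O = \pi_0(X_{k_\lambda})$. This is the desired transitivity.

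Finally I assemble the pieces. For each $\lambda$ the image $p_\lambda(\overline{T}_0)$ is a nonempty $G_\lambda$-invariant subset of $\pi_0(X_{k_\lambda})$, hence equals $\pi_0(X_{k_\lambda})$ by the transitivity just proved. Because $\overline{T}_0$ is closed in the profinite set $\pi_0(X_{k'}) = \varprojlim_\lambda \pi_0(X_{k_\lambda})$, it coincides with the inverse limit of its images, so $\overline{T}_0 = \varprojlim_\lambda p_\lambda(\overline{T}_0) = \varprojlim_\lambda \pi_0(X_{k_\lambda}) = \pi_0(X_{k'})$, as desired. The only real obstacle is the finite-level transitivity of the second paragraph --- once the Galois descent of the invariant clopen $U_O$ is in place it is immediate --- together with the identification $\pi_0(X_{k'}) = \varprojlim_\lambda \pi_0(X_{k_\lambda})$, which is where quasi-compactness and quasi-separatedness (automatic for $X$ of finite type over $k$) enter.
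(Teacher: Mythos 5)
Your argument is correct as far as it goes, but it only goes as far as quasi-compact quasi-separated $X$, and the parenthetical justification ``in our situation $X$ is of finite type over $k$'' is not available: the lemma is stated for an \emph{arbitrary} connected scheme over $k$, and this generality is exactly what the paper uses. Via Lemma \ref{finitegeomconncomp}, the lemma is applied in Theorem \ref{arithtogeom} (steps (3) and (4)) to schemes of the form $Y_{k^{\rmsep}}$ where $Y$ is a connected \emph{geometric cover} of a finite type $k$-scheme; such covers are only locally of finite type and are typically not quasi-compact --- indeed the point of those steps is to \emph{deduce} that $\pi_0(Y_{k^{\rmsep}})$ is finite, so quasi-compactness cannot be assumed. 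Your method genuinely breaks in that setting: when $X_{k'}$ is locally noetherian but not quasi-compact, $\pi_0(X_{k'})$ is a (possibly infinite) discrete set, not a profinite one, so the identification $\pi_0(X_{k'}) \cong \varprojlim_\lambda \pi_0(X_{k_\lambda})$ fails as topological spaces, and your final step --- a closed subset of an inverse limit equals the inverse limit of its images --- is precisely where compactness is used (compare $\bbZ \subset \widehat{\bbZ} = \varprojlim_n \bbZ/n$, which surjects onto every finite quotient without being everything). Within the qcqs world your two ingredients are fine: the torsor argument for transitivity of $\Gal(k_\lambda/k)$ on $\pi_0(X_{k_\lambda})$ is correct, and so is the limit formalism; the gap is that this world excludes the intended applications.

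For comparison, the paper's proof needs no finiteness at all: one pulls $\overline{T}_0$ back to a closed $\Gal(k'/k)$-stable subset $\overline{T} \subset X_{k'}$, descends it to a closed subset $T \subset X$ by Galois descent of closed subsets (\cite[Lemma 038B]{StacksProject}), and then notes that the image in $X$ of any connected component of $X_{k'}$ is a connected component of the connected scheme $X$ (\cite[Lemma 04PZ]{StacksProject}), hence all of $X$; thus $T = X$, $\overline{T} = X_{k'}$ and $\overline{T}_0 = \pi_0(X_{k'})$. If you want to keep your limit strategy, you would have to supply a separate argument covering the non-quasi-compact case, which in practice amounts to redoing the descent argument anyway. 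One minor point, common to both proofs: the statement should require $\overline{T}_0 \neq \emptyset$ (the empty set is closed and stable), so your remark that the empty case is ``nothing to prove'' is not quite accurate --- that case must be excluded, not dismissed.
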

  \begin{proof}
  Let $\overline{T}$ be the preimage of $\overline{T}_0$ in $X_{k'}$ (with the reduced induced structure). By \cite[Lemma 038B]{StacksProject}, $\overline{T}$ is the preimage of a closed subset $T \subset X$ via the projection morphism $p:X_{k'}\rarr X$. On the other hand, by \cite[Lemma 04PZ]{StacksProject}, the image $p(\overline{T})$ equals the entire $X$. Thus, $T = X$ and $\overline{T}=X_{k'}$, and so $\overline{T}_0 = \pi_0(X_{k'})$.
\end{proof}

\begin{lemma}\label{finitegeomconncomp}
    Let $X$ be a connected scheme over a field $k$ with an $l'$-rational point with $l'/k$ a finite field extension. Then $\pi_0(X_{k^{\rmsep}})$ is finite, the $\Gal_k$ action on $\pi_0(X_{k^{\rmsep}})$ is continuous and there exists a finite separable extension $l/k$ such that the induced map $\pi_0(X_{k^\rmsep}) \rarr \pi_0(X_l)$ is a bijection. Moreover, there exists the smallest field (contained in $k^{\rmsep}$) with this property and it is Galois over $k$.
\end{lemma}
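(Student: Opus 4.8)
The plan is to treat the three assertions in order: finiteness of $\pi_0(X_{k^{\rmsep}})$, continuity of the $\Gal_k$-action, and existence of the (smallest, Galois) field $l$ trivializing the components. The conceptual heart is the first step, where the rational-point hypothesis is converted into a finite $\Gal_k$-stable set of components that can be fed into Lemma \ref{galois-invariant-connected}.

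First I would prove finiteness. Let $p:\Spec(l')\rarr X$ be the given point and base-change it to obtain $p_{k^{\rmsep}}:\Spec(l'\otimes_k k^{\rmsep})\rarr X_{k^{\rmsep}}$. Since $l'/k$ is finite, $l'\otimes_k k^{\rmsep}$ is a finite-dimensional $k^{\rmsep}$-algebra, hence Artinian, so $\Spec(l'\otimes_k k^{\rmsep})$ is a finite discrete set; let $S\subset X_{k^{\rmsep}}$ be its image. As $p$ is defined over $k$, the map $p_{k^{\rmsep}}$ is equivariant for the $\Gal_k$-action on the $k^{\rmsep}$-factor, so $S$ is a finite $\Gal_k$-stable subset, and its image $\bar S\subseteq\pi_0(X_{k^{\rmsep}})$ is finite and $\Gal_k$-stable. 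Because connected components are closed, $\pi_0(X_{k^{\rmsep}})$ is $T_1$ and $\bar S$ is closed; applying Lemma \ref{galois-invariant-connected} to the (possibly infinite) Galois extension $k^{\rmsep}/k$ forces $\bar S=\pi_0(X_{k^{\rmsep}})$. Hence $\pi_0(X_{k^{\rmsep}})$ is finite.

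Next I would deduce continuity. Writing $l'_s$ for the separable closure of $k$ in $l'$, the inseparable part of $l'$ contributes only nilpotents, so $S$ is $\Gal_k$-equivariantly identified with $\Hom_k(l'_s,k^{\rmsep})$; since $l'_s/k$ is finite separable, $\Gal_k$ acts continuously on $S$, i.e. every point-stabilizer is open. The map $S\twoheadrightarrow\pi_0(X_{k^{\rmsep}})$ is equivariant and surjective, and for an equivariant surjection the stabilizer of a component contains the (open) stabilizer of any preimage; therefore each component-stabilizer is open and $\Gal_k$ acts continuously on the finite set $\pi_0(X_{k^{\rmsep}})$. For the last assertions I would use that $X$ is quasi-compact and quasi-separated (automatic here, as it is of finite type over $k$): by Galois descent of idempotents and clopen subschemes one has, for $k\subseteq l\subseteq k^{\rmsep}$, an identification $\pi_0(X_l)\cong \pi_0(X_{k^{\rmsep}})/H_l$ with $H_l:=\Gal(k^{\rmsep}/l)$, the canonical map $\pi_0(X_{k^{\rmsep}})\rarr\pi_0(X_l)$ being the quotient by $H_l$-orbits (see \cite{StacksProject}). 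Thus this map is bijective iff $H_l$ acts trivially on $\pi_0(X_{k^{\rmsep}})$. Setting $N=\Ker\big(\Gal_k\rarr\mathrm{Sym}(\pi_0(X_{k^{\rmsep}}))\big)$, which is open and normal by continuity, and $l_0:=(k^{\rmsep})^N$, the extension $l_0/k$ is finite Galois, and bijectivity holds for $l$ iff $H_l\subseteq N$ iff $l\supseteq l_0$. Hence $l_0$ is the smallest field with the required property and is Galois over $k$.

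The hard part is precisely the finiteness step: the rational-point hypothesis is what manufactures a finite $\Gal_k$-stable family of components, and without it $\pi_0(X_{k^{\rmsep}})$ genuinely can be infinite for connected $X$, so Lemma \ref{galois-invariant-connected} is doing essential work there. Everything after that is formal manipulation of the finite profinite $\Gal_k$-set $\pi_0(X_{k^{\rmsep}})$; the only nontrivial external input is the orbit description $\pi_0(X_l)\cong\pi_0(X_{k^{\rmsep}})/H_l$, whose justification (descent of connected components) is the single place where quasi-compactness and quasi-separatedness of $X$ are needed.
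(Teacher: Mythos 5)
Your proof is correct and takes essentially the same approach as the paper: base-change the $l'$-point to produce a finite $\Gal_k$-stable (hence closed, and by Lemma \ref{galois-invariant-connected} exhaustive) subset of $\pi_0(X_{k^{\rmsep}})$, deduce continuity of the action from the equivariant surjection out of the finite continuous $\Gal_k$-set $\pi_0(\Spec(l'\otimes_k k^{\rmsep}))$, and obtain the smallest Galois $l$ as the fixed field of $\ker\big(\Gal_k\rarr\Aut(\pi_0(X_{k^{\rmsep}}))\big)$. The only slip is cosmetic: over an imperfect $k$ the inseparable part of $l'$ need not become nilpotent after $-\otimes_k k^{\rmsep}$ (the factors of $l'\otimes_k k^{\rmsep}$ can be purely inseparable field extensions of $k^{\rmsep}$), but they are still local Artinian, so your $\Gal_k$-equivariant identification of the point set with $\Hom_k(l'_s,k^{\rmsep})$ is unaffected.
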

\begin{proof}
    Let us first show the continuity of the $\Gal_k$-action. The morphism $\Spec(l') \rarr X$ gives a $\Gal_k$-equivariant morphism $\Spec(l'\otimes_k k^{\rmsep}) \rarr X_{k^{\rmsep}}$ and a $\Gal_k$-equivariant map $\pi_0(\Spec(l'\otimes_k k^{\rmsep})) \rarr \pi_0(X_{k^{\rmsep}})$. Denote by $M \subset \pi_0(X_{k^{\rmsep}})$ the image of the last map. It is finite and $\Gal_k$-invariant, and by Lm.\ \ref{galois-invariant-connected}, $M=\pi_0(X_{k'})$. We have tacitly used that $M$ is closed, as $\pi_0(X_{k'})$ is Hausdorff (as the connected components are closed). As $\Gal_k$ acts continuously on $\pi_0(\Spec(l'\otimes_k k^{\rmsep}))$ (for example by \cite[Lemma 038E]{StacksProject}), we conclude that it acts continuously on $\pi_0(X_{k^{\rmsep}})$ as well. From Lm.\ \ref{galois-invariant-connected} again and from \cite[Tag 038D]{StacksProject}, we easily see that the fields $l \subset k^{\rmsep}$ such that $\pi_0(X_{k^\rmsep}) \rarr \pi_0(X_l)$ is a bijection are precisely those that $\Gal_l$ acts trivially on $\pi_0(X_{k^\rmsep})$. To get the minimal field with this property we choose $l$ such that $\Gal_l = \ker(\Gal_k \rarr \Aut(\pi_0(X_{k^\rmsep}) ))$.
\end{proof}

\begin{theorem}\label{arithtogeom}
Let $k$ be a field and fix an algebraic closure $\bk$. Let $X$ be a geometrically connected scheme of finite type over $k$. Let $\bx:\Spec(\bk) \rarr X_{\bk}$ be a geometric point on $X_{\bk}$. Then the induced sequence
\begin{displaymath}
\pipet(X_{\bk},\bx) \stackrel{\iota}{\rightarrow} \pipet(X,\bx) \stackrel{p}{\rightarrow} \Gal_k \rightarrow 1
\end{displaymath}
of topological groups is nearly exact in the middle (i.e.\ the thick closure of $\rmim(\iota)$ equals $\ker(p)$) and $\pipet(X) \rightarrow \Gal_k$ is a topological quotient map.
\end{theorem}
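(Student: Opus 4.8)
The plan is to translate both maps into the dictionary of Prop.~\ref{dictionary} and then feed in geometry through Lemmas~\ref{galois-invariant-connected} and~\ref{finitegeomconncomp}. Here $\iota$ is induced by the base change $X_{\bk}\to X$ and corresponds to the pullback functor $H':\rmCov_X\to\rmCov_{X_{\bk}}$, while $p$ is induced by $X\to\Spec k$ and corresponds to $H:\Gal_k-\rmSets\simeq\rmCov_{\Spec k}\to\rmCov_X$, $Z\mapsto Z\times_k X$ (using $\pipet(\Spec k)=\Gal_k$ from Lm.~\ref{proetale-of-normal} and Ex.~\ref{covers-of-a-field}). First I would record the easy inclusion $\rmim(\iota)\subset\ker(p)$: the composite $p\circ\iota$ is induced by $X_{\bk}\to\Spec k$, which factors through $\Spec\bk$, hence through $\pipet(\Spec\bk)=\Gal_{\bk}=1$. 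Since $\ker(p)=\bigcap_{V<^\circ\Gal_k}p^{-1}(V)$ is an intersection of open subgroups, it is thickly closed, so $\overline{\overline{\rmim(\iota)}}\subset\ker(p)$ comes for free.

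For the quotient-map assertion I would avoid soft density arguments, as dense image need not give surjectivity for the non-compact group $\pipet(X)$. Instead I would use a closed point: being of finite type over $k$, the scheme $X$ has a closed point $\Spec l\to X$ with $l/k$ finite, and the composite $\Spec l\to X\to\Spec k$ induces on $\pipet=\piet$ (Lm.~\ref{proetale-of-normal}) the standard map $\Gal_l\to\Gal_k$, whose image is the open subgroup $\Gal_{l_0}$, where $l_0$ is the separable closure of $k$ in $l$. Hence $\rmim(p)$ contains an open subgroup, so it is open, closed and dense, and therefore all of $\Gal_k$, giving surjectivity. Openness of $p$ then follows: for an open subgroup $V<^\circ\pipet(X)$, the preimage of $V$ under the map $\Gal_l\to\pipet(X)$ is open in the profinite group $\Gal_l$, hence contains some $\Gal_{l'}$, so $p(V)\supset\Gal_{l'}$ is open; since open subgroups form a basis and $p(gV)=p(g)p(V)$, the map $p$ is open, i.e. a topological quotient map.

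The heart is near-exactness in the middle, $\ker(p)\subset\overline{\overline{\rmim(\iota)}}$, which I would obtain by showing that every open $U<^\circ\pipet(X)$ containing $\rmim(\iota)$ already contains $\ker(p)$; this suffices because $\overline{\overline{\rmim(\iota)}}=\bigcap_{U\supset\rmim(\iota)}U$. Such a $U$ corresponds to a pointed connected cover $Y\to X$ with $F(Y)=\pipet(X)/U$, and $\rmim(\iota)\subset U$ says exactly that $\pipet(X_{\bk})$ fixes the base point of $F(Y)=F(Y_{\bk})$, i.e. the base-point connected component of $Y_{\bk}=Y\times_k\bk$ maps isomorphically onto $X_{\bk}$. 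I would then apply Lm.~\ref{finitegeomconncomp} to $Y$ (which is connected and, being étale over the finite-type $X$, carries a point with residue field finite over $k$) to learn that $\pi_0(Y_{\bk})$ is \emph{finite} with continuous $\Gal_k$-action, and Lm.~\ref{galois-invariant-connected} to learn that this action is transitive. Transitivity then propagates the single trivial sheet to all components, so $Y_{\bk}$ is completely decomposed, $\pi_0(Y_{\bk})\cong\Gal_k/\Gal_l$ for an open $\Gal_l$, and $Y\cong X_l=H(\Gal_k/\Gal_l)$ lies in the essential image of $H$. Consequently $\ker(p)$ acts trivially on $F(Y)=\pipet(X)/U$, whence $\ker(p)\subset U$, as desired.

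The step just described is precisely condition (b) of Prop.~\ref{dictionary}\ref{dictionary-kernel}, and its verification is the main obstacle: it rests on the finiteness of $\pi_0(Y_{\bk})$ for the a priori non-quasi-compact cover $Y$, which is exactly where Lm.~\ref{finitegeomconncomp} is indispensable (the analogous profinite statement would follow from compactness, but that tool is unavailable here). Combining this reverse inclusion with the one noted in the first paragraph yields $\overline{\overline{\rmim(\iota)}}=\ker(p)$, the asserted near-exactness in the middle; as a by-product the same transitivity argument verifies the hypothesis of Prop.~\ref{dictionary}\ref{dictionary-normal}, so $\overline{\overline{\rmim(\iota)}}$ is in fact normal, consistent with Obs.~\ref{normalqpNc-closed}.
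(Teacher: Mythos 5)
Your strategy is essentially the paper's: the same two geometric inputs (Lm.~\ref{finitegeomconncomp} for finiteness of $\pi_0(Y_{\bk})$ and Lm.~\ref{galois-invariant-connected} for transitivity of the $\Gal_k$-action), the same reduction of middle exactness to covers whose base change is completely decomposed, and a closed-point argument for openness of $p$; in fact your direct verification that every open $U<^\circ\pipet(X)$ containing $\rmim(\iota)$ contains $\ker(p)$ packages the paper's two steps (normality of the thick closure, then Prop.~\ref{dictionary}\ref{dictionary-kernel}) more economically. But there is a genuine gap in the surjectivity of $p$. You write that since $\rmim(p)$ contains an open subgroup ``it is open, closed and dense, and therefore all of $\Gal_k$''. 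Openness and closedness do follow; density does not --- a proper open subgroup of $\Gal_k$ is open and closed without being dense. Note that up to this point you have not used geometric connectedness at all, and without it surjectivity is simply false: for $X=\Spec(l)$ with $l/k$ a nontrivial finite separable extension, $p$ is the inclusion $\Gal_l\hookrightarrow\Gal_k$. Density is exactly where the hypothesis must enter. Two standard repairs: (i) the paper's route --- $\pipet(X)\rarr\piet(X)$ has dense image (profinite completion), and $\piet(X)\rarr\Gal_k$ is surjective by the classical SGA1 theorem for geometrically connected quasi-compact schemes; or (ii) via Prop.~\ref{dictionary}\ref{denseimageequivalentconditions}: every connected cover of $\Spec(k)$ is some $\Spec(l)$, and its pullback $X_l$ is connected because the connected scheme $X_{k^s}$ surjects onto it, so connectedness is preserved and the image of $p$ is dense.

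A second, smaller gap: in the crucial step you pass from ``$Y_{\bk}$ is completely decomposed with $\pi_0(Y_{\bk})\cong\Gal_k/\Gal_l$'' directly to ``$Y\cong X_l$''. This implication is not formal; it is itself a descent statement, and descent of this kind is precisely what can fail for infinite geometric covers (cf.\ Ex.~\ref{counterexample-with-picture}). It is true here because of the finiteness you established, but that has to be spelled out: $Y_{\bk}\rarr X_{\bk}$ is finite (a finite disjoint union of trivial sheets), finiteness is fpqc-local on the base so $Y\rarr X$ is finite \'etale, and then Grothendieck's classical homotopy exact sequence for $\piet$ --- which the paper invokes at exactly this point --- shows that a finite \'etale cover splitting completely over $X_{\bk}$ is pulled back from a finite \'etale cover of $\Spec(k)$, hence $Y\cong X_l$ by connectedness. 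With these two repairs your proof is complete and agrees in substance with the paper's.
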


\begin{proof}
\begin{enumerate}
\item The map $p$ is surjective and open: let $U<\pipet(X)$ be an open subgroup. There is a geometric covering $Y$ of $X$ with a $\bk$-point $\by$ such that the morphism $\pipet(Y,\by) \rightarrow \pipet(X,\bx)$ is equal to $U \subset \pipet(X,\bx)$. As $Y$ is locally of finite type over $k$, the image of $\by$ in $Y$ has a finite extension $l$ of $k$ as the residue field. Thus, we get $\Gal_l \rarr \pipet (Y) \rarr \Gal_k$ and we see that the image $\pipet(Y) \rightarrow \Gal_k$ contains an open subgroup, so is open. We have shown that $p$ is an open morphism. In particular the image of $\pipet(X)$ in $\Gal_k$ is open and so also closed. On the other hand, this image is dense as we have the following diagram

\begin{center}
\begin{tikzpicture}
\matrix(a)[matrix of math nodes,
row sep=1.8em, column sep=2em,
text height=1ex, text depth=0.25ex]
{&[-1.7em] \pipet(X) & \pipet(\Spec(k)) \\ {\reallywidehat{{\pi_1^{\mathrm{pro\acute{e}t}}(X)}}}^{\mathrm{prof}} &[-1.7em] \piet(X) & \piet(\Spec(k))\\};

\path[->,font=\scriptsize] (a-1-2) edge (a-2-2);
\draw[double distance = 1.5pt] (a-1-3) -- (a-2-3);
\path[->] (a-1-2) edge (a-1-3);
\path[->>] (a-2-2) edge (a-2-3);
\draw[double distance = 1.5pt] (a-2-1) -- (a-2-2);

  \end{tikzpicture}
\end{center}

where $\widehat{\ldots}^{\mathrm{prof}}$ means the profinite completion. In the diagram, the left vertical map has dense image and the lower horizontal is surjective. This shows that $\pipet(X) \rightarrow \Gal_k$ is surjective.

\item The composition $\pipet(X_{\bk},\bx) \rightarrow \pipet(X,\bx) \rightarrow \Gal_k $ is trivial: this is clear thanks to Prop.\ \ref{dictionary} and the fact that the map $X_{\bk} \rarr \Spec(k)$ factorizes through $\Spec(\bk)$. 

\item The thick closure of $\rmim(\iota)$ is normal: as remarked above, $\pipet(X_{\bk})=\pipet(X_{k^s})$, where $k^s$ denotes the separable closure. Thus, we are allowed to replace $\bk$ with $k^s$ in the proof of this point. Moreover, by the same remark, we can and do assume $X$ to be reduced. Let $Y \rightarrow X$ be a connected geometric covering such that there exists a section $s:X_{k^s} \rightarrow Y\times_XX_{k^s}=Y_{k^s}$ over $X_{k^s}$. Observe that any such section is a clopen immersion: this follows immediately from the equivalence of categories of $\pipet(X_{k^s})-\rmSets$ and geometric coverings.
Define $\bar{T}:= \bigcup_{\sigma \in Gal(k)} {}^\sigma s(X_{k^s}) \subset Y_{k^s}$. Observe that two images of sections in the sum either coincide or are disjoint as $X_{k^s}$ is connected and they are clopen. Now, $\bar{T}$ is obviously open, but we claim that it is also a closed subset. This follows from Lm.\ \ref{finitegeomconncomp} (which implies that $\pi_0(Y_{k^s})$ is finite), but one can also argue directly by using that $Y_{k^s}$ is locally noetherian and ${}^\sigma s(X_{k^s})$ are clopen. Now by \cite[Tag 038B]{StacksProject}, $\bar{T}$ descends to a closed subset $T \subset Y$. It is also open as $T$ is the image of $\bar{T}$ via projection $Y_{k^s} \rightarrow Y$ which is surjective and open map. Indeed, surjectivity is clear and openness is easy as well and is a particular case of a general fact, that any map from a scheme to a field is universally open (\cite[Tag 0383]{StacksProject}). By connectedness of $Y$ we see that $T=Y$. So $Y_{k^s}=\bar{T}$. But this last one is a disjoint union of copies of $X_{k^s}$, which is what we wanted to show by Prop.\ \ref{dictionary}.

\item The smallest normal thickly closed subgroup of $\pipet(X)$ containing $\rmim(\iota)$ is equal to $\ker(p)$: as we already know that this image is contained in the kernel and that the map $\pipet(X) \rarr \Gal_k$ is a quotient map of topological groups, we can apply Prop.\ \ref{dictionary}. Let $Y$ be a connected geometric covering of $X$ such that $Y_{\bk}=Y\times_X X_{\bk}$ splits completely. Denote $Y_{\bk}=\bigsqcup_\alpha X_{\bk,\alpha}$, where by $ X_{\bk,\alpha}$ we label different copies of $X_{\bk}$. By Lm.\ \ref{finitegeomconncomp}, $\pi_0(Y_{\bk})$ is finite, and thus the indexing set $\{\alpha\}$ and the covering $Y \rarr X$ are finite. But in this case, the statement follows from the classical exact sequence of \'etale fundamental groups due to Grothendieck.
\end{enumerate}
\end{proof}

As promised above, we give examples of geometric coverings of $X_{\bk}$ that cannot be defined over any finite field extension $l/k$.

\begin{example}\label{counterexample-with-picture}
Let $X_i= \bbG_{m,\bbQ}$, $i=1,2$. Define $X$ to be the gluing $X=\cupt X_i$ of these schemes at the rational points $1_i: \Spec(\bbQ) \rarr X_i$ corresponding to $1$. Fix an algebraic closure $\overline{\bbQ}$ of $\bbQ$ and so a geometric point $\bar{b}$ over the base $\Spec(\bbQ)$. This gives geometric points $\bx_i$ on $\bX_i=X_{i,\overline{\bbQ}}$ and $X_i$ lying over $1_i$, which we choose as base points for the fundamental groups involved. Similarly, we get a geometric point $\bx$ over the point of gluing $x$ that maps to $\bar{b}$. Then Example \ref{exofgluingmany} gives us a description of the fundamental group $\pipet(X,\bx) \simeq \Big(*^{\rmtop}_{i=1,2} (\piet(\bX_i,\bx_i) \rtimes  \Gal_{\bbQ,i})/\overline{\langle \langle \iota_1(\sigma)=\iota_{2}(\sigma)|\sigma \in \Gal_\bbQ \rangle \rangle} \Big)^{\rmNoohi}$ and of its category of sets:
\begin{multline*}
\pipet(X,\bx) - \rmSets \simeq\\ \simeq\Big\{ S \in \Big(*^{\rmtop} \piet(\bX_1) *^{\rmtop} \piet(\bX_2) *^{\rmtop}  \Gal_{\bbQ} \Big) - \rmSets \Big| \forall_{\sigma \in \Gal_{\bbQ}}\forall_{i} \forall_{\gamma \in \piet(\bX_i)}\forall_{s \in S} \sigma \cdot (\gamma \cdot s) = {}^{\sigma}\gamma \cdot(\sigma \cdot s) \Big\}
\end{multline*}

For the base change $\bX$ to $\overline{\bbQ}$, we have $\pipet(\bX,\bx) \simeq \piet(\bX_1,\bx_1) *^N \piet(\bX_2,\bx_2)$. Recall that the groups $\piet(\bX_i,\bx_i)$ are isomorphic to $\hbbZ(1) = \varprojlim \mu_{n}$ as  $\Gal_{\bbQ}$-modules. Fix these isomorphisms. Let $S = \bbN_{>0}$. Let us define a $\pipet(\bX,\bx)$-action on $S$, which means giving actions by $\piet(\bX_1,\bx_1)$ and $\piet(\bX_2,\bx_2)$ (no compatibilities of the actions required).
Let $\ell$ be a fixed odd prime number (e.g.\ $\ell=3$). We will give two different actions of $\bbZ_\ell(1)$ on $S$ which will define actions of $\hbbZ(1)$ by projections on $\bbZ_\ell(1)$. We start by dividing $S$ into consecutive intervals labelled $a_1,a_3,a_5, \ldots$ of cardinality $\ell^1, \ell^3,\ell^5,\ldots$ respectively. These will be the orbits under the action of $\piet(\bX_1,\bx_1)$. Similarly, we divide $S$ into consecutive intervals $b_2, b_4, b_6,\ldots$ of cardinality $\ell^2, \ell^4, \ldots$.

\begin{displaymath}
    \mathrlap{\overbrace{\phantom{\bullet \quad \bullet \quad \bullet \quad}}^{a_1}}
    \mathrlap{\underbrace{\phantom{\bullet \quad \bullet \quad \bullet \quad \bullet \quad \bullet \quad \bullet \quad \bullet \quad \bullet \quad \bullet \quad}}_{b_2}}
    \bullet \quad \bullet \quad \bullet \quad
    \mathrlap{\overbrace{\phantom{\bullet \quad \bullet \quad \bullet \quad \bullet \quad \bullet \quad \bullet \quad \bullet \quad \bullet \quad \bullet \quad \bullet \quad \bullet \quad \bullet \quad  \bullet \quad \bullet \quad \bullet \quad \ldots \quad}}^{a_3}}
    \bullet \quad \bullet \quad \bullet \quad  \bullet \quad \bullet \quad \bullet \quad
    \mathrlap{\underbrace{\phantom{\bullet \quad \bullet \quad \bullet \quad \bullet \quad \bullet \quad \bullet \quad  \bullet \quad \bullet \quad \bullet \quad \ldots \quad \ldots \quad}}_{b_4}}
    \bullet \quad \bullet \quad \bullet \quad \bullet \quad \bullet \quad \bullet \quad  \bullet \quad \bullet \quad \bullet \quad \ldots \quad \ldots
\end{displaymath}  

We still have to define the action on each $a_m$ and $b_m$. We choose arbitrary identifications $b_m \simeq \mu_{\ell^m}$ as $\bbZ_\ell(1)$-modules. Now, fix a compatible system of $\ell^n$-th primitive roots of unity $\zeta=(\zeta_{\ell^n}) \in \bbZ(1)$. For $a_m$'s, we choose the identifications with $\mu_{\ell^m}$ arbitrarily with one caveat: we demand that for any even number $m$, the intersection $b_m \cap a_{m+1}$ contains the elements $1, \zeta_{\ell^{m+1}} \in \mu_{\ell^{m+1}}$ via the chosen  identification $a_{m+1} \simeq \mu_{\ell^{m+1}}$.
As $|b_m \cap a_{m+1}|>0$ and $|b_m \cap a_{m+1}|\equiv 0 \mod \ell$, the intersection $b_m \cap a_{m+1}$ contains at least two elements and we see that choosing such a labelling is always possible.

Assume that $S$ corresponds to a covering that can be defined over a finite Galois extension $K/\bbQ$. Fix $s_0 \in a_1 \cap b_2$. By increasing $K$, we might and do assume that $\Gal_K$ fixes $s_0$. Let $p$ be a prime number $\neq \ell$ that splits completely in $K$ and $\mathfrak{p}$ be a prime of $\calO_K$ lying above $p$. Let $\phi_{\frakp} \in \Gal_K$ be a Frobenius element (which depends on the choice of the decomposition group and the coset of the inertia subgroup). It acts on $\bbZ_\ell(1)$ via $t \mapsto t^p$ and this action is independent of the choice of $\phi_{\frakp}$. Choose $N>0$ such that $p^N \equiv 1 \mod \ell^2$ and let $m$ be the biggest number such that $p^N  \equiv 1 \mod \ell^m$. If $m$ is odd, we look at $p^{\ell N}$ instead. In this case $m+1$ is the biggest number such that $p^{\ell N}  \equiv 1 \mod \ell^{m+1}$ and so, by changing $N$ if necessary, we can assume that $m$ is even, $>1$. The whole point of the construction is the following: if $s \in a_i\cap b_j$ with $i,j<m$ is fixed by $\phi_{\frakp}^N$, then so are $g \cdot s$ and $h\cdot s$ (for $h \in \piet(\bX_1,\bx_1)$ and $g\in \piet(\bX_2,\bx_2)$).  Then moving such $s$ with $g$'s and $h$'s to $b_m\cap a_{m+1}$ leads to a contradiction. Indeed, let $s_1 \in b_m \cap a_{m+1} \subset S$ correspond to $1 \in \mu_{\ell^{m+1}} \simeq a_{m+1}$ (it is possible by the choices made in the construction of $S$). 
Write $s_1= g_m h_{m-1}\ldots h_3g_2h_1 \cdot s_0$ with $h_i \in \piet(\bX_1,\bx_1)$ and $g_j \in \piet(\bX_2,\bx_2)$ (this form is not unique, of course). This is possible thanks to the fact that the sets $a_i$, $b_j$ form consecutive intervals separately such that $b_j$ intersects non-trivially $a_{j-1}$ and $a_{j+1}$. By the construction of $S$ again, there is an element $s_2 \in b_m \cap a_{m+1}$ corresponding to $\zeta_{\ell^{m+1}} \in \mu_{\ell^{m+1}}$ via $a_{m+1} \simeq \mu_{\ell^{m+1}}$. We can now write $s_2$ in two ways:
\begin{displaymath}
s_2 = \zeta \cdot s_1 = g \cdot s_1,
\end{displaymath}
where $g \in \piet(\bX_2,\bx_2)$ and $\zeta$ is the chosen element in $\piet(\bX_1,\bx_1) \simeq \hat{\bbZ}(1)$. By the choices made, the action of $\phi_{\frakp}^N$ fixes the elements $s_1$ and $g \cdot s_1$, while it moves $\zeta \cdot s_1$. Indeed, $\phi_{\frakp}^N \cdot (\zeta \cdot s_1) = (\phi_{\frakp}^N  \zeta \phi_{\frakp}^{-N}) \cdot (\phi_{\frakp}^N \cdot s_1) = \zeta^{p^N} \cdot (\phi_{\frakp}^N \cdot s_1) = \zeta^{p^N} \cdot s_1 = \zeta_{\ell^{m+1}}^{p^N} \neq \zeta_{\ell^{m+1}} = \zeta \cdot s_1 \in \mu_{\ell^{m+1}} \simeq a_{m+1}$ -- a contradiction.
\end{example}

\begin{example}\label{counterexample-with-matrices}
Let $X_i= \bbG_{m,\bbQ}$, $i=1,2,3$ and let $X_4, X_5$ be the nodal curves obtained from gluing $1$ and $-1$ on $\bbP^1_{\bbQ}$ (see Ex.\ \ref{exnodal}). Define $X$ to be the gluing $X=\cupt X_i$ of all these schemes at the rational points corresponding to $1$ (or the image of $1$ in the case of the nodal curves). We fix an algebraic closure $\overline{\bbQ}$ of $\bbQ$ and so fix a geometric point $\bar{b}$ over the base $\Spec(\bbQ)$. We get geometric points $\bx_i$ on $\bX_i = X_i \times_{\bbQ} \overline{\bbQ}$ lying over $1$. We have and fix the following isomorphisms of $\Gal_{\bbQ}$-modules. For $1 \leq i \leq 3$, $\piet(\bX_i,\bx_i) \simeq \hbbZ(1)$ and for $4 \leq j \leq 5$, we have $\pipet(\bX_j,\bx_j) \simeq \langle t^\bbZ \rangle$ (i.e.\ $\bbZ$ written multiplicatively). Let $t_i \in \pipet(\bX_i,\bx_i)$ be the elements corresponding via these isomorphisms to a fixed inverse system of primitive roots $\zeta \in \hbbZ(1)$ (for $i=1,2,3$) and to $t \in \langle t^\bbZ \rangle$ (for $i=4,5$). Example \ref{exofgluingmany} gives a description of the fundamental group 
\[\pipet(X,\bx) \simeq \bigg(*^N_{i=1,2,3} (\hbbZ(1)_i \rtimes  \Gal_{\bbQ,i}) *^N_{j=4,5} (\langle t^\bbZ \rangle \times \Gal_{\bbQ,j})\Big/\big\langle \big\langle \overline{\iota_i(\sigma)=\iota_{i'}(\sigma)\big|{\footnotesize \begin{array}{c}\sigma \in \Gal_\bbQ \\[-3pt]  i,i' = 1, \ldots, 5 
\end{array}}} \big\rangle \big\rangle \bigg)^{\rmNoohi}\]
 and of its category of sets:
  \begin{multline*}
  \pipet(X,\bx) - \rmSets \simeq 
  \Big\{ S \in \Big(*^{\rmtop}_{1 \leq i \leq 3} \hbbZ(1) *^{\rmtop}\langle t^\bbZ \rangle^{*2}*^{\rmtop} \Gal_{\bbQ} \Big) - \rmSets \Big|\\ \Big| \forall_{\sigma \in \Gal_{\bbQ}}\forall_{1\leq i \leq 3} \forall_{\substack{\gamma \in \bbZ(1)_i\\ w \in \langle t^\bbZ \rangle^{*2}}}\forall_{s \in S} \sigma \cdot (\gamma \cdot s) = ^{\sigma}\gamma \cdot(\sigma\cdot s) \text{ and } \sigma \cdot (w \cdot s) = w \cdot (\sigma \cdot s) \Big\}
  \end{multline*}
  Let $G=\Big\{\left( \begin{array}{cc}
  * & * \\
    & *  \end{array}\right)\Big\} \subset \GL_2(\bbQ_\ell)$ be the subgroup of upper triangular matrices. Fix $u_1 \in \bbZ_\ell^\times$ such that $u_1^p \neq u_1$. Let $H= *^{\rmtop}_{i} \piet(\bX_i,\bx_i)$ and define a continuous homomorphism $\psi: H \rarr G$ by:
    \begin{gather*}
      \psi(t_1) = \left( \begin{array}{cc}
      u_1 &  \\
       & 1  \end{array}\right),
       \text{ } \psi(t_2)= \left( \begin{array}{cc}
     1 &  \\
       & u_1  \end{array}\right),
       \text{ } \psi(t_3)= \left( \begin{array}{cc}
     1 & 1 \\
       & 1  \end{array}\right),
        \text{ } \psi(t_4)= \left( \begin{array}{cc}
     \ell &  \\
       & 1  \end{array}\right),
        \text{ } \psi(t_5)= \left( \begin{array}{cc}
     1 &  \\
       & \ell  \end{array}\right).
  \end{gather*}
  It is easy to see that $\psi$ is surjective.
  
  Let $U \subset G$ be the subgroup of matrices with elements in $\bbZ_\ell$, i.e.\ $U=\Big\{\left( \begin{array}{cc}
  * & * \\
    & *  \end{array}\right)\Big\} \subset \GL_2(\bbZ_\ell)$. It is an open subgroup of $G$. Thus, using $\psi$ and the fact that $H^{\rmNoohi}=\pipet(\bX,\bx)$, we get that $S:=G/U$ defines a $\pipet(\bX,\bx)$-set. It is connected (i.e.\ transitive) and so corresponds to a connected geometric covering of $\bX$. 
  Assume that it can be defined over a finite extension $L$ of $\bbQ$. We can assume $L/\bbQ$ is Galois. By the description above, it means that there is a compatible action of groups $\bbZ(1)_i$, $\bbZ^{*2}$ and $\Gal_L$ on $S$. By increasing $L$, we can assume moreover that $\Gal_L$ fixes $[U]$.
  
  Choose $p\neq \ell$ that splits completely in $\Gal_L$, fix a prime $\frakp$ of $L$ dividing $p$ and let $\phi_{\frakp} \in \Gal_L$ denote a fixed Frobenius element. Let $t_3^{u_1}$ denote the unique element of $\psi_{|\piet(\bX_3,\bx_3)}^{-1}\left(\left( \begin{array}{cc}
    1 & u_1 \\
      & 1  \end{array}\right)\right)$. Let $n \gg 0$. An easy calculation shows that $\psi(t_4^{-n}t_1 t_3 t_1^{-1}t_3^{-u_1} t_4^n)=1_{\GL_2(\bbQ_\ell)} \in U$. Then $\phi_{\frakp}\cdot [U]=\phi_{\frakp} \cdot (t_4^{-n}t_1 t_3 t_1^{-1}t_3^{-u_1} t_4^n \cdot [U])= {}^{\phi_{\frakp}}(t_4^{-n}t_1 t_3 t_1^{-1}t_3^{-u_1} t_4^n)\cdot(\phi_{\frakp}\cdot [U]) = t_4^{-n}t_1^p t_3^p t_1^{-p}t_3^{-pu_1} t_4^n\cdot [U]$. But
  \begin{align*}
  \psi(t_4^{-n}t_1^p t_3^p t_1^{-p}t_3^{-pu_1} t_4^n) 
  &=\left( \begin{array}{cc}
   \ell^{-n} &  \\
    & 1  \end{array}\right)
  \left( \begin{array}{cc}
   u_1^p &  \\
    & 1  \end{array}\right)
   \left( \begin{array}{cc}
  1 & p \\
    & 1  \end{array}\right) 
  \left( \begin{array}{cc}
   u_1^{-p} &  \\
    & 1  \end{array}\right)
    \left( \begin{array}{cc}
  1 & -pu_1\\
    & 1  \end{array}\right)
    \left( \begin{array}{cc}
  \ell^n &  \\
    & 1  \end{array}\right)\\
  &=\left( \begin{array}{cc}
  1 & \ell^{-n}p(u_1^p-u_1)\\
    & 1  \end{array}\right) \notin U.
  \end{align*}
  As $n \gg 0$ and $u_1^p \neq u_1$, it follows that $\phi_{\frakp}\cdot[U] \neq [U]$ -- a contradiction.
  \end{example}

It is important to note, that the above (counter-)examples are possible only when considering the geometric coverings that are not trivialized by an \'etale cover (but one really needs to use the pro-\'etale cover to trivialize them). In \cite{BhattScholze}, the category of geometric coverings trivialized by an \'etale cover on $X$ is denoted by $\rmLoc_{X_{\et}}$ and the authors prove the following
\begin{fact}(\cite[Lemma 7.4.5]{BhattScholze})
Under $\rmLoc_X \simeq \pipet(X)-\rmSets$, the full subcategory $\rmLoc_{X_{\et}} \subset \rmLoc_X$ corresponds to the full subcategory of those $\pipet(X)-\rmSets$ where an open subgroup acts trivially.
\end{fact}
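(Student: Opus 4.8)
The plan is to transport everything through the equivalence $\rmLoc_X \simeq \pipet(X,\bx)-\rmSets$ of \cite[Lemma 7.3.9]{BhattScholze}, under which a local system $F$ corresponds to the set $S = F_{\bx}(F)$ equipped with its monodromy action $\rho \colon \pipet(X,\bx) \rarr \Aut(S)$. Since a subgroup of a topological group is open exactly when it contains an open neighbourhood of the identity, the clause ``some open subgroup acts trivially on $S$'' is equivalent to ``$\Ker(\rho)$ is open''. So the statement to prove becomes: a geometric cover $Y \in \rmCov_X$ lies in $\rmLoc_{X_\et}$ if and only if the pointwise stabiliser $\bigcap_{s \in S} \rmStab(s) = \Ker(\rho)$ is open in $\pipet(X,\bx)$.

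For the implication ``open stabiliser $\Rightarrow$ \'etale-locally constant'', I would argue as follows. Suppose $U <^\circ \pipet(X,\bx)$ acts trivially on $S$, and let $Y_U \rarr X$ be the connected geometric cover corresponding to the transitive object $\pipet(X,\bx)/U$. Being a geometric cover it is \'etale, and being connected over the connected scheme $X$ it is surjective: its image is open by \'etaleness and stable under specialisation (hence closed) by the existence part of VCoP, so it is all of $X$. Thus $Y_U \rarr X$ is a covering in $X_\et$. Under the identification $\pipet(Y_U) \simeq U$ coming from the Galois correspondence, the pullback of $Y$ to $Y_U$ corresponds to the restriction of $\rho$ to $U$, which is trivial by hypothesis; hence $Y \times_X Y_U$ is a disjoint union of copies of $Y_U$, and $Y$ is trivialised by an \'etale cover, i.e. $Y \in \rmLoc_{X_\et}$.

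For the converse, suppose $Y \in \rmLoc_{X_\et}$, so there is an \'etale surjection $p \colon U \rarr X$ with $p^{*}Y$ a split cover, say $Y \times_X U \cong \bigsqcup_{s \in S} U$. This splitting furnishes, for every $s \in S$, a section $\sigma_s \colon U \rarr Y$ over $X$ picking out $s$ in each fibre. The aim is to replace $U$ by a \emph{geometric} cover: once one produces a connected geometric cover $Z \rarr X$ over which $Y$ is still split, then $Z$ corresponds to an open subgroup $W = \pipet(Z) <^\circ \pipet(X,\bx)$, and the splitting of $Y \times_X Z$ says precisely that $W$ acts trivially on $S$, which is what we want. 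To obtain such a $Z$ I would use that $\rmLoc_X = \rmCov_X$ is a stack for effective descent (Prop. \ref{properdescent}) together with the full set of disjoint sections $\{\sigma_s\}$, descending the constant cover along $p$ to recognise the trivialising \'etale cover as (dominated by) an object of $\rmCov_X$.

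The main obstacle is exactly this last reduction. A priori $p$ is only \'etale, not a geometric cover, and because open subgroups of a Noohi group need not contain open normal subgroups, a connected cover trivialised \'etale-locally need not admit a Galois closure inside $\rmCov_X$; equivalently, the infinite intersection $\bigcap_{s} \rmStab(s)$ need not be open even though each $\rmStab(s)$ is. What makes the argument go through for objects of $\rmLoc_{X_\et}$ is that \'etale-local triviality forces this whole pointwise stabiliser to be cut out by the single cover $U$ rather than by the infinite family of point-stabilisers, and this is the step I would treat most carefully (extracting it from $\rmLoc_X=\rmCov_X$ and descent). That it can fail in general is precisely what Examples \ref{counterexample-with-picture} and \ref{counterexample-with-matrices} demonstrate, confirming that $\rmLoc_{X_\et}$ is a proper subcategory of $\rmLoc_X$.
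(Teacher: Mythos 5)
First, note what you are being compared against: the paper does not prove this statement at all — it imports it verbatim as a Fact from \cite[Lemma 7.4.5]{BhattScholze} — so your argument must stand on its own. Your forward direction is correct and is the standard one: if $U <^\circ \pipet(X,\bx)$ acts trivially on $S$, the connected cover $Y_U$ corresponding to $\pipet(X,\bx)/U$ is étale with open, specialization-stable image, hence surjects onto the connected $X$, and since the image of $\pipet(Y_U,\by)\rarr\pipet(X,\bx)$ lies in $U$, the pullback $Y\times_X Y_U$ splits; so $Y\in\rmLoc_{X_\et}$.

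The converse, however, has a genuine gap, and the mechanism you propose for it cannot work. You want a connected geometric cover $Z$ splitting $Y$, obtained by ``recognising the trivialising étale cover as (dominated by) an object of $\rmCov_X$'', i.e.\ a $Z\in\rmCov_X$ with an $X$-map $Z\rarr U$. Such a $Z$ need not exist: take $X=\bbP^1_{\bbC}$, so $\rmCov_X$ consists of disjoint unions of copies of $X$, and let $U=U_1\sqcup U_2\rarr X$ be the étale cover by two proper Zariski opens; an $X$-map from a copy of $X$ into $U$ would be a section of some $U_i\rarr X$, impossible since $U_i\subsetneq X$. (Also, Prop.~\ref{properdescent} concerns proper surjections of finite presentation; effectivity of descent along étale surjections is instead the fpqc argument invoked in the proof of Prop.~\ref{injectivity-for-prodiscrete}.) The correct argument, which is how \cite{BhattScholze} proceed, builds $Z$ from $Y$ and its local trivialisations rather than relating it to $U$ by a morphism: consider the sheaf $T=\underline{\Isom}(\underline{S},Y)$ on $X_{\et}$. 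After refining the trivialising cover so that its members are connected (possible since étale $X$-schemes are locally topologically noetherian, hence locally connected), $T$ is isomorphic over each member to $\rmAut(\underline{S})$, which on locally connected schemes equals the constant sheaf $\underline{\rmAut(S)}$ — exactly the point where the caveat of your Remark about $\underline{\rmAut(S)}\neq\rmAut(\underline{S})$ is avoided. Hence $T\in\rmLoc_{X_\et}$, so its pullback to $X_{\proet}$ lies in $\rmLoc_X=\rmCov_X$ and is a geometric cover; its fibre at $\bx$ is $\Isom(S,Y_{\bx})=\rmAut(S)$ with $\pipet(X,\bx)$ acting by post-composition with $\rho$, so the stabiliser of the point $\id_S$ — open, being a point-stabiliser in the fibre of a geometric cover — is precisely $\Ker(\rho)$. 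In your descent language: one must descend the constant cover $\underline{\rmAut(S)}_U$ along $p$ equipped with the \emph{twisted} descent datum given by the Čech cocycle $\phi_j^{-1}\phi_i$ of the local trivialisations (descending $\underline{S}_U$ with its natural datum merely returns $Y$); the resulting torsor carries a tautological trivialisation of $Y$ over itself. This torsor construction is the idea missing from your sketch.
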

We are now going to prove:
\begin{proposition}\label{injectivity-for-prodiscrete}
Let $X$ be a geometrically connected separated scheme of finite type over a field $k$. Let $Y \in \rmCov_{X_{\bk}}$ be such that $Y \in \rmLoc_{(X_{\bk})_\et}$. Then there exists a finite extension $l/k$ such and $Y_0 \in \rmCov_{X_l}$ such that $Y \simeq Y_0 \times_{X_l} X_{\bk}$.
\end{proposition}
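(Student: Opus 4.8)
The plan is to translate the hypothesis into group theory, cut $X_{\bk}$ along its normalization so that on each (normal) piece the pro-\'etale group becomes profinite, and then descend a \emph{finite} trivializing datum; the infinitude of $Y$ is neutralized by a uniform finiteness observation.

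First, by the preceding Fact (\cite[Lemma 7.4.5]{BhattScholze}), the cover $Y$ corresponds to a $\pipet(X_{\bk})$-set $S$ on which some open subgroup acts trivially. Using $\pipet(X_{\bk}) = \pipet(X_{k^\rmsep})$ and Prop. \ref{topoinv}, I may assume $X$ reduced and work over $k^\rmsep$ (where base changes stay reduced). Let $X^\nu \rarr X$ be the normalization, which is finite since $X$ is of finite type over $k$, hence a morphism of effective descent for geometric covers (Prop. \ref{properdescent}); write $X^\nu_{\bk} = \bigsqcup_{v} X^\nu_{v}$ for its \emph{finitely many} connected components. Each $X^\nu_{v}$ is normal, so $\pipet(X^\nu_{v}) = \piet(X^\nu_{v})$ is profinite by Lm. \ref{proetale-of-normal}.

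The \emph{key observation} is a uniform finiteness. For each $v$ the composite $\piet(X^\nu_{v}) = \pipet(X^\nu_{v}) \rarr \pipet(X_{\bk}) \rarr \Aut(S)$ has open kernel (the preimage of an open subgroup acting trivially), and an open subgroup of a profinite group has finite index. Hence $Y|_{X^\nu_{v}} = Y \times_{X_{\bk}} X^\nu_{v}$ is trivialized by a \emph{single} finite \'etale Galois cover $W_v \rarr X^\nu_{v}$ with a finite group $\Gamma_v$, so that $Y|_{X^\nu_{v}} \cong W_v \times^{\Gamma_v} S$. This holds even though $Y|_{X^\nu_{v}}$ may have infinitely many components, and that is precisely the point which makes descent possible: a single finite cover handles all of them at once. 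The finite covers $W_v$ with their $\Gamma_v$-actions, together with the analogous finite data on $X^\nu_{\bk} \times_{X_{\bk}} X^\nu_{\bk}$ and $X^\nu_{\bk} \times_{X_{\bk}} X^\nu_{\bk} \times_{X_{\bk}} X^\nu_{\bk}$ (whose extra components lie over the non-normal locus of $X$, are of finite type over $k$, and have finitely many geometric components controlled by Lm. \ref{finitegeomconncomp}), are all of finite type. By the standard limit descent for finite \'etale covers combined with Grothendieck's descent of finite \'etale covers over finite field extensions, they descend to a common finite extension $l/k$.

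Concretely, I would present $Y$ as a descent datum $(X',\phi)$ for the effective descent morphism $X^\nu_{\bk} \rarr X_{\bk}$: here $X' = Y|_{X^\nu_{\bk}}$ is componentwise the finite-data object $W_v \times^{\Gamma_v} S$, and the gluing isomorphism $\phi$ over $X^\nu_{\bk} \times_{X_{\bk}} X^\nu_{\bk}$ is determined by finite combinatorial data over the singular locus. Descending $(X',\phi)$ to a descent datum $(X'_0,\phi_0)$ for the finite morphism $X^\nu_l \rarr X_l$ over a large enough finite $l$, and applying effective descent (Prop. \ref{properdescent}), produces $Y_0 \in \rmCov_{X_l}$; since base change commutes both with the associated-cover construction $W_v \times^{\Gamma_v} S$ and with effective descent, one gets $Y_0 \times_{X_l} X_{\bk} \cong Y$, as required. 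The main obstacle is the bookkeeping of $\phi$ and of the cocycle condition across the simplicial diagram $X^\nu_\bullet$: one must verify that $\phi$ is genuinely governed by finite data (so that it descends) and that the descended data still satisfy the cocycle condition, so that effective descent applies. Once the uniform-finiteness observation is in place, the remaining descent is a finite-extension analogue of classical finite-\'etale descent, and the infinitude of $Y$ no longer interferes.
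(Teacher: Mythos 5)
Your route via the normalization is genuinely different from the paper's, and its first half is correct: each connected component $X^\nu_v$ of $X^\nu_{\bk}$ is normal, so $\pipet(X^\nu_v)=\piet(X^\nu_v)$ is profinite (Lm.~\ref{proetale-of-normal}); the kernel of $\pipet(X_{\bk})\rarr\Aut(S)$ is open by \cite[Lemma 7.4.5]{BhattScholze}; hence its preimage in $\piet(X^\nu_v)$ has finite index, $Y|_{X^\nu_v}\cong W_v\times^{\Gamma_v}S$ for a single finite Galois cover $W_v$, and the $W_v$ with their $\Gamma_v$-actions spread out to a finite extension. The gap is the step you defer as ``bookkeeping'': descending the gluing isomorphism $\phi$. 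Because you descend along the normalization rather than along a trivializing cover, $Y|_{X^\nu_{\bk}}$ is \emph{not} constant, so over a component $Z=(Z_0)_{\bk}$ of $X^\nu_{\bk}\times_{X_{\bk}}X^\nu_{\bk}$ the datum $\phi|_Z$ is not a locally constant function to $\rmAut(S)$ but a section over $Z$ of an $\underline{\Isom}$-cover of $Z_0$, whose fibre is an \emph{infinite} set of bijections of $S$. The assertion that such a section is ``finite combinatorial data'' and therefore automatically defined over a finite extension is a finite-level-definability statement of exactly the same nature as the proposition you are proving, and for general geometric covers such statements fail (that is the point of Ex.~\ref{counterexample-with-picture} and Ex.~\ref{counterexample-with-matrices}); so it cannot be waved through, and your proposal nowhere proves it. As written, the argument is missing its central step.

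The step is true and can be closed, for instance as follows: both sides of $\phi|_Z$ have monodromy factoring through the finite groups $\Gamma_{v_i}$, so the $\underline{\Isom}$-cover of $Z_0$ is a disjoint union of \emph{finite} \'etale covers; the image of the section $\phi|_Z$ is a connected component of the base change of one of these finite \'etale pieces (sections of separated \'etale maps are clopen immersions, cf. Lm.~\ref{separated}); by Lm.~\ref{finitegeomconncomp} that component is defined over some finite $l'/l$, and a finite \'etale morphism that becomes an isomorphism after base change to $\bk$ has degree one, hence is an isomorphism, so $\phi|_Z$ descends; the cocycle conditions descend as well, since morphisms of covers are determined by their maps on geometric fibres. (Alternatively, one can quote Thm.~\ref{arithtogeom} for $Z_0$: the stabilizer of $\phi|_Z$ in $\pipet(Z_0)$ is open and contains the image of $\pipet(Z)$, hence its thick closure $=\ker(\pipet(Z_0)\rarr\Gal_l)$, hence $\pipet((Z_0)_{l'})$ for some finite $l'$.) Note, finally, that the paper's own proof avoids the entire issue by choosing the descent morphism differently: since $Y\in\rmLoc_{(X_{\bk})_\et}$ and $X_{\bk}$ is quasi-compact, there is a finite-type \'etale surjection trivializing $Y$, which spreads out to $X'\rarr X_l$; then $Y|_{X'_{\bk}}\simeq\underline{S}$ is constant, the descent datum is just a function from the finite set $\pi_0(X'_{\bk}\times_{X_{\bk}}X'_{\bk})$ to $\rmAut(S)$ -- visibly existing at level $l$ once components are geometrically connected -- and effective descent for \'etale covers finishes. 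Descending along the trivializing cover rather than the normalization is precisely what makes the gluing data combinatorial and the proof short; your approach can be repaired, but only by adding the Isom-cover argument above.
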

\begin{proof}By the topological invariance (Prop.\ \ref{topoinv}), we can replace $\bk$ by $k^{\rmsep}$ if desired.
By the assumption $Y \in \rmLoc_{(X_{\bk})_\et}$, there exists an \'etale cover of finite type that trivializes $Y$. Being of finite type, it is a base-change $X'_{\bk}=X'\times_{\Spec(l)}\Spec(\bk) \rarr X_{\bk}$ of an \'etale cover $X' \rarr X_{l}$ for some finite extension $l/k$. Thus, $Y_{|X'_{\bk}}$ is constant (i.e.\ $\simeq \sqcup_{s \in S} X' = \underline{S}$) and the isomorphism between the pull-backs of $Y_{|X'_{\bk}}$ via the two projections $X_{\bk}' \times_{X_{\bk}} X_{\bk}' \rightrightarrows X_{\bk}'$ is expressed by an element of a constant sheaf $\underline{\rmAut(S)}(X_{\bk}' \times_{X_{\bk}} X_{\bk}')=\rmAut(\underline{S})(X_{\bk}' \times_{X_{\bk}} X_{\bk}')$ (we use the fact that $X'_{\bk}$ is \'etale over $X_{\bk}$, and thus $\pi_0(X_{\bk}' \times_{X_{\bk}} X_{\bk}')$ is discrete, in this case even finite). By enlarging $l$, we can assume that the connected components of the schemes involved: $X'$, $X'\times_{X_l} X'$ etc. are geometrically connected over $l$. Define $Y_0' = \sqcup_{s \in S} X'$. The discussion above shows that the descent datum on $Y_{|X'_{\bk}}$ with respect to $X'_{\bk} \rarr X_{\bk}$ is in fact the pull-back of a descent datum on $Y_0'$ with respect to $X' \rarr X_l$. As \'etale covers are morphisms of effective descent for geometric coverings (this follows from the fpqc descent for fpqc sheaves and the equivalence $\rmCov_{X_l} \simeq \rmLoc_{X_l}$ of \cite[Lemma 7.3.9]{BhattScholze}), the proof is finished. 
\end{proof}

\begin{rmk}
  Over a scheme with a non-discrete set of connected components, $\underline{\rmAut(S)}$ might not be equal to $\rmAut(\underline{S})$.
\end{rmk}

Proposition \ref{injectivity-for-prodiscrete} shows that our main theorem is significantly easier for $\pisga$.
\begin{corollary}\label{cor:injectivity-for-prodiscrete-cor}
  Let $X$ be a geometrically connected separated scheme of finite type over a field $k$. Fix an algebraic closure $\bk$ of $k$. Then
  \begin{displaymath}
    \pisga(X_{\bk}) \rarr \pisga(X)
  \end{displaymath}
  is a topological embedding.
\end{corollary}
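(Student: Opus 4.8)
The plan is to deduce this from Prop.~\ref{injectivity-for-prodiscrete} through the dictionary of Prop.~\ref{dictionary}, in the same spirit as the proof of Thm.~\ref{arithtogeom}. Since prodiscrete groups are Noohi and, by topological invariance (Prop.~\ref{topoinv}), $\pisga(X_{\bk})=\pisga(X_{k^{\rmsep}})$ with $X$ reducible to its reduction, it suffices to treat $h':\pisga(X_{k^{\rmsep}})\rarr\pisga(X)$. I would first record that, because $\pisga(X)$ is the inverse limit of the discrete quotients of $\pipet(X)$ by its open normal subgroups, $\pisga(X)-\rmSets$ is the full subcategory of $\pipet(X)-\rmSets\simeq\rmCov_X$ of those covers on which some open \emph{normal} subgroup of $\pipet(X)$ acts trivially; in particular such covers lie in $\rmLoc_{X_{\et}}$. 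By Prop.~\ref{dictionary}(1) it then suffices to produce, for every connected $S\in\pisga(X_{k^{\rmsep}})-\rmSets$, a connected $S'\epirarr S$ and a connected $T\in\pisga(X)-\rmSets$ with $S'\monorarr H'(T)$, where $H'$ is pullback to $X_{k^{\rmsep}}$.

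Given such $S$, I would pick the open normal subgroup $\bar N\lhd\pipet(X_{k^{\rmsep}})$ acting trivially on $S$ and let $Z\rarr X_{k^{\rmsep}}$ be the corresponding connected Galois geometric cover; it dominates $S$ and lies in $\rmLoc_{(X_{k^{\rmsep}})_{\et}}$. By Prop.~\ref{injectivity-for-prodiscrete} there is a finite extension $l/k$, which I enlarge to be Galois, and $Z_0\in\rmCov_{X_l}$ with $Z\simeq (Z_0)_{k^{\rmsep}}$. Composing $Z_0\rarr X_l$ with the finite \'etale map $X_l\rarr X$ exhibits $Z_0$ as a connected geometric cover $T$ of $X$ (\'etaleness and the existence part of the valuative criterion are stable under this composition). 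Since $l\otimes_k k^{\rmsep}\cong (k^{\rmsep})^{[l:k]}$, one has $H'(T)=T\times_X X_{k^{\rmsep}}\cong\bigsqcup_{\sigma\in\Gal(l/k)}{}^{\sigma}Z$, which contains $Z$ as a clopen component. Thus $S':=Z$ works: $S'\epirarr S$ and $S'\monorarr H'(T)$.

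The crux is to verify that $T\in\pisga(X)-\rmSets$, i.e. that an open normal subgroup of $G:=\pipet(X)$ acts trivially on $F(T)=\bigsqcup_\sigma F({}^\sigma Z)$; equivalently, that the monodromy $\rho:G\rarr\Aut(F(T))$ has discrete image. The geometric monodromy already has discrete image: because $Z$ is Galois, the $\pipet(X_{k^{\rmsep}})$-stabilizer of a point in a block $F({}^\sigma Z)$ is the open normal subgroup trivializing that block, so fixing one point in each of the finitely many blocks lands in the kernel. It remains to see that the arithmetic part, the action of $\Gal_k$ on the discrete deck group $D=\Aut(Z/X_{k^{\rmsep}})$, factors through a finite quotient, and this is where I expect the real difficulty. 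I would resolve it using the van Kampen description (Cor.~\ref{geomvK}, Rmk.~\ref{remark-relations-in-vK-for-normalization}) applied to the normalization: $\pipet(X_{k^{\rmsep}})$ is the Noohi completion of a free product of the finitely many profinite groups $\piet(\widetilde{X}_{k^{\rmsep},v})$ with the finitely generated free group $\pi_1(\Gamma,T)$ of the finite dual graph. Any continuous map to a discrete group has finite image on each profinite factor and finitely generated image on the free part, so the discrete quotient $D$ is finitely generated. As $\Gal_k$ acts continuously on $D$ (point stabilizers inside the continuous $G$-set $F(T)$ are open), an open subgroup fixes the finitely many generators, whence the action factors through a finite quotient. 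Consequently $\rho(G)$ is an extension of a discrete group by a finite one and is discrete in $\Aut(F(T))$ — fixing finitely many points, one per block together with witnesses for the finite arithmetic part, forces membership in $\ker\rho$ — so $\ker\rho$ is open normal and $T\in\pisga(X)-\rmSets$. Feeding this into Prop.~\ref{dictionary}(1) yields the topological embedding.
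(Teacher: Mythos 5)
Your overall skeleton is the intended one (Prop.~\ref{dictionary}(1) applied to connected $\pisga$-sets, plus Prop.~\ref{injectivity-for-prodiscrete}), and you are right that the one non-formal point is to check that $T=Z_0$, viewed as a cover of $X$, really is an object of $\pisga(X)-\rmSets$ (equivalently: connected and lying in $\rmLoc_{X_\et}$; for connected $T$ this is the same as "some open \emph{normal} subgroup acts trivially", since the kernel of the monodromy is normal and open as soon as it contains an open subgroup). But your argument for this point has a genuine gap. The "$\Gal_k$-action on the deck group $D$" you invoke is not available at this stage: defining it by conjugation requires the image of $\pipet(X_{k^{\rmsep}})$ in $\pipet(X_l)$ to be normal, which is essentially the content of Theorem~\ref{exactness-in-geometric-to-arithmetic-as-abstract} and is proved only later; at this point only near-exactness (normality of the \emph{thick closure}, Thm.~\ref{arithtogeom}) is known. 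One can instead define the action semilinearly via the $l$-form $Z_0$, i.e.\ ${}^\sigma d=\tilde\sigma d\tilde\sigma^{-1}$, but then its continuity --- openness of $\{\sigma: {}^\sigma d=d\}$, i.e.\ the claim that every deck transformation of $Z$ descends to a finite level, equivalently that $\Gal_l$ acts continuously on $\pi_0\bigl((Z_0\times_{X_l}Z_0)_{k^{\rmsep}}\bigr)$ --- is precisely as hard as the statement you are proving, and is not formal: Examples~\ref{counterexample-with-picture} and~\ref{counterexample-with-matrices} show that components of base changes of covers of $X_l$ need not be defined over any finite extension, so such continuity cannot be taken for granted. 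Your parenthetical justification ("point stabilizers inside the continuous $G$-set $F(T)$ are open") concerns stabilizers of points of $F(T)$, not stabilizers for the conjugation action on $D$, so it does not establish continuity. The same missing normality undermines the final step: to conclude that an element of $\pipet(X)$ fixing your finitely many witnesses lies in $\ker\rho$, you need every element of $\pipet(X)$ to act on each block through the holomorph of $D$ (i.e.\ to normalize the translation action of $D$), which again is not known here.

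The gap is easy to close, but by using the \emph{proof} of Prop.~\ref{injectivity-for-prodiscrete} rather than only its statement: the cover $Y_0$ constructed there is trivialized by the finite-type \'etale cover $X'\rarr X_l$, so $Y_0\in\rmLoc_{(X_l)_\et}$, and hence by the Fact quoted from \cite[Lemma 7.4.5]{BhattScholze} some open subgroup $U<^\circ\pipet(X_l)$ acts trivially on $F_{X_l}(Y_0)$. Since $\pipet(X_l)$ is an open subgroup of finite index of $\pipet(X)$ and $F_X(T)=\bigcup_{i}g_i\cdot F_{X_l}(Y_0)$ for finitely many coset representatives $g_i$ of $\pipet(X_l)$ in $\pipet(X)$, the open subgroup $\bigcap_i g_iUg_i^{-1}$ of $\pipet(X)$ acts trivially on $F_X(T)$; thus $T$ is a connected $\pisga(X)$-set and your dictionary argument goes through verbatim. (Your van Kampen observation that $D$ is finitely generated is correct --- each profinite factor has finite image in a discrete group and the free part is finitely generated --- but it is not needed once the argument is set up this way.)
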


\subsection{Preparation for the proof of Theorem \ref{injectivity-on-the-left}}
We are going to use the following proposition.
\begin{proposition}\label{dominbyrat}
Let $X$ be a scheme of finite type over a field $k$ with a $k$-rational point $x_0$ and assume that $X_{\bk}$ is connected. Let $Y_1, \ldots, Y_N$ be a set of connected finite \'etale coverings of $X_{\bk}$. Then there exists a finite Galois \'etale covering $Y$ of $X$ such that for all $1 \leq i \leq N$, there exists a surjective map $Y_{\bk} \epirarr Y_i$ of coverings of $X_{\bk}$.
\end{proposition}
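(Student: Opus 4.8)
The plan is to reduce the statement to the classical homotopy exact sequence for $\piet$ together with an elementary argument about normal cores of finite–index subgroups. Since $X$ is of finite type over $k$ it is quasi-compact and quasi-separated, and $X_{\bk}$ is connected by hypothesis, so by the classical theorem (SGA1, Exp. IX, Thm. 6.1) there is a short exact sequence of profinite groups
\begin{displaymath}
1 \rarr \piet(X_{\bk},\bx) \rarr \piet(X,\bx) \rarr \Gal_k \rarr 1,
\end{displaymath}
where $\bx$ is a geometric point lying over the rational point $x_0$. Write $\pi = \piet(X,\bx)$, $\bar\pi = \piet(X_{\bk},\bx)$ and let $p : \pi \epirarr \Gal_k$ be the projection; recall $\bar\pi$ is normal in $\pi$. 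Each connected finite \'etale cover $Y_i$ corresponds to an open finite-index subgroup $U_i < \bar\pi$.

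First I would descend the covers to a finite level. As $\Spec\bk = \varprojlim_l \Spec l$ over the finite subextensions $l/k$ inside $\bk$, we have $X_{\bk} = \varprojlim_l X_l$, and by a standard limit argument each $Y_i$ is the base change of a finite \'etale cover $Y_{i,l_i} \rarr X_{l_i}$ for some finite $l_i/k$. Replacing $l$ by the Galois closure of the compositum of the $l_i$, I may assume there is a single finite Galois extension $l/k$ and \emph{connected} finite \'etale covers $Y_{i,l} \rarr X_l$ with $(Y_{i,l})_{\bk} \cong Y_i$ for all $i$ (connectedness of $Y_{i,l}$ follows from that of its base change $Y_i$). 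Setting $\pi_l := p^{-1}(\Gal_l) = \piet(X_l,\bx)$, each connected $Y_{i,l}$ gives an open finite-index subgroup $\tilde U_i < \pi_l < \pi$ with $\tilde U_i \cap \bar\pi = U_i$ (after conjugating $U_i$ within its class if necessary). The essential gain of this step is that the $\tilde U_i$ now live inside the whole group $\pi$, not merely in $\bar\pi$.

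Next I would produce the Galois cover. Let $W_0 = \bigcap_i \tilde U_i$, an open finite-index subgroup of $\pi$, and let $V$ be its normal core, i.e. the kernel of the continuous permutation action $\pi \rarr \mathrm{Sym}(\pi/W_0)$ on the finite set $\pi/W_0$. Then $V$ is normal and open of finite index in $\pi$, and $V \subseteq \tilde U_i$ for every $i$, so $\bar\pi \cap V \subseteq \bar\pi \cap \tilde U_i = U_i$ for all $i$. Let $Y \rarr X$ be the finite Galois cover corresponding to $V$. To conclude I would check that $Y_{\bk}$ surjects onto each $Y_i$, and here the key point is that, because $Y \rarr X$ is Galois, every connected component of $Y_{\bk}$ is isomorphic \emph{as a cover of $X_{\bk}$} to one fixed connected cover $W$: indeed $Y_{\bk} \rarr X_{\bk}$ is Galois with group $G = \Aut_X(Y)$, which acts simply transitively on the geometric fibre over $\bx$ and hence transitively on $\pi_0(Y_{\bk})$, permuting the components by isomorphisms. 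In group terms $W$ corresponds to $\bar\pi \cap V$ (normal in $\bar\pi$ since $V$ is normal in $\pi$), and since $\bar\pi \cap V \subseteq U_i$ there is a surjection $W \epirarr Y_i$ of covers of $X_{\bk}$; applying it on each component yields the desired surjection $Y_{\bk} \epirarr Y_i$.

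The only nontrivial geometric inputs are the homotopy exact sequence and the descent of the $Y_i$ to a finite extension; everything else is the elementary fact that the normal core of a finite-index subgroup is again of finite index, together with the transitivity of $\Aut_X(Y)$ on $\pi_0(Y_{\bk})$ for a Galois $Y$. I expect the step requiring the most care to be this last one: making a \emph{single} cover $Y$ work simultaneously for all $i$ hinges precisely on the components of $Y_{\bk}$ being genuinely isomorphic covers, which is exactly what Galois-ness of $Y$ over $X$ (equivalently, normality of $V$ in $\pi$) provides.
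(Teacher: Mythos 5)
Your proof is correct, and it takes a genuinely different route from the paper's. The paper first reduces to $N=1$ with $Y_1$ Galois over $X_{\bk}$, then uses the rational point $x_0$ to split the classical sequence as $\piet(X,\bx_0)\simeq \piet(X_{\bk},\bx_0)\rtimes \Gal_k$, notes that $U=\piet(Y_1,\by)$ is open normal in $\piet(X_{\bk},\bx_0)$ with only finitely many conjugates under the resulting $\Gal_k$-action (because $Y_1$ descends to a finite extension $l/k$), sets $V=\bigcap_{\sigma\in\Gal_k}{}^{\sigma}U$, and takes the cover corresponding to $V\rtimes\Gal_k$. You instead keep all the $Y_i$ at once, descend them to a single finite level $X_l$, intersect the corresponding open subgroups inside the full group $\pi=\piet(X,\bx)$, and pass to the normal core there. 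This buys two things. First, your argument never uses the rational point: once one works with $p^{-1}(\Gal_l)$ and normal cores in $\pi$, the splitting is irrelevant, so you in effect prove the statement without that hypothesis. Second, and more substantively, the normal core is exactly what makes $Y$ honestly Galois over $X$. In the paper's construction this point is delicate: $V\rtimes\Gal_k$ is normalized by $\Gal_k$, but it is normal in $\piet(X_{\bk},\bx_0)\rtimes\Gal_k$ only if $\Gal_k$ acts trivially on $\piet(X_{\bk},\bx_0)/V$, which can fail --- for the Kummer cover $z\mapsto z^{\ell}$ of $\bbG_{m,\bbQ}$ (with $\ell$ odd) one gets $V=\ell\hbbZ(1)$, normal and Galois-stable, yet the associated cover of $\bbG_{m,\bbQ}$ has trivial automorphism group and is not Galois over $\bbQ$. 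What the paper's construction does guarantee is that $Y_{\bk}$ is connected and Galois over $X_{\bk}$, which is the form actually used in Step 2 of the proof of Theorem \ref{injectivity-on-the-left} (where $c_v^N$ must be ``Galois as a $G_v$-set''); your $Y_{\bk}$ is instead a disjoint union of copies of a single connected Galois cover $W$ of $X_{\bk}$. Both versions give the surjections $Y_{\bk}\epirarr Y_i$ demanded by the proposition, and yours delivers the ``Galois over $X$'' clause exactly as stated. Two small repairs to your write-up: if $k$ is imperfect, the fields $l_i$ produced by the limit argument need not be separable over $k$, so ``Galois closure'' should be ``normal closure'' (or replace $\bk$ by $k^{\rmsep}$ at the outset, which is harmless by topological invariance, Prop.~\ref{topoinv}); and in fact your argument never needs $l/k$ to be Galois, only that $\Gal_l$ be open in $\Gal_k$.
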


\begin{proof}
There is a finite connected Galois covering of $X_{\bk}$ dominating $Y_1, \ldots, Y_N$. Thus, we can assume $N=1$ and $Y_1$ is Galois. Fix a geometric point $\bx_0$ over $x_0$. The $k$-rational point $x_0$ gives a splitting $s:\Gal_k \rarr \piet(X,\bx_0)$, allowing to write $\piet(X,\bx_0) \simeq \piet(X_{\bk},\bx_0) \rtimes \Gal_k$ and so an action of $\Gal_k$ on $\piet(X_{\bk},\bx_0)$.
Fix a geometric point $\by$ on $Y_1$ over $\bx_0$. The group $U = \piet(Y_1,\by)$ is a normal open subgroup of $\piet(X_{\bk},\bx_0)$. As $Y_1$ is defined over a finite Galois field extension $l/k$ (contained in $\bk$), it is easy to check that $\Gal_l \subset \Gal_k$ fixes $U$, i.e.\ ${}^\sigma U = U$ for $\sigma \in \Gal_l$. It follows that the set of conjugates ${}^\sigma U$ is finite, of cardinality bounded by $[l:k]$. Define $V = \cap_{\sigma \in \Gal_k} {}^\sigma U$. It follows that this is an open subgroup of $\piet(X_{\bk},\bx_0)$ fixed by the action of $\Gal_k$. Moreover, it is normal, as $g(\cap_{\sigma \in \Gal_k} {}^\sigma U)g^{-1} = \cap_{\sigma \in \Gal_k} g{}^\sigma U g^{-1} = \cap_{\sigma \in \Gal_k} {}^\sigma(({}^{\sigma^{-1}}g) U ({}^{\sigma^{-1}}g^{-1})) = \cap_{\sigma \in \Gal_k} g{}^\sigma U g^{-1}$, due to normality of $U$. The open normal subgroup $V \cdot \Gal_k = V \rtimes \Gal_k < \piet(X_{\bk},\bx_0) \rtimes \Gal_k$ corresponds to a covering with the desired properties.
\end{proof}

Before starting the proof, we need to collect some facts about the Galois action on the geometric $\piet$. They are discussed, for example, in \cite[Ch.\ 2]{Stix-book}. The existence, functoriality and compatibility with compositions of the action can be readily seen to generalize to $\pipet$ as well, but \uline{note (see the last point below) that one has to be careful when discussing continuity}. For a connected topologically noetherian scheme $W$ and geometric points $\bw_1,\bw_2$, let $\pipet(W,\bw_1,\bw_2) = \mathrm{Isom}_{\rmCov_{W_{\bk}}}(F_{\bw_1},F_{\bw_2})$ denote the set of isomorphisms of the two fibre functors, topologized in a way completely analogous to the case when $\bw_1 = \bw_2$. By Cor. \ref{base-points}, it is a bi-torsor under $\pipet(W,\bw_1)$ and $\pipet(W,\bw_2)$. The bi-torsors under profinite groups $\piet(W,\bw_1,\bw_2)$ are defined similarly and are rather standard. For a geometrically unibranch $W$, the two notions match.
\begin{lemma}\label{lem:Gal-action-on-paths}
  For a scheme $W$ of finite type over $k$ and two geometric points $\bar{w}_1, \bar{w}_2$ on $W$ lying over $k$-points, there is an abstract $\Gal_k$-action on $\piet(W_{\bk},\bw_1,\bw_2)$ and $\pipet(W_{\bk},\bw_1,\bw_2)$ such that
  \begin{enumerate}[label=\alph*)]
    \item It is given by $\psi_\sigma = \piet(\id_W \times_{\Spec(k)} \Spec(\sigma^{-1}),\bw_1,\bw_2)$ or an analogously defined automorphism of $\pipet(W_{\bk},\bw_1,\bw_2)$. This makes sense as $\bw_1, \bw_2$ are $\Gal_k$-invariant.
    \item The morphism $\pipet(W_{\bk},\bw_1,\bw_2) \to \piet(W_{\bk},\bw_1,\bw_2)$ is $\Gal_k$-equivariant. Similarly, maps of schemes $(W,\bw_1,\bw_2)  \to (W',\bw_1,\bw_2)$  induce $\Gal_k$-equivariant maps on $\piet$ and $\pipet$.
   \item\label{lem:Gaop:pt:composition-compatibility} For three geometric points $\bw_1, \bw_2, \bw_3$, the Galois action is compatible with the composition maps, i.e.\ for any $\sigma \in \Gal_k$, the following diagram commutes
  \[
    \xymatrix{
    \pipet(W_{\bk},\bw_2,\bw_3) \times \pipet(W_{\bk},\bw_1,\bw_2) \ar[r]^(0.62){(-)\circ(-)} \ar[d]_{(\psi_\sigma, \psi_\sigma)} & \pipet(W_{\bk},\bw_1,\bw_3) \ar[d]^{\psi_\sigma} \\
    \pipet(W_{\bk},\bw_2,\bw_3)\times \pipet(W_{\bk},\bw_1,\bw_2) \ar[r]^(0.62){(-)\circ(-)} & \pipet(W_{\bk},\bw_1, \bw_3)
    }
  \]
  Similarly for $\piet$. Inductively, this also holds for arbitrary (finite) composition maps.

\item\label{lem:pt:Gal-action-as-conjugation} Let $s_{w_1},s_{w_2}$ be the sections of the maps $\piet(W,\bw_i) \to \Gal_k$ coming from rational points $w_1, w_2$. Then $\psi_\sigma(\gamma) = s_{w_2}(\sigma) \circ \gamma \circ s_{w_1}(\sigma^{-1})$ for $\gamma \in \piet(W_{\bk},\bw_1,\bw_2) \subset \piet(W,\bw_1,\bw_2)$. Note that this only makes sense thanks to the fundamental exact sequence for $\piet$ (and its version for the sets of paths, see \cite[Prop.\ 18]{Stix-book}). Thus, at this stage, we cannot make an analogous statement for $\pipet$.
  \end{enumerate}
In terms of continuity of $\psi_\sigma$, there is a priori a huge difference in how much we can say about $\piet$ and $\pipet$.
\begin{enumerate}[label=\alph*)]\addtocounter{enumi}{5}
  \item For each $\sigma$, the map $\psi_\sigma$ is continuous as an automorphism of $\piet(W_{\bk},\bw_1,\bw_2)$ and $\pipet(W_{\bk},\bw_1,\bw_2)$.
\item The action $\Gal_k \times \piet(W_{\bk},\bw_1,\bw_2) \to \piet(W_{\bk},\bw_1,\bw_2)$ is continuous. Note, however, that {\bf at this stage of the proof we do not know whether this is true for $\pipet$}. In fact, this is closely related to the main result we need to prove. 
\end{enumerate}  
\end{lemma} 

\subsection{Proof that $\pipet(X_{\bk}) \rarr \pipet(X)$ is a topological embedding}
In this subsection we finally prove our main result.
\begin{theorem}\label{injectivity-on-the-left}
  Let $k$ be a field and fix an algebraic closure $\bk$ of $k$. Let $X$ be a scheme of finite type over $k$
  such that the base-change $X_{\bk}$ is connected. Let $\bx$ be a $\Spec(\bk)$-point on $X_{\bk}$. Then the induced map
  \begin{displaymath}
  \pipet(X_{\bk},\bx) \rarr \pipet(X,\bx)
  \end{displaymath}
  is a topological embedding.
  \end{theorem}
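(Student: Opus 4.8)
The plan is to reduce, via part (1) of the dictionary (Proposition \ref{dictionary}), to a concrete statement about covers, and then to prove that statement by an explicit van Kampen construction. Part (1) says that $\iota\colon \pipet(X_{\bk}) \rarr \pipet(X)$ is a topological embedding precisely when every connected geometric cover $Z$ of $X_{\bk}$ admits a connected cover $Z'$ with a surjection $Z' \epirarr Z$ and an embedding $Z' \monorarr Y_{\bk}$ for some $Y \in \rmCov_X$. Writing $Z = \pipet(X_{\bk})/U$ for an open $U$, one checks it suffices to produce, for each such $U$, an open subgroup $V < \pipet(X)$ with $\iota^{-1}(V) \subseteq U$ (up to conjugacy): then $Z' = \pipet(X_{\bk})/\iota^{-1}(V)$ surjects onto $Z$ and embeds tautologically into $\iota^*(\pipet(X)/V) = Y_{\bk}$. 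Geometrically this means building a (necessarily infinite) geometric cover $Y$ of $X$, defined over $k$, whose base change $Y_{\bk}$ dominates $Z$; the counterexamples (Ex. \ref{counterexample-with-picture}, \ref{counterexample-with-matrices}) show $Y_{\bk}$ cannot be taken isomorphic to $Z$, so the goal is \emph{domination}, not \emph{descent}.

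First I would apply the van Kampen theorem (Cor. \ref{geomvK}, in the form of Remark \ref{remark-relations-in-vK-for-normalization}) to the normalization $h\colon X^\nu \rarr X$, which is finite because $X$ is Nagata, hence a morphism of effective descent (Prop. \ref{properdescent}). As the components of $X^\nu$ are normal, Lemma \ref{proetale-of-normal} identifies their pro-\'etale and \'etale fundamental groups, so $\pipet(X)$ is the Noohi completion of a quotient of $\big(*^{\rmtop}_v \piet(X^\nu_v)\big) *^{\rmtop} \pi_1(\Gamma,T)$ by the edge and cocycle relations. Normalization is compatible with base change to $\bk$ at the level of geometric components, so the same recipe gives $\pipet(X_{\bk})$ from $\big(*^{\rmtop}_w \piet((X^\nu_{\bk})_w)\big) *^{\rmtop} \pi_1(\Gamma_{\bk},T_{\bk})$, where each $v$ splits into the finitely many geometric components $w$ above it (Lemma \ref{finitegeomconncomp}); in particular $\Gamma$ and $\Gamma_{\bk}$ are finite graphs. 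By Lemma \ref{vKfunctoriality} the map $\iota$ is the Noohi completion of the evident map between these free-product quotients, sending $\piet((X^\nu_{\bk})_w) \rarr \piet(X^\nu_v)$ and respecting the graph generators, and $\Gal_k$ acts on the $\bk$-side by permuting the $w$ over each $v$ and acting on each $\piet((X^\nu_{\bk})_w)$.

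The heart is then group-theoretic. A connected cover $Z$ restricts over each $(X^\nu_{\bk})_w$ to a disjoint union of finite \'etale covers, so $Z$ is assembled from finite \'etale vertex-covers glued along the finite graph; equivalently $U$ determines open subgroups of the vertex groups together with gluing data on the edges. To construct $V$, I would build a $\Gal_k$-equivariant locally constant system over the graph for $X$: at each vertex $v$ choose, using Proposition \ref{dominbyrat}, finite Galois covers of $X^\nu_v$ defined over $k$ whose base changes dominate the finite \'etale covers that $Z$ induces over the $w$ above $v$, and splice these across the edges so that the cocycle relations hold. Proposition \ref{dominbyrat} is exactly what makes the chosen covers Galois-equivariant, hence defined over $k$; assembling them yields $Y \in \rmCov_X$ and $V = \pipet(Y)$. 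To control which elements survive I would introduce a distance (syllable-length) function on the free products counting the edge-generators of $\pi_1(\Gamma_{\bk})$ in a reduced expression, giving a filtration compatible with the Bass--Serre tree of the graph of groups. The subgroup $U$ determines a finite radius beyond which its behaviour is governed by vertex and edge data already captured, and the distance function lets one verify, level by level along the tree, that any $\gamma \in \iota^{-1}(V)$ — i.e. fixing the base point of $Y_{\bk}$ — also fixes the base point of $Z$, so $\iota^{-1}(V) \subseteq U$.

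The main obstacle is reconciling three features simultaneously in the construction of $Y$: the finite \'etale pieces of $Z$ over the vertices may be infinite in number and of unbounded degree (so $Y$ is a genuinely non-quasi-compact cover, and the assembly converges only in the Ra{\u \i}kov-complete Noohi group); the dominating covers must be chosen $\Gal_k$-equivariantly and consistently along every edge, not merely vertex by vertex; and the loop directions from $\pi_1(\Gamma_{\bk})$ produce elements of arbitrarily large length, so membership in $\iota^{-1}(V)$ must be propagated along unbounded paths in the Bass--Serre tree. Organizing this propagation is precisely the role of the distance function: it splits the verification $\iota^{-1}(V) \subseteq U$ into a bounded-radius core, handled by finitely many applications of Proposition \ref{dominbyrat}, plus a controlled tail. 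A minor preliminary point is arranging the rational points required by Proposition \ref{dominbyrat} on the $X^\nu_v$; this is handled by passing to a finite (separable, by Prop. \ref{topoinv}) extension $l/k$ over which the relevant components acquire rational points and become geometrically connected (Lemma \ref{finitegeomconncomp}), after which an open subgroup of $\pipet(X_l)$ is automatically open in $\pipet(X)$, so the conclusion transfers back to $k$.
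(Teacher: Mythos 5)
Your outline coincides with the paper's own strategy: the reduction via Prop.~\ref{dictionary}(1) to finding, for each open $U < \pipet(X_{\bk})$, an open $V < \pipet(X_l)$ with $\iota^{-1}(V) \subseteq U$; the passage to a finite extension making the components of the normalization geometrically connected with rational points; van Kampen (Cor.~\ref{geomvK}, Lm.~\ref{vKfunctoriality}) to present both groups over the \emph{same} finite graph; and Prop.~\ref{dominbyrat} plus a length function as the main tools. However, the heart of the argument --- actually producing $V$ and verifying $\iota^{-1}(V)\subseteq U$ --- is where your proposal has a genuine gap, in two places. First, the claim that $U$ ``determines a finite radius beyond which its behaviour is governed by vertex and edge data already captured,'' so that the core is ``handled by finitely many applications of Proposition~\ref{dominbyrat},'' is false. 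The orbits of the vertex groups on $S=\pipet(X_{\bk})/U$ reachable in $N$ steps from the base point grow without bound with $N$, and the paper must invoke Prop.~\ref{dominbyrat} once for \emph{every} $N$, producing a coherent tower $c_v^{N+1}\epirarr c_v^{N}$ of Galois vertex-covers; openness of the resulting subgroup comes from the fact that a word of bounded plain length only imposes conditions at its own scale, not from any global finite radius.

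Second, and more seriously, ``splice these across the edges so that the cocycle relations hold'' is exactly the step that fails naively --- this is what the counterexamples you cite demonstrate --- and your proposal gives no mechanism for it. The obstruction is concrete: the splittings $\piet(X^{\nu}_v)\simeq \piet(\bX_v)\rtimes \Gal_k$ furnished by the rational points at different vertices are incompatible along edges, the discrepancy being measured by elements $\delta_{\sigma,E},\theta_{\sigma,E}$ of the geometric vertex groups. The paper handles this by (i) a further finite extension of $k$ forcing all $\delta_{\sigma,E},\theta_{\sigma,E}$ to act trivially on the first-stage covers $c_v^{1}$, and (ii) defining $V$ not as the stabilizer attached to a cover $Y$ over $X$ (no such $Y$ is ever built directly), but as the subgroup of the geometric free product generated by ``looplike'' palindromic words $g_1\cdots g_m g_{m+1} g_m'\cdots g_1'$ whose paired syllables $g_{m+1+j}g_{m+1-j}$ act trivially on $c^{N(\omega)}_{v_{m+1-j}}$, where the distance $N(\omega)$ is engineered from the tree metric precisely so that the twisted Galois action $\phi_{v_0}(\sigma)$ carries looplike words to products of looplike words (the key identity $N(\omega')=N(\omega)$). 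Only with this stability in hand can one equip the quotient $S'$ with a $\pipet(X_l)$-action extending the geometric one, and even then the well-definedness (independence of coset representatives and of lifts modulo $\langle \bR_1,\bR_2\rangle^{nc}$) and the triviality of the relations $R_1, R_2$ require the long computations with $\eta_{\sigma,v\rarr w}$ occupying most of the paper's proof. Without this construction, or a substitute for it, your $Y$ and $V$ do not exist, and the ``level by level'' verification along the Bass--Serre tree has nothing to propagate.
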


Then, we will derive the final form of the fundamental exact sequence.

\begin{theorem}\label{exactness-in-geometric-to-arithmetic-as-abstract}
With the assumptions as in Thm.\ \ref{injectivity-on-the-left}, the sequence of abstract groups
\begin{displaymath}
1 \rarr \pipet(X_{\bk},\bx) \rarr \pipet(X,\bx) \rarr \Gal_k \rarr 1
\end{displaymath}
is exact.

Moreover, the map $\pipet(X_{\bk},\bx) \rarr \pipet(X,\bx)$ is a topological embedding and the map $\pipet(X,\bx) \rarr \Gal_k$ is a quotient map of topological groups.
\end{theorem}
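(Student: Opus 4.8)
The plan is to deduce the theorem as a corollary of the two main results already in hand. Theorem \ref{arithtogeom} provides that $p:\pipet(X,\bx)\rarr\Gal_k$ is a surjective topological quotient map, that $\rmim(\iota)\subseteq\ker(p)$, and that the sequence is nearly exact in the middle, i.e. $\overline{\overline{\rmim(\iota)}}=\ker(p)$. Theorem \ref{injectivity-on-the-left} provides that $\iota:\pipet(X_{\bk},\bx)\rarr\pipet(X,\bx)$ is a topological embedding, in particular injective. Together these give exactness at $\pipet(X_{\bk},\bx)$, exactness at $\Gal_k$, and both ``moreover'' assertions for free. So the only thing left is to upgrade the near-exactness in the middle to genuine exactness of abstract groups, namely $\rmim(\iota)=\ker(p)$.

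First I would record that $\rmim(\iota)$ is closed in $\pipet(X,\bx)$. Since $\iota$ is a topological embedding, $\rmim(\iota)$ carries the topology of the Noohi group $\pipet(X_{\bk},\bx)$ and is therefore Raikov complete; as the two-sided uniformity on a subgroup is the restriction of the ambient one, $\rmim(\iota)$ is a complete, hence closed, subgroup, so $\overline{\rmim(\iota)}=\rmim(\iota)$. Now \emph{if} one knew that $\rmim(\iota)$ were normal in $\pipet(X,\bx)$, Lemma \ref{lepage-lemma} (topological closure equals thick closure for normal subgroups) would yield $\rmim(\iota)=\overline{\rmim(\iota)}=\overline{\overline{\rmim(\iota)}}=\ker(p)$, finishing the proof. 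Thus the entire problem reduces to the normality of $\rmim(\iota)$.

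The hard part is precisely this normality, and topology alone does not suffice: because open \emph{normal} subgroups need not form a basis in a Noohi group, a closed subgroup can be thickly dense in a strictly larger closed subgroup, so closedness together with $\overline{\overline{\rmim(\iota)}}=\ker(p)$ cannot by itself force equality (the same phenomenon underlies Ex. \ref{counterexample-with-picture} and Ex. \ref{counterexample-with-matrices}). I would therefore prove $\ker(p)\subseteq\rmim(\iota)$ directly, using the cover-theoretic content of Theorem \ref{injectivity-on-the-left}. An element $\phi\in\ker(p)$ is an automorphism of the fibre functor $F_{\bx}$ on $\rmCov_X$ acting trivially on covers pulled back from $\Spec(k)$, and membership in $\rmim(\iota)$ means exactly that $\phi$ extends to an automorphism of the fibre functor on the larger category $\rmCov_{X_{\bk}}$. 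To build such an extension on a connected $Z\in\rmCov_{X_{\bk}}$ I would invoke the key statement of Theorem \ref{injectivity-on-the-left}: $Z$ is dominated by a cover $Z'$ that embeds $Z'\monorarr Y_{\bk}$ into the base-change of some $Y\in\rmCov_{X_l}$ with $l/k$ finite. The already-defined action of $\phi$ on $F_{\bx}(Y)=F_{\bx}(Y_{\bk})$ can then be transported along $Z'\monorarr Y_{\bk}$ and the domination $Z'\epirarr Z$ to define $\phi$ on the fibre of $Z$.

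The genuine obstacle is to check that this construction is well defined --- independent of the chosen dominating $Z'$ and of $l$ --- and functorial in $Z$, so that the values assemble into a single automorphism of the fibre functor on $\rmCov_{X_{\bk}}$, i.e. an element of $\pipet(X_{\bk},\bx)$ mapping to $\phi$. This is exactly where the compatibility of $\iota$ with the Galois action (Lemma \ref{geomfunctorscompatibility}) and the bookkeeping developed in the proof of Theorem \ref{injectivity-on-the-left} re-enter. Equivalently, one may package the same input as the normality statement of the previous paragraph: conjugation by lifts of elements of $\Gal_k$ preserves $\rmim(\iota)$ by functoriality, the remaining substantive point being that $\ker(p)$ itself normalizes $\rmim(\iota)$; since lifts of $\Gal_k$ together with $\ker(p)$ generate $\pipet(X,\bx)$, this gives normality and hence the theorem. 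I expect this verification of well-definedness (equivalently, that $\ker(p)$ normalizes the geometric part) to be the main difficulty, precisely because the naive density argument fails: $\rmim(\iota)$ is closed but only thickly dense in $\ker(p)$.
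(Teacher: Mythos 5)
Your assembly of the known ingredients is correct and coincides with the paper's own reduction: Theorems \ref{arithtogeom} and \ref{injectivity-on-the-left} give surjectivity, the quotient-map and embedding statements, injectivity, and near-exactness in the middle; $\rmim(\iota)$ is closed because it is the homeomorphic image of a Ra{\u \i}kov-complete (Noohi) group; and by Lemma \ref{lepage-lemma} everything reduces to showing that $\rmim(\iota)$ is a \emph{normal} subgroup, since then $\rmim(\iota)=\overline{\rmim(\iota)}=\overline{\overline{\rmim(\iota)}}=\ker(p)$. Up to this point your argument is the paper's argument. (The paper adds one further useful observation you miss: the equality ``topological closure $=$ thick closure'' may be tested inside any open subgroup $U$ with $\rmim(\iota)<U<^{\circ}\pipet(X)$, so it suffices to prove normality inside $\pipet(X_l)$ for a convenient finite extension $l/k$; this is what makes the next step possible.)

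The genuine gap is that you never prove the normality, and both of your sketched routes defer exactly the substantive point. Route 2 is circular as stated: ``conjugation by lifts of elements of $\Gal_k$ preserves $\rmim(\iota)$ by functoriality'' is not a formal fact, because two lifts of the same $\sigma\in\Gal_k$ differ by an element of $\ker(p)$, so the assertion for arbitrary lifts already presupposes that $\ker(p)$ normalizes $\rmim(\iota)$ --- which you then list separately as ``the remaining substantive point''; even for a lift coming from a rational point (available only after a finite extension of $k$), matching conjugation by the section with the scheme-theoretic Galois action on $\rmCov_{X_{\bk}}$ requires a real verification. Route 1 hides the same problem: to transport $\phi\in\ker(p)$ along $Z'\monorarr Y_{\bk}$ you need $\phi$ to stabilize the subset $F_{\bx}(Z')\subset F_{\bx}(Y_{\bk})$; but $F_{\bx}(Z')$ is an $\rmim(\iota)$-orbit, and $\phi$ carries $\rmim(\iota)$-orbits to $\phi\,\rmim(\iota)\,\phi^{-1}$-orbits, so stability is again essentially the normality you are trying to prove. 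The paper closes this gap with machinery your proposal does not invoke: after passing to $\pipet(X_l)$, it writes $\iota$ as the Noohi completion of the explicit map $\psi$ between the van Kampen presentations $(*^{\rmtop}_v \piet(\bX_v))*^{\rmtop}\pi_1(\Gamma,T)/\overline{\langle R_1,\bR_2\rangle^{nc}} \rarr (*^{\rmtop}_v(\piet(\bX_v)\rtimes\Gal_{k,v}))*^{\rmtop}\pi_1(\Gamma,T)/\overline{\langle R_1,R_2\rangle^{nc}}$ from the proof of Theorem \ref{injectivity-on-the-left}, and checks that $\psi$ has normal image: the target is generated by the images of the $\piet(\bX_v)$'s and $\pi_1(\Gamma,T)$ (which lie in the image) together with the $\Gal_{k,v}$'s, and for the latter one has $\sigma_v\psi(\bar g)\sigma_v^{-1}=\psi(\overline{{}^{\sigma_v}g})$, where ${}^{\sigma_v}g=\phi_v(\sigma)(g)$ is the abstract Galois action constructed in Step 3b of that proof. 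Density of the pre-completion group (Lemma \ref{dense-in-Noohization}) and closedness of $\rmim(\iota)$ then yield normality in $\pipet(X_l)$, and Lemma \ref{lepage-lemma} finishes. Without this presentation, or some substitute carrying out the verification you acknowledge as ``the main difficulty,'' the proposal is not a proof.
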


In the proof, after some preparatory steps (e.g.\ extending the field $k$), we define the set of \emph{regular loops} in $\pipet(X_{\bk})$ with respect to a fixed open subgroup $U <^{\circ} \pipet(X_{\bk},\bx)$ and use it to construct an Galois invariant open subgroup $V$ inside of $U$ (see Steps II and III below). {\bf There is also an alternative approach} to proving the existence of $V$ that avoids the direct construction involving regular loops. {\bf We sketch it in Rmk.\ \ref{rmk:quick-approach}}. While this latter approach is quicker, it is less instructive: as explained in Rmk.\ \ref{rmk:example-not-regular-loop} below, the notion of a regular loop provides an insight of what goes wrong in the counterexample Ex.\ \ref{counterexample-with-picture}. Still, it might be worth having a look at, as our main approach is rather lengthy.

{\bf \emph{Step I: Setting things up and applying van Kampen}}

For any finite extension $\bk/l/k$ of $k$, the map $\pipet(X_l,\bx) \rarr \pipet(X,\bx)$ is an embedding of an open subgroup and we have a factorization $\pipet(X_{\bk},\bx) \rarr \pipet(X_l,\bx) \rarr \pipet(X,\bx)$. Here, we have  tacitly lifted $\bx$ to $X_l$. Thus, we can start by replacing $k$ by a finite extension. Considering the normalization $X^\nu \rarr X$, base-changing the whole problem to a finite extension $l$ of $k$, considering the factorization $l/l'/k$ into separable and purely inseparable extension of fields, and using first that the base-change along a separable field extension of a normal scheme is normal and then the topological invariance of $\pipet$, we can assume that we have a surjective finite morphism $h:\tX \rarr X$ such that the connected components of $\tX$, $\tX \times_X \tX$, $\tX\times_X\tX \times_X \tX$ are geometrically connected, have rational points and for each $W \in \pi_0(\tX)$, there is $\pipet(W) = \piet(W)$ and $\pipet(W_{\bk}) = \piet(W_{\bk})$. 

Let $\tX = \sqcup_{v \in \rmVert} \tX_v$ be the decomposition into connected components. Note that the indexing set $\rmVert$ is finite. For each $t \in \pi_0(\tX) \cup \pi_0(\tX \times_X \tX) \cup \pi_0(\tX \times_X \tX \times_X \tX))$, we fix a $k$-rational point $x(t)$ on $t$ and a $\bk$-point $\bx(\bt)$ on $\bt=t_{\bk}$ lying over $x(t)$. We will often write $\bx_t$ to mean $\bx(t)$. Let us fix $v_{\bx} \in \rmVert$ for the rest of the text and say that the image of $\bx(\tX_{v_{\bx},\bk})$ in $X_{\bk}$ will be the fixed geometric point $\bx$ of $X_{\bk}$ and its image in $X$ the fixed geometric point of $X$. For any $W_{\bk
}, W_{\bk}' \in \pi_0(S_\bullet(\bh))$ and every boundary map $\bpartial: W_{\bk} \rarr W_{\bk}'$, we fix paths $\gamma_{W_{\bk}',W_{\bk}} \in \pipet(W_{\bk}', \bx_{W_{\bk}'}, \bpartial(\bx_{W_{\bk}}))$ between the chosen geometric points, as in Cor.\ \ref{geomvK}. This is possible thanks to Lm.\ \ref{geomfunctorscompatibility}. We define $\gamma_{W',W}$ to be the image of this path.

Let $\bh: \tX_{\bk} \rarr X_{\bk}$ be the base-change of $h$. We choose a maximal tree $T$ (resp.\ $T'$) in the graph $\Gamma =\pi_0(S_\bullet(h))_{\leqslant 1}$ (resp.\ $\Gamma' =\pi_0(S_\bullet(\bh))_{\leqslant 1}$). After making these choices, we can apply Cor.\ \ref{geomvK} with Rmk.\ \ref{remark-relations-in-vK-for-normalization} to write the fundamental groups of $(X,\bx)$ and $(X_{\bk},\bx)$. This way we get a diagram

    \[
    \xymatrix{
    \bigg(\Big(\big(*^{\rmtop}_v \piet(\tX_{v,\bk},\bx_v)\big) \topfp \pi_1(\Gamma',T')\Big)/ \overline{\langle R_1', R_2' \rangle^{nc}} \bigg)^{\rmNoohi}\ar[r]^(0.75){\simeq }\ar[d] & \pipet(X_{\bk},\bx)   \ar[d]  \\
      \bigg(\Big(\big(*^{\rmtop}_v \piet(\tX_v,\bx_v)\big)  \topfp \pi_1(\Gamma,T)\Big)/ \overline{\langle R_1, R_2 \rangle^{nc}} \bigg)^{\rmNoohi} \ar[r]^(0.75){\simeq }  & \pipet(X,\bx) 
    }
  \]

where $\overline{(\ldots)}$ denotes the topological closure, $\langle R \rangle^{nc}$ denotes the normal subgroup generated by the set $R$, and $R_1, R_1', R_2, R_2'$ are as in Rmk.\ \ref{remark-relations-in-vK-for-normalization}.

Note that, while the (connected components of the) fibre products $\tX \times_X \tX$, $\tX \times_X \tX \times_X \tX$ are not necessarily normal nor satisfy $\pipet(W) = \piet(W)$, we can effectively work as if this was the case, see Rmk. \ref{remark-relations-in-vK-for-normalization}.
\begin{obs}\label{obs:setup}
The maps and groups above enjoy the following properties.
\begin{enumerate}[label=\alph*)]
    \item By Lm.\ \ref{vKfunctoriality}, the left vertical map is the Noohi completion of the obvious map of the underlying quotients of free topological products.
    \item By geometrical connectedness of the schemes in sight, we can (and do) identify
    \[
    \pi_0(S_\bullet(h)) = \pi_0(S_\bullet(\bh)), \quad  \Gamma' = \Gamma \quad \textrm{ and } \quad T' = T 
    \]
    \item \label{obs:pt:R_2'=R_2} As $\gamma_{W',W}$'s are chosen to be the images of $\gamma_{W_{\bk}',W_{\bk}}$'s, we see that $\alpha_{abc}^{(f)}$'s appearing in $R_2$, and so a priori elements of $\piet(\tX_v,\bx_v)$'s, are in fact in $\piet(\tX_{v,\bk},\bx_v)$. It follows that
    \[
    R_2' = R_2
    \]
    \item The $k$-rational points $x(W)$ give identification
    \[
    \piet(W,\bx_W)\simeq \piet(W_{\bk},\bx_W)\rtimes \Gal_k
    \]
    When $W = \tX_v$ for $v \in \rmVert$, we will write $\Gal_{k,v}$ in the identification above to distinguish between different copies of $\Gal_k$ in the van Kampen presentation of $\pipet(X,\bx)$.
    \item As $\gamma_{W',W}$ is the image of the path $\gamma_{W_{\bk}',W_{\bk}}$ on $W_{\bk}'$, it maps to the trivial element of\\ ${\piet(\Spec(k), \bx(W),\bx(W')) =  \Gal_k}$. It implies, that the following diagram commutes
\begin{center}
\begin{tikzpicture}
\matrix(a)[matrix of math nodes,
row sep=1em, column sep=1.5em,
text height=1ex, text depth=0.25ex]
{\piet(W_{\bk},\bx(W)) &  & \piet(W,\bx(W))  &  {} &   \\  &  &  &   \Gal_k \\ \piet(W'_{\bk},\bx(W')) & &  \piet(W',\bx(W'))  &  {} &  \\};

\path[->,font=\scriptsize] (a-1-3) edge  (a-2-4);
\path[->,font=\scriptsize] (a-3-3) edge (a-2-4);
\path[->] (a-1-1) edge (a-3-1);
\path[->] (a-1-3) edge (a-3-3);
\path[->] (a-1-1) edge (a-1-3);
\path[->] (a-3-1) edge (a-3-3);

\end{tikzpicture}
\end{center}
\end{enumerate}
\end{obs}

Let $P$ be a walk in $\Gamma$, i.e.\ a sequence of consecutive edges (with possible repetitions) $e_1, \ldots, e_m$ in $\Gamma$ with an orientation such that the terminal vertex of $e_i$ is the initial vertex of $e_{i+1}$. Using the orientation of $\Gamma$, it can be written as $\epsilon_1 e_1 \ldots \epsilon_m e_m$ with $\epsilon_i \in \{\pm\}$ indicating whether the orientation agrees or not. This will come handy as follows: define $\partial^+_0= \partial_0, \partial^-_0= \partial_1, \partial^+_1= \partial_1, \partial^-_1= \partial_0$.

For each $P$ as above with a vertex sequence $(v_1,v_2,\ldots,v_{m+1})$, there is a map

    \begin{center}
        \begin{tikzcd}[row sep=0.01em]
          \piet(\tX_{v_{m+1},\bk},\partial_1^{\epsilon_m}(\bx_{e_m}),\bx_{v_{m+1}})  \times \piet(\tX_{v_m,\bk},\bx_{v_m},\partial_0^{\epsilon_m}(\bx_{e_m}))  \times & {} & {} \\ \vdots & \ni &  (\gamma_{2m},\ldots,\gamma_1) \arrow[mapsto]{dddd}\\
          \times \piet(\tX_{v_2,\bk},\partial_1^{\epsilon_1}(\bx_{e_1}),\bx_{v_2}) \times \piet(\tX_{v_1,\bk},\bx_{v_1},\partial_0^{\epsilon_1}(\bx_{e_1})) \ar{ddd}  & {} &  {}\\    {} & {} & {} \\ {} & {} & {} \\
          \pipet(X_{\bk},\bx_{v_1},\bx_{v_{m+1}}) & \ni &  \gamma_{2m} \circ\ldots \circ \gamma_1
      \end{tikzcd}
    \end{center}

In the following, we will use $\circ_{?}$ to denote the ``composition of \'etale paths'' and $\bullet_{?}$ to denote the multiplication in some group(oid) $?$. When $? = \pipet(X_{\bk},\bx) \textrm{ or } \pipet(X,\bx)$, we will skip the subscript. While we could just use $\circ_{?}$ everywhere, it is sometimes convenient to keep track of when some paths ``have been closed'' by using $\bullet_{?}$.

{\bf \emph{Step II: Defining regular loops in $\pipet(X_{\bk},\bx)$}}

\begin{definition}\label{defn:paths-special-form}
  An element $\gamma \in \mathrm{Isom}_{\rmCov_{X_{\bk}}}(F_{\bx_w},F_{\bx_v})$ is called an \emph{\'etale path of special form supported on $P$} if it lies in the image of the composition map above for some walk $P$ starting in $w$ and ending in $v$.
  
  Any element $(\gamma_{2m},\ldots,\gamma_1)$ in the preimage of such $\gamma$ will be called a \emph{presentation} of $\gamma$ with respect to $P$.
\end{definition}

For a walk $P$, denote by $l(P)$ the length of $P$, i.e.\ the number of consecutive edges (not necessarily different) it is composed of.

\begin{obs}\label{obs:rho-v-special}
  A useful example of a path of special form is the following. In the van Kampen presentation, the maps $\piet(\tX_{v,\bk},\bx_v) = \pipet(\tX_{v,\bk},\bx_v) \to \pipet(X_{\bk},\bx)$ are given by
  \[
    \rho_v(-) = \gamma_v^{-1} \circ (-) \circ \gamma_v
  \]
  where $\gamma_v \in \pipet(\tX_{v,\bk},\bx,\bx_v)$ is defined as follows: if $P_{v_{\bx},v} \subset T$ denotes the unique shortest path in the tree $T \subset \Gamma$ (forgetting the orientation) from $v_{\bx}$ to $v$, then the choices of paths $\gamma_{W_{\bk}',W_{\bk}}$ made when applying the van Kampen theorem give a unique \'etale path of special form $\gamma_v$ supported on $P_{v_{\bx},v}$.
\end{obs}

Before introducing the main objects of the proof, we note a simple result.
\begin{lemma}\label{lem:continuity-psi-sigma-gamma}
  For a fixed path $\gamma \in \pipet(X_{\bk},\bx,\by)$ of special form, the map $\Gal_k \ni \sigma \mapsto \psi_\sigma(\gamma) \in \pipet(X_{\bk},\bx,\by)$ is continuous.
\end{lemma}
\begin{proof}
  This follows from the continuity of the composition maps of paths and the fact that the statement is true for $\piet$.
\end{proof}

To prove Thm.\ \ref{injectivity-on-the-left}, {\bf it is enough to prove the following statement:} any connected geometric covering $Y$ of $X_{\bk}$ can be dominated by a covering defined over a finite separable extension $l/k$.

Indeed, let $Y' \in \rmCov_{X_l}$ be a connected covering that dominates $Y$ after base-change to $\bk$. By looking at the separable closure of $k$ in $l$ and using the topological invariance of $\pipet$, we can assume $l/k$ is separable. The composition $Y'' = Y' \to X_l \to X$ is an element of $\rmCov_X$ and there is a diagonal embedding $Y'\times_{\Spec(l)} \Spec(\bk) \to Y''\times_{\Spec(k)} \Spec(\bk)$. By Prop.\ \ref{dictionary}\ref{dictionary-kernel}, the proof will be finished.

Let us fix a connected $Y \in \rmCov$ till the end of the proof and denote by $S=Y_{\bx}$ the corresponding $\pipet(X_{\bk},\bx)$-set. Fix some point $s_0 \in S$ and let $U = \rmStab_{\pipet(X_{\bk},\bx)}(s_0)$.

\begin{definition}
  For each $v \in \rmVert$, define
\[
  O_v^N = \left\{s \in F_{\bx_v}(Y) | \exists_{\substack{\textrm{walk } P \text{, }\\ l(P) \leq N}} \exists_{\substack{\gamma \textrm{ of sp. form }\\\textrm{supp. on } P}} s = \gamma \cdot s_0 \right\}
\]
and call it the set of ``elements at $v$ reachable in at most $N$ steps''.
\end{definition}
The following is a crucial observation regarding $O^N_v$.
\begin{lemma*}
  For any $v$ and $N$, the set $O^N_v$ is finite.
\end{lemma*}
\begin{proof}
  We proceed by induction on $N$. For $N=1$, the walks of length not greater than $N$ starting in $v_0$ (are either trivial or) consist of a single edge whose initial vertex is necessarily $v_{\bx}$. As $\Gamma$ is finite, there are only finitely many such edges. Let us fix one, named $e$, with vertices $v_0,w$.  
  We need to show that the set
\[
  \left\{(\theta\circ \delta) \cdot s_0 \in F_{\bx_w}(Y) | \delta \in \piet(\tX_{v_{\bx}, \bk},\bx,\partial_1^{\epsilon(e)}(\bx_e)), \quad \theta \in \piet(\tX_{w,\bk},\partial_0^{\epsilon(e)}(\bx_e),\bx_w) \right\}
\]
is finite. However, as in general the sets $\piet(W,\bx_1,\bx_2)$ are (bi-)torsors under profinite groups (namely $\piet(W,\bx_1)$ and $\piet(W,\bx_2)$) and the maps and actions in sight are continuous, we see that the finiteness of this last set follows directly from finiteness of orbits of points in discrete sets under an action by a profinite group.

Now, to see the inductive step, assume that the claim is true for $N$. To prove it for $N+1$, note that any element in $O_v^{N+1}$ can be connected by a single edge to an element of $O_w^N$ (for some vertex $w$). As $O_w^N$ is finite and as we have just explained that, starting from a fixed point,  one can only reach finitely many points by applying \'etale paths of special forms supported on a single edge, the result follows.
\end{proof}

Now, for each $v\in \rmVert$ and $N \in \mathbb{N}$, define $C_v^N \in \piet(\tX_v,\bx_v) - \rmSets$ so that
\begin{enumerate}
  \item The set $C_v^N$ is Galois as a $\piet(\tX_v,\bx_v)$-set.
  \item The pullback of $C_v^N$ to $\piet(\tX_{v,\bk},\bx_v) - \rmSets$ dominates each of the $\piet(\tX_{v,\bk},\bx_v)$-orbits of elements of $O_v^N$. 
  \item There is a surjection $C^{N+1}_v \epirarr C^N_v$ of $\piet(\tX_{v},\bx_v)$-sets.
\end{enumerate}

We can find sets satisfying the first two conditions by applying Prop.\ \ref{dominbyrat}, and the last condition can be guaranteed by choosing the $C_v^N$'s inductively (for a given $v$).

We now proceed to define a subgroup of $\pipet(X,\bx)$ that will lead to the desired $\pipet(X,\bx)$-set. For that we need to find a suitably large subgroup of elements of $U$ that are well behaved under the Galois action.

\begin{definition}\label{defn:regular-loops}
We will call an element $g \in \pipet(X_{\bk},\bx)$ a \emph{regular loop} (with respect to $U$) if there exists $v, m$, a walk $P$ of length $m$ from $v_1 = v_{\bx}$ to $v_{m+1} = v$, \'etale paths $\gamma, \gamma'$ of special form supported on $P$ and $P^{-1}$, respectively, and $\beta \in \piet(\tX_{v,\bk},\bx_v)$ such that
\begin{itemize}
  \item  $g = \gamma' \circ \beta \circ \gamma$;
  \item $\beta$ acts trivially on $C_v^m$, i.e.
  \[
    \beta \in \ker
    \left(\piet(\tX_{v,\bk},\bx_v) \to \rmAut(C_v^m)\right)
  \]
  \item there exist presentations $(\gamma_{2m},\ldots,\gamma_1) $ and $(\gamma_1',\ldots, \gamma_{2m}')$ of $\gamma$ and $\gamma'$ such that the following condition is satisfied. For any $1 \leqslant i \leqslant m$, there is
\[
  \gamma'_{2i -1} \circ \gamma_{2i -1} \in \mathrm{ker}\left(\piet(\tX_{v_{i},\bk},\bx_{v_{i}}) \to \Aut(C_{v_{i}}^{i})\right)
\]  
and
\[
  \gamma_{2i} \circ \gamma_{2i}' \in \mathrm{ker}\left(\piet(\tX_{v_{i+1},\bk},\bx_{v_{i+1}}) \to \Aut(C_{v_{i+1}}^{i})\right)
\] 
 
\end{itemize} 
\end{definition}
The following picture might be useful to visualize the definition.
\begin{center}
  \begin{tikzcd}
    \scalebox{1.6}{$\bullet$} \arrow[bend left]{r}{\gamma_1} &\bullet \arrow[bend left]{l}{\gamma_1'} \arrow[bend left]{r}{\gamma_2} &\scalebox{1.6}{$\bullet$} \arrow[bend left]{l}{\gamma_2'} \cdots  \scalebox{1.6}{$\bullet$} \arrow[bend left]{r}{\gamma_{2m-1}} &\bullet \arrow[bend left]{l}{\gamma_{2m-1}'} \arrow[bend left]{r}{\gamma_{2m}} &\scalebox{1.6}{$\bullet$} \arrow[bend left]{l}{\gamma_{2m}'} \arrow[loop right]{}{\beta}
  \end{tikzcd}
\end{center}
Here, the larger bullets correspond to $\bx_{v_i}$'s and  the smaller ones to $\partial^{\epsilon}_{0 \textrm{ or } 1}(\bx(e_i))$. 

\begin{rmk}
  We find the definition involving $C_v^N$'s quite convenient. One could, however, avoid introducing $C_v^N$'s and make a slightly different definition. Define $O_v^{N,+}$ to be the set of (isomorphism classes of) $\Gal_k$-conjugates of the $\piet(\tX_{v,\bk},\bx_v)$-sets in $O_v^N$. Prop.\ \ref{dominbyrat} then implies that $O_v^{N,+}$ are finite as well. Moreover, for each $v$, both $O_v^N$ and $O_v^{N,+}$ are increasing with $N$. We could then require $\beta$'s and $(\gamma'_{2i-1} \circ \gamma_{2i-1})$'s as above to act trivially on each element of $O_v^{m,+}$ and $O_{v_i}^{i,+}$, correspondingly.
\end{rmk}

{\bf \emph{Step III: Defining the desired open subgroup $V$, checking its properties and finishing the proof}}

We make the following central definition.

\begin{center}
  \begin{minipage}{0.8\linewidth}
\emph{Let $V_0<\pipet(X_{\bk},\bx)$ denote the subgroup generated by the set of regular loops and let $V$ be its topological closure.}
  \end{minipage}
\end{center}

Let $G = \Big(\big(*^{\rmtop}_v \piet(\tX_{v,\bk},\bx_v)\big) \topfp \pi_1(\Gamma',T')\Big)/ \overline{\langle R_1', R_2' \rangle^{nc}}$ denote the topological group appearing in the van Kampen presentation above. We have that $G^{\rmNoohi} = \pipet(X_{\bk},\bx)$.  Let $\tilde{G} \subset \pipet(X_{\bk},\bx)$ denote the subgroup of all \'etale paths (or ``loops'', rather) of special form, supported on walks from $v_{\bx}$ to $v_{\bx}$. 
\begin{obs}\label{obs:tilde-Noohi-G}
  By Obs.\ \ref{obs:rho-v-special}, the map $G \to \pipet(X_{\bk},\bx) = G^{\rmNoohi}$ factorizes through $\tilde{G}$. Directly from the definitions, there is $V_0 < \tilde{G}$. We are thus in the situation of Lm.\ \ref{lem:V^Noohi-description}. We will use it below.
\end{obs}

For brevity, let us denote $\bG_v = \piet(\tX_{v,\bk},\bx_v)$ and $G_v = \piet(\tX_v,\bx_v) \simeq \bG_v \rtimes \Gal_k$ in the proofs below.

\begin{proposition}\label{prop:V-contained-open-invariant}
The following statements about the subgroup $V$ hold:
  \begin{enumerate}
    \item\label{prop:v-coi:contained} There is a containment $V < U$.
    \item\label{prop:v-coi:open} It is an open subgroup.
    \item\label{prop:v-coi:invariant} The groups $V_0$ and $V$ are invariant under the Galois action, i.e.\ $\psi_\sigma(V_0) = V_0$ and $\psi_\sigma(V) = V$ for all $\sigma \in \Gal_k$.
  \end{enumerate}
\end{proposition}
\begin{proof}
  \begin{enumerate}
    \item As any open subgroup is automatically closed, it is enough to show that any regular loop lies in $U$. Let $g$ be a regular loop and write $g = \gamma' \circ \beta \circ \gamma$ with $\gamma,\gamma'$  \'etale paths of special form supported on some walk (and its inverse) from $v_{\bx}$ to $v$ of length $m$, with presentations $(\gamma_1,\ldots, \gamma_{2m})$ and $(\gamma_{2m}',\ldots, \gamma_1')$ of $\gamma$ and $\gamma'$, and $\beta \in \ker(\piet(\tX_{v,\bk},\bx_v) \to C_v^m)$, as in the definition of a regular loop. Let us introduce the following notation (and analogously for $\gamma'$)
    \[
      \gamma_{i \leftarrow 1} = \gamma_i \circ \ldots \circ \gamma_2 \circ \gamma_1
    \]
    By definition, there is 
    \[
      \gamma_{2i \leftarrow 1} \cdot s_0 \in O_{v_i}^i
    \]
    For $i=m$, it follows from the condition on $\beta$ that $(\beta \circ \gamma) \cdot s_0 = \gamma \cdot s_0$. Similarly, the condition on $\gamma_{2m} \circ \gamma_{2m}'$ implies that $(\gamma_{2m}'^{-1} \circ \gamma_{2m}^{-1}) \circ \gamma_{2m \leftarrow 1} \cdot s_0 = \gamma_{2m \leftarrow 1} \cdot s_0$, and thus 
    \[
      (\gamma' \circ \beta \circ \gamma) \cdot s_0 = (\gamma' \circ \gamma) \cdot s_0 = ((\gamma_{1 \leftarrow 2m-1}' \circ \gamma_{2m}' \circ \gamma_{2m \leftarrow 1})) \cdot s_0 = \gamma_{1 \leftarrow 2m-1}' \circ \gamma_{2m-1 \leftarrow 1} \cdot s_0
    \]
    The process continues in a similar fashion to show that $g$ stabilizes $s_0$, and thus belongs to $U$.

    \item By Lm.\ \ref{lem:V^Noohi-description}, it is enough to check that the map $G \to \rmAut(\tilde{G}/V_0)$ is continuous when $\tilde{G}/V_0$ is considered with the discrete topology.
    
    Using the universal property of free topological products, continuity can be checked separately for $\bG_v$ and $D$. For $D$, this is automatic, as $D$ is discrete. To see the result for $\bG_v$'s, we need to show that the stabilizers of the action of $\bG_v$ on $\tilde{G}/V_0$ induced by $\bG_v \to G$ are open. Fix $[gV_0] \in \tilde{G}/V_0$ and $g \in \tilde{G}$ representing it. The element $g$ is represented by some \'etale path (or a ``loop'', in fact) of special form $\rho$ supported on a walk $P_\rho$ of length $l(P_\rho)$. By Obs.\ \ref{obs:rho-v-special}, the morphism $\bG_v \to \tilde{G} \subset \pipet(X_{\bk},\bx)$ is also defined using an \'etale path of special form $\gamma_v$ supported on a walk $P_{v_{\bx},v}$ in the tree $T \subset \Gamma$. Let $H_v = \ker\left(\bG_v \to \rmAut\big(C^{l(P_\rho) + l(P_v)}_v\big)\right) < \bG_v$. Then $H_v$ is open in $\bG_v$ and its image in $\tilde{G}$ can be written as $\{\gamma_v^{-1} \circ \beta \circ \gamma_v | \beta \in H_v\}$. It follows from the setup that for $\beta \in H_v$,
\[
  g^{-1}\circ\gamma_v^{-1}\circ\beta \circ \gamma_v \circ g \in V_0
\]
   and so any element in the image of $H_v$ fixes $[gV_0]$ in $G/V$. Thus, the stabilizer of $[gV_0]$ in $\bG_v$ is also open, as desired.
  
    \item For each $\sigma$, the map $\psi_\sigma$ is continuous. As $V = \overline{V_0}^{G^{\rmNoohi}}$, it is thus enough to prove that $V_0$ is $\Gal_k$-invariant. By Lm.\ \ref{lem:Gal-action-on-paths}, it follows that under the action of $\Gal_k$, an \'etale path of special form supported on a walk $P$ is mapped again to an \'etale path of special form supported on $P$. Consequently, checking that the action of $\sigma \in \Gal_k$ maps a regular loop $g$ to another regular loop boils down to checking the following fact. If $g$ has a presentation $g = \gamma' \circ \beta \circ \gamma$ as in the definition of a regular loop, then
  \begin{itemize}
    \item $\psi_\sigma(\beta)$ still acts trivially on $C_v^{l(P)}$;
    \item $\psi_\sigma(\gamma_i' \circ \gamma_i)$ or $\psi_\sigma(\gamma_i \circ \gamma_i')$, depending on parity, still acts trivially on $C_{v_{\lfloor i \rfloor +1}}^{\lceil i \rceil}$ for every $i$.
  \end{itemize}
   However, as the automorphism $\psi_\sigma$ on $\piet(\tX_{v_j,\bk},\bx_{v_j})$ matches conjugation by $\sigma$ in $\piet(\tX_{v_j},\bx_{v_j})$ restricted to its normal subgroup $\piet(\tX_{v_j,\bk},\bx_{v_j})$ and the sets $C_{v_j}^j$ were Galois as $\piet(\tX_{v_j},\bx_{v_j})$-sets, the result follows.

  \end{enumerate}
\end{proof}

\begin{center}
  \begin{minipage}[t][1cm]{0.8\linewidth}
\emph{Denote by $S'$ the quotient $\pipet(X_{\bk},\bx)/V$ considered as a $\pipet(X_{\bk},\bx)$-set.}
  \end{minipage}
\end{center}

Let $\rho_v : \pipet(X_{\bk},\bx_v) \to \pipet(X_{\bk},\bx)$ be the isomorphism defined using the fixed (\'etale) path $\gamma_v$ between $\bx_v$ and $\bx$, as in Obs.\ \ref{obs:rho-v-special}.
We have an action given by $\sigma \mapsto \psi_\sigma$ on both $\pipet(X_{\bk},\bx)$ and $\pipet(X_{\bk},\bx_v)$. We already know that $V < \pipet(X_{\bk},\bx)$ is invariant under this action, but this is not necessarily true for $\rho_v^{-1}(V) < \pipet(X_{\bk},\bx_v)$. This holds after a finite base field extension.
\begin{lemma}\label{Galois-invariance-of-v-pullback}
  For each $v \in \rmVert$, define an (abstract) $\Gal_{k,v}$-action on $\pipet(X_{\bk},\bx)$ to be 
  \[
    {}^{\sigma_v} g = \rho_v\Big(\psi_\sigma\big(\rho_v^{-1}(g)\big)\Big)
  \]
  Then there exists a finite extension $l/k$, such that for all $v\in \rmVert$, there is
  \begin{enumerate}[label=\alph*)]
    \item $\Gal_{l,v}$ fixes $V$;
    \item The obtained induced $\Gal_{l,v}$-action on $S'$ can be written as
    \[
      \sigma_v \cdot [gV] = [(\gamma_v^{-1}\circ \psi_\sigma(\gamma_v))\bullet \psi_\sigma(g)V]
    \]
    \item The induced $\Gal_{l,v}$ action on $S'$ is continuous and compatible with the $\bG_v$-action.
    
  \end{enumerate}
\end{lemma}
\begin{proof}
  As there are finitely many vertices $v$, it is enough to prove the statements for a single fixed $v$. Let $g \in V$. By definition of $\rho_v$, there is
  \[
    {}^{\sigma_v} g = \gamma_v^{-1}\circ \big(\psi_\sigma(\gamma_v \circ g \circ \gamma_v^{-1})\big) \circ \gamma_v = \big(\gamma_v^{-1}\circ \psi_\sigma(\gamma_v)\big) \bullet \psi_\sigma(g) \bullet \big(\psi_\sigma(\gamma_v^{-1})\circ \gamma_v \big)
\]
By Prop.\ \ref{prop:V-contained-open-invariant}, we have $\psi_\sigma(g) \in V$ and we only need to show that $\gamma_v^{-1} \circ \psi_\sigma(\gamma_v) \in V$. By Lm.\ \ref{lem:continuity-psi-sigma-gamma} and Obs.\ \ref{obs:rho-v-special}, the map $\Gal_k \ni \sigma \mapsto \psi_\sigma(\gamma_v) \in \pipet(X_{\bk},\bx,\bx_v)$ is continuous, and we conclude that for an open subgroup of $\sigma \in \Gal_k$ we have the desired containment.

It follows from the previous point that we get an induced action of $\Gal_{l,v}$ on $S'$. Using that $\gamma_v^{-1} \circ \psi_\sigma(\gamma_v) \in V$, the alternative formula in the statement follows from the computation
\[
 \sigma_v \cdot [gV] = [\rho_v(\psi_\sigma(\rho_v^{-1}(g)))V] = [(\gamma_v^{-1}\circ \psi_\sigma(\gamma_v))\bullet \psi_\sigma(g) \bullet (\psi_\sigma(\gamma_v)^{-1}\circ \gamma_v)V] = [(\gamma_v^{-1}\circ \psi_\sigma(\gamma_v))\bullet \psi_\sigma(g)V]
\]

Let us move to the last point. Compatibility with the $\bG_v$-action follows from Lm.\ \ref{lem:Gal-action-on-paths}(\ref{lem:pt:Gal-action-as-conjugation} and the fact that the map $\bG_v \to \pipet(X_{\bk},\bx)$ is defined by postcomposing with $\rho_v$. To check continuity, fix $[gV]$. By Lm.\ \ref{lem:V^Noohi-description}, this class is represented by a path (loop) of special form, and so we can assume this about $g$. Checking that the stabilizer of $[gV]$ is open boils down to checking that for an open subgroup of $\sigma$'s in $\Gal_{l,v}$, one has $g^{-1}\bullet (\gamma_v^{-1}\circ \psi_\sigma(\gamma_v))\bullet \psi_\sigma(g) \in V$. However, this follows from the openness of $V$ and Lm.\ \ref{lem:continuity-psi-sigma-gamma}.
\end{proof}

\begin{prop}\label{prop:ending-prop}
  There is a (continuous) $\pipet(X_l,\bx)$-action on $S'$ that extends the $\pipet(X_{\bk},\bx)$-action.
\end{prop}
\begin{proof}
By the van Kampen theorem for $\pipet(X_l,\bx)$, it is enough to show that there are continuous actions of $\bG_v\rtimes \Gal_{l,v}$'s and $D$ compatible with the $\bG_v$ and $D$ actions that $S'$ is already equipped with, and such that the van Kampen relations are satisfied. We already have a continuous action by $D$ on $S'$, and by Lm.\ \ref{Galois-invariance-of-v-pullback}, we get an action of $\bG_v \rtimes \Gal_{l,v}$.

Let us now check that the van Kampen relations  are preserved. In the case of relation $R_2$, this is automatic by Obs.\ \ref{obs:setup} \ref{obs:pt:R_2'=R_2}. This is because we have left the $\bG_v$-actions intact. To check that relation $R_1$ is respected, it suffices to check that $\piet(\partial_1)(\sigma )\vec{E} = \vec{E} \piet(\partial_0)(\sigma)$ for $\sigma \in \Gal_{l,E}$ and $E$ an edge in $\Gamma$ with vertices $v_-, v_+$. Let $\delta_{W',W} = \gamma_{W',W}$ denote the fixed paths from the van Kampen setup in the computation below to make the distinction from $\gamma_v$'s clearer. Using Lm. \ref{lem:Gal-action-on-paths} \ref{lem:pt:Gal-action-as-conjugation}, we compute that
\[
  \piet(\partial_0)(\sigma) = \delta_{v+,E}^{-1} \circ_{G_{v+}} \sigma_E \circ_{G_{v+}} \delta_{v_+,E} = \delta_{v+,E}^{-1}\circ_{G_{v+}} \psi_{\sigma}(\delta_{v+,E})\circ_{G_{v+}} \sigma_{v+}= 
  (\delta_{v+,E}^{-1}\circ_{G_{v+}} \psi_{\sigma}(\delta_{v+,E}))\bullet_{G_{v+}} \sigma_{v+}
\]
The image of $(\delta_{v+,E}^{-1}\circ_{G_{v+}} \psi_{\sigma}(\delta_{v+,E}))$ in $\pipet(X,\bx)$ via $\rho_{v+}$ is $\gamma_{v+}^{-1} \circ \delta_{v+,E}^{-1}\circ \psi_{\sigma}(\delta_{v+,E}) \circ \gamma_{v+}$.

By definition, $\vec{E} \in \pipet(X,\bx)$ can be written as $\vec{E} = \gamma_{v-}^{-1}\circ \delta_{v-,E}^{-1}\circ \delta_{v+,E}\circ \gamma_{v+}$. Putting this together and using the formula of Lm. \ref{Galois-invariance-of-v-pullback}, we have that $\vec{E} \bullet \piet(\partial_0)(\sigma) \cdot [hV]$ equals
\begin{align*}
  (\gamma_{v-}^{-1}\circ \delta_{v-,E}^{-1}\circ \delta_{v+,E}\circ \gamma_{v+}) \circ (\gamma_{v+}^{-1} \circ \delta_{v+,E}^{-1}\circ \psi_{\sigma}(\delta_{v+,E}) \circ \gamma_{v+}) \cdot [(\gamma_{v+}^{-1}\circ \psi_\sigma(\gamma_{v+}))\bullet \psi_\sigma(h)V]\\ = 
  [(\gamma_{v-}^{-1} \circ \delta_{v-,E}^{-1} \circ \psi_\sigma(\delta_{v+,E} \circ \gamma_{v+}))\bullet \psi_\sigma(h)V]
\end{align*}
A similar computation (left to the reader) shows that \[  \piet(\partial_1)(\sigma) \bullet\vec{E} \cdot [hV] = [(\gamma_{v-}^{-1} \circ \delta_{v-,E}^{-1} \circ \psi_\sigma(\delta_{v+,E} \circ \gamma_{v+}))\bullet \psi_\sigma(h)V] \] as well.
This finishes the proof of the Proposition.
\end{proof}

\begin{proof}
  {\bf (\emph{End of the proof of Thm.\ \ref{injectivity-on-the-left}})}

 We have proven that for a transitive $\pipet(X_{\bk},\bx)$-set $S$ there exists a finite extension $l/k$ and a transitive $\pipet(X_l,\bx)$-set $S'$ that dominates $S$ as $\pipet(X_{\bk},\bx)$-sets. As explained above, this finishes the proof.
\end{proof}

We have finished our main proof, and thus the most difficult part of the exact sequence is now proven. We now obtain the final form of the fundamental exact sequence. 

\begin{proof}{\bf (\emph{End of the proof of Thm.\ \ref{exactness-in-geometric-to-arithmetic-as-abstract}})}
  
  We already know the statements of the ``moreover'' part and the near exactness in the middle of the sequence. All we have to prove is that $\pipet(X_{\bk},\bx)$ is thickly closed in $\pipet(X,\bx)$.
As $\pipet(X_{\bk},\bx) \rarr \pipet(X,\bx)$ is a topological embedding of Ra{\u \i}kov complete groups, $\pipet(X_{\bk},\bx)$ is a closed subgroup of $\pipet(X,\bx)$ (see e.g.\ \cite[Prop.\ 6.2.7.]{Dikranjan}). By Lm.\ \ref{lepage-lemma}, the proof will be finished if we show that $\pipet(X_{\bk},\bx)$ is normal in $\pipet(X,\bx)$. Observe that checking whether $\overline{\pipet(X_{\bk},\bx)}=\overline{\overline{\pipet(X_{\bk},\bx)}}$ can be performed after replacing $\pipet(X,\bx)$ by any open subgroup $U$ such that $\pipet(X_{\bk},\bx) < U <^\circ \pipet(X,\bx)$. Choosing a suitably large finite field extension $l/k$ and looking at $U=\pipet(X_l,\bx)$, we are reduced to the situation as in the proof of Thm.\ \ref{injectivity-on-the-left}, i.e.\ we have enough rational points on the connected components we are interested in when applying van Kampen. Let $\tilde{G} < \pipet(X_{\bk},\bx)$ be the dense subgroup defined above Prop.\ \ref{prop:V-contained-open-invariant}. Note that by the van Kampen theorem applied to $\pipet(X,\bx)$ together with the observations in Obs.\ \ref{obs:setup}, it follows that the subgroup generated by $\tilde{G}$ and $\Gal_{k,v}$'s is dense in $\pipet(X,\bx)$. Putting this together, it follows that it is enough to check that, for each $v$, conjugation by elements of $\Gal_{k,v}$ fixes $\tilde{G}$ in $\pipet(X,\bx)$. This, however, follows from Lm.\ \ref{lem:Gal-action-on-paths} \ref{lem:Gaop:pt:composition-compatibility} \ref{lem:pt:Gal-action-as-conjugation} and the fact that $\Gal_{k,v} \to \pipet(X,\bx)$ is defined as the composition $\Gal_{k,v} \to \pipet(X,\bx_v) \stackrel{\rho_v}{\to} \pipet(X,\bx)$, where $\rho_v = \gamma_v^{-1} \circ ( - ) \circ \gamma_v$ with $\gamma_v \in \pipet(X_{\bk},\bx,\bx_v)$ of special form.   
\end{proof}

\begin{rmk}\label{rmk:example-not-regular-loop}
  Let us revisit the counterexample of Ex.\ \ref{counterexample-with-picture} from the point of view of the proof above. We will freely use the notation set there. In this example, we have started from the fixed point $s_0$, and used the group elements to reach point $s_1 = g_mh_{m-1}\ldots h_3 g_2 h_1 \cdot s_0$. We have then concluded that $s_2 = \zeta_{\ell^{m+1}}\cdot s_1 = g \cdot s_1$ and justified that the setup forces that this equality contradicts the possibility of extending the Galois action to the set $S$. The problem here is caused by the fact that, denoting $\gamma = g_mh_{m-1}\ldots h_3 g_2 h_1$, the element
  \[
\gamma^{-1} \circ g^{-1} \circ \zeta_{\ell^{m+1}} \circ \gamma
  \]
  stabilizes $s_0$, but it is not a ``regular loop'' in the language introduced above.

  Of course, this only means that this particular ``obvious'' presentation is not as in the definition of a regular loop. But, by now, we know that it provably cannot be a regular loop with any presentation.
\end{rmk}

\begin{rmk}\label{rmk:quick-approach}
  We sketch a slightly different approach to the central part of the main proof. It is a bit quicker, but less constructive, i.e.\ does not ``explicitly'' construct the desired Galois invariant open subgroup in terms of regular loops. We will freely use the fact that a surjective map from a compact space onto a Hausdorff space is a quotient map. 
  
  Assume that we have already done the preparatory steps of the main proof, i.e.\ we have increased the base field to have many rational points and applied the van Kampen theorem.
  We want to prove that the action
  \[
    \Gal_k \times \pipet(X_{\bk},\bx) \to \pipet(X_{\bk},\bx)
  \]
  given by $\psi_\sigma$ is continuous.
  Let $G, \tilde{G}$ be as introduced above Obs.\ \ref{obs:tilde-Noohi-G}. 

  Firstly, one checks that any element of $\tilde{G}$, so a path of special form, can be in fact rewritten with a presentation that makes it visibly an image of an element of $G$, at the expense of the presentation possibly getting longer. Another words, the map $G \to \tilde{G}$ is surjective. By default, $\tilde{G}$ is considered with the subspace topology from $\pipet(X_{\bk},\bx)$. Let us denote $(\tilde{G}, \mathrm{quot})$ the same group but considered with the quotient topology from $G$. We thus have a continuous bijection $(\tilde{G},\mathrm{quot}) \to \tilde{G}$.
  
  The group $G$ is a topological quotient of the free topological product of finitely many compact groups $G_v$ and a finitely generated free group $D \simeq \bbZ^{*r}$. One checks from the universal properties that this free product can be written as a quotient of the free topological group $F(Z)$ (see \cite[Ch.\ 7.]{AT}) on a compact space of generators $Z = \sqcup_v G_v \sqcup_{\{1,\ldots,r\}} *$, i.e.\ the disjoint union of $G_v$'s and $r$ singletons.

  By \cite[Thm.\ 7.4.1]{AT}, $F(Z)$ is, as a topological space, a colimit of an increasing union $\ldots \subset B_n \subset B_{n+1} \subset \ldots$  of compact subspaces. These spaces are explicitly described as words of bounded length in $F(Z)$ (this makes sense, as the underlying group of $F(Z)$ is the abstract free group on $Z$). From this, it follows that (as a topological space) $(\tilde{G},\mathrm{quot}) = \mathrm{colim} K_n$, with $K_n = \rmim(B_n)$. 

  Working directly with $K_n$'s is inconvenient for our purposes, as these sets are not necessarily preserved by the Galois action. The reason is that the van Kampen presentation as a quotient of a free product uses fixed paths, while
   applying Galois action will usually move the paths. One then has to conjugate by a suitable element to ``return'' to the paths fixed in van Kampen, possibly increasing the length of the word.

  Instead, we can consider subsets $K_n' \subset \tilde{G}$ of elements that are paths of special form of length $\leqslant n$, i.e.\ possessing a presentation as a path of special form of length $\leqslant n$ (see Defn. \ref{defn:paths-special-form}). By a reasonably simple combinatorics, one can cook up ``brute force'' bounds $f(n, d), g(n,d) \in \bbN$ in terms of $n$ and the diameter $d = \mathrm{diam}(\Gamma)$ of $\Gamma$ such that there is
  \[
    K_n \subset K_{f(n,d)}' \quad \textrm{ and } \quad K_n' \subset K_{g(n,d)}
  \]
  In conclusion, $(\tilde{G},\mathrm{quot}) = \mathrm{colim} K_n'$ in $\mathrm{Top}$.
  
  By Lm.\ \ref{lem:Gal-action-on-paths}, the $\Gal_k$-action preserves the sets $K_n'$ and $\Gal_k \times K_n' \to K_n'$ is continuous. As $\Gal_k$ is compact, $\Gal_k\times( - )$ has a right adjoint $\mathrm{Maps}_{\mathrm{cts}}(\Gal_k, - )$ in $\mathrm{Top}$ and so $\Gal_k \times (\mathrm{colim}_{n \in \bbN} K_n') = \mathrm{colim}_{n \in \bbN}(\Gal_k\times K_n')$.
  From this, we immediately get that $\Gal_k \times (\tilde{G},\mathrm{quot}) \to (\tilde{G},\mathrm{quot})$ is continuous. As $\Gal_k$-action respects the group action of $\tilde{G}$, it quickly follows that the action is still continuous when $(\tilde{G},\mathrm{quot})$ is equipped with the weakened topology $\tau$ making open subgroups a base at $1$, as in Lm.\ \ref{lem:weakened-top}. By (the easier part of)  Lm.\ \ref{lem:V^Noohi-description}, this weakened topology on $(\tilde{G},\mathrm{quot})$ matches that of $\tilde{G}$. It follows that $\Gal_k \times \tilde{G} \to \tilde{G}$ is continuous.

  By Lm.\ \ref{lem:weakened-top} again, one has to check that the continuity is not lost when passing to the Ra{\v \i}kov completion of the maximal Hausdorff quotient of $(G,\tau)$. This in turn can be justified by similar arguments as in the proof of Lm.\ \ref{lem:V^Noohi-description}. This finishes the sketch. See also \cite[Prop.\ 4.3.3]{BhattScholze}. 

\end{rmk}

\bibliographystyle{alpha}
\bibliography{fes-biblio}

\end{document}